\renewcommand{\thesubsection}{\arabic{section}.\arabic{subsection}}
\newcommand{\diag}{\mathop{\mathrm{diag}}}
\newcommand{\sat}{\mathop{\mathrm{sat}}}
\newcommand{\satr}{\mathop{\mathrm{sat}}}
\newcommand{\norm}[1]{\ensuremath{\left\| #1 \right\|}}
\newcommand{\bracket}[1]{\ensuremath{\left[ #1 \right]}}
\newcommand{\braces}[1]{\ensuremath{\left\{ #1 \right\}}}
\newcommand{\refeqn}[1]{(\ref{eqn:#1})}
\newcommand{\tr}[1]{\mbox{tr}\ensuremath{\negthickspace\bracket{#1}}}
\newcommand{\trs}[1]{\mathrm{tr}\ensuremath{[#1]}}
\newcommand{\SO}{\ensuremath{\mathsf{SO(3)}}}
\newcommand{\so}{\ensuremath{\mathfrak{so}(3)}}
\newcommand{\SE}{\ensuremath{\mathsf{SE(3)}}}
\renewcommand{\Re}{\ensuremath{\mathbb{R}}}
\newcommand{\Sph}{\ensuremath{\mathsf{S}}}
\newcommand{\D}{\ensuremath{\mathbf{D}}}
\newcommand{\g}{\ensuremath{\mathfrak{g}}}
\newcommand{\CCC}{\ensuremath{\mathfrak{C}}}
\newcommand{\Mb}{\mathbf{M}}
\newcommand{\Bb}{\mathbf{B}}
\newcommand{\xb}{\mathbf{x}}
\newcommand{\Gb}{\mathbf{G}}
\newtheorem{prop}{Proposition}
\newtheorem{proof}{Proof}
\begin{document}
\title{\singlespacing\normalsize\bf Geometric Nonlinear Controls for Multiple Cooperative Quadrotor UAVs Transporting a Rigid Body}
\author{\normalsize Farhad A. Goodarzi}
\date{}
\maketitle
\thispagestyle{empty}
\begin{center}
B.S. in Mechanical Engineering, December 2009, Sharif University of Technology \\
M.S. in Mechanical Engineering, December 2011, Santa Clara University
\\[\baselineskip]
A Dissertation submitted to\\[\baselineskip]
The Faculty of\\The School of Engineering and Applied Science\\ of The George
Washington University\\ in partial satisfaction of the requirements\\ for the degree
of Doctor of Philosophy\\[\baselineskip]
August 31, 2015\\[\baselineskip]
Dissertation directed by\\[\baselineskip]
Taeyoung Lee\\Professor of Engineering and Applied Science 
\end{center}
\pagestyle{plain}
\setcounter{page}{1}
\pagenumbering{roman}
\newpage
\doublespacing
\noindent The School of Engineering and Applied Science of The George Washington
University certifies that Farhad A. Goodarzi has passed the Final Examination for the
degree of Doctor of Philosophy as of August 31, 2015. This is the
final and approved form of the dissertation.
\begin{center}
\singlespacing
{\normalsize Geometric Nonlinear Control for Multiple Cooperative Quadrotor UAVs Transporting a Rigid Body}\\[\baselineskip]
{\normalsize Farhad A. Goodarzi}
\end{center}\vspace{1.0cm}
\noindent Dissertation Research Committee:\\ 
\hfill\begin{minipage}{5in}\vspace{1.0cm}
{Taeyoung Lee, Associate Professor, Dissertation Director}\\
{Azim Eskandarian, Professor, Committee Chairman}\\
{Adam Wickenheiser, Assistant Professor, Committee Member}\\
{Evan Drumwright, Assistant Professor, Committee Member} \\
{Jinglai Shen, Associate Professor, Committee Member}
\end{minipage}
\newpage
\vspace*{\fill}
\begin{center}
\singlespacing
$\copyright$ Copyright 2015 by Farhad A. Goodarzi\\ All rights reserved
\end{center}
\vspace*{\fill}
\newpage
\section{\protect\centering{Dedication}}
\indent I dedicate my dissertation work to my family and many friends. A special feeling of gratitude to my loving mother Giti Moradi who has always loved me unconditionally and whose good examples have taught me to work hard for the things that I aspire to achieve. This work is also dedicated to my deceased father who I lost at the begging of my Ph.D. program and his words of encouragement and push for tenacity ring in my ears. \\

\indent My twin sister Shirin has never left my side and is very special. I also dedicate this dissertation to my many friends who have supported me throughout the process. I will always appreciate all they have done, especially Naeem Masnadi and Parisa Abdollahi for a constant source of support and encouragement during the challenges of graduate school and life. I am truly thankful for having you in my life. \\

\indent To Dr. Taeyoung Lee, my advisor, whose passion for excellence is apparent in all that he does. His motivational guidance, intellectual tenacity, and editing skills are unsurpassed. 

\doublespacing
\newpage
\section{\protect \centering Abstract}
\begin{center}
\doublespacing{{\normalsize Geometric Nonlinear Control for Multiple Cooperative Quadrotor UAVs Transporting a Rigid Body}}
\end{center}
This dissertation presents nonlinear tracking control systems for quadrotor unmanned aerial vehicles (UAV) under the influence of uncertainties. Assuming that there exist unstructured disturbances in the translational dynamics and the attitude dynamics, a geometric nonlinear adaptive controller is developed directly on the special Euclidean group. In particular, a new form of an adaptive control term is proposed to guarantee stability while compensating the effects of uncertainties in quadrotor dynamics. Next, we derived a coordinate-free form of equations of motion for a complete model of a quadrotor UAV with a payload which is connected via a flexible cable according to Lagrangian mechanics on a manifold. The flexible cable is modeled as a system of serially-connected links and has been considered in the full dynamic model. A geometric nonlinear control system is presented to asymptotically stabilize the position of the quadrotor while aligning the links to the vertical direction below the quadrotor. Finally, we focused on the dynamics and control of arbitrary number of quadrotor UAVs transporting a rigid body payload. The rigid body payload is connected to quadrotors via flexible cables. It is shown that a coordinate-free form of equations of motion can be derived for arbitrary numbers of quadrotors and links according to Lagrangian mechanics on a manifold. A geometric nonlinear controller is presented to transport the rigid body to a fixed desired position while aligning all of the links along the vertical direction. Numerical simulation and experimental results are presented and rigorous mathematical stability analysis are provided. These results will be particularly useful for aggressive load transportation that involves large deformation of the cable.

\doublespacing
\newpage
\newgeometry{top=1in, bottom=1in, left=1.125in, right=1.125in}
\tableofcontents
\restoregeometry
\newpage
\cleardoublepage
\phantomsection \label{listoffig}


\addcontentsline{toc}{section}{\hspace{1.5pt} List of Figures}
\begin{center}
\listoffigures
\end{center}

\newpage
\setcounter{page}{1}
\setcounter{section}{1}
\pagenumbering{arabic}


\newpage
\begin{singlespace}
\section{\protect \centering Chapter 1: Introduction}
\end{singlespace}
\doublespacing
\setcounter{section}{1}
\subsection {\normalsize Motivation}
{\addtolength{\leftskip}{0.5in}
Quadrotor unmanned aerial vehicles (UAVs) are becoming increasingly popular. They offer flight characteristics comparable to traditional helicopters, namely stationary, vertical, and lateral flights in a wide range of speeds, with a much simpler mechanical structure. With their small-diameter rotors driven by electric motors, these multi-rotor platforms are safer to operate than helicopters in indoor environments. Also, they have sufficient payload transporting capability and flight endurance for various missions~\cite{Mahony2012,gooddaewontaeyoungacc14,IJCAS2015,farhadacc15}. UAVs have been studied for different applications such as surveillance or mobile sensor networks as well as for educational purposes. Quadrotors are very popular due to their dynamic simplicity, maneuverability and high performance. Areal transportation of a cable-suspended load has been studied traditionally for helicopters~\cite{CicKanJAHS95,BerPICRA09}. 
Recently, small-size single or multiple autonomous vehicles are considered for load transportation and deployment~\cite{PalCruIRAM12,MicFinAR11,MazKonJIRS10,MelShoDARSSTAR13}, and trajectories with minimum swing and oscillation of payload are generated~\cite{ ZamStaJDSMC08, SchMurIICRA12, PalFieIICRA12}.\\ 

Safe cooperative transportation of possibly large or bulky payloads is extremely important in various missions, such as military operations, search and rescue, mars surface explorations and personal assistance.
\begin{figure}[h]
\centerline{
	\subfigure{
        \includegraphics[width=0.32\columnwidth]{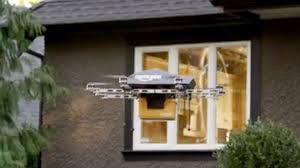}}
        \subfigure{
        \includegraphics[width=0.32\columnwidth]{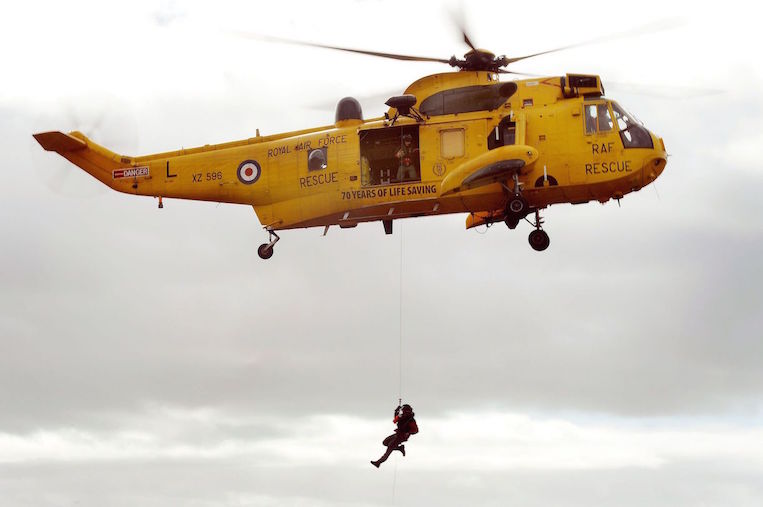}}
	\subfigure{
	\includegraphics[width=0.32\columnwidth]{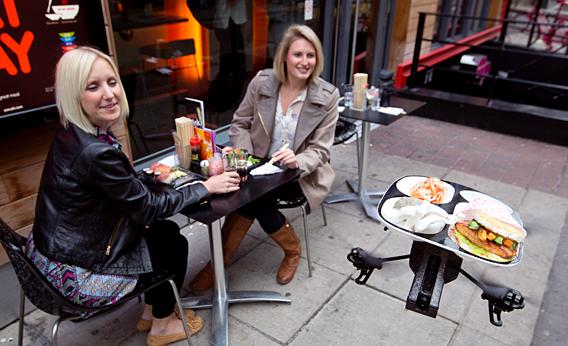}}
}
\caption{Arial payload transportation}\label{fig:exampleso}
\end{figure}

However, these results are based on the common and restrictive assumption that the cable connecting the payload to the quadrotor UAV is always taut and rigid. Also, the dynamic of the cable and payload are ignored and they are considered as bounded disturbances to the transporting vehicle. Therefore, they cannot be applied to aggressive, rapid load transportations where the cable is deformed or the tension along the cable is low, thereby restricting its applicability. As such, it is impossible to guarantee safety operations.\\

It is challenging to incorporate the effects of a deformable cable, since the dimension of the configuration space becomes infinite. Finite element approximation of a cable often yields complicated equations of motion that make dynamic analysis and controller design extremely difficult.

}
\subsection {\normalsize Literature Review}
{\addtolength{\leftskip}{0.5in}
Several control systems have been proposed for quadrotors. In many cases, disturbances and uncertainties are eliminated in the model for simplicity. There are other limitations of quadrotor control systems, such as complexities in controller structures or lack of stability proof. For example, tracking control of a quadrotor UAV has been considered in~\cite{CabCunPICDC09,MelKumPICRA11}, but the control system in~\cite{CabCunPICDC09} has a complex structure since it is based on a multiple-loop backstepping approach, and no stability proof is presented in~\cite{MelKumPICRA11}. Robust tracking control systems are studied in~\cite{NalMarPICDC09,HuaHamITAC09}, but the quadrotor dynamics is simplified by considering planar motion only~\cite{NalMarPICDC09}, or by ignoring the rotational dynamics by timescale separation assumption~\cite{HuaHamITAC09}.\\ 

In other studies, disturbances and uncertainties have been considered into the dynamics of the quadrotors~\cite{NormanKam2013,HassanFaiz2013,Besnard2007,Bolandi2013}. Several controllers have been designed and presented to eliminate these disturbances such as PID~\cite{Sharma2012}, sliding mode~\cite{Liu2013}, or robust controllers~\cite{Wahyudie2013}. In one example, proportional-derivative controllers are developed with consideration of blade flapping for operations under wind disturbances~\cite{HofHuaAGNCC07}. A backstepping control method is proposed in~\cite{Castillo2006} by considering an aggressive perturbation with bounded signals. These approaches have certain limitations on handling uncertainties. For example, it is well known that sliding mode controller causes chattering problems that may excite high-frequency unmodeled dynamics. Nonlinear robust tracking control systems in~\cite{LeeLeoPACC12,LeeLeoAJC13} guarantee ultimate boundedness of tracking errors only, and they are also prone to chattering if the required ultimate bound is smaller. PID controllers are adopted widely, but it is required that the uncertainties are fixed. \\

Due to the limitations mentioned above, adaptive controllers have been developed~\cite{dydekAnnnaLav12,Justin2014,Antonelli2013,HZhenchina13}. Although adaptive controllers compensate the effects of disturbances and uncertainties for quadrotor controls, most of the studies are based on linearization~\cite{dydekAnnnaLav12,Justin2014} or simplification~\cite{Antonelli2013,HZhenchina13}. In~\cite{Antonelli2013} only the constant external disturbances is considered into the system dynamics and the stability analysis. In~\cite{HZhenchina13} an adaptive block backstepping controller is presented to stabilize the attitude of a quadrotor, however this method only guarantees the boundedness of errors. A nonlinear adaptive state feedback controller is also presented in~\cite{Silvestre2014}, where the proposed controller only assumes constant known disturbance forces. An adaptive sliding mode controller is developed for under-actuated quadrotor dynamics in~\cite{DWLee2009}. This controller uses slack variables to overcome the under-actuated property of a quadrotor system while simplifying the dynamics to reduce the higher-order derivative terms which makes it very sensitive to the noise. A robust adaptive control of a quadrotor is also presented in~\cite{Bialy2013}, where linear-in-the-parameter uncertainties and bounded disturbances are considered. These simplifications~\cite{Antonelli2013,Silvestre2014}, linearization~\cite{dydekAnnnaLav12,Justin2014}, and assumptions~\cite{HZhenchina13} in the dynamics and controller design process of adaptive controllers restrict the quadrotor to maintain complex or aggressive missions such as a flipping maneuver~\cite{Bialy2013}.\\ 

The other critical issue in designing controllers for quadrotors is that they are mostly based on local coordinates. Local coordinates such as Euler angles and quaternions produce singularities and the dynamic model needs to be examined and studied more explicitly~\cite{farhadayoubi2011}. For quadrotor UAV's dynamics, some aggressive maneuvers are demonstrated at~\cite{MelMicIJRR12} which are based on Euler angles. Therefore they involve complicated expressions for trigonometric functions, and they exhibit singularities in representing quadrotor attitudes, thereby restricting their ability to achieve complex rotational maneuvers significantly. A quaternion-based feedback controller for attitude stabilization was shown in~\cite{TayMcGITCSTI06}. By considering the Coriolis and gyroscopic torques explicitly, this controller guarantees exponential stability. Quaternions do not have singularities but, as the three-sphere double-covers the special orthogonal group, one attitude may be represented by two antipodal points on the three-sphere. This ambiguity should be carefully resolved in quaternion-based attitude control systems, otherwise they may exhibit unwinding, where a rigid body unnecessarily rotates through a large angle even if the initial attitude error is small~\cite{BhaBerSCL00}. To avoid these, an additional mechanism to lift attitude onto the unit-quaternion space is introduced~\cite{MaySanITAC11}.\\

The dynamics of a quadrotor UAV is globally expressed on the special Euclidean group, $\SE$, and nonlinear control systems are developed to track outputs of several flight modes~\cite{LeeLeoPICDC10}. There are also several studies using the estimations for dynamical objects developed on the special Euclidean group~\cite{Misra2015coupled,ACC2014,Space2014}. Several aggressive maneuvers of a quadrotor UAV or spacecrafts are demonstrated based on a hybrid control architecture, and a nonlinear robust control system is also considered in~\cite{LeeLeoPACC12,Farhad2013}. As they are directly developed on the special Euclidean/Orthogonal group, complexities, singularities, and ambiguities associated with minimal attitude representations or quaternions are completely avoided~\cite{ChaSanICSM11,Automatica_2014,ICRA2015}.\\

In most of the prior works, the dynamics of aerial transportation has been simplified due to the inherent dynamic complexities. For example, it is assumed that the dynamics of the payload is considered completely decoupled from quadrotors, and the effects of the payload and the cable are regarded as arbitrary external forces and moments exerted to the quadrotors~\cite{ ZamStaJDSMC08, SchMurIICRA12, PalFieIICRA12}, thereby making it challenging to suppress the swinging motion of the payload actively, particularly for agile aerial transportations.\\

Recently, the coupled dynamics of the payload or cable has been explicitly incorporated into control system design~\cite{LeeSrePICDC13}. In particular, a complete model of a quadrotor transporting a payload modeled as a point mass, connected via a flexible cable is presented, where the cable is modeled as serially connected links to represent the deformation of the cable~\cite{gooddaewontaeyoungacc14}. In another distinct study, multiple quadrotors transporting  a rigid body payload has been studied~\cite{LeeMultipleRigid14}, but it is assume that the cables connecting the rigid body payload and quadrotors are always taut. These assumptions and simplifications in the dynamics of the system reduce the stability of the controlled system, particularly in rapid and aggressive load transportation where the motion of the cable and payload is excited nontrivially.

}
\subsection {\normalsize Proposed Approches}
{\addtolength{\leftskip}{0.5in}
Geometric nonlinear controllers are developed to follow an attitude tracking command and a position tracking command. In particular, a new form of an adaptive control term is proposed to guarantee asymptotical convergence of tracking error variables when there exist uncertainties at the translational dynamics and the rotational dynamics of quadrotors. The corresponding stability properties are analyzed mathematically, and they are verified by several experiments~\cite{farhadASME15}. The robustness of the proposed tracking control systems are critical in generating complex maneuvers, as the impact of the several aerodynamic effects resulting from the variation in air speed is significant even at moderate velocities~\cite{HofHuaAGNCC07}.\\

A coordinate-free form of the equations of motion for a chain pendulum connected a cart that moves on a horizontal plane is presented according to Lagrangian mechanics on a manifold~\cite{LeeLeoPICDC12}. The cable is modeled as an arbitrary number of links with different sizes and masses that are serially-connected by spherical joints. The resulting configuration manifold is the product of the special Euclidean group for the position and the attitude of the quadrotor, and a number of two-spheres that describe the direction of each link. We present Euler-Lagrange equations of the presented quadrotor model that are globally defined on the nonlinear configuration manifold.\\

The coupled dynamics of the payload or cable has been explicitly incorporated into control system design~\cite{LeeSrePICDC13}. In particular, a complete model of a quadrotor transporting a payload modeled as a point mass, connected via a flexible cable is presented. In another distinct study, multiple quadrotors transporting  a rigid body payload has been studied~\cite{LeeMultipleRigid14}, but it is assumed that the cables connecting the rigid body payload and quadrotors are always taut. These assumptions and simplifications in the dynamics of the system reduce the stability of the controlled system, particularly in rapid and aggressive load transportation where the motion of the cable and payload is excited nontrivially.\\

Quadrotor UAV is under-actuated as the direction of the total thrust is always fixed relative to its body. By utilizing geometric control systems for quadrotor~\cite{LeeLeoPICDC10,LeeLeoAJC13,Farhad2013}, we show that the hanging equilibrium of the links can be asymptotically stabilized while translating the quadrotor to a desired position. In contrast to existing papers where the force and the moment exerted by the payload to the quadrotor are considered as disturbances, the control systems proposed in this work explicitly consider the coupling effects between the cable/load dynamics and the quadrotor dynamics.

}
\subsection {\normalsize Contributions}
{\addtolength{\leftskip}{0.5in}
The first distinct feature of the proposed approach is that the equations of motion and the control systems are developed directly on the nonlinear configuration manifold in a coordinate-free fashion. This yields remarkably compact expressions for the dynamic model and controllers, compared with local coordinates that often require symbolic computational tools due to complexity of multi-body systems. Furthermore, singularities of local parameterization are completely avoided to generate agile maneuvers in a uniform way.\\

This work presents a rigorous Lyapunov stability analysis to establish stability properties without any timescale separation assumptions or singular perturbation, and a new nonlinear integral and adaptive control term are designed to guarantee robustness against unstructured uncertainties in both rotational and translational dynamics.\\

The other distinct contribution of this dissertation is presenting the complete dynamic model of an arbitrary number of quadrotors transporting a rigid body where each quadrotor is connected to the rigid body via a flexible cable. Each flexible cable is modeled as an arbitrary number of serially connected links, and it is valid for various masses and lengths. A coordinate free form of equations of motion is derived according to Lagrange mechanics on a nonlinear manifold for the full dynamic model. These sets of equations of motion are presented in a complete and organized manner without any simplification.\\

Another contribution of this study is designing a control system to stabilize the rigid body at desired position. Geometric nonlinear controller is utilized~\cite{LeeLeoPICDC10,LeeLeoAJC13,Farhad2013}, and they are generalized for the presented model. More explicitly, we show that the rigid body payload is asymptotically transported into a desired location, while aligning all of the links along the vertical direction corresponding to a hanging equilibrium.\\

The unique property of the proposed control system is that the nontrivial coupling effects between the dynamics of rigid payload, flexible cables, and multiple quadrotors are explicitly incorporated into control system design, without any simplifying assumption. Also, the equations of motion and the control systems are developed directly on the nonlinear configuration manifold intrinsically. Therefore, singularities of local parameterization are completely avoided to generate agile maneuvers of the payload in a uniform way. In short, the proposed control system is particularly useful for rapid and safe payload transportation in complex terrain, where the position of the payload should be controlled concurrently while suppressing the deformation of the cables.

}

\subsection {\normalsize Publications}
{\addtolength{\leftskip}{0.5in}
The contributions in this dissertation have been published in the following journals and conference proceedings.
\begin{itemize}
\item Farhad A. Goodarzi, Daewon Lee, Taeyoung Lee, Geometric Control of a Quadrotor UAV Transporting a Payload Connected via Flexible Cable, International Journal of Control, Automation and Systems, Vol. 13, No. 6, December 2015.
\item Farhad A. Goodarzi, Daewon Lee, Taeyoung Lee, Geometric Adaptive Tracking Control of a Quadrotor UAV on SE(3) for Agile Maneuvers, ASME Journal of Dynamic Systems, Measurement and Control, 137(9), 091007-12, Sep 01, 2015
\item Farhad A. Goodarzi, Taeyoung Lee, Dynamic and Control of Quadrotor UAVs Transporting a Rigid Body Connected via Flexible Cables, in Proceeding of American Control Conference, pp. 4677-4682, Chicago, IL, July 2015
\item Farhad A. Goodarzi, Daewon Lee, Taeyoung Lee, Geometric Stabilization of a Quadrotor UAV with a Payload Connected by flexible Cable, in Proceeding of American Control Conference, pp. 4925-4930, Portland, OR, June 2014
\item Farhad A. Goodarzi, Daewon Lee, Taeyoung Lee, Geometric Nonlinear PID Control of a Quadrotor UAV on SO(3), in Proceeding of European Control Conference, pp. 3845-3850, Zurich, Switzerland, February 2013
\end{itemize}

}


\newpage
\begin{singlespace}
\section{\protect \centering Chapter 2-Quadrotor Autonomous Trajectory Tracking}
\end{singlespace}
\setcounter{section}{2}
\doublespacing
In this chapter, we present a complete model of a quadrotor transporting a payload connecting with a flexible cable. The flexible cable is modeled as an arbitrary number of serially connected links. Geometric nonlinear controller presented for the full dynamic model considering the coupling effects of payload and the cable.

New contributions and the unique features of the control system proposed in this chapter compared with other studies are as follows: (i) it is developed for the full six degrees of freedom dynamic model of a quadrotor UAV on $\SE$, including the coupling effects between the translational dynamics and the rotational dynamics on a nonlinear manifold without any simplification or assumptions, (ii) the control systems are developed directly on the nonlinear configuration manifold in a coordinate-free fashion.  Thus, singularities of local parameterization are completely avoided to generate agile maneuvers in a uniform way, (iii) a rigorous Lyapunov analysis is presented to establish stability properties without any timescale separation assumption, and (iv) a new form of an adaptive control term is proposed to guarantee asymptotical convergence of tracking error variables in the presence of uncertainties, (v) in contrast to hybrid control systems~\cite{GilHofIJRR11}, complicated reachability set analysis is not required to guarantee safe switching between different flight modes, as the region of attraction for each flight mode covers the configuration space almost globally, (vi) the proposed algorithm is validated with experiments for agile maneuvers. A rigorous mathematical analysis of nonlinear adaptive controllers for a quadrotor UAV on $\SE$ with experimental validations for complex and aggressive maneuvers is unprecedented. 

The chapter is organized as follows. We develop a globally defined model for a quadrotor UAV in Section \ref{sec:chap2dynamic}. Adaptive tracking control systems are developed in Sections \ref{sec:chap2controlattitude} and \ref{sec:chap2controlposition}, followed by numerical examples in Section \ref{sec:chap2numerical}.


\subsection {\normalsize Quadrotor Dynamic Model}\label{sec:chap2dynamic}
{\addtolength{\leftskip}{0.5in}
Consider a quadrotor UAV model illustrated in Figure~\ref{fig:QM}. We choose an inertial reference frame $\{\vec e_1,\vec e_2,\vec e_3\}$ and a body-fixed frame $\{\vec b_1,\vec b_2,\vec b_3\}$. The origin of the body-fixed frame is located at the center of mass of this vehicle. The first and the second axes of the body-fixed frame, $\vec b_1,\vec b_2$, lie in the plane defined by the centers of the four rotors.

\begin{figure}[h]
\setlength{\unitlength}{0.8\columnwidth}\footnotesize
\centerline{
\begin{picture}(1,0.8)(0,0)
\put(0,0){\includegraphics[width=0.8\columnwidth]{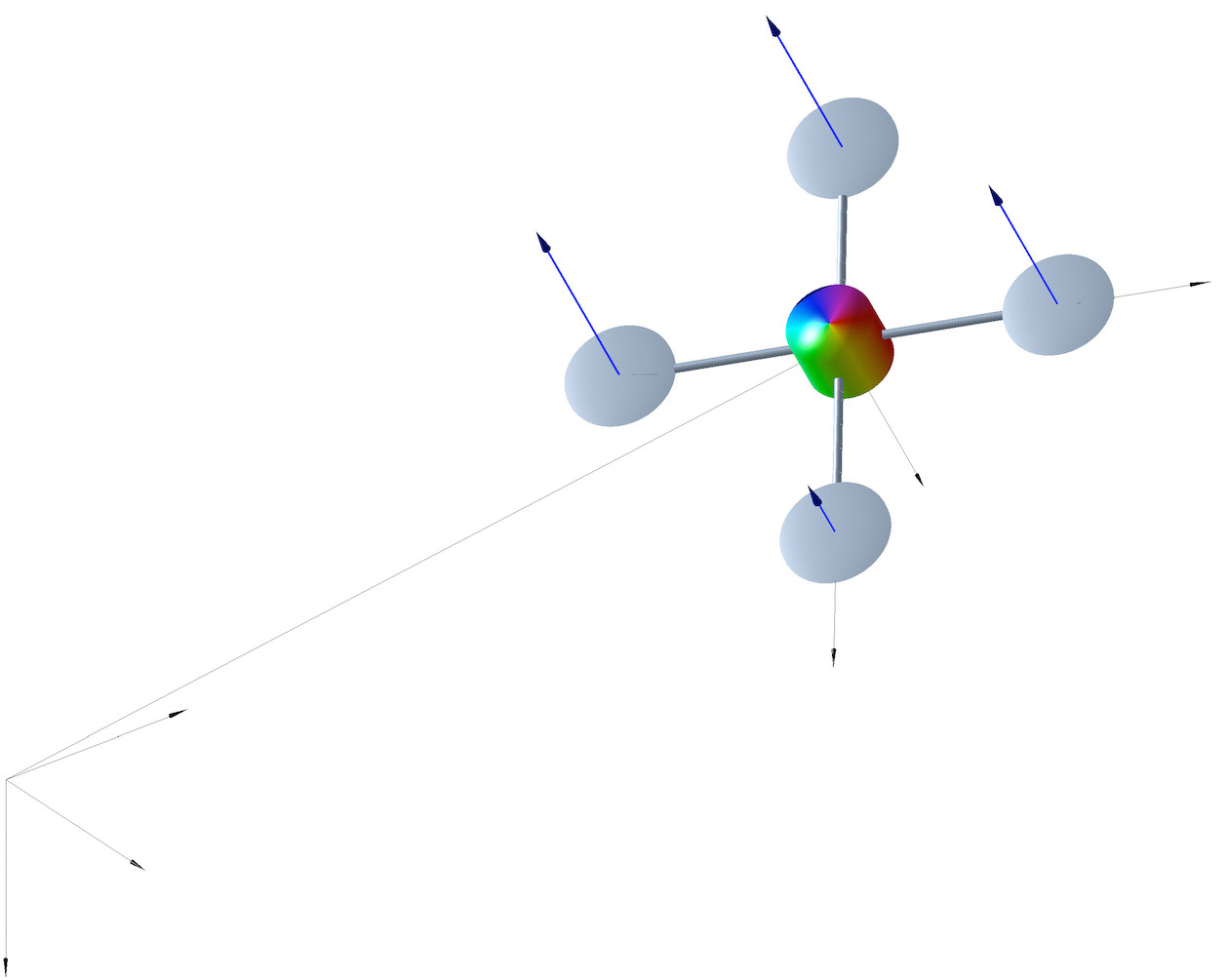}}
\put(0.16,0.18){\shortstack[c]{$\vec e_1$}}
\put(0.13,0.06){\shortstack[c]{$\vec e_2$}}
\put(0.02,0.0){\shortstack[c]{$\vec e_3$}}
\put(0.98,0.5){\shortstack[c]{$\vec b_1$}}
\put(0.70,0.22){\shortstack[c]{$\vec b_2$}}
\put(0.76,0.37){\shortstack[c]{$\vec b_3$}}
\put(0.78,0.66){\shortstack[c]{$f_1$}}
\put(0.56,0.76){\shortstack[c]{$f_2$}}
\put(0.40,0.63){\shortstack[c]{$f_3$}}
\put(0.61,0.42){\shortstack[c]{$f_4$}}
\put(0.30,0.35){\shortstack[c]{$x$}}
\put(0.90,0.35){\shortstack[c]{$R$}}
\end{picture}}
\caption{Quadrotor model}\label{fig:QM}
\end{figure}

The configuration of this quadrotor UAV is defined by the location of the center of mass and the attitude with respect to the inertial frame. Therefore, the configuration manifold is the special Euclidean group $\SE$, which is the semi-direct product of $\Re^3$ and the special orthogonal group $\SO=\{R\in\Re^{3\times 3}\,|\, R^TR=I,\, \det{R}=1\}$. 

The mass and the inertial matrix of a quadrotor UAV are denoted by $m\in\Re$ and $J\in\Re^{3\times 3}$. Its attitude, angular velocity, position, and velocity are defined by $R\in\SO$, $\Omega,x,v\in\Re^3$, respectively, where the rotation matrix $R$ represents the linear transformation of a vector from the body-fixed frame to the inertial frame and the angular velocity $\Omega$ is represented with respect to the body-fixed frame. The distance between the center of mass to the center of each rotor is $d\in\Re$, and the $i$-th rotor generates a thrust $f_i$ and a reaction torque $\tau_i$ along $-\vec b_3$ for $1\leq i \leq 4$. The magnitude of the total thrust and the total moment in the body-fixed frame are denoted by $f\in\Re$, $M\in\Re^3$, respectively. 

The following conventions are assumed for the rotors and propellers, and the thrust and moment that they exert on the quadrotor UAV. We assume that the thrust of each propeller is directly controlled, and the direction of the thrust of each propeller is normal to the quadrotor plane. The first and third propellers are assumed to generate a thrust along the direction of $-\vec b_3$ when rotating clockwise; the second and fourth propellers are assumed to generate a thrust along the same direction of $-\vec b_3$ when rotating counterclockwise. Thus, the thrust magnitude is $f=\sum_{i=1}^4 f_i$, and it is positive when the total thrust vector acts  along $-\vec b_3$, and it is negative when the total thrust vector acts along $\vec b_3$. By the definition of the rotation matrix $R\in\SO$, the direction of the $i$-th body-fixed axis $\vec b_i$ is given by $Re_i$ in the inertial frame, where $e_1=[1;0;0],e_2=[0;1;0],e_3=[0;0;1]\in\Re^3$. Therefore, the total thrust vector is given by $-fRe_3\in\Re^3$ in the inertial frame.

We also assume that the torque generated by each propeller is directly proportional to its thrust. Since it is assumed that the first and the third propellers rotate clockwise and the second and the fourth propellers rotate counterclockwise to generate a positive thrust along the direction of $-\vec b_3$, %
the torque generated by the $i$-th propeller about $\vec b_3$ can be written as $\tau_i=(-1)^{i} c_{\tau f} f_i$  for a fixed constant $c_{\tau f}$.    All of these assumptions are fairly common in many quadrotor control systems~\cite{TayMcGITCSTI06,CasLozICSM05}.

Under these assumptions, the thrust of each propeller $f_1, f_2, f_3, f_4$ is directly converted into $f$ and $M$, or vice versa. In this paper, the thrust magnitude $f\in\Re$ and the moment vector $M\in\Re^3$ are viewed as control inputs. The corresponding equations of motion are given by
\begin{gather}
\dot x  = v,\label{eqn:EL1}\\
m \dot v = mge_3 - f R e_3 + \mathds{W}_{x}(x,v,R,\Omega)\theta_{x},\label{eqn:EL2}\\
\dot R = R\hat\Omega,\label{eqn:EL3}\\
J\dot \Omega + \Omega\times J\Omega = M + \mathds{W}_{R}(x,v,R,\Omega)\theta_{R},\label{eqn:EL4}
\end{gather}
where the \textit{hat map} $\hat\cdot:\Re^3\rightarrow\SO$ is defined by the condition that $\hat x y=x\times y$ for all $x,y\in\Re^3$. 
More explicitly, for a vector $x=[x_1,x_2,x_3]^{T}\in\Re^3$, the matrix $\hat x$ is given by
\begin{align}
    \hat x = \begin{bmatrix} 0 & -x_3 & x_2\\
                                x_3 & 0 & -x_1\\
                                -x_2 & x_1 & 0 \end{bmatrix}\label{eqn:hat}.
\end{align}
This identifies the Lie algebra $\SO$ with $\Re^3$ using the vector cross product in $\Re^3$. The inverse of the hat map is denoted by the \textit{vee} map, $\vee:\SO\rightarrow\Re^3$. 

The modeling error and uncertainties in the translational dynamics and the rotational dynamics are given by $\mathds{W}_{x}(x,v,R,\Omega)\theta_{x}$, and $\mathds{W}_{R}(x,v,R,\Omega)\theta_{R}$, respectively. Where $\mathds{W}_{x}(x,v,R,\Omega),\mathds{W}_{R}(x,v,R,\Omega)\in\Re^{3\times P}$ are known functions of the state, and $\theta_{x},\theta_{R}\in\Re^{P\times 1}$ are fixed unknown parameters. It is assumed that the bounds of unknown parameters are given by
\begin{align}
\|\mathds{W}_{x}\| \leq B_{W_{x}},\quad \|\theta_{x}\| \leq B_{\theta},\quad \|\theta_{R}\| \leq B_{\theta},
\end{align}
for $B_{W_{x}},B_{\theta}>0$. Throughout this chapter, $\lambda_m (A)$ and $\lambda_{M}(A)$ denote the minimum eigenvalue and the maximum eigenvalue of a square matrix $A$, respectively, and $\lambda_m$ and $\lambda_M$ are the minimum eigenvalue and the maximum eigenvalue of the inertia matrix $J$. i.e., $\lambda_m=\lambda_m(J)$ and $\lambda_M=\lambda_M(J)$. The two-norm of a matrix $A$ is denoted by $\|A\|$. The standard dot product in $\Re^n$ is denoted by $\cdot$, i.e., $x\cdot y =x^Ty$ for any $x,y\in\Re^n$.

}


\subsection {\normalsize Attitude Controlled Flight Mode}\label{sec:chap2controlattitude}
{\addtolength{\leftskip}{0.5in}
Since the quadrotor UAV has four inputs, it is possible to achieve asymptotic output tracking for at most four quadrotor UAV outputs. The quadrotor UAV has three translational and three rotational degrees of freedom; it is not possible to achieve asymptotic output tracking of both attitude and position of the quadrotor UAV. This  motivates us to introduce two flight modes, namely (1) an attitude controlled flight mode, and (2) a position controlled flight mode. While a quadrotor UAV is under-actuated, a complex flight maneuver can be defined by specifying a concatenation of flight modes together with conditions for switching between them. This will be further illustrated by a numerical example and experimental results later. In this section, the attitude controlled flight mode is considered. 


\subsubsection {\normalsize Attitude Tracking Errors}
Suppose that a smooth attitude command $R_d(t)\in\SO$ satisfying the following kinematic equation is given:
\begin{align}
\dot R_d = R_d \hat\Omega_d,
\end{align}
where $\Omega_d(t)\in\Re^3$ is the desired angular velocity, which is assumed to be uniformly bounded. We first define errors associated with the attitude dynamics as follows~\cite{BulLew05,LeeITCST13}.
\begin{prop}\label{prop:propchap1_first}
For a given tracking command $(R_d,\Omega_d)$, and the current attitude and angular velocity $(R,\Omega)$, we define an attitude error function $\Psi:\SO\times\SO\rightarrow\Re$, an attitude error vector $e_R\in\Re^3$, and an angular velocity error vector $e_\Omega\in \Re^3$ as follows~\cite{LeeITCST13}:
\begin{gather}
\Psi (R,R_d) = \frac{1}{2}\tr{G(I-R_d^TR)},\\
e_R =\frac{1}{2} (GR_d^TR-R^TR_dG)^\vee,\label{eqn:errr}\\
e_\Omega = \Omega - R^T R_d\Omega_d\label{eqn:eomega},
\end{gather}
where the matrix $G\in\Re^{3\times3}$ is given by $G = \diag[g_1, g_2, g_3]$ for distinct, positive constants $g_1$, $g_2$, $g_3$ $\in\Re$. Then, the following properties hold:
\begin{itemize}
\item[(i)] $\Psi$ is positive-definite about $R=R_d$.
\item[(ii)] The directional derivative of $\Psi$ with respect to $R$ along $\delta R = R\hat\eta$ for any $\eta\in\Re^3$ is given by
\begin{align}
\frac{d}{d\epsilon}\bigg|_{\epsilon=0} \Psi(R\exp(\epsilon\hat\eta),R_d) = e_R \cdot \eta. 
\end{align}
\item[(iii)] The critical points of $\Psi$, where $e_R=0$, are $\{R_d\}\cup\{R_d\exp (\pi \hat s),\,s\in\Sph^2 \}$, where $\Sph^2=\{s\in\Re^{3}\; |\; \|s\|=1\}$.
\item[(iv)] A lower bound of $\Psi$ is given as follows:
\begin{align}
b_{1}\|e_R\|^2 \leq \Psi(R,R_d),\label{eqn:PsiLB}
\end{align}
where the constant $b_1$ is given by $b_{1}=\frac{h_1}{h_2+h_3}$ for
\begin{gather}
h_{1}=\min\{g_{1}+g_{2},g_{2}+g_{3},g_{3}+g_{1}\}\nonumber,\\
h_{2}=\max\{(g_{1}-g_{2})^2,(g_{2}-g_{3})^2,(g_{3}-g_{1})^2\}\nonumber,\\
h_{3}=\max\{(g_{1}+g_{2})^2,(g_{2}+g_{3})^2,(g_{3}+g_{1})^2\}\nonumber.
\end{gather} 

\item[(v)] Let $\psi$ be a positive constant that is strictly less than $2$. If $\Psi(R,R_d)< \psi<h_1$, then an upper bound of $\Psi$ is given by
\begin{align}
\Psi(R,R_d)\leq b_2 \|e_R\|^2,\label{eqn:PsiUB}
\end{align}
where the constant $b_2$ is given by $b_{2}=\frac{h_{1}h_{4}}{h_{5}(h_{1}-\psi)}$ for
\begin{gather}
h_{4}=\max\{g_{1}+g_{2},g_{2}+g_{3},g_{3}+g_{1}\}\nonumber,\\
h_{5}=\min\{(g_{1}+g_{2})^2,(g_{2}+g_{3})^2,(g_{3}+g_{1})^2\}\nonumber.
\end{gather}

\item[(vi)] The time-derivative of $\Psi$ and $e_R$ satisfies:
\begin{align}
\dot\Psi = e_R\cdot e_\Omega,\quad \|\dot e_R\|\leq\|e_\Omega\|.\label{eqn:Psidot00}
\end{align}
\end{itemize}
\end{prop}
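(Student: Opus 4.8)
The plan is to reduce every assertion to a statement about the single relative rotation $Q:=R_d^TR\in\SO$: both $\Psi=\tfrac12\tr{G(I-Q)}$ and $e_R=\tfrac12(GQ-Q^TG)^\vee$ depend on $(R,R_d)$ only through $Q$, and the kinematics $\dot R=R\hat\Omega$, $\dot R_d=R_d\hat\Omega_d$ give $\dot Q=Q\hat e_\Omega$ with $e_\Omega=\Omega-Q^T\Omega_d$. Writing $Q$ in exponential coordinates, $Q=\exp(\theta\hat s)=\cos\theta\,I+\sin\theta\,\hat s+(1-\cos\theta)ss^T$ with $s\in\Sph^2$, $\theta\in[0,\pi]$, and using the standard identities $\tr{G\hat s}=0$, $\tr{Gss^T}=s^TGs$, $(ab^T-ba^T)^\vee=b\times a$, $(\hat xA+A^T\hat x)^\vee=((\tr{A})I-A)x$, and $R\hat xR^T=\widehat{Rx}$, I would first establish the closed forms
\[
2\Psi=(1-\cos\theta)\bigl(\tr{G}-s^TGs\bigr),\qquad
e_R=\tfrac12\bigl[\sin\theta\,((\tr{G})I-G)s+(1-\cos\theta)\,s\times Gs\bigr].
\]
The decisive point is that the two vectors in the bracket are mutually orthogonal (the first lies in $\mathrm{span}\{s,Gs\}$, the second is orthogonal to both), so $4\|e_R\|^2=\sin^2\theta\,\|((\tr{G})I-G)s\|^2+(1-\cos\theta)^2\|s\times Gs\|^2$. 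Everything else is bookkeeping on these expressions, using that $\tr{G}-s^TGs=\sum_i s_i^2(g_j+g_k)$, $\|((\tr{G})I-G)s\|^2=\sum_i s_i^2(g_j+g_k)^2$, and $\|s\times Gs\|^2=\sum_{i<j}s_i^2s_j^2(g_i-g_j)^2$ are convex combinations of the numbers that define $h_1,\dots,h_5$.

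Parts (i)–(iii) follow directly. For (i), $\tr{G}-s^TGs\ge h_1>0$ while $1-\cos\theta\ge0$ with equality iff $\theta=0$ iff $Q=I$, so $\Psi\ge0$ and vanishes exactly at $R=R_d$. For (ii), differentiating $\Psi(R\exp(\epsilon\hat\eta),R_d)=\tfrac12\tr{G(I-Q\exp(\epsilon\hat\eta))}$ at $\epsilon=0$ gives $-\tfrac12\tr{GQ\hat\eta}$, and the identity $\tr{A\hat\eta}=-\eta\cdot(A-A^T)^\vee$ with $A=GQ$ turns this into $e_R\cdot\eta$. For (iii), $e_R=0$ is equivalent to $GQ$ being symmetric, and by the orthogonality above it forces $\sin\theta\,((\tr{G})I-G)s=0$ and $(1-\cos\theta)\,s\times Gs=0$; since $\|((\tr{G})I-G)s\|^2\ge h_5>0$ the first equation gives $\sin\theta=0$, hence $\theta\in\{0,\pi\}$, returning $R=R_d$ for $\theta=0$ and the rotations through $\pi$ listed in the statement for $\theta=\pi$.

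For (iv)–(v), assume $R\ne R_d$ so $\Psi>0$, cancel the common factor $1-\cos\theta$, and use $\sin^2\theta=(1-\cos\theta)(1+\cos\theta)$ to write
\[
\frac{\|e_R\|^2}{\Psi}=\frac{(1+\cos\theta)\,\|((\tr{G})I-G)s\|^2+(1-\cos\theta)\,\|s\times Gs\|^2}{2\,(\tr{G}-s^TGs)}.
\]
Part (iv) follows by bounding the numerator above by $(1+\cos\theta)h_3+(1-\cos\theta)h_2$, the denominator below by $2h_1$, and maximizing over $\cos\theta\in[-1,1]$, which gives $\|e_R\|^2/\Psi\le(h_2+h_3)/h_1=1/b_1$. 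Part (v) uses the hypothesis: $1-\cos\theta=2\Psi/(\tr{G}-s^TGs)<2\psi/h_1<2$ forces $1+\cos\theta>2(h_1-\psi)/h_1>0$, and then dropping the nonnegative $\|s\times Gs\|^2$ term and using $\|((\tr{G})I-G)s\|^2\ge h_5$ together with $\tr{G}-s^TGs\le h_4$ gives $\|e_R\|^2/\Psi\ge h_5(h_1-\psi)/(h_1h_4)=1/b_2$. For (vi), differentiating $2\Psi=\tr{G}-\tr{GQ}$ along $\dot Q=Q\hat\Omega-\hat\Omega_d Q$ and rewriting $\hat\Omega_d Q=Q\,\widehat{Q^T\Omega_d}$ via $R\hat xR^T=\widehat{Rx}$ collapses $\dot\Psi$ to $e_R\cdot(\Omega-Q^T\Omega_d)=e_R\cdot e_\Omega$ (reusing the computation behind (ii)); and differentiating $e_R$ with $\dot Q=Q\hat e_\Omega$ gives $\dot e_R=\tfrac12(GQ\hat e_\Omega+\hat e_\Omega Q^TG)^\vee$, so $\|\dot e_R\|=\tfrac1{2\sqrt2}\|GQ\hat e_\Omega+\hat e_\Omega Q^TG\|_F\le\|G\|\,\|e_\Omega\|$, which is $\le\|e_\Omega\|$ once $G$ is normalized so that its largest eigenvalue is at most one.

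The main obstacle is the estimate in (iv)–(v): one must notice that the two pieces of $e_R$ are orthogonal, so $\|e_R\|^2$ is a clean sum of two squares rather than a tangle of cross terms, and then match the convex-combination bounds on $\tr{G}-s^TGs$, $\|((\tr{G})I-G)s\|^2$, and $\|s\times Gs\|^2$ to the precise constants $h_1,\dots,h_5$ and optimize over $\theta$. The critical-point analysis in (iii) and the identities behind (vi) are routine once the exponential-coordinate formulas are written down.
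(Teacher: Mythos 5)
Your proposal is correct and, for the substantive items (iv)--(v), follows essentially the same route as the paper: both pass to the relative rotation $Q=R_d^TR$ in exponential (Rodrigues) coordinates, reduce everything to closed-form expressions for $\Psi$ and $\|e_R\|^2$ in $\theta$ and $s$ (your two formulas are exactly the paper's after substituting $x=\theta s$), and then bound $\|e_R\|^2/\Psi$ and $\Psi/\|e_R\|^2$ by the identical convex-combination estimates, arriving at the same $b_1$ and $b_2$. The difference is one of self-containment: you actually \emph{derive} the key identities, via the orthogonality of $\sin\theta\,((\tr{G})I-G)s$ and $(1-\cos\theta)\,s\times Gs$, where the paper simply asserts them ``using the MATLAB Symbolic Computation Tool''; you prove (i)--(iii) directly where the paper cites Bullo--Lewis; and you attempt the bound $\|\dot e_R\|\le\|e_\Omega\|$, which the paper's appendix never proves (its computation of $\dot\Psi$ is even carried out for $G=I$). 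Two caveats, both traceable to the proposition statement rather than to your argument. First, in (iii) your own decomposition shows that criticality at $\theta=\pi$ additionally requires $s\times Gs=0$, i.e.\ $s\in\{\pm e_1,\pm e_2,\pm e_3\}$ when the $g_i$ are distinct; so the critical set consists of $R_d$ and only three nontrivial rotations, not all of $\{R_d\exp(\pi\hat s):s\in\Sph^2\}$ -- and the paper's proof of (v) in fact relies on there being exactly ``three remaining critical points.'' You should state this rather than deferring to the (inaccurate) list in the statement. Second, your estimate $\|\dot e_R\|\le\|G\|\,\|e_\Omega\|$ yields the claimed $\|\dot e_R\|\le\|e_\Omega\|$ only under a normalization such as $\max_i g_i\le 1$, which the proposition does not impose; indeed $\dot e_R=\tfrac12((\tr{Q^TG})I-Q^TG)e_\Omega$, which at $Q=I$ has operator norm $h_4/2$, so without normalizing $G$ the inequality is simply false. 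You flag this honestly, and since the paper omits the proof of this item entirely, it is a gap in the statement that your write-up should make explicit rather than a flaw in your approach.
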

\begin{proof} 
See Appendix \ref{sec:pfchap1_first}.
\end{proof}


\subsubsection {\normalsize Attitude Tracking Controller}
We now introduce a nonlinear controller for the attitude controlled flight mode:
\begin{align}\label{eqn:aM}
M  =& -k_R e_R -k_\Omega e_\Omega -\mathds{W}_{R}\bar{\theta}_{R}+(R^TR_d\Omega_d)^\wedge J R^T R_d \Omega_d+ J R^T R_d\dot\Omega_d,
\end{align}
and adaptive law
\begin{align}\label{eqn:eI}
\dot{\bar{\theta}}_{R}=\gamma_{R}\mathds{W}_{R}^{T}(e_{\Omega}+c_{2}e_{R}),
\end{align}
where $k_R,k_\Omega,c_2,\gamma_{R}$ are positive constants and $\bar{\theta}_{R}\in\Re^{p}$ denotes the estimated value of $\theta_{R}$. The adaptive gains for $e_\Omega$ and $e_R$ correspond to $\gamma_R$ and $c_2\gamma_R$, respectively.

The control moment is composed of proportional, derivative, and adaptive terms, augmented with additional terms to cancel out the angular acceleration caused by the desired angular velocity. 
\begin{prop}{(Attitude Controlled Flight Mode)}\label{prop:propchap2_2}
Consider the control moment $M$ defined in \refeqn{aM}-\refeqn{eI}. For positive constants $k_R,k_\Omega$, the constants $c_2,B_2$ are chosen such that
\begin{gather}
\|(2J-\trs{J}I)\| \|\Omega_d\| \leq B_2,\label{eqn:B_2}\\
c_2 < \min\bigg\{  \frac{\sqrt{k_R\lambda_m}}{\lambda_M}, \frac{4k_\Omega}{8k_R\lambda_M+(k_\Omega+B_2)^2}\bigg\},\label{eqn:c2}
\end{gather}
Then, the zero equilibrium of tracking errors and the estimation errors is stable in the sense of Lyapunov, and $e_R,e_\Omega\rightarrow 0$ as $t\rightarrow\infty$. Furthermore the estimation error, $\tilde{\theta}_{R}=\theta_{R}-\bar{\theta}_{R}$, is uniformly bounded.
\end{prop}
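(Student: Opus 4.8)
The plan is a Lyapunov argument carried out on the sublevel set of Proposition~\ref{prop:propchap1_first}(v), i.e.\ where $\Psi(R,R_d)<\psi<h_1$. First I would substitute the control moment \refeqn{aM} into the attitude error dynamics. Differentiating \refeqn{eomega} along \refeqn{EL3}--\refeqn{EL4} and using $\hat\Omega_d\Omega_d=0$, the feedforward and the adaptive term organize the closed loop into
\begin{align}
J\dot e_\Omega = -k_R e_R - k_\Omega e_\Omega + \mathds{W}_R\tilde\theta_R + \Delta ,\nonumber
\end{align}
with $\tilde\theta_R=\theta_R-\bar\theta_R$, where $\Delta$ collects the gyroscopic terms that survive because the feedforward in \refeqn{aM} cancels the Coriolis coupling only approximately (it uses $R^TR_d\Omega_d$ in place of $\Omega$). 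Writing $\Omega=e_\Omega+R^TR_d\Omega_d$ one finds $\Delta=-e_\Omega\times Je_\Omega+Y$ where $Y$ is linear in $e_\Omega$ and $\|Y\|\le\|(2J-\trs{J}I)\|\,\|\Omega_d\|\,\|e_\Omega\|\le B_2\|e_\Omega\|$ by \refeqn{B_2}; note also $e_\Omega\cdot(e_\Omega\times Je_\Omega)=0$. I would then take the Lyapunov candidate
\begin{align}
\mathcal{V} = \tfrac12 e_\Omega\cdot Je_\Omega + k_R\Psi(R,R_d) + c_2\, e_R\cdot Je_\Omega + \tfrac{1}{2\gamma_R}\tilde\theta_R\cdot\tilde\theta_R ,\nonumber
\end{align}
where the cross term $c_2 e_R\cdot Je_\Omega$ is precisely what will upgrade mere boundedness to asymptotic convergence of $(e_R,e_\Omega)$.

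Next I would check that $\mathcal V$ is positive definite about $(e_R,e_\Omega,\tilde\theta_R)=0$. On the chosen sublevel set, Proposition~\ref{prop:propchap1_first}(iv)--(v) bounds $k_R\Psi$ both below and above by positive multiples of $\|e_R\|^2$ (via \refeqn{PsiLB}--\refeqn{PsiUB}), so $\mathcal V$ is sandwiched between two quadratic forms in $(\|e_R\|,\|e_\Omega\|)$ plus $\tfrac1{2\gamma_R}\|\tilde\theta_R\|^2$; the off-diagonal contribution $c_2 e_R\cdot Je_\Omega$ is controlled by $c_2\lambda_M\|e_R\|\|e_\Omega\|$, and the first inequality in \refeqn{c2}, $c_2<\sqrt{k_R\lambda_m}/\lambda_M$, is exactly what makes the associated $2\times2$ matrices positive definite. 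Hence $\mathcal V$ admits class-$\mathcal K$ lower and upper bounds in terms of the error variables.

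Then I would differentiate $\mathcal V$. Using $\dot\Psi=e_R\cdot e_\Omega$ and $\|\dot e_R\|\le\|e_\Omega\|$ from Proposition~\ref{prop:propchap1_first}(vi), the closed-loop error dynamics above, and substituting the adaptive law \refeqn{eI} for $\dot{\bar\theta}_R$, the terms containing $\mathds{W}_R\tilde\theta_R$ cancel exactly and the $\pm k_R e_R\cdot e_\Omega$ terms cancel, leaving
\begin{align}
\dot{\mathcal V} = -k_\Omega\|e_\Omega\|^2 - c_2 k_R\|e_R\|^2 - c_2 k_\Omega\, e_R\cdot e_\Omega + e_\Omega\cdot\Delta + c_2\, e_R\cdot\Delta + c_2\,\dot e_R\cdot Je_\Omega .\nonumber
\end{align}
Applying $e_\Omega\cdot\Delta\le B_2\|e_\Omega\|^2$, $|c_2\dot e_R\cdot Je_\Omega|\le c_2\lambda_M\|e_\Omega\|^2$, and bounding $c_2 e_R\cdot\Delta$ on the sublevel set (where $\|e_R\|$ and $\|e_\Omega\|$ are bounded, which tames the cubic piece $e_R\cdot(e_\Omega\times Je_\Omega)$), one can write $\dot{\mathcal V}\le -z^{\top}W z$ with $z=(\|e_R\|,\|e_\Omega\|)^{\top}$ and $W$ symmetric; $W$ is positive definite exactly under the second inequality in \refeqn{c2}, the terms $8k_R\lambda_M$ and $(k_\Omega+B_2)^2$ there accounting respectively for the $c_2 e_R\cdot(J\dot e_\Omega)$ cross contributions and for the combination $c_2 k_\Omega e_R\cdot e_\Omega + c_2 e_R\cdot\Delta + c_2\dot e_R\cdot Je_\Omega$. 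Thus $\dot{\mathcal V}\le0$: $\mathcal V$ is nonincreasing, so the trajectory stays in the sublevel set (which retroactively justifies every bound used and keeps Proposition~\ref{prop:propchap1_first}(v) applicable), $e_R,e_\Omega,\tilde\theta_R$ remain bounded, and together with the class-$\mathcal K$ bounds on $\mathcal V$ this gives stability in the sense of Lyapunov of the zero equilibrium and uniform boundedness of $\tilde\theta_R=\theta_R-\bar\theta_R$.

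Finally, for $e_R,e_\Omega\to0$ I would invoke Barbalat's lemma: $\mathcal V$ is bounded below and nonincreasing, hence convergent, so $\int_0^{\infty} z^{\top}Wz\,dt<\infty$; since $R,\Omega,e_R,e_\Omega,\tilde\theta_R$ (and the bounded signals $\Omega_d,\dot\Omega_d,\mathds{W}_R$) are all bounded, the right-hand side of the error dynamics is bounded, so $\dot e_R,\dot e_\Omega$ are bounded, $z^{\top}Wz$ is uniformly continuous, and therefore $z^{\top}Wz\to0$, i.e.\ $e_R,e_\Omega\to0$. The main obstacle is the $\dot{\mathcal V}$ step: because \refeqn{aM} cancels the gyroscopic/Coriolis coupling only approximately, one must bound the residual $\Delta$ sharply (this is what motivates $B_2$ and \refeqn{B_2}) and then pick $c_2$ small enough that $\mathcal V$ stays positive definite \emph{and} $\dot{\mathcal V}$ becomes negative definite in $(e_R,e_\Omega)$ simultaneously --- the two competing requirements being precisely the two inequalities in \refeqn{c2} --- all while verifying the trajectory never leaves the sublevel set on which these estimates and Proposition~\ref{prop:propchap1_first}(v) are valid.
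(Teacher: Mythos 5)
Your overall route is the same as the paper's: the identical Lyapunov function $\tfrac12 e_\Omega\cdot Je_\Omega+k_R\Psi+c_2e_R\cdot Je_\Omega+\tfrac1{2\gamma_R}\|\tilde\theta_R\|^2$, the same cancellation of the $\mathds{W}_R\tilde\theta_R$ terms by the adaptive law \refeqn{eI}, the same pair of $2\times2$ matrix conditions tied to the two inequalities in \refeqn{c2}, and Barbalat in place of the paper's LaSalle--Yoshizawa invocation (an immaterial difference). Your closed-loop expression for $\dot{\mathcal V}$ is also exactly the one the paper obtains.

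There is, however, one concrete gap in the estimate of $\dot{\mathcal V}$. The residual is $\Delta=(Je_\Omega+d)^\wedge e_\Omega$ with $d=(2J-\trs{J}I)R^TR_d\Omega_d$, i.e.\ your $Y$ equals $d\times e_\Omega$. Hence \emph{all} of $\Delta$, not just the cubic piece, is orthogonal to $e_\Omega$: $e_\Omega\cdot\Delta=e_\Omega\cdot\bigl((Je_\Omega+d)\times e_\Omega\bigr)=0$ identically. This is precisely the ``important property'' the paper's proof leans on. You instead estimate $e_\Omega\cdot\Delta\le B_2\|e_\Omega\|^2$, which is a valid but lossy upper bound; it injects a spurious $-B_2$ into the $(2,2)$ entry of your $W$, turning $k_\Omega-2c_2\lambda_M$ into roughly $k_\Omega-B_2-2c_2\lambda_M$. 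Condition \refeqn{c2} constrains only $c_2$ and places no lower bound on $k_\Omega$ relative to $B_2$, so whenever $k_\Omega\le B_2$ your $W$ fails to be positive definite for every $c_2>0$ and the claim ``$W$ is positive definite exactly under the second inequality in \refeqn{c2}'' breaks down. The fix is to use $e_\Omega\cdot\Delta=0$ and let $B_2$ enter only through $|c_2\,e_R\cdot(d\times e_\Omega)|\le c_2B_2\|e_R\|\|e_\Omega\|$, which is what produces the off-diagonal $-\tfrac{c_2}{2}(k_\Omega+B_2)$ in the paper's $W_2$. Relatedly, the cubic term $c_2\,e_R\cdot(Je_\Omega\times e_\Omega)$ is handled in the paper by the uniform bound $\|e_R\|\le1$, giving the clean $2c_2\lambda_M\|e_\Omega\|^2$ contribution, rather than by appealing to boundedness of $\|e_\Omega\|$ on a sublevel set, which would make $W$ depend on the initial data.
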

\begin{proof}
See Appendix \ref{sec:pfchap2_2}.
\end{proof}

While these results are developed for the attitude dynamics of a quadrotor UAV, they can be applied to the attitude dynamics of any rigid body. Nonlinear adaptive controllers have been developed for attitude stabilization in terms of modified Rodriguez parameters~\cite{SubJAS04} and quaternions~\cite{SubAkeJGCD04}, and for attitude tracking in terms of Euler-angles~\cite{ShoJuaPACC02}. The proposed tracking control system is developed on $\SO$, therefore it avoids singularities of Euler-angles and Rodriguez parameters, as well as unwinding of quaternions. 

Asymptotic tracking of the quadrotor attitude does not require specification of the thrust magnitude. As an auxiliary problem, the thrust magnitude can be chosen in many different ways to achieve an additional translational motion objective. For example, it can be used to asymptotically track a quadrotor altitude command~\cite{LeeLeoAJC13}. Since the translational motion of the quadrotor UAV can only be partially controlled; this flight mode is most suitable for short time periods where an attitude maneuver is to be completed. 

}


\subsection {\normalsize Position Controlled Flight Mode}\label{sec:chap2controlposition}
{\addtolength{\leftskip}{0.5in}
We introduce a nonlinear controller for the position controlled flight mode in this section.
\subsubsection {\normalsize Position Tracking Errors}
Suppose that an arbitrary smooth position tracking command $x_d (t) \in \Re^3$ is given. The position tracking errors for the position and the velocity are given by:
\begin{align}
e_x  = x - x_d,\quad
e_v  = \dot e_x = v - \dot x_d.
\end{align}
In the position controlled tracking mode, the attitude dynamics is controlled to follow the computed attitude $R_c(t)\in\SO$ as illustrated in Figure~\ref{fig:controlfellow} and the computed angular velocity $\Omega_c(t)$ defined as
\begin{align}
R_c=[ b_{1_c};\, b_{3_c}\times b_{1_c};\, b_{3_c}],\quad \hat\Omega_c = R_c^T \dot R_c\label{eqn:RdWc},
\end{align}
where $b_{3_c}\in\Sph^2$ is given by
\begin{align}
 b_{3_c} = -\frac{-k_x e_x - k_v e_v -\mathds{W}_{x}\bar{\theta}_{x} - mg e_3 +m\ddot x_d}{\norm{-k_x e_x - k_v e_v -\mathds{W}_{x}\bar{\theta}_{x}- mg e_3 + m\ddot x_d}},\label{eqn:Rd3}
\end{align}
\begin{figure}[h]
\centerline{
	\subfigure{
        \includegraphics[width=0.9\columnwidth]{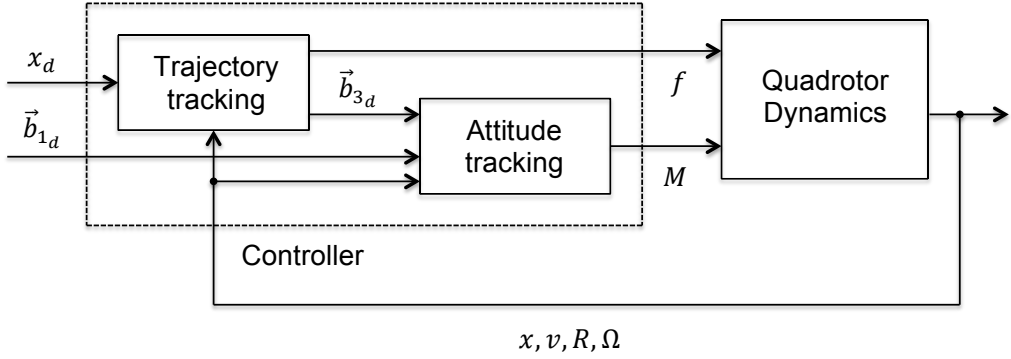}}
}
\caption{Controller loops}\label{fig:controlfellow}
\end{figure}
for positive constants $k_x,k_v$ and $\bar{\theta}_{x}\in\Re^{p}$ denotes the estimate value of $\theta_{x}$. The unit vector $b_{1_c}\in\Sph^2$ is selected to be orthogonal to $b_{3_c}$, thereby guaranteeing that $R_c\in\SO$. It can be chosen to specify the desired heading direction, and the detailed procedure to select $b_{1c}$ is described later at Section \ref{sec:b1c}.  
Following the prior definition of the attitude error and the angular velocity error given at \refeqn{errr} and \refeqn{eomega}, and assume that the commanded acceleration is uniformly bounded:
\begin{align}
\|-mge_3+m\ddot x_d\| < B_1\label{eqn:B}
\end{align}
for a given positive constant $B_1$. These imply that the given desired position command is distinctive from free-fall, where no control input is required.


\subsubsection {\normalsize Position Tracking Controller}
The nonlinear controller for the position controlled flight mode, described by control expressions for the thrust magnitude and the moment vector, are:
\begin{align}
f  =& ( k_x e_x + k_v e_v +\mathds{W}_{x}\bar{\theta}_{x}+ mg e_3-m\ddot x_d)\cdot Re_3,\label{eqn:f}\\
M  =& -k_R e_R -k_\Omega e_\Omega -\mathds{W}_{R}\bar{\theta}_{R}+(R^TR_c\Omega_c)^\wedge J R^T R_c \Omega_c + J R^T R_c\dot\Omega_c.\label{eqn:M}
\end{align}
In \ref{eqn:f}, we can show that the total thrust is always positive. The first term in this equation is the projection into $b_{3}$ and while controller works perfectly we can guarantee that the angle between two terms in \ref{eqn:f} is always less than 90 degree and it happens when $R$ is close to $R_{c}$. Similar with \refeqn{eI}, an adaptive control law for the position tracking controller is defined as
\begin{align}\label{eqn:adaptivelawx}
\dot{\bar{\theta}}_{x}&=\begin{cases}
\gamma_{x}\mathds{W}_{x}^{T}(e_{v}+c_{1}e_{x}) & \hspace{-0.4cm}\mbox{if } \|\bar{\theta}_{x}\|<B_{\theta}\\
& \hspace{-0.4cm}\mbox{or } \|\bar{\theta}_{x}\|=B_{\theta}\\
& \hspace{-0.4cm}\mbox{and } \bar{\theta}_{x}^{T}\mathds{W}_{x}^{T}(e_{v}+c_{1}e_{x})\leq 0\\
&\\
\gamma_{x}(I-\frac{\bar{\theta}_{x}\bar{\theta}_{x}^{T}}{\bar{\theta}_{x}^{T}\bar{\theta}_{x}})\mathds{W}_{x}^{T}(e_{v}+c_{1}e_{x}) & \mbox{Otherwise}
\end{cases},
\end{align}
for a positive adaptive gain $\gamma_{x}$ and a weighting parameter $c_1$. The corresponding effective adaptive gain for $e_x$ corresponds to $c_1\gamma_x$. The above expression correspond adaptive controls with projection~\cite{Ioannou96} and it is included to restrict the effects of attitude tracking errors on the translational dynamics as shown in Figure~\ref{fig:acl}

\begin{figure}[h]
\centerline{
	\subfigure{
        \includegraphics[width=0.36\columnwidth]{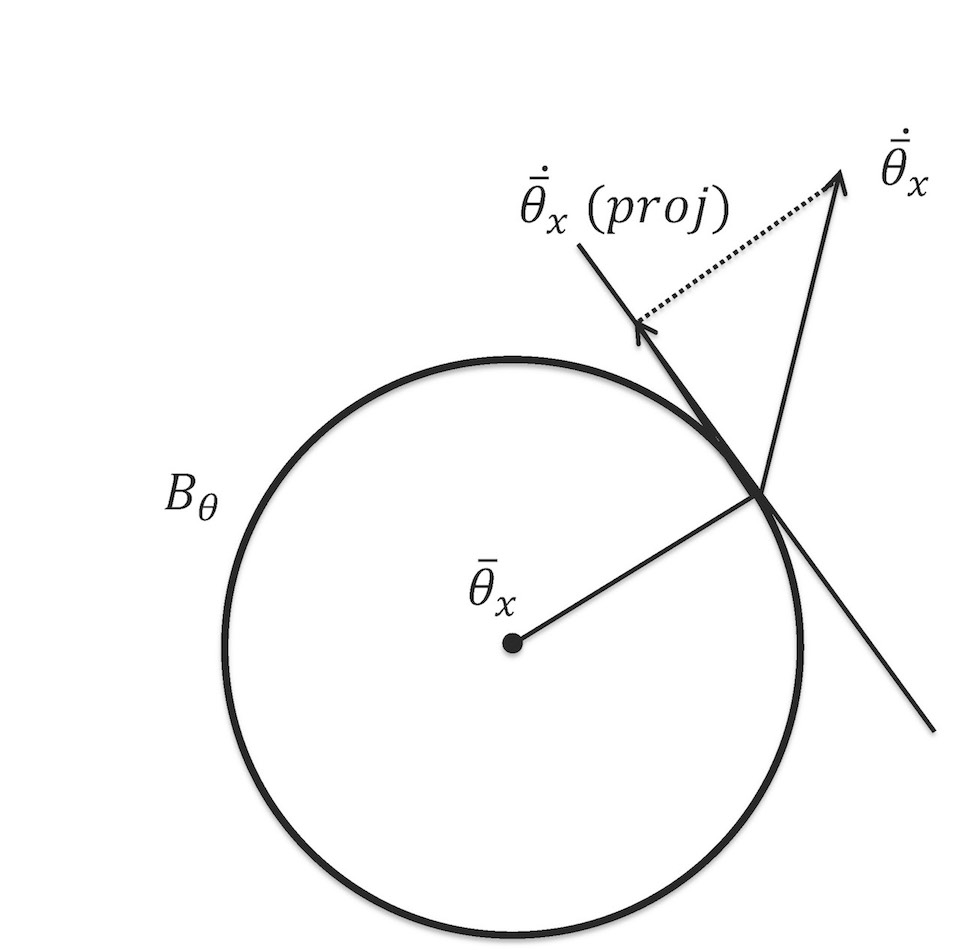}}
}
\caption{Adaptive control law}\label{fig:acl}
\end{figure}

The nonlinear controller given by equations \refeqn{f}, \refeqn{M} can be given a backstepping interpretation. The computed attitude $R_c$ given in equation \refeqn{RdWc} is selected so that the thrust axis $-b_3$ of the quadrotor UAV tracks the computed direction given by $-b_{3_c}$ in \refeqn{Rd3}, which is a direction of the thrust vector that achieves position tracking. The moment expression \refeqn{M} causes the attitude of the quadrotor UAV to asymptotically track $R_c$ and the thrust magnitude expression \refeqn{f} achieves asymptotic position tracking. 

\begin{prop}{(Position Controlled Flight Mode)}\label{prop:propchap2_3}
Suppose that the initial conditions satisfy
\begin{align}\label{eqn:Psi0}
\Psi(R(0),R_c(0)) < \psi_1 < 1,
\end{align}
for positive constant $\psi_1$. Consider the control inputs $f,M$ defined in \refeqn{f}-\refeqn{M}.
For positive constants $k_x,k_v$, we choose positive constants $c_1,c_2,k_R,k_\Omega$ such that
\begin{gather}
c_1 < \min\braces{\frac{4k_xk_v(1-\alpha)^2}{k_v^2(1+\alpha)^2+4m k_x(1-\alpha)},\; \sqrt{\frac{k_x}{m}} },\label{eqn:c1b}\\
\lambda_{m}(W_2) > \frac{\|W_{12}\|^2}{4\lambda_{m}(W_1)},\label{eqn:kRkWb}
\end{gather}
and \refeqn{c2} is satisfied, where $\alpha=\sqrt{\psi_1(2-\psi_1)}$, and the matrices $W_1,W_{12},W_2\in\Re^{2\times 2}$ are given by
\begin{align}
W_1 &= \begin{bmatrix} {c_1k_x}(1-\alpha) & -\frac{c_1k_v}{2}(1+\alpha)\\
-\frac{c_1k_v}{2}(1+\alpha) & k_v(1-\alpha)-mc_1\end{bmatrix},\label{eqn:W1}\\
W_{12}&=\begin{bmatrix}
{c_1}(B_{W_{x}} B_{\theta}+B_1) & 0 \\ B_{W_{x}} B_{\theta}+B_1+k_xe_{x_{\max}} & 0\end{bmatrix},\label{eqn:W12}\\
W_2 & = \begin{bmatrix} c_2k_R & -\frac{c_2}{2}(k_\Omega+B_2) \\ 
-\frac{c_2}{2}(k_\Omega+B_2) & k_\Omega-2c_2\lambda_M \end{bmatrix}.\label{eqn:W2}
\end{align}
This implies that the zero equilibrium of the tracking errors and the estimation errors is stable in the sense of Lyapunov and all of the tracking error variables asymptotically converge to zero. Also, the estimation errors are uniformly bounded.
\end{prop}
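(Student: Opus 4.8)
The plan is to follow the standard backstepping/Lyapunov approach on $\SE$, treating the translational and rotational error dynamics as an interconnected system with the attitude error acting as a perturbation on the translational channel. First I would substitute the control laws \refeqn{f}--\refeqn{M} into the equations of motion \refeqn{EL1}--\refeqn{EL4} to obtain the closed-loop error dynamics. Writing $A=-k_xe_x-k_ve_v-\mathds{W}_{x}\bar\theta_{x}-mge_3+m\ddot x_d$, so that $b_{3_c}=-A/\norm{A}$ and $f=-A\cdot Re_3$ from \refeqn{Rd3}--\refeqn{f}, the velocity error satisfies $m\dot e_v=-k_xe_x-k_ve_v+\mathds{W}_{x}\tilde\theta_{x}+X$, where $\tilde\theta_{x}=\theta_{x}-\bar\theta_{x}$ and $X=(A\cdot Re_3)Re_3-A$ is the thrust-direction mismatch caused by the attitude error. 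The central estimate here is $\norm{X}\le\alpha\norm{A}\le\alpha(k_x\norm{e_x}+k_v\norm{e_v}+B_{W_{x}}B_{\theta}+B_1)$, valid as long as $\Psi(R,R_c)<\psi_1$, which is exactly where \refeqn{B} and \refeqn{Psi0} with $\alpha=\sqrt{\psi_1(2-\psi_1)}$ come in. For the rotational channel, using $R_c,\Omega_c$ in place of $R_d,\Omega_d$, Proposition~\ref{prop:propchap1_first} and the derivation underlying Proposition~\ref{prop:propchap2_2} give $J\dot e_\Omega=-k_Re_R-k_\Omega e_\Omega+\mathds{W}_{R}\tilde\theta_{R}+E$, with $E$ a gyroscopic remainder bounded via $\norm{(2J-\trs{J}I)}\,\norm{\Omega_c}\le B_2$.

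Next I would take the Lyapunov candidate
\[
\mathcal{V}=\tfrac12 k_x\norm{e_x}^2+\tfrac12 m\norm{e_v}^2+c_1 m\,e_x\cdot e_v+k_R\Psi(R,R_c)+\tfrac12 e_\Omega\cdot Je_\Omega+c_2 e_R\cdot Je_\Omega+\tfrac1{2\gamma_{x}}\norm{\tilde\theta_{x}}^2+\tfrac1{2\gamma_{R}}\norm{\tilde\theta_{R}}^2,
\]
and use Proposition~\ref{prop:propchap1_first}(iv)--(v) to sandwich $k_R\Psi$ between multiples of $\norm{e_R}^2$. Together with $c_1<\sqrt{k_x/m}$ and $c_2<\sqrt{k_R\lambda_m}/\lambda_M$ from \refeqn{c2}, this shows $\mathcal{V}$ is positive definite in $(e_x,e_v,e_R,e_\Omega,\tilde\theta_{x},\tilde\theta_{R})$ on the region $\Psi<\psi_1$, which also yields the claimed Lyapunov stability of the zero equilibrium once $\dot{\mathcal V}\le 0$ is established.

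Then I would differentiate $\mathcal{V}$ along the closed-loop trajectories. The terms $\tfrac1{\gamma_{x}}\tilde\theta_{x}\cdot\dot{\tilde\theta}_{x}$ and $\tfrac1{\gamma_{R}}\tilde\theta_{R}\cdot\dot{\tilde\theta}_{R}$ are designed so that the adaptive laws \refeqn{eI}, \refeqn{adaptivelawx} exactly cancel the $\mathds{W}_{x}\tilde\theta_{x}$ and $\mathds{W}_{R}\tilde\theta_{R}$ contributions to $\dot{\mathcal V}$; when the projection branch in \refeqn{adaptivelawx} is active, the additional term only makes $\dot{\mathcal V}$ more negative, by the standard parameter-projection inequality ($\bar\theta_{x}^{T}\tilde\theta_{x}\le0$ on $\norm{\bar\theta_{x}}=B_{\theta}$ with $\norm{\theta_{x}}\le B_{\theta}$, the branch being triggered only when $\bar\theta_{x}^{T}\mathds{W}_{x}^{T}(e_v+c_1e_x)>0$). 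What remains I would bound, using the estimates on $\norm{X}$ and $\norm{E}$, by $\dot{\mathcal{V}}\le -z_1^{T}W_1 z_1+z_1^{T}W_{12}z_2-z_2^{T}W_2 z_2$ with $z_1=[\norm{e_x};\norm{e_v}]$, $z_2=[\norm{e_R};\norm{e_\Omega}]$, and $W_1,W_{12},W_2$ precisely as in \refeqn{W1}--\refeqn{W2} (the $k_xe_{x_{\max}}$ entry of $W_{12}$ coming from linearizing the $\norm{e_x}$-dependence of the $X$-bound on the sublevel set where $\norm{e_x}\le e_{x_{\max}}$). Conditions \refeqn{c1b} and \refeqn{kRkWb} then force $W_1$ and $W_2$ to be positive definite and the combined $4\times4$ block matrix negative definite by a Schur-complement argument, so $\dot{\mathcal V}\le0$, vanishing only at $e_x=e_v=e_R=e_\Omega=0$. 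Finally, $\mathcal{V}$ non-increasing gives uniform boundedness of all errors and of $\tilde\theta_{x},\tilde\theta_{R}$; in particular $\Psi(R(t),R_c(t))<\psi_1$ and $\norm{e_x}\le e_{x_{\max}}$ persist, closing the bootstrap, and $\Omega_c,\dot\Omega_c$ stay bounded. Integrating the quadratic bound shows $(e_x,e_v,e_R,e_\Omega)\in L^2$, and since these and their derivatives are bounded, Barbalat's lemma gives $e_x,e_v,e_R,e_\Omega\to0$.

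I expect the main obstacle to be the interlocking a priori bounds: the $X$-estimate requires $\Psi<\psi_1$ and $\norm{e_x}\le e_{x_{\max}}$, while the rotational remainder $E$ requires $\Omega_c$ bounded, which itself needs the translational errors bounded — so the ``$\dot{\mathcal V}\le0$ hence errors bounded'' conclusion must be organized as an invariance/bootstrap argument over the sublevel sets of $\mathcal V$ rather than invoked globally, and one must check that \refeqn{c1b}, \refeqn{kRkWb}, \refeqn{c2} are simultaneously satisfiable. Extracting the coupling bound sharply enough that the cross-block is exactly $W_{12}$ in \refeqn{W12}, and the careful bookkeeping of the projection term, are the most delicate parts of the computation.
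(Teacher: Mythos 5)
Your proposal is correct and follows essentially the same route as the paper's proof: the same add-and-subtract trick producing the thrust-mismatch term $X$ with the bound $\norm{X}\le\alpha(k_x\norm{e_x}+k_v\norm{e_v}+B_{W_x}B_\theta+B_1)$, the same combined Lyapunov candidate with cross terms and parameter-estimation errors, the same projection argument for the adaptive law, the same $e_{x_{\max}}$ treatment of the third-order coupling term, and the same reduction to $\dot{\mathcal V}\le -z_1^TW_1z_1+z_1^TW_{12}z_2-z_2^TW_2z_2$ with \refeqn{kRkWb} as the positivity condition for the resulting $2\times 2$ matrix of norms. The only cosmetic differences are your sign convention on $X$, your use of Barbalat's lemma where the paper invokes LaSalle--Yoshizawa, and your (welcome, and sharper than the paper's) explicit attention to the invariance of the domain $D_1$.
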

\begin{proof}
See Appendix \ref{sec:pfchap2_3}.
\end{proof}

Proposition \ref{prop:propchap2_3} requires that the initial attitude error is less than $90^\circ$ in \refeqn{Psi0}. Suppose that this is not satisfied, i.e. $1\leq\Psi(R(0),R_c(0))<2$. We can still apply Proposition \ref{prop:propchap2_2}, which states that the attitude error is asymptotically decreases to zero for almost all cases, and it satisfies \refeqn{Psi0} in a finite time. Therefore, by combining the results of Proposition \ref{prop:propchap2_2} and \ref{prop:propchap2_3}, we can show attractiveness of the tracking errors when $\Psi(R(0),R_c(0))<2$.

\begin{prop}{(Position Controlled Flight Mode with a Larger Initial Attitude Error)}\label{prop:propchap2_4}
Suppose that the initial conditions satisfy
\begin{gather}
1\leq \Psi(R(0),R_c(0)) < 2\label{eqn:eRb3},\quad \|e_x(0)\| < e_{x_{\max}},
\end{gather}
for a constant $e_{x_{\max}}$. Consider the control inputs $f,M$ defined in \refeqn{f}-\refeqn{M}, where the control parameters satisfy \refeqn{Psi0}-\refeqn{kRkWb} for a positive constant $\psi_1<1$. Then the zero equilibrium of the tracking errors is attractive, i.e., $e_x,e_v,e_R,e_\Omega\rightarrow 0$ as $t\rightarrow\infty$. 
\end{prop}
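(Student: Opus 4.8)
The plan is to split the trajectory into two phases and stitch together Proposition~\ref{prop:propchap2_2} (attitude controlled mode) and Proposition~\ref{prop:propchap2_3} (position controlled mode), exactly as anticipated in the remark preceding the statement. In Phase~I, while $\Psi(R(t),R_c(t))\ge \psi_1$, I would show that the attitude error is driven below $\psi_1$ in finite time $T$. In Phase~II, I would re-initialize the clock at $t=T$, where now $\Psi(R(T),R_c(T))<\psi_1<1$, and invoke Proposition~\ref{prop:propchap2_3} with initial time $T$ to obtain $e_x,e_v,e_R,e_\Omega\to 0$. Appending the (bounded) behaviour on $[0,T]$ then upgrades this to attractiveness of the zero equilibrium over the larger initial set $1\le\Psi(R(0),R_c(0))<2$.

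For Phase~I the key observation is that the moment law \refeqn{M} has exactly the form of the attitude controller \refeqn{aM}, with the prescribed command $(R_d,\Omega_d)$ replaced by the computed command $(R_c,\Omega_c)$ of \refeqn{RdWc}, and that the $(e_R,e_\Omega)$ error dynamics together with the adaptive law \refeqn{eI} do not involve the thrust magnitude $f$ at all. Hence, as soon as $(R_c,\Omega_c,\dot\Omega_c)$ are well defined and uniformly bounded, the Lyapunov argument behind Proposition~\ref{prop:propchap2_2} applies verbatim and yields $\Psi(R(t),R_c(t))\to 0$ for all initial data off the measure-zero stable manifold of the nontrivial critical points $\{R_c\exp(\pi\hat s)\}$; in particular a finite $T$ with $\Psi(R(T),R_c(T))<\psi_1$ exists. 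So the real work in Phase~I is bounding $R_c,\Omega_c,\dot\Omega_c$ on $[0,T]$: since these are built from $b_{3_c}$ and its first two derivatives via \refeqn{Rd3}--\refeqn{RdWc}, and $b_{3_c}$ depends on $e_x,e_v,\bar\theta_x$ and the reference $x_d$ and its derivatives, it suffices to bound $e_x,e_v$ (the projection adaptive law \refeqn{adaptivelawx} already gives $\|\bar\theta_x\|\le B_{\theta}$, and with $\|\mathds{W}_x\|\le B_{W_x}$ a bound on $\dot{\bar\theta}_x$). I would get this from the translational error equation: substituting \refeqn{f} into \refeqn{EL2} gives $\dot e_x=e_v$ and $m\dot e_v=-k_xe_x-k_ve_v+\mathds{W}_x\tilde\theta_x+X$, with $X=F-(F\cdot Re_3)Re_3$ and $F=k_xe_x+k_ve_v+\mathds{W}_x\bar\theta_x+mge_3-m\ddot x_d$, so that $\|X\|\le 2\|F\|\le 2(k_x\|e_x\|+k_v\|e_v\|+B_{W_x}B_{\theta}+B_1)$. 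This is a linear growth bound, and a Gr\"onwall estimate shows $\|e_x(t)\|,\|e_v(t)\|$ stay finite on every bounded interval, with a bound depending only on $\|e_x(0)\|<e_{x_{\max}}$, $\|e_v(0)\|$, and $t$. Consequently the denominator in \refeqn{Rd3} stays bounded (away from zero under the standing nondegeneracy of the commanded thrust), so $R_c$ is well defined and $\Omega_c,\dot\Omega_c$ are bounded, which closes Phase~I.

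For Phase~II I would restart at $t=T$: by construction $\Psi(R(T),R_c(T))<\psi_1<1$, and the Gr\"onwall bound keeps $\|e_x(T)\|$ within the a priori size $e_{x_{\max}}$ that the gains in \refeqn{kRkWb} are tuned to dominate. Proposition~\ref{prop:propchap2_3}, applied with initial time $T$, then gives stability in the sense of Lyapunov and $e_x,e_v,e_R,e_\Omega\to 0$, which — together with the bounded transient on $[0,T]$ — is exactly the attractiveness claimed for initial conditions satisfying \refeqn{eRb3}.

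The main obstacle is the mutual dependence inside Phase~I: boundedness of the computed command $(R_c,\Omega_c,\dot\Omega_c)$ is what legitimizes the attitude Lyapunov analysis (and hence the finiteness of $T$), yet that boundedness is itself obtained from bounds on $e_x,e_v$ that only hold because the exit time $T$ is finite. I would break this circularity by a maximal-interval/bootstrap argument — work on the maximal interval on which all signals are finite, and use the Gr\"onwall estimate together with the strict Lyapunov decrease from Proposition~\ref{prop:propchap2_2} to show that this interval is all of $[0,\infty)$ and that $\Psi$ must cross $\psi_1$ in finite time. Some care is needed to lower-bound the decrease rate of $\Psi$ uniformly on the compact region $\{\psi_1\le\Psi\le 2\}$ minus a neighbourhood of the bad critical set, so that the exit time, and hence the transient growth of $\|e_x\|$, is controlled; the measure-zero stable manifold of the critical points with $\Psi=2$ is precisely why the conclusion is attractiveness rather than asymptotic stability.
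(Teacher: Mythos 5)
Your proposal follows essentially the same route as the paper's own proof: concatenate Proposition~\ref{prop:propchap2_2} (the attitude errors decrease until $\Psi$ enters the region $\Psi<\psi_1$ at some finite time $t^*$) with Proposition~\ref{prop:propchap2_3} applied from $t^*$ onward, and verify that $z_1=[\|e_x\|,\|e_v\|]^T$ remains bounded on $[0,t^*]$. The only difference is that the paper defers that boundedness step to a cited reference, whereas you supply it explicitly via the Gr\"onwall estimate on the translational error dynamics together with a bootstrap argument to break the circular dependence between boundedness of $(R_c,\Omega_c,\dot\Omega_c)$ and of $(e_x,e_v)$ --- which is precisely the part the paper's proof leaves implicit.
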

\begin{proof}
See Appendix \ref{sec:pfchap2_4}.
\end{proof}

Linear or nonlinear PID and adaptive controllers have been widely used for a quadrotor UAV. But, they have been applied in an \textit{ad-hoc} manner without stability analysis. This paper provides a new form of nonlinear adaptive controller on $\SE$ that guarantees almost global attractiveness in the presence of uncertainties. 

Compared with other PID controllers requiring that uncertain terms should be fixed, the proposed adaptive control system can handle more general class of uncertainties. The nonlinear robust tracking control system in~\cite{LeeLeoPACC12,LeeLeoAJC13} provides ultimate boundedness of tracking errors, and the control input may be prone to chattering if the required ultimate bound is smaller. Compared with~\cite{LeeLeoAJC13}, the control system in this paper guarantees stronger asymptotic stability, and there is no concern for chattering.


\subsubsection {\normalsize Direction of the First Body-Fixed Axis}\label{sec:b1c}
As described above, the construction of the orthogonal matrix $R_c$ involves having its third column $b_{3_c}$ specified by \refeqn{Rd3}, and its first column $b_{1_c}$ is arbitrarily chosen to be orthogonal to the third column, which corresponds to a one-dimensional degree of choice. 

By choosing $b_{1_c}$ properly, we constrain the asymptotic direction of the first body-fixed axis. Here, we propose to specify the \textit{projection} of the first body-fixed axis onto the plane normal to $b_{3_c}$. In particular, we choose a desired direction $b_{1_d}\in\Sph^2$, that is not parallel to $b_{3_c}$, and $b_{1_c}$ is selected as $b_{1_c}=\mathrm{Proj}[b_{1_d}]$, where $\mathrm{Proj}[\cdot]$ denotes the normalized projection onto the plane perpendicular to $b_{3_c}$. In this case, the first body-fixed axis does not converge to $b_{1_d}$, but it converges to the projection of $b_{1_d}$, i.e. $b_1\rightarrow b_{1_c}=\mathrm{Proj}[b_{1_d}]$ as $t\rightarrow\infty$. This can be used to specify the heading direction of a quadrotor UAV in the horizontal plane~\cite{LeeLeoAJC13}.

}


\subsection {\normalsize Numerical Example}\label{sec:chap2numerical}
{\addtolength{\leftskip}{0.5in}
An aggressive flipping maneuver is considered for a numerical simulation. The quadrotor parameters are chosen as
\begin{gather*}
J =
\begin{bmatrix}
    5.5711 & 0.0618 & -0.0251\\
    0.06177 & 5.5757 & 0.0101\\
    -0.02502 & 0.01007 & 1.05053
\end{bmatrix}\times 10^{-2}\,\mathrm{kgm}^2 ,\\ m = 0.755\,\mathrm{kg},\quad
d = 0.169\,\mathrm{m},\quad c_{\tau f} = 0.1056.
 \end{gather*}
Also, controller parameters are selected as follows: $k_x=6.0$, $k_v=3.0$, $k_R=0.7$, $k_\Omega = 0.12$, $c_1=0.1$, $c_2=0.1$.

In this simulation, initial state of the quadrotor UAV is at a hovering condition: $x(0)=v(0)=\Omega(0)=0_{3\times 1}$, and $R(0)=I_{3\times 3}$. The desired trajectory is a flipping maneuver where the quadrotor rotates about rotation axis $e_r = [\frac{\sqrt{2}}{2}, \frac{\sqrt{2}}{2}, 0]$ by $360^\circ$. This is a complex maneuver combining a nontrivial pitching maneuver with a yawing motion. It is achieved by concatenating the following two control modes:
\begin{itemize}
\item[(i)] Attitude tracking to rotate the quadrotor ($t\leq 0.375$)
\begin{align*}
&R_d(t)= I+\sin(4 \pi t)\hat{e}_r+(1-\cos(4 \pi t))(e_r e_r^T-I),\; \Omega_d= 4\pi\cdot e_r.
\end{align*}
\item[(ii)] Trajectory tracking to make it hover after completing the preceding rotation ($0.375<t \leq 2$)
\begin{align*}
x_d (t)=[0,0,0]^T,\quad b_{1_d} = [1,0,0]^T.
\end{align*}
\end{itemize}

We considered two cases for this numerical simulation to compare the effect of the proposed adaptive term in the presence of disturbances. In this numerical simulation we considered a special case of $W_{x}=I_{3\times 3}$ and $W_{R}=I_{3\times 3}$. Two cases are as follows: (i) with adaptive term and (ii) without the adaptive term, where constant disturbances are defined as
\begin{align*}
\theta_R=[0.03,-0.06,0.09]^T,\quad \theta_x=[0.25,0.125,0.2]^T
\end{align*}

The switching time is determined as follows. The desired angular velocity of the first attitude tracking mode is $4\pi$, which means it requires $0.5\;\mathrm{sec}$ for one revolution but the control mode is switched to trajectory tracking mode at $t=0.375\;\mathrm{sec}$ to compensate rotational inertia. It is also assumed that the maximum thrust at each rotor is given by $f_{i_{\max}}=3.2\,\mathrm{N}$, and any thrust command above the maximum thrust is saturated to represent the actual motor limitation in the numerical simulation.
\begin{figure}[h]
\setlength{\unitlength}{0.08\columnwidth}\scriptsize
\centerline{
\begin{picture}(9.9,4.5)(0,0)
	\put(0,0){\includegraphics[width=0.8\columnwidth]{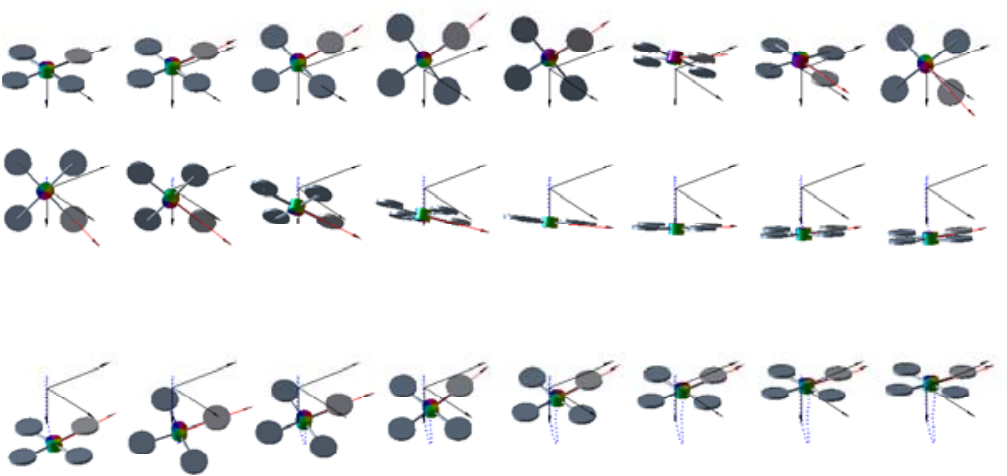}}
	\put(1.00,4.4){$\vec e_1$}
	\put(1.0,3.5){$\vec e_2$}
	\put(0.4,3.35){$\vec e_3$}
\end{picture}}
\caption{Snapshots of a flipping maneuver: the red line denotes the rotation axis $e_r=[\frac{1}{\sqrt{2}},\; \frac{1}{\sqrt{2}},\;0]$. The quadrotor UAV rotates about the $e_r$ axis by $360^\circ$. The trajectory of its mass center is denoted by blue, dotted lines.}\label{fig:Int3D}
\end{figure}
\begin{figure}
\centerline{
	\subfigure[Attitude error function $\Psi$]{
		\includegraphics[width=0.3\columnwidth]{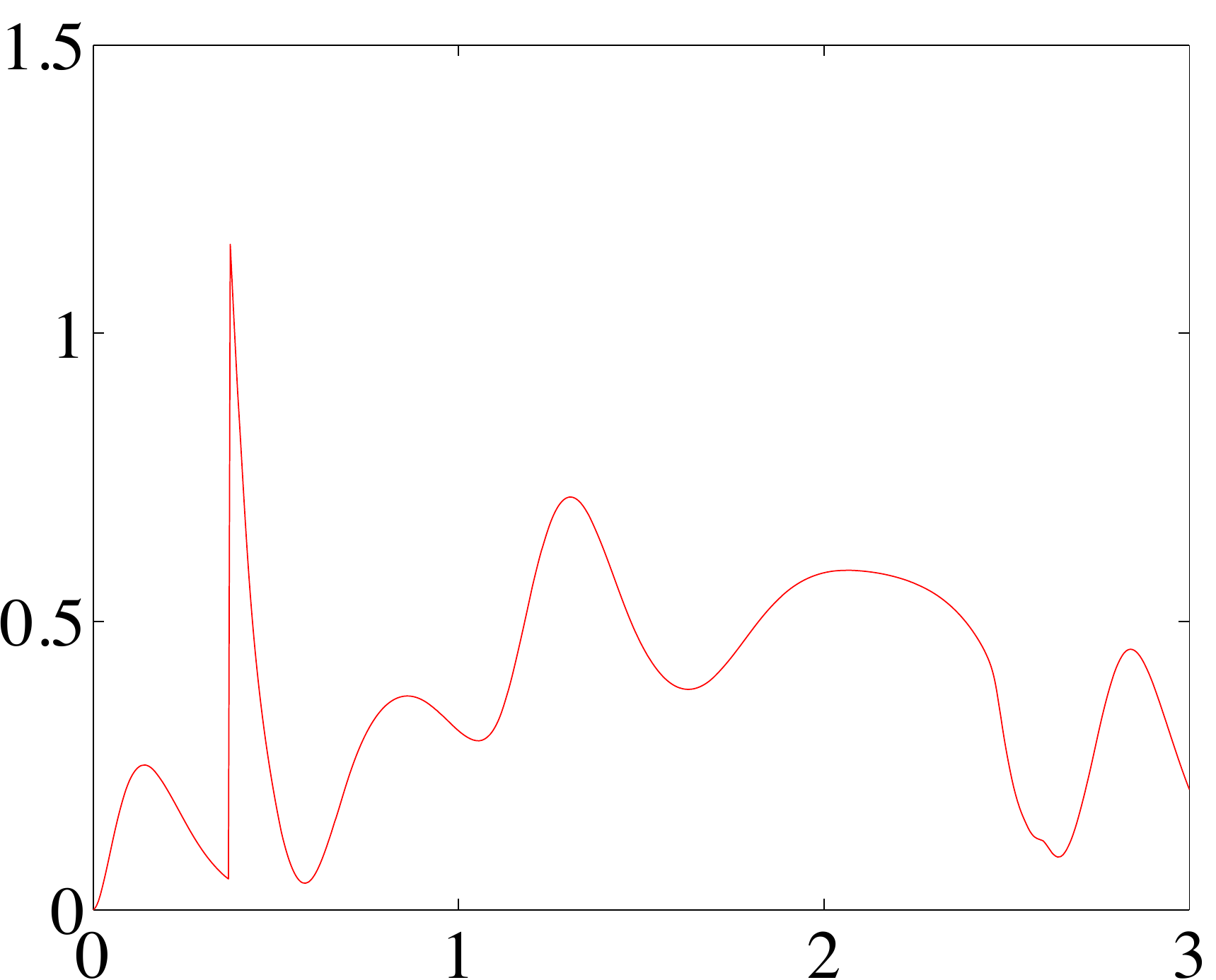}\label{fig:sim_error_WO}}
	\subfigure[Thrust at each rotor $f_i$ ($\mathrm{N}$)]{
		\includegraphics[width=0.3\columnwidth]{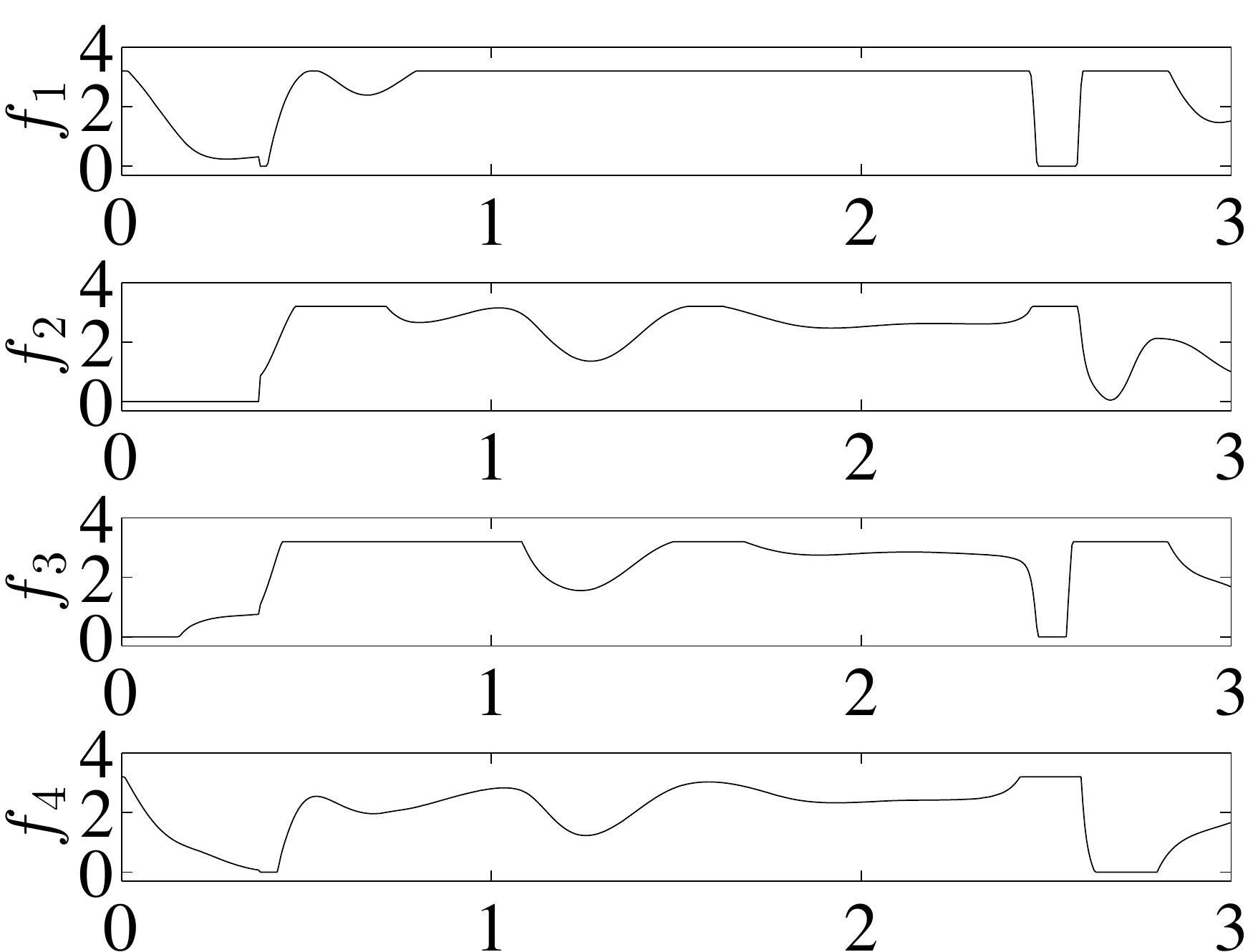}\label{fig:sim_force_WO}}
		\subfigure[Attitude error $e_{R}$ ($\mathrm{rad}$)]{
		\includegraphics[width=0.3\columnwidth]{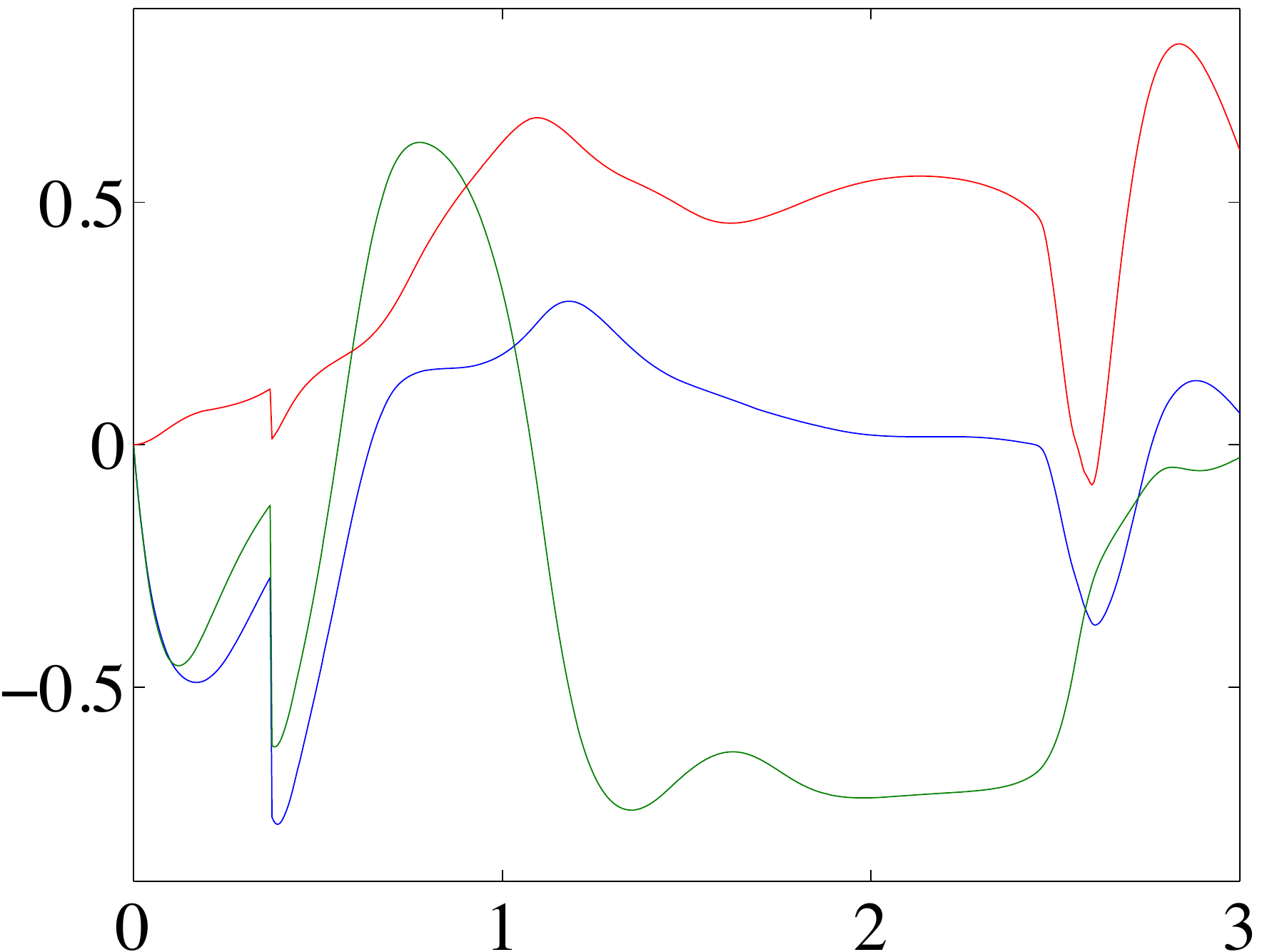}\label{fig:sim_eR_WO}}
}
\centerline{
	\subfigure[Angular Velocity error $e_{\Omega}$($\mathrm{rad}/\mathrm{sec}$)]{
		\includegraphics[width=0.3\columnwidth]{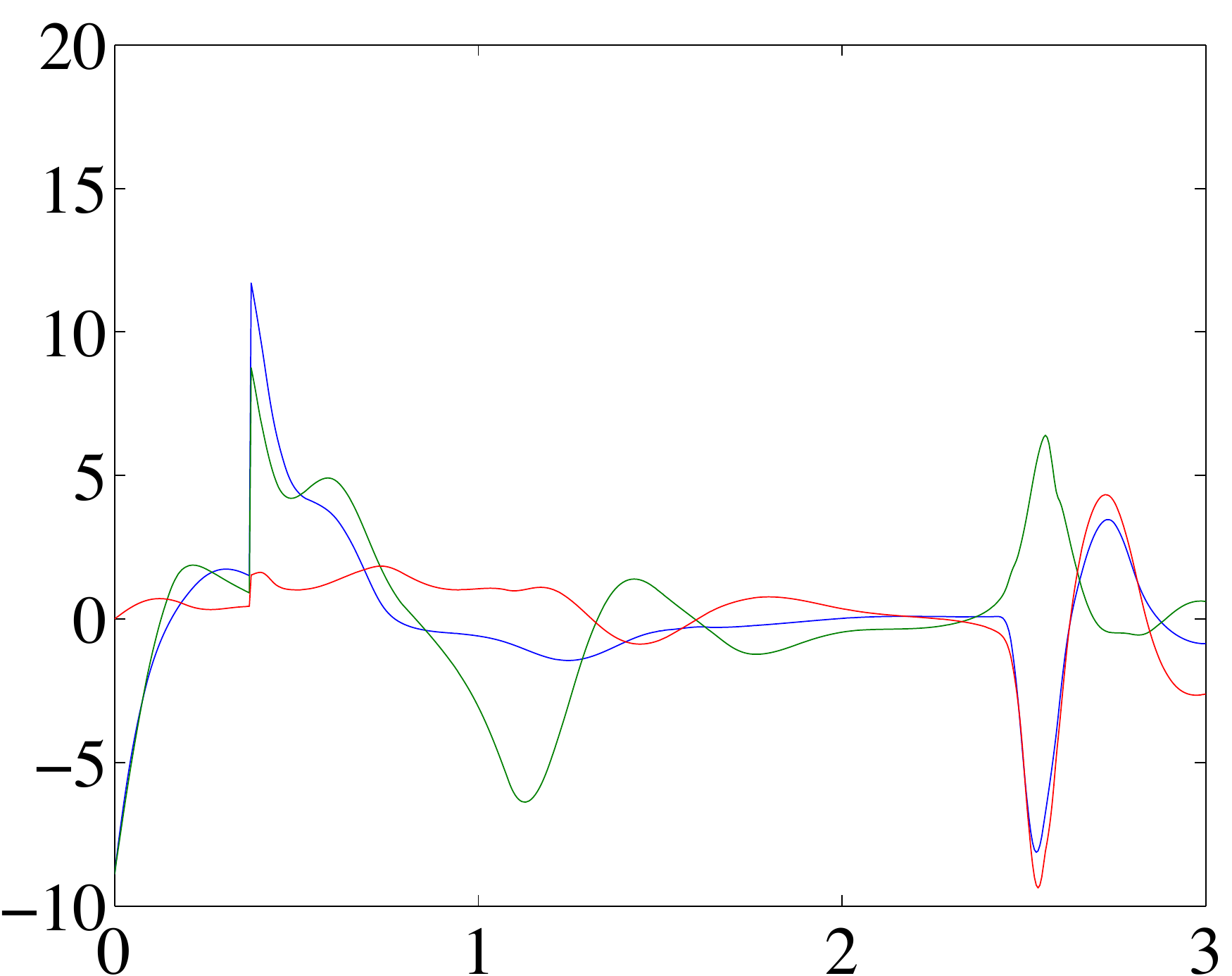}\label{fig:sim_eW_WO}}
		\subfigure[Angular Velocity $\Omega$,$\Omega_{d}$ ($\mathrm{rad/s}$)]{
		\includegraphics[width=0.3\columnwidth]{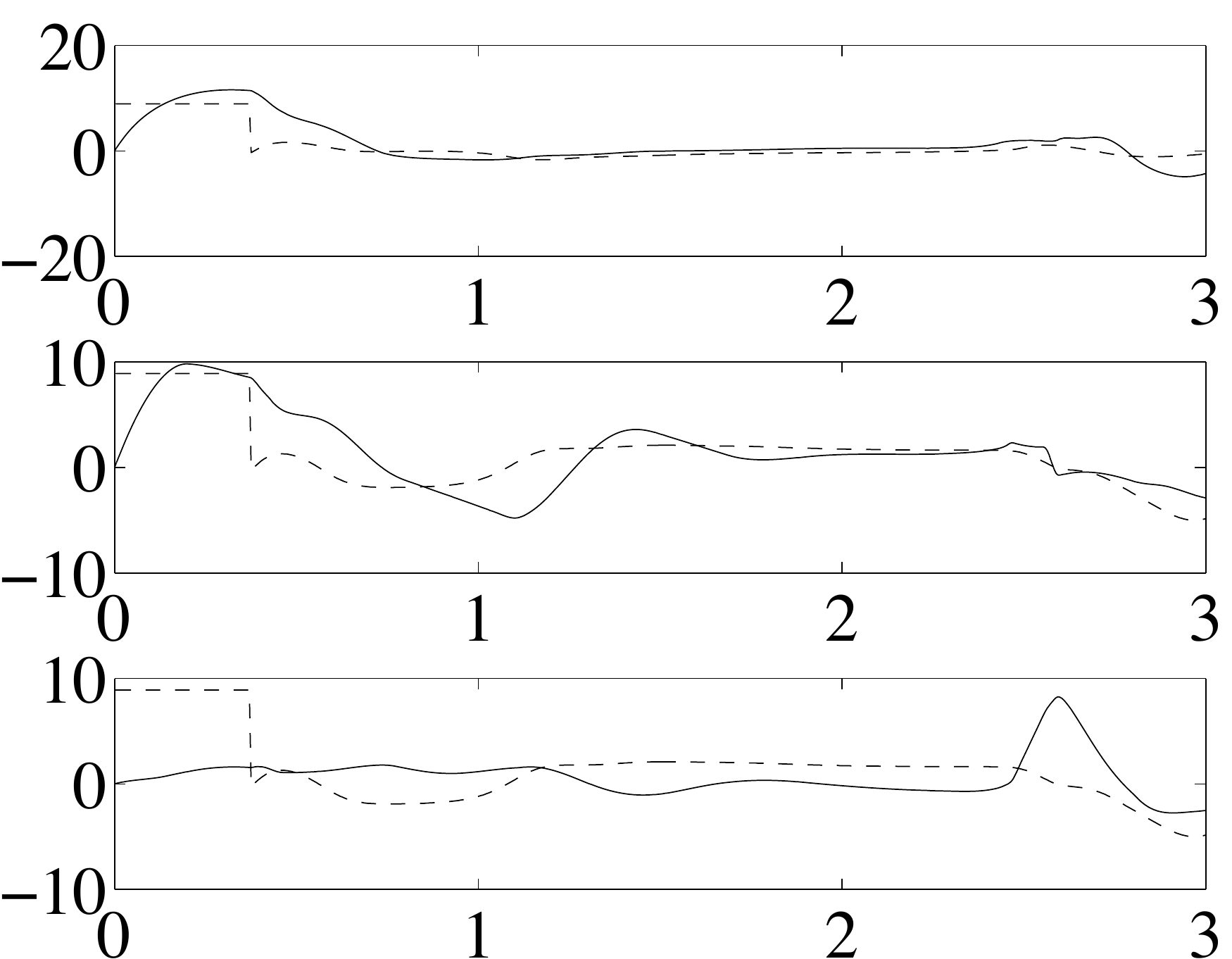}\label{fig:sim_ang_WO}}
	\subfigure[Position $x$,$x_{d}$($\mathrm{m}$)]{
		\includegraphics[width=0.3\columnwidth]{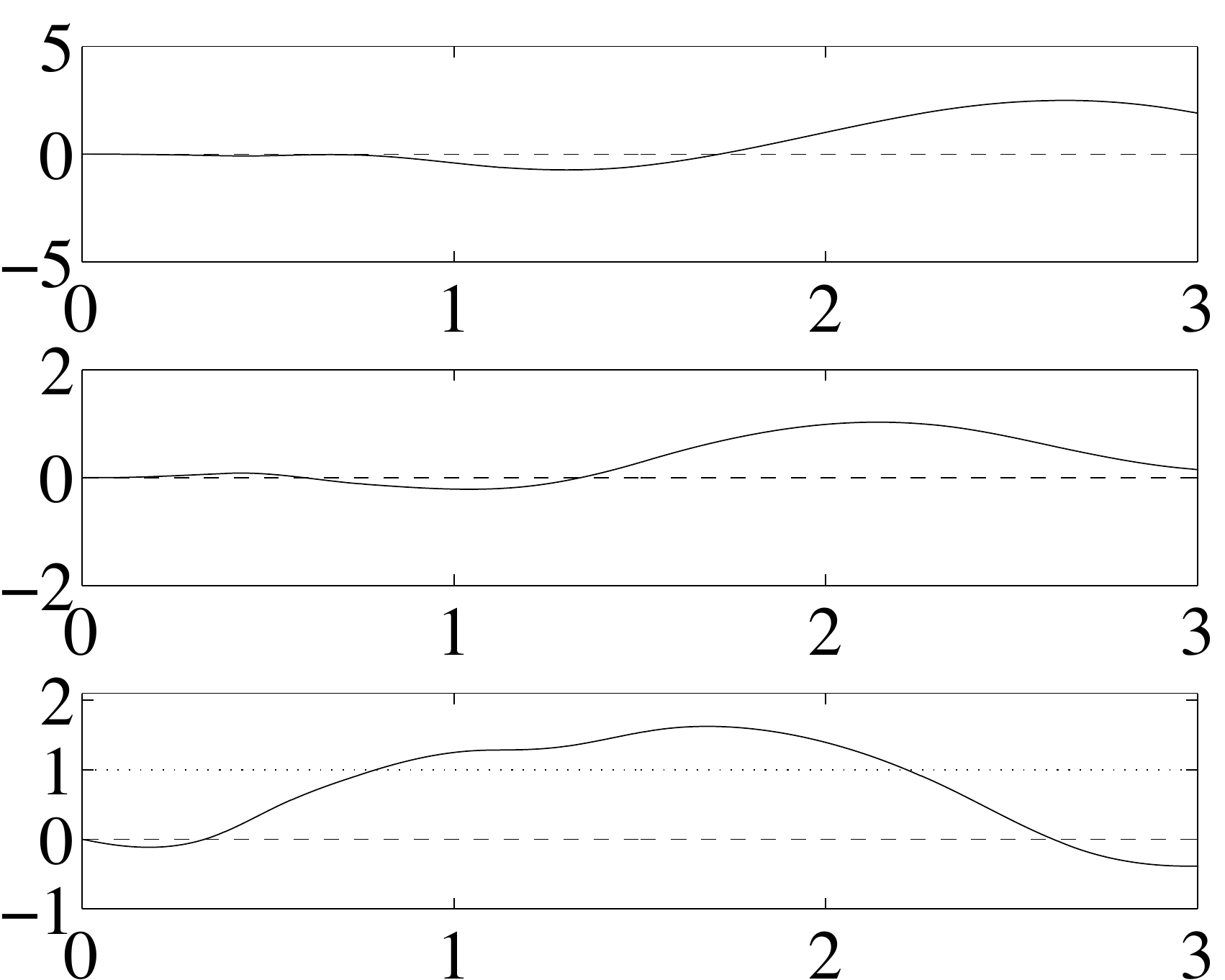}\label{fig:sim_x_WO}}
}
\centerline{
}
\caption{Flipping without adaptive term (dotted:desired, solid:actual)}\label{fig:sim_WO}
\end{figure}
\begin{figure}
\centerline{
	\subfigure[Attitude error function $\Psi$]{
		\includegraphics[width=0.3\columnwidth]{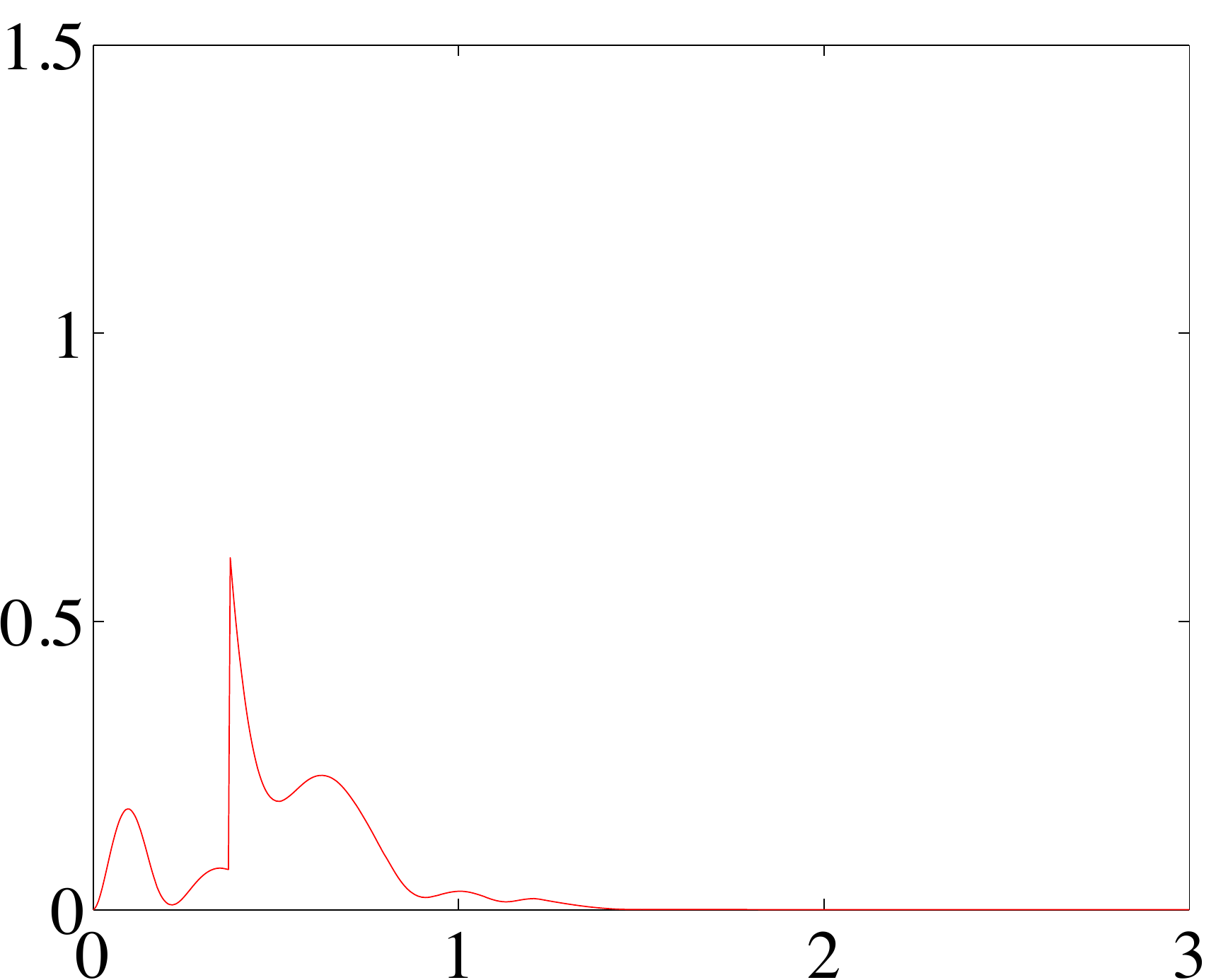}\label{fig:sim_error_W}}
	\subfigure[Thrust at each rotor $f_i$ ($\mathrm{N}$)]{
		\includegraphics[width=0.3\columnwidth]{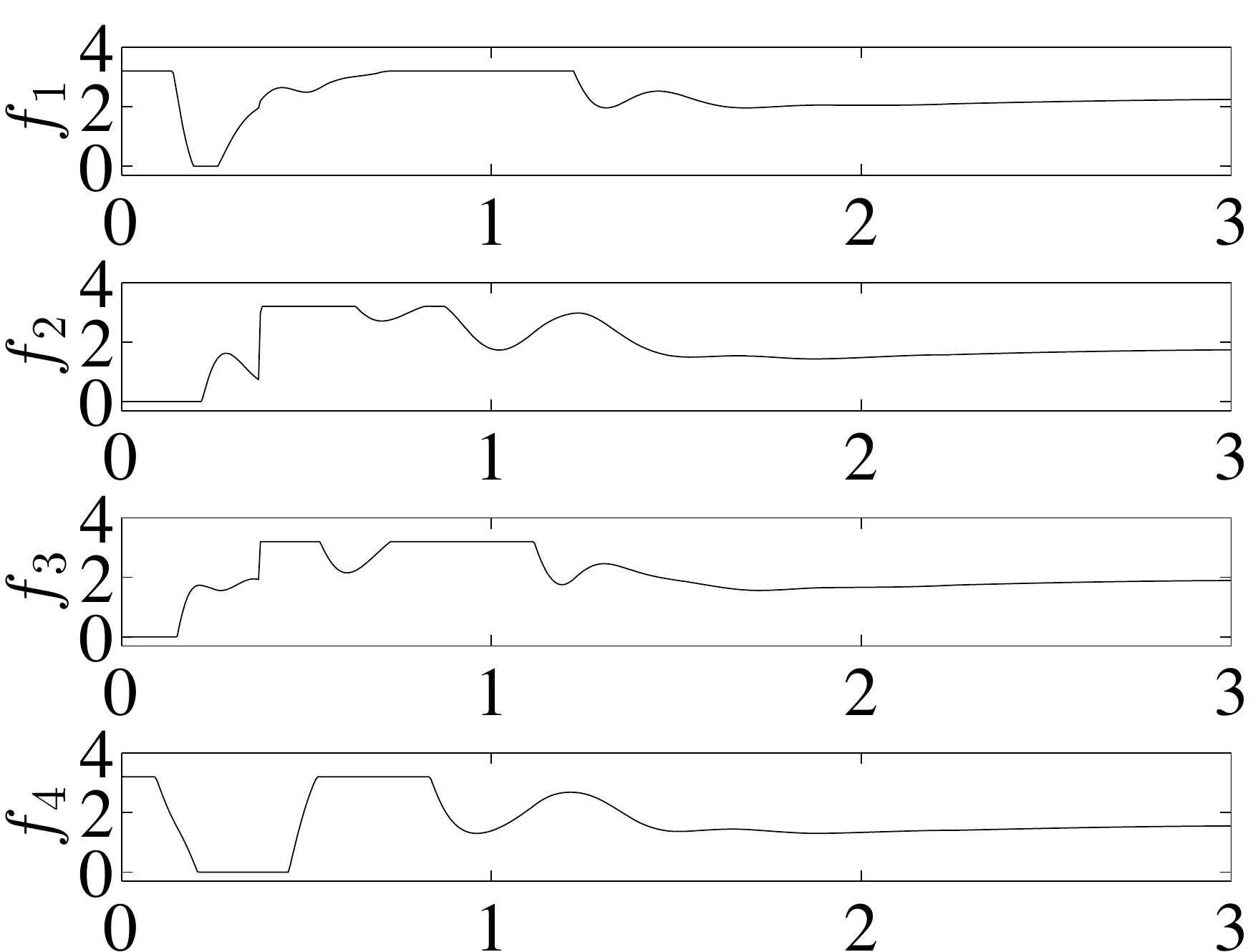}\label{fig:sim_force_W}}
		\subfigure[Attitude error $e_{R}$ ($\mathrm{rad}$)]{
		\includegraphics[width=0.3\columnwidth]{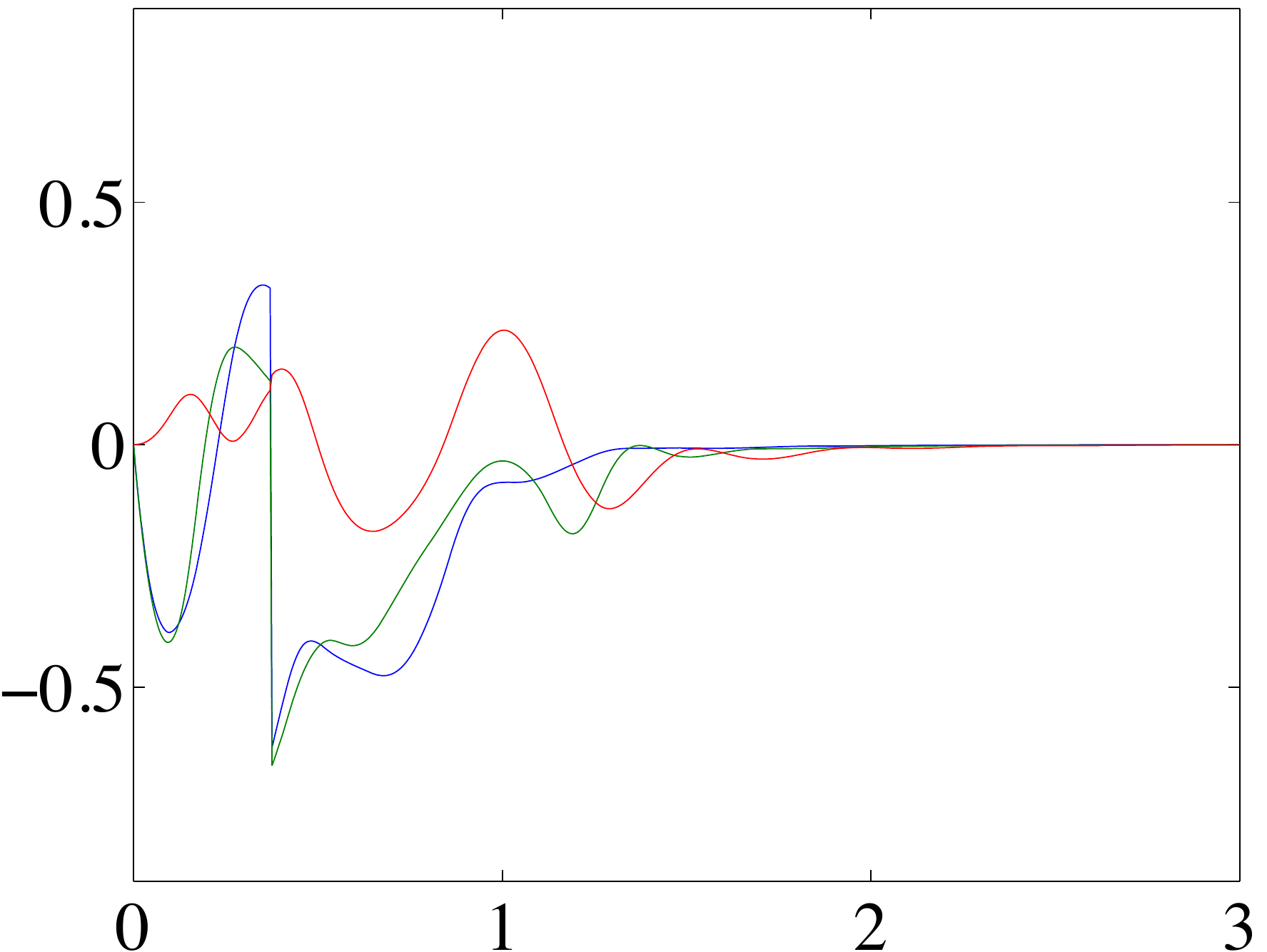}\label{fig:sim_eR_W}}
}
\centerline{
	\subfigure[Angular Velocity error $e_{\Omega}$($\mathrm{rad}/\mathrm{sec}$)]{
		\includegraphics[width=0.3\columnwidth]{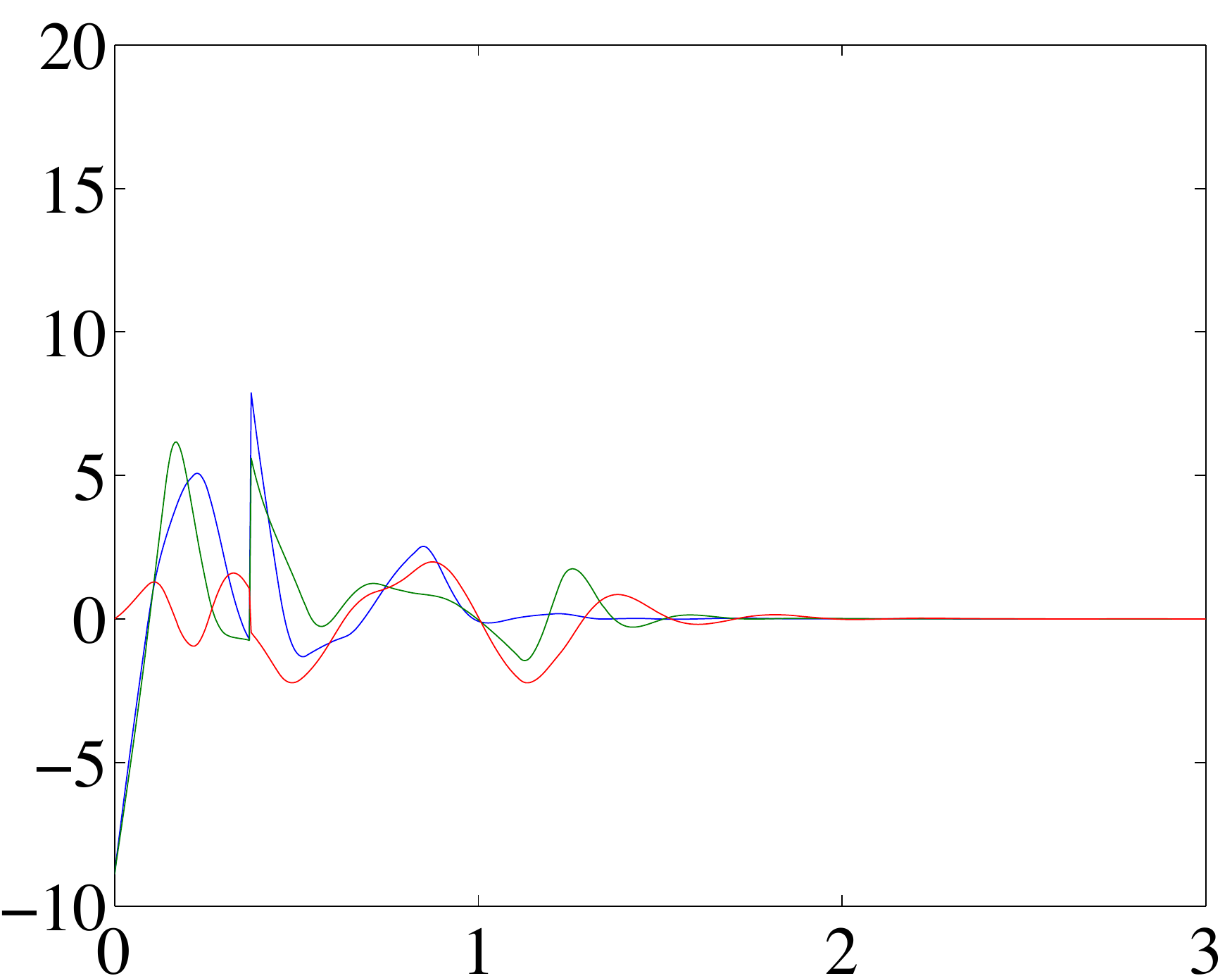}\label{fig:sim_eW_W}}
		\subfigure[Angular Velocity $\Omega$,$\Omega_{d}$ ($\mathrm{rad/s}$)]{
		\includegraphics[width=0.3\columnwidth]{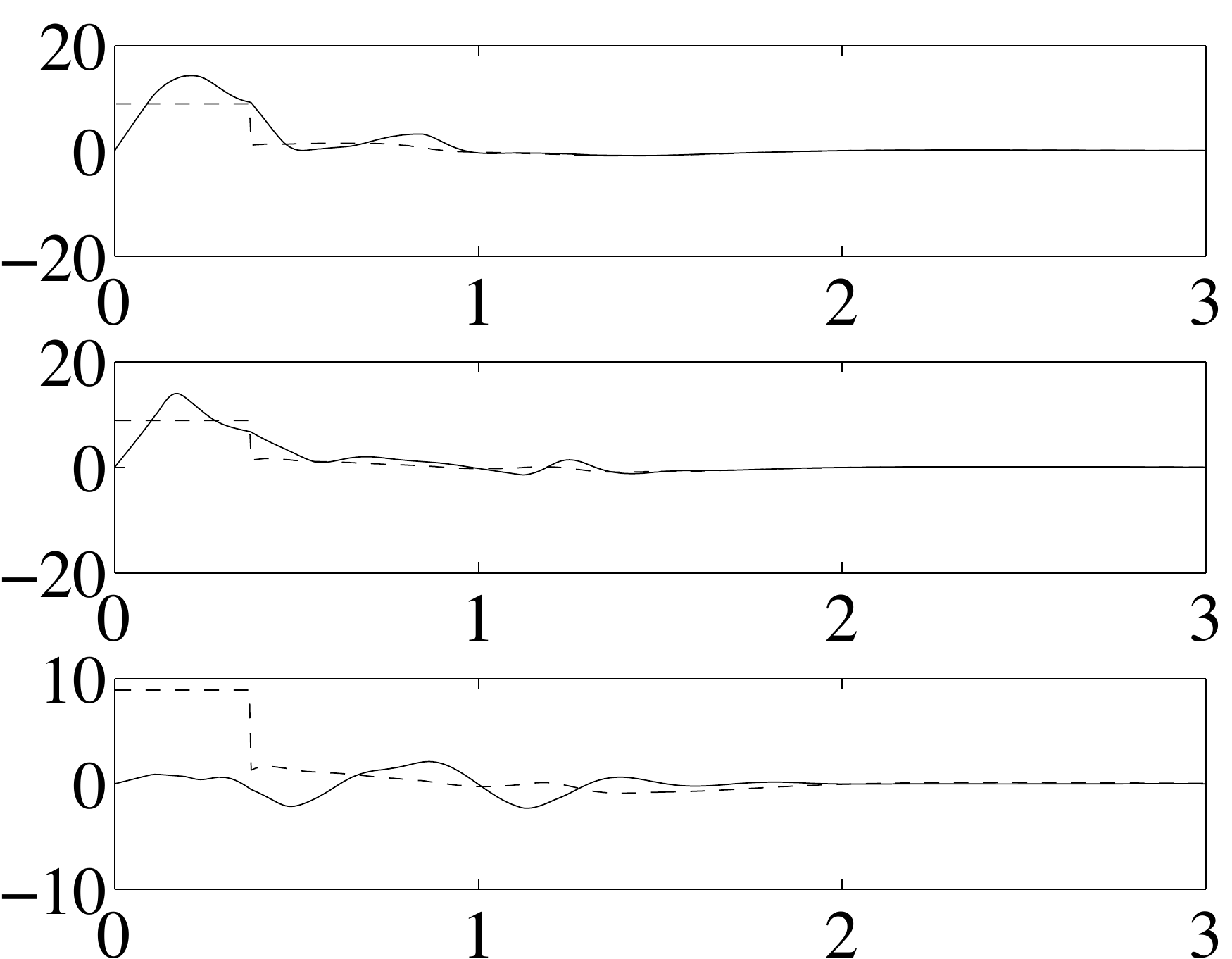}\label{fig:sim_ang_W}}
	\subfigure[Position $x$,$x_{d}$($\mathrm{m}$)]{
		\includegraphics[width=0.3\columnwidth]{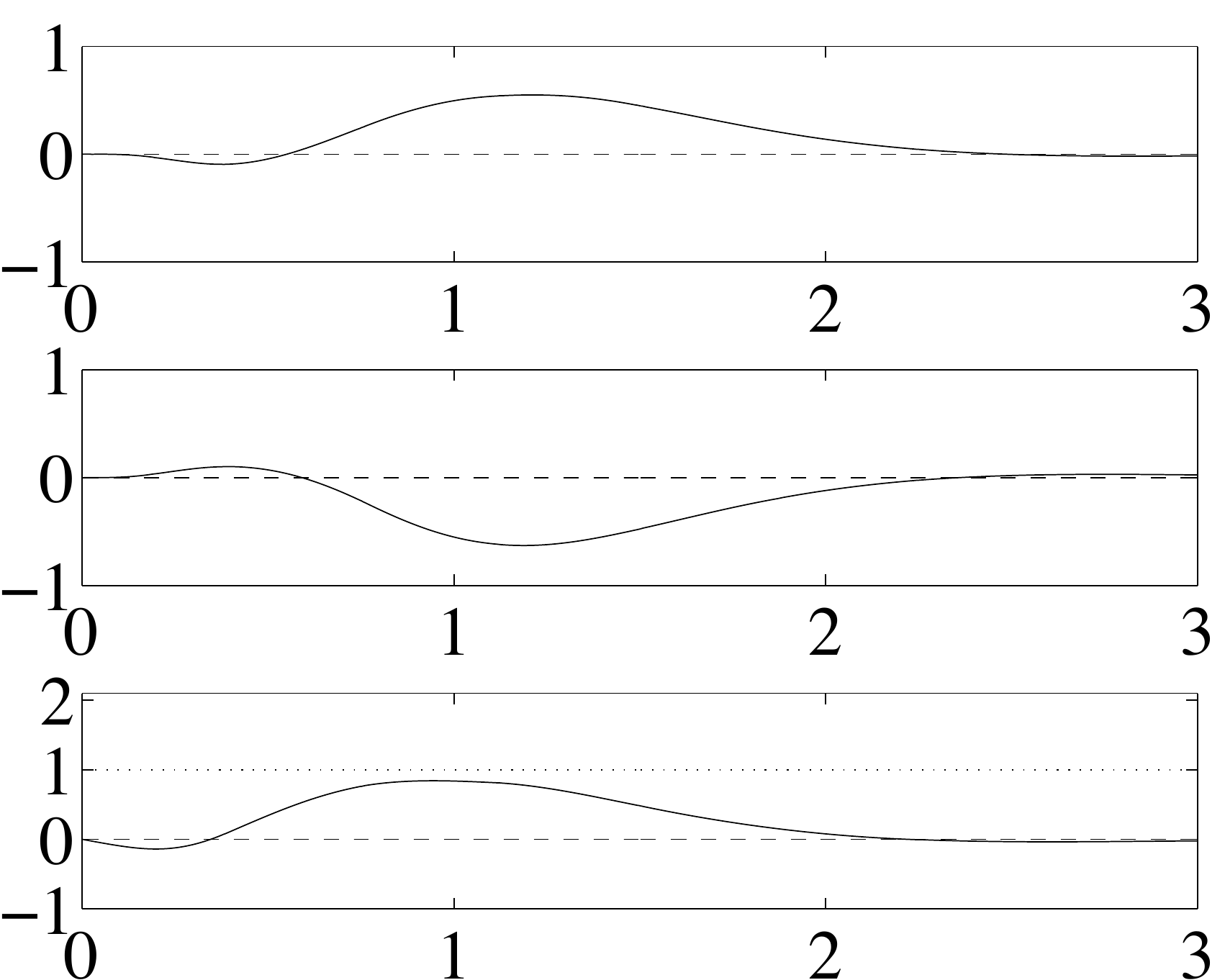}\label{fig:sim_x_W}}
}
\centerline{
}
\caption{Flipping with adaptive term (dotted:desired, solid:actual)}\label{fig:sim_W}
\end{figure}
Snapshots of the maneuver are presented at Figure \ref{fig:Int3D} and numerical results are illustrated at Figures \ref{fig:sim_WO} and \ref{fig:sim_W}, where it is shown that the adaptive terms eliminate the steady state error. The altitude drop during the flipping maneuver is reduced noticeably.

}

\newpage
\begin{singlespace}
\section{\protect \centering Chapter 3: Quadrotor UAV Transporting a Payload with Flexible Cable}
\end{singlespace}
\setcounter{section}{3}
\doublespacing
The main contribution of this chapter is presenting a nonlinear dynamic model and a control system for a quadrotor UAV with a cable-suspended load, that explicitly incorporate the effects of deformable cable. 

This chapter is organized as follows. In section \ref{sec:chap3dynamic}, equations of motion in 3D space for a chain pendulum on a quadrotor are derived. Section \ref{sec:chap3linear}, talks about the dynamics equation, linearization, and linear control design for a chain pendulum on a cart moving in 3D. Section \ref{sec:chap3control} presents the proposed geometric nonlinear controller and stability analysis to asymptotically stabilize the chain pendulum on a quadrotor. In section \ref{sec:chap3numerical}, we provide numerical simulations results to validate our work.

\subsection {\normalsize Quadrotor Dynamic Model}\label{sec:chap3dynamic}
{\addtolength{\leftskip}{0.5in}
Consider a quadrotor UAV with a payload that is connected via a chain of $n$ links, as illustrated at Figure \ref{fig:Quad}. The inertial frame is defined by the unit vectors $e_{1}=[1;0;0]$, $e_{2}=[0;1;0]$, and $e_{3}=[0;0;1]\in \Re^{3}$, and the third axis $e_{3}$ corresponds to the direction of gravity. Define a body-fixed frame $\{\vec{b}_{1},\vec{b}_{2},\vec{b}_{3}\}$ whose origin is located at the center of mass of the quadrotor, and its third axis $\vec b_3$ is aligned to the axis of symmetry. 

The location of the mass center, and the attitude of the quadrotor are denoted by $x\in\Re^3$ and $R\in\SO$, respectively, where the special orthogonal group is $\SO=\{R\in\Re^{3\times 3}\,|\, R^T R=I_{3\times 3},\;\mathrm{det}[R]=1\}$. A rotation matrix represents the linear transformation of a representation of a vector from the body-fixed frame to the inertial frame. 

\begin{figure}[h]
\centerline{
	\setlength{\unitlength}{0.09\columnwidth}\scriptsize
\begin{picture}(5,7.5)(0,0)
\put(0,0){\includegraphics[width=0.45\columnwidth]{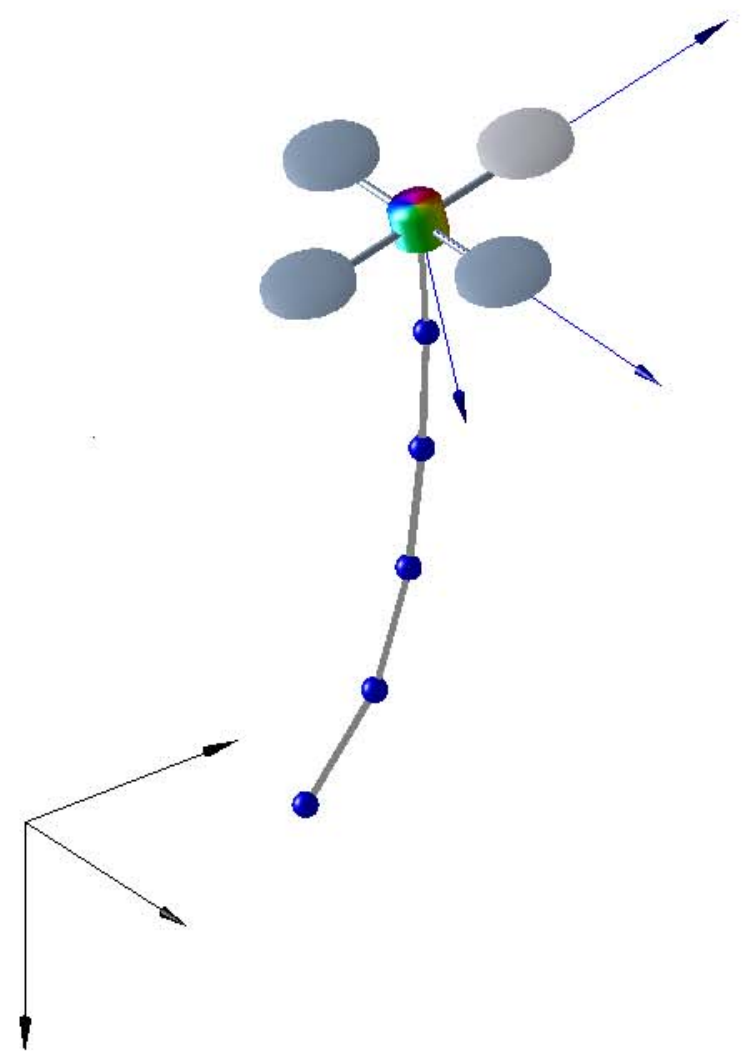}}
\put(2.7,6){\shortstack[c]{$m$}}
\put(2.3,4.8){\shortstack[c]{$m_{1}$}}
\put(2.2,4.05){\shortstack[c]{$m_{2}$}}
\put(2.1,3.25){\shortstack[c]{$m_{3}$}}
\put(2.7,2.4){\shortstack[c]{$m_{4}$}}
\put(2.2,1.5){\shortstack[c]{$m_{5}$}}
\put(1.3,0.8){\shortstack[c]{$e_{2}$}}
\put(1.7,2.1){\shortstack[c]{$e_{1}$}}
\put(0.1,-0.1){\shortstack[c]{$e_{3}$}}
\put(5,7){\shortstack[c]{$b_{1}$}}
\put(4.5,4.4){\shortstack[c]{$b_{2}$}}
\put(3.3,3.3){\shortstack[c]{$b_{3}$}}
\end{picture}
}
\caption{Quadrotor UAV with a cable-suspended load. Cable is modeled as a serial connection of arbitrary number of links (only 5 are illustrated).}\label{fig:Quad}
\end{figure}

The dynamic model of the quadrotor is identical to~\cite{LeeLeoPICDC10}. The mass and the inertia matrix of the quadrotor are denoted by $m\in\Re$ and $J\in\Re^{3\times 3}$, respectively. The quadrotor can generates a thrust $-fRe_3\in\Re^3$ with respect to the inertial frame, where $f\in\Re$ is the total thrust magnitude. It also generates a moment $M\in\Re^3$ with respect to its body-fixed frame. The pair $(f,M)$ is considered as control input of the quadrotor. 

Let $q_i\in\Sph^2$ be the unit-vector representing the direction of the $i$-th link, measured outward from the quadrotor toward the payload, where the two-sphere is the manifold of unit-vectors in $\Re^3$, i.e., $\Sph^2=\{q\in\Re^3\,|\, \|q\|=1\}$. For simplicity, we assume that the mass of each link is concentrated at the outboard end of the link, and the point where the first link is attached to the quadrotor corresponds to the mass center of the quadrotor. The mass and the length of the $i$-th link are defined by $m_i$ and $l_i\in\Re$, respectively. Thus, the mass of the payload corresponds to $m_n$. The corresponding configuration manifold of this system is given by $\SO\times\Re^3\times (\Sph^2)^n$.

Next, we show the kinematics equations. Let $\Omega\in\Re^3$ be the angular velocity of the quadrotor represented with respect to the body fixed frame, and let $\omega_i\in\Re^3$ be the angular velocity of the $i$-th link represented with respect to the inertial frame. The angular velocity is normal to the direction of the link, i.e., $q_i\cdot\omega_i=0$. The kinematics equations are given by
\begin{align}
\dot R & = R\hat\Omega,\label{eqn:Rdot}\\
\dot{q}_{i} & =\omega_{i}\times q_{i}=\hat{\omega}_{i}q_{i}.\label{eqn:qidot}
\end{align}

Throughout this chapter, the 2-norm of a matrix $A$ is denoted by $\|A\|$, and the dot product is denoted by $x \cdot y = x^Ty$. Also $\lambda_{\min}(\cdot)$ and $\lambda_{\max}(\cdot)$ denotes the minimum and maximum eigenvalue of a square matrix respectively, and $\lambda_{m}$ and $\lambda_{M}$ are shorthand for $\lambda_{m}=\lambda_{m}(J)$ and $\lambda_{M}=\lambda_{M}(J)$. 


\subsubsection {\normalsize Lagrangian}
We derive the equations of motion according to Lagrangian mechanics. The kinetic energy of the quadrotor is given by
\begin{align}
T_Q = \frac{1}{2}m\|\dot x\|^2 + \frac{1}{2} \Omega\cdot J\Omega.\label{eqn:TQ}
\end{align}
Let $x_i\in\Re^3$ be the location of $m_i$ in the inertial frame. It can be written as
\begin{align}\label{posvec33}
x_{i}=x+\sum^{i}_{a=1}{l_{a}q_{a}}.
\end{align}
Then, the kinetic energy of the links are given by
\begin{align}
T_L & = \frac{1}{2} \sum_{i=1}^n m_i \|\dot x+\sum^{i}_{a=1}{l_{a}\dot q_{a}}\|^2\nonumber\\
& = \frac{1}{2}\sum_{i=1}^n m_i \|\dot x\| + \dot x\cdot \sum_{i=1}^n\sum_{a=i}^n m_a l_i \dot q_i
+\frac{1}{2}\sum_{i=1}^n m_i \|\sum_{a=1}^i l_a \dot q_a\|^2.
\label{eqn:TL}
\end{align}
From \refeqn{TQ} and \refeqn{TL}, the total kinetic energy can be written as
\begin{align}
T & =\frac{1}{2}M_{00}\|\dot{x}\|^{2}+\dot{x}\cdot\sum^{n}_{i=1}{M_{0i}\dot{q}_{i}}+\frac{1}{2}\sum^{n}_{i,j=1}{M_{ij}\dot{q}_{i}\cdot\dot{q}_{j}}+\frac{1}{2}\Omega^{T}J\Omega,\label{eqn:KE}
\end{align}
where the inertia values $M_{00},M_{0i},M_{ij}\in\Re$ are given by
\begin{gather}
M_{00}=m+\sum_{i=1}^n m_i,\quad M_{0i}=\sum_{a=i}^n m_a l_i,\quad M_{i0}=M_{0i},\nonumber\\
M_{ij}=\braces{\sum_{a=\max\{i,j\}}^n m_a} l_i l_j,\label{eqn:Mij}
\end{gather}
for $1\leq i,j\leq n$. The gravitational potential energy is given by
\begin{align}
V & = -mgx\cdot e_3 - \sum_{i=1}^n m_i g x_i\cdot e_3\nonumber\\
& = -\sum^{n}_{i=1}\sum^{n}_{a=i}m_{a}gl_{i}e_{3}\cdot q_{i}-M_{00}ge_{3}\cdot x,\label{eqn:PE}
\end{align}
From \refeqn{KE} and \refeqn{PE}, the Lagrangian is $L=T-V$.


\subsubsection {\normalsize Euler-Lagrange equations}
Coordinate-free form of Lagrangian mechanics on the two-sphere $\Sph^2$ and the special orthogonal group $\SO$ for various multi-body systems has been studied in~\cite{Lee08,LeeLeoIJNME08}. The key idea is representing the infinitesimal variation of $R\in\SO$ in terms of the exponential map
\begin{align}
\delta R = \frac{d}{d\epsilon}\bigg|_{\epsilon = 0} \exp R(\epsilon \hat\eta) = R\hat\eta,\label{eqn:delR}
\end{align}
for $\eta\in\Re^3$. The corresponding variation of the angular velocity is given by $\delta\Omega=\dot\eta+\Omega\times\eta$. Similarly, the infinitesimal variation of $q_i\in\Sph^2$ is given by
\begin{align}
\delta q_i = \xi_i\times q_i,\label{eqn:delqi}
\end{align}
for $\xi_i\in\Re^3$ satisfying $\xi_i\cdot q_i=0$. This lies in the tangent space as it is perpendicular to $q_{i}$. Using these, we obtain the following Euler-Lagrange equations.
\begin{prop}\label{prop:propchap3_1}
Consider a quadrotor with a cable suspended payload whose Lagrangian is given by \refeqn{KE} and \refeqn{PE}. The Euler-Lagrange equations on $\Re^3\times\SO\times(\Sph^2)^n$ are as follows
\begin{gather}
M_{00}\ddot{x}+\sum^{n}_{i=1}{M_{0i}\ddot{q}_{i}}=-fRe_{3}+M_{00}ge_{3}+\Delta_{x},\label{eqn:xddot}\\
M_{ii}\ddot q_i  -\hat q_i^2 (M_{i0}\ddot x + \sum_{\substack{j=1\\j\neq i}}^n M_{ij}\ddot q_j)=- M_{ii}\|\dot q_i\|^2 q_i-\sum_{a=i}^n m_a gl_i\hat q_i^2 e_3,\label{eqn:qddot}\\
J\dot{\Omega}+\hat{\Omega}J\Omega=M+\Delta_{R},\label{eqn:Wdot}
\end{gather}
where $M_{ij}$ is defined at \refeqn{Mij}. Therefore $\Delta_{x}$ and $\Delta_{R}\in\Re^3$ are fixed disturbances applied to the translational and rotational dynamics of the quadrotor respectively. Equations \refeqn{xddot} and \refeqn{qddot} can be rewritten in a matrix form as follows:
\begin{align}
&\begin{bmatrix}%
    M_{00} & M_{01} & M_{02} & \cdots & M_{0n} \\
    -\hat q_1^2 M_{10} & M_{11}I_{3} & -M_{12} \hat q_1^2 & \cdots & -M_{1n}\hat q_1^2\\%
    -\hat q_2^2 M_{20} & -M_{21} \hat q_2^2 & M_{22} I_{3} & \cdots & -M_{2n} \hat q_2^2\\%
    \vdots & \vdots & \vdots & & \vdots\\
    -\hat q_n^2 M_{n0} & -M_{n1} \hat q_n^2 & -M_{n2}\hat q_n^2 & \cdots & M_{nn} I_{3}
    \end{bmatrix}%
    \begin{bmatrix}
    \ddot x \\ \ddot q_1 \\ \ddot q_2 \\ \vdots \\ \ddot q_n
    \end{bmatrix}\nonumber\\
 &=   \begin{bmatrix}
    -fRe_3 +M_{00}ge_3+\Delta_{x}\\
    -\|\dot q_1\|^2M_{11} q_1 -\sum_{a=1}^n m_a gl_1\hat q_1^2 e_3\\
    -\|\dot q_2\|^2M_{22} q_2 -\sum_{a=2}^n m_a gl_2\hat q_2^2 e_3\\
    \vdots\\
    -\|\dot q_n\|^2M_{nn} q_n - m_n gl_n\hat q_n^2 e_3
    \end{bmatrix}.\label{eqn:ELm}
\end{align}
Or equivalently, it can be written in terms of the angular velocities as
\begin{gather}
\begin{bmatrix}%
    M_{00} & -M_{01}\hat q_1 & -M_{02}\hat q_2 & \cdots & -M_{0n}\hat q_n\\
    \hat q_1 M_{10} & M_{11}I_{3} & -M_{12} \hat q_1 \hat q_2 & \cdots & -M_{1n}\hat q_1 \hat q_n\\%
    \hat q_2 M_{20} &-M_{21} \hat q_2\hat q_1 & M_{22} I_{3} & \cdots & -M_{2n} \hat q_2 \hat q_n\\%
    \vdots & \vdots & \vdots & & \vdots\\
    \hat q_n M_{n0} &-M_{n1} \hat q_n \hat q_1 & -M_{n2}\hat q_n \hat q_2 & \cdots & M_{nn} I_{3}
    \end{bmatrix}%
    \begin{bmatrix}
    \ddot x \\ \dot \omega_1 \\ \dot \omega_2 \\ \vdots \\ \dot \omega_n
    \end{bmatrix}\nonumber\\
    =
    \begin{bmatrix}
    \sum_{j=1}^n M_{0j} \|\omega_j\|^2 q_j-fRe_3+M_{00}ge_3+\Delta_{x}\\
    \sum_{j=2}^n M_{1j}\|\omega_j\|^2\hat q_1 q_j +\sum_{a=1}^n m_a gl_1\hat q_1 e_3\\
    \sum_{j=1,j\neq 2}^n M_{2j}\|\omega_j\|^2\hat q_2 q_j +\sum_{a=2}^n m_a gl_2\hat q_2 e_3\\
    \vdots\\
    \sum_{j=1}^{n-1} M_{nj}\|\omega_j\|^2\hat q_n q_j + m_n gl_n\hat q_n e_3\\
    \end{bmatrix},\label{eqn:ELwm}\\
\dot q_i = \omega_i\times q_i.\label{eqn:ELwm2}
\end{gather}
\end{prop}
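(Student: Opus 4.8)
The plan is to apply the Lagrange--d'Alembert principle on the configuration manifold $\Re^3\times\SO\times(\Sph^2)^n$ using the constrained variations \refeqn{delR} and \refeqn{delqi}, taking as generalized forces the thrust $-fRe_3$ together with the disturbance $\Delta_x$ conjugate to $\delta x$, and the control moment $M$ together with $\Delta_R$ conjugate to $\eta$. Two of the three blocks come out immediately. Since $T$ is independent of $x$, the Euler--Lagrange operator in $x$ is $\tfrac{d}{dt}\bigl(M_{00}\dot x+\sum_i M_{0i}\dot q_i\bigr)$, and balancing it against $-fRe_3+\Delta_x$ and the $M_{00}ge_3$ term supplied by $-\partial V/\partial x$ produces \refeqn{xddot}. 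Likewise, neither $T_L$ nor $V$ depends on $R$ or $\Omega$, because the link directions $q_i$ are expressed in the inertial frame, so the rotational block decouples and reduces to the classical rigid-body computation: varying $\int\tfrac12\Omega\cdot J\Omega\,dt$ with $\delta\Omega=\dot\eta+\hat\Omega\eta$, integrating by parts, and using $\hat\Omega^T=-\hat\Omega$ gives $-\int(J\dot\Omega+\hat\Omega J\Omega)\cdot\eta\,dt$, which balanced against $\int(M+\Delta_R)\cdot\eta\,dt$ yields \refeqn{Wdot}.

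The substantive block is the one for the link directions. First I would compute, using the symmetry $M_{ij}=M_{ji}$ from \refeqn{Mij} and taking care with the coupled quadratic form $\tfrac12\sum_{i,j}M_{ij}\dot q_i\cdot\dot q_j$, that $\partial L/\partial\dot q_i=M_{i0}\dot x+\sum_{j=1}^n M_{ij}\dot q_j$ and $\partial L/\partial q_i=-\partial V/\partial q_i=\sum_{a=i}^n m_a g l_i e_3$. Writing the first variation of the action in the $q_i$ directions as $\sum_i\int\bigl(\tfrac{\partial L}{\partial q_i}\cdot\delta q_i+\tfrac{\partial L}{\partial\dot q_i}\cdot\delta\dot q_i\bigr)\,dt$, using $\delta\dot q_i=\tfrac{d}{dt}\delta q_i$, integrating by parts with vanishing boundary terms, and then substituting $\delta q_i=\xi_i\times q_i$ with $\xi_i\cdot q_i=0$, the scalar triple product identity turns each integrand into $\xi_i\cdot\bigl(q_i\times(\tfrac{\partial L}{\partial q_i}-\tfrac{d}{dt}\tfrac{\partial L}{\partial\dot q_i})\bigr)$. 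Since there is no generalized force conjugate to $\xi_i$, since the $\xi_i$ range independently over all of $q_i^\perp$, and since $q_i\times(\cdot)$ is already orthogonal to $q_i$, this forces $q_i\times(\tfrac{\partial L}{\partial q_i}-\tfrac{d}{dt}\tfrac{\partial L}{\partial\dot q_i})=0$, i.e. $\hat q_i\bigl(M_{i0}\ddot x+M_{ii}\ddot q_i+\sum_{j\ne i}M_{ij}\ddot q_j-\sum_{a=i}^n m_a g l_i e_3\bigr)=0$. Left-multiplying once more by $\hat q_i$, using $\hat q_i^2=q_iq_i^T-I$ together with $q_i\cdot\ddot q_i=-\|\dot q_i\|^2$ (the twice-differentiated constraint $\|q_i\|=1$), the $M_{ii}\ddot q_i$ term is pulled out of the projection and the curvature term $-M_{ii}\|\dot q_i\|^2 q_i$ appears; rearranging gives exactly \refeqn{qddot}.

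Finally, the two matrix forms are pure reorganization. Equation \refeqn{ELm} is \refeqn{xddot} stacked with the $n$ copies of \refeqn{qddot}, collected so that the $(0,0)$ entry is $M_{00}$, the $(0,i)$ entries are $M_{0i}$, the diagonal link blocks are $M_{ii}I_3$, and the remaining blocks carry the projection $-\hat q_i^2$. For \refeqn{ELwm} I would substitute the kinematic identity $\ddot q_i=-\hat q_i\dot\omega_i-\|\omega_i\|^2 q_i$ (obtained by differentiating \refeqn{qidot} and using $\omega_i\cdot q_i=0$) into the pre-projection relation $\hat q_i\bigl(M_{i0}\ddot x+\sum_j M_{ij}\ddot q_j-\sum_{a=i}^n m_a g l_i e_3\bigr)=0$; then $\hat q_i q_i=0$ kills the $\|\omega_i\|^2 q_i$ self-term, while $q_i\cdot\dot\omega_i=0$ (from differentiating $\omega_i\cdot q_i=0$) collapses $\hat q_i^2\dot\omega_i$ to $-\dot\omega_i$, leaving the diagonal entry $M_{ii}\dot\omega_i$ and the off-diagonal blocks $-M_{ij}\hat q_i\hat q_j$. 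The same substitution in \refeqn{xddot} gives the first row, and \refeqn{ELwm2} simply restates \refeqn{qidot}.

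I expect the main obstacle to be the $q_i$-variation bookkeeping — differentiating the coupled kinetic energy correctly, carrying the integration by parts, and above all recognizing that the constrained variational principle yields only $\hat q_i(\cdot)=0$ rather than $(\cdot)=0$, so that \refeqn{qddot} emerges only after a second multiplication by $\hat q_i$ and the use of $q_i\cdot\ddot q_i=-\|\dot q_i\|^2$. The coordinate-free Lagrangian framework on $\Sph^2$ and $\SO$ developed in \cite{Lee08,LeeLeoIJNME08} handles precisely this type of computation and can be cited to keep the derivation compact.
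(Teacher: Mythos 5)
Your proposal is correct and follows essentially the same route as the paper's proof: the Lagrange--d'Alembert principle with the constrained variations $\delta R=R\hat\eta$, $\delta q_i=\xi_i\times q_i$, integration by parts, the observation that stationarity over $\xi_i\perp q_i$ yields only the projected condition (equivalently $\hat q_i(\cdot)=0$, then $\hat q_i^2(\cdot)=0$), extraction of the diagonal term via $q_i\cdot\ddot q_i=-\|\dot q_i\|^2$, and the substitution $\ddot q_i=-\hat q_i\dot\omega_i-\|\omega_i\|^2q_i$ for the angular-velocity form. The only cosmetic difference is that the paper states the stationarity condition directly in the $\hat q_i^2$ form rather than passing explicitly through the intermediate single-$\hat q_i$ identity as you do.
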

\begin{proof}
See Appendix \ref{sec:pfchap3_1}
\end{proof}
These provide a coordinate-free form of the equations of motion for the presented quadrotor UAV that is uniformly defined for any number of links $n$, and that is globally defined on the nonlinear configuration manifold. Compared with equations of motion derived in terms of local coordinates, such as Euler-angles, these avoid singularities completely, and they provide a compact form of equations that are suitable for control system design.

However, the presented finite element model may not capture the certain dynamic characteristics of the actual cable dynamics represented by partial differential equations. Designing a control system for such infinite-dimensional system is beyond the scope of this dissertation. 

}


\subsection {\normalsize Control System Design for a Simplified Dynamic Model}\label{sec:chap3linear}
{\addtolength{\leftskip}{0.5in}
Let $x_d\in\Re^3$ be a fixed desired location of the quadrotor UAV. Assuming that all of the links are pointing downward, i.e., $q_i=e_3$, the resulting location of the payload is given by 
\begin{align}
x_n=x_d +\sum_{i=1}^n l_i e_3. 
\end{align}
We wish to design the control force $f$ and the control moment $M$ such that this hanging equilibrium configuration at the desired location becomes asymptotically stable. 


\subsubsection {\normalsize Control Problem Formulation}
For the given equations of motion \refeqn{xddot} for $x$, the control force is given by $-fRe_3$. This implies that the total thrust magnitude $f$ can be arbitrarily chosen, but the direction of the thrust vector is always along the third body-fixed axis. Also, the rotational attitude dynamics of the quadrotor is not affected by the translational dynamics of the quadrotor or the dynamics of links.

To overcome the under-actuated property of a quadrotor, in this section, we first replace the term  $-fRe_3$ of \refeqn{xddot} by a fictitious control input $u\in\Re^3$, and design an expression for $u$ to asymptotically stabilize the desired equilibrium. This is equivalent to assuming that the attitude $R$ of the quadrotor can be instantaneously controlled. The effects of the attitude dynamics are incorporated at the next section. Also $\Delta_{x}$ is ignored in the simplified dynamic model.
In short, the equations of motion for the simplified dynamic model considered in the section are given by
\begin{align}
M_{00}\ddot{x}+\sum^{n}_{i=1}{M_{0i}\ddot{q}_{i}}=u+M_{00}ge_{3},\label{eqn:xddot_sim}
\end{align}
and \refeqn{qddot}.

\subsubsection {\normalsize Linear Control System}\label{sec:LCS}
The fictitious control input is designed from the linearized dynamics about the desired hanging equilibrium. The variation of $x$ and $u$ are given by
\begin{align}
\delta x = x - x_d,\quad \delta u = u - M_{00}g e_3.\label{eqn:delxLin}
\end{align}
From \refeqn{delqi}, the variation of $q_i$ from the equilibrium can be written as
\begin{align}
\delta q_i = \xi_i\times e_3,\label{eqn:delqLin}
\end{align}
where $\xi_i\in\Re^3$ with $\xi_i\cdot e_3=0$. The variation of $\omega_i$ is given by $\delta\omega\in\Re^3$ with $\delta\omega_i \cdot e_3=0$. Therefore, the third element of each of $\xi_i$ and $\delta\omega_i$ for any equilibrium configuration is zero, and they are omitted in the following linearized equation, i.e., the state vector of the linearized equation is composed of $C^T\xi_i\in\Re^2$, where $C=[e_1,e_2]\in\Re^{3\times 2}$. 

\begin{prop}\label{prop:propchap3_2}
The linearized equations of the simplified dynamic model \refeqn{xddot_sim} and \refeqn{qddot} can be written as follows
\begin{gather}
\Mb\ddot \xb  + \Gb\xb = \Bb \delta u+ \g(\xb,\dot{\xb}),\label{eqn:Lin}
\end{gather}
where $\g(\xb,\dot{\xb})$ corresponds to the higher order terms where $\xb=[\delta x,\; \xb_{q}]^{T}\in\Re^{2n+3}$, $\Mb\in\Re^{2n+3\times 2n+3}$, $\Gb\in\Re^{2n+3\times 2n+3}$, $\Bb\in\Re^{2n+3\times 3}$, and \refeqn{Lin} can equivalently be written as
\begin{align*}
\begin{bmatrix} \Mb_{xx} & \Mb_{xq}\\ \Mb_{qx} & \Mb_{qq} \end{bmatrix}
\begin{bmatrix}  \delta\ddot x \\ \ddot \xb_q\end{bmatrix}
&+
\begin{bmatrix} 0_3 & 0_{3\times 2n}\\ 0_{2n\times 3} & \Gb_{qq}\end{bmatrix}
\begin{bmatrix}  \delta x \\ \xb_q\end{bmatrix}=
\begin{bmatrix} I_3 \\ 0_{2n\times 3}\end{bmatrix}
\delta u+ \g(\xb,\dot{\xb}),
\end{align*}
where the corresponding sub-matrices are defined as
\begin{align*}
\xb_q & = [C^T \xi_1;\,\ldots\,;\,C^T \xi_n],\\
\Mb_{xx} &= M_{00}I_{3},\\
\Mb_{xq} &= \begin{bmatrix}
-M_{01}\hat e_3C & -M_{02}\hat e_3C & \cdots & -M_{0n}\hat e_3C
\end{bmatrix},\\
\Mb_{qx} & = \Mb_{xq}^T,\\
\Mb_{qq} &=
    \begin{bmatrix}%
M_{11}I_{2} & M_{12} I_2 & \cdots & M_{1n}I_2\\%
M_{21} I_2 & M_{22} I_{2} & \cdots & M_{2n}I_2\\%
\vdots & \vdots & & \vdots\\
M_{n1}I_2 & M_{n2}I_2 & \cdots & M_{nn} I_{2}
    \end{bmatrix},\\
\Gb_{qq}  = \mathrm{diag}[&\sum_{a=1}^n {m_a gl_1 I_2},\cdots,m_ngl_nI_2].
\end{align*}
\end{prop}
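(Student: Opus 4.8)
The plan is to substitute the perturbations $x=x_d+\delta x$, $q_i=e_3+\delta q_i$ with $\delta q_i=\xi_i\times e_3=-\hat e_3\xi_i$ and $\xi_i\cdot e_3=0$, and the perturbed control $\delta u$ of \refeqn{delxLin}, into the simplified equations of motion \refeqn{xddot_sim} and \refeqn{qddot}, to expand every term to first order, and to gather into the remainder $\g(\xb,\dot{\xb})$ all contributions that are at least quadratic in the perturbations. I would first record the elementary identities $\hat e_3^{2}=e_3e_3^{T}-I_3$, $\hat e_3^{3}=-\hat e_3$, $CC^{T}=I_3-e_3e_3^{T}=-\hat e_3^{2}$, and $C^{T}C=I_2$, together with the kinematic consequences $q_i=e_3-\hat e_3\xi_i+\g$, $\ddot q_i=-\hat e_3\ddot\xi_i+\g=-\hat e_3 C\,\ddot{(C^{T}\xi_i)}+\g$, and $\norm{\dot q_i}^{2}=\g$ (writing $\g$ generically for higher-order terms). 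Because $\xi_i\cdot e_3=0$ we have $\xi_i=CC^{T}\xi_i$, so $C^{T}\xi_i\in\Re^2$ is the minimal coordinate of the $i$-th link and differentiation commutes with $C^{T}$.

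For the translational equation \refeqn{xddot_sim}, using $\ddot x=\delta\ddot x$ and the expansion of $\ddot q_i$ above gives at once $M_{00}\,\delta\ddot x-\sum_{i=1}^{n}M_{0i}\hat e_3 C\,\ddot{(C^{T}\xi_i)}=\delta u+\g$; this is the first block row of \refeqn{Lin}, with $\Mb_{xx}=M_{00}I_3$, $\Mb_{xq}$ as stated, a vanishing stiffness block, and $\Bb$ with first block $I_3$. For the $i$-th link equation \refeqn{qddot}, the term $M_{ii}\norm{\dot q_i}^{2}q_i$ is quadratic and goes into $\g$; in the coupling term $\hat q_i^{2}\parenth{M_{i0}\ddot x+\sum_{j\neq i}M_{ij}\ddot q_j}$ the factor $\hat q_i^{2}$ may be replaced by $\hat e_3^{2}$ since the parenthesized factor is already first order; and a first-order expansion of the gravity term using $\hat q_i^{2}e_3=\hat q_i(q_i\times e_3)$ and the triple-product identity $a\times(b\times c)=b(a\cdot c)-c(a\cdot b)$ gives $q_i\times e_3=-\xi_i+\g$, hence $\hat q_i^{2}e_3=-\hat e_3\xi_i+\g$. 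Substituting $\ddot q_i=-\hat e_3\ddot\xi_i$ and $\ddot q_j=-\hat e_3\ddot\xi_j$ everywhere and using $\hat e_3^{3}=-\hat e_3$, every surviving term of the $i$-th equation carries a common left factor $-\hat e_3$, so it collapses to $-\hat e_3\bracket{M_{ii}\ddot\xi_i+\hat e_3 M_{i0}\,\delta\ddot x+\sum_{j\neq i}M_{ij}\ddot\xi_j+\sum_{a=i}^{n}m_a gl_i\xi_i}=\g$.

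To reach the minimal $(2n+3)$-dimensional form I would left-multiply this identity once more by $-\hat e_3$: since $(-\hat e_3)(-\hat e_3)=-\hat e_3^{2}=I_3-e_3e_3^{T}$ acts as the identity on the bracketed vector --- each of its four terms being orthogonal to $e_3$ --- this recovers $M_{ii}\ddot\xi_i+\hat e_3 M_{i0}\,\delta\ddot x+\sum_{j\neq i}M_{ij}\ddot\xi_j+\sum_{a=i}^{n}m_a gl_i\xi_i=\g$, and then left-multiplying by $C^{T}$ (and using $C^{T}\ddot\xi_j=\ddot{(C^{T}\xi_j)}$) gives the $i$-th block row $M_{i0}C^{T}\hat e_3\,\delta\ddot x+\sum_{j=1}^{n}M_{ij}\,\ddot{(C^{T}\xi_j)}+\sum_{a=i}^{n}m_a gl_i\,(C^{T}\xi_i)=\g$. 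The identity $M_{i0}C^{T}\hat e_3=\parenth{-M_{0i}\hat e_3 C}^{T}$ (using $M_{0i}=M_{i0}$ and $\hat e_3^{T}=-\hat e_3$) shows $\Mb_{qx}=\Mb_{xq}^{T}$, and reading off the coefficients identifies $\Mb_{qq}$ (block $(i,j)$ equal to $M_{ij}I_2$, including $M_{ii}I_2$ on the diagonal) and $\Gb_{qq}=\diag[\sum_{a=1}^{n}m_a gl_1 I_2,\ldots,m_n gl_n I_2]$ exactly as claimed, with a zero control block. Stacking $\xb_q=[C^{T}\xi_1;\ldots;C^{T}\xi_n]$, $\xb=[\delta x;\xb_q]$ and collecting the various remainders into one map $\g(\xb,\dot{\xb})$ assembles \refeqn{Lin}.

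The step I expect to require the most care is the order bookkeeping --- in particular justifying the substitution $\hat q_i^{2}\mapsto\hat e_3^{2}$ inside the coupling terms and bounding the discarded pieces by $O\parenth{\norm{(\delta x,\xi_i,\delta\dot x,\dot\xi_i)}^{2}}$ --- and, more delicately, the projection step: one must verify that the bracketed vector genuinely lies in $\mathrm{span}\{e_1,e_2\}$, so that left-multiplication by $-\hat e_3$ is invertible there and the apparent rotation $C^{T}\hat e_3$ is absorbed cleanly into $\Mb_{qx}$; this is precisely what makes $\Mb$ symmetric and $\Gb_{qq}$ a clean multiple of the identity. The remaining manipulations are routine linear algebra.
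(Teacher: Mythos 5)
Your proposal is correct and follows essentially the same route as the paper: a first-order expansion of the simplified equations of motion about the hanging equilibrium using $\delta q_i=\xi_i\times e_3$, followed by projection onto the plane normal to $e_3$ via $C^T$ --- indeed you supply the detailed bookkeeping that the paper's own proof (which linearizes the kinematics to $\dot\xi_i=\delta\omega_i$, substitutes into the angular-velocity form of the dynamics, and defers the rest to a reference) leaves implicit. One harmless slip: $(-\hat e_3)(-\hat e_3)=\hat e_3^{2}=e_3e_3^{T}-I_3$ acts as \emph{minus} the identity on vectors orthogonal to $e_3$, not the identity; since the right-hand side is a generic higher-order remainder, this sign does not affect your conclusion that the bracketed vector is itself higher order.
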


\begin{proof}
See Appendix \ref{sec:pfchap3_2}
\end{proof}
For the linearized dynamics \refeqn{Lin}, the following control system is chosen
\begin{align}
\delta u & = -k_{x}\delta{x}-k_{\dot{x}}\delta\dot{x}-\sum_{a=1}^{n}{k_{q_{i}}C^{T}(e_3\times q_{i})}-k_{\omega_{i}}C^{T}\delta\omega_{i}\nonumber\\
& = -K_x \xb - K_{\dot x} \dot \xb,\label{eqn:delu}
\end{align}
for controller gains $K_x =[k_xI_3,k_{q_1}I_{3\times 2},\ldots k_{q_n}I_{3\times 2}]\in\Re^{3\times (3+2n)}$ and $K_{\dot x} =[k_{\dot x}I_3,k_{\omega_1}I_{3\times 2},\ldots k_{\omega_n}I_{3\times 2}]\in\Re^{3\times (3+2n)}$. Provided that \refeqn{Lin} is controllable, we can choose the controller gains $K_x,K_{\dot x}$ such that the equilibrium is asymptotically stable for the linearized equation \refeqn{Lin}. Then, the equilibrium becomes asymptotically stable for the nonlinear Euler Lagrange equation \refeqn{xddot_sim} and \refeqn{qddot}~\cite{Kha96}. The controlled linearized system can be written as
\begin{align}
\dot{z}_{1}=&\mathds{A} z_{1}+\mathds{B}\g(\xb,\dot{\xb}),
\end{align}
where $z_{1}=[\xb,\; \dot{\xb}]^{T}\in\Re^{4n+6}$ and the matrices $\mathds{A}\in\Re^{4n+6\times 4n+6}$ and $\mathds{B}\in\Re^{4n+6\times 2n+3}$ are defined as
\begin{align}
\mathds{A}=\begin{bmatrix}
0&I\\
-\Mb^{-1}(\Gb+\Bb K_{x})&-{(\Mb^{-1}\Bb K_{\dot{x}})}
\end{bmatrix}, \mathds{B}=\begin{bmatrix}
0\\
\Mb^{-1}
\end{bmatrix}.
\end{align}
We can also choose $K_{\xb}$ and $K_{\dot{\xb}}$ such that $\mathds{A}$ is Hurwitz. Then for any positive definite matrix $Q\in\Re^{4n+6\times4n+6}$, there exist a positive definite and symmetric matrix $P\in\Re^{4n+6\times4n+6}$ such that $\mathds{A}^{T}P+P\mathds{A}=-Q$ according to~\cite[Thm 3.6]{Kha96}.

}

\subsection {\normalsize Controller Design for a Quadrotor with a Flexible Cable}\label{sec:chap3control}
{\addtolength{\leftskip}{0.5in}
The control system designed in the previous section is generalized to the full dynamic model that includes the attitude dynamics. The central idea is that the attitude $R$ of the quadrotor is controlled such that its total thrust direction $-Re_3$ that corresponds to the third body-fixed axis asymptotically follows the direction of the fictitious control input $u$. By choosing the total thrust magnitude properly, we can guarantee that the total thrust vector $-fRe_{3}$ asymptotically converges to the fictitious ideal force $u$, thereby yielding asymptotic stability of the full dynamic model.


\subsubsection {\normalsize Controller Design}
Consider the full nonlinear equations of motion, let $A\in\Re^3$ be the ideal total thrust of the quadrotor system that asymptotically stabilize the desired equilibrium. From \refeqn{delxLin}, we have 
\begin{align}
A= M_{00}ge_3 + \delta u = -K_{x} \xb-K_{\dot{x}}\dot\xb -K_{z}\satr_{\sigma}(e_{\xb})+ M_{00}ge_3,\label{eqn:A}
\end{align}
where the following integral term $e_{\xb}\in\Re^{2n+3}$ is added to eliminate the effect of disturbance $\Delta_x$ in the full dynamic model 
\begin{align}\label{eqn:exterm}
e_{\xb}=\int^{t}_{0}{(P\mathds{B})^{T}z_{1}(\tau)\;d\tau},
\end{align}
where $K_z =[k_{z}I_3,k_{z_1}I_{3\times 2},\ldots k_{z_n}I_{3\times 2}]\in\Re^{3\times (3+2n)}$ is an integral gain. For a positive constant $\sigma\in\Re$, a saturation function $\sat_\sigma:\Re\rightarrow [-\sigma,\sigma]$ is introduced as
\begin{align*}
\sat_{\sigma}(y) = \begin{cases}
\sigma & \mbox{if } y >\sigma\\
y & \mbox{if } -\sigma \leq y \leq\sigma\\
-\sigma & \mbox{if } y <-\sigma\\
\end{cases}.
\end{align*}
If the input is a vector $y\in\Re^n$, then the above saturation function is applied element by element to define a saturation function $\sat_\sigma(y):\Re^n\rightarrow [-\sigma,\sigma]^n$ for a vector. It is also assumed that an upper bound of the infinite norm of the uncertainty is known
\begin{align}\label{eqn:disturbancecond}
\|\Delta_{x}\|_{\infty}\leq \delta,
\end{align}
for positive constant $\delta$. The desired direction of the third body-fixed axis $b_{3_c}\in\Sph^2$ is given by
\begin{align}
b_{3_c} = - \frac{A}{\|A\|}.\label{eqn:b3c}
\end{align}
This provides a two-dimensional constraint for the desired attitude of quadrotor, and there is additional one-dimensional degree of freedom that corresponds to rotation about the third body-fixed axis, i.e., yaw angle. A desired direction of the first body-fixed axis, namely $b_{1_d}\in\Sph^2$ is introduced to resolve it, and it is projected onto the plane normal to $b_{3_c}$. The desired direction of the second body-fixed axis is chosen to constitute an orthonormal frame. More explicitly, the desired attitude is given by
\begin{align}
R_c = \bracket{-\frac{\hat b_{3_c}^2 b_{1_d}}{\|\hat b_{3_c}^2 b_{1_d}\|},\;
 \frac{\hat b_{3_c}b_{1_d}}{\|\hat b_{3_c}b_{1_d}\|},\; b_{3_c}},
\end{align}
which is guaranteed to lie in $\SO$ by construction, assuming that $b_{1_d}$ is not parallel to $b_{3_c}$~\cite{LeeLeoAJC13}. The desired angular velocity $\Omega_{c}\in\Re^{3}$ is obtained by the attitude kinematics equation
\begin{align}
\Omega_c = (R_c^T \dot R_c)^\vee.
\end{align}
Next, we introduce the tracking error variables for the attitude and the angular velocity $e_{R}$, $e_{\Omega}\in\Re^{3}$ as follows~\cite{TFJCHTLeeHG}
\begin{align}
&e_{R}=\frac{1}{2}(R_{c}^{T}R-R^{T}R_{c})^{\vee},\\
&e_{\Omega}=\Omega-R^{T}R_{c}\Omega_{c}.
\end{align}
The thrust magnitude and the moment vector of quadrotor are chosen as 
\begin{align}
f = -A\cdot Re_3,\label{eqn:fi}
\end{align}
and
\begin{align}
M =-{k_R}e_{R} -{k_{\Omega}}e_{\Omega} -k_{I}e_{I}+\Omega\times J\Omega-J(\hat\Omega R^T R_{c} \Omega_{c} - R^T R_{c}\dot\Omega_{c}),\label{eqn:Mi}
\end{align}
where $k_R,k_\Omega$, and $k_{I}$ are positive constants and the following integral term is introduced to eliminate the effect of fixed disturbance $\Delta_{R}$
\begin{align}\label{eqn:integralterm}
e_{I}=\int^{t}_{0}{e_{\Omega}(\tau)+c_{2}e_{R}(\tau)d\tau},
\end{align}
where $c_{2}$ is a positive constant.


\subsubsection {\normalsize Stability Analysis}
\begin{prop}\label{prop:propchap3_3}
Consider control inputs $f$, $M$ defined in \refeqn{fi} and \refeqn{Mi}. There exist controller parameters and gains such that, (i) the zero equilibrium of tracking error is stable in the sense of Lyapunov; (ii) the tracking errors $e_{R}$, $e_{\Omega}$, $\xb$, $\dot{\xb}$ asymptotically converge to zero as $t\rightarrow\infty$; (iii) the integral terms $e_{I}$ and $e_{\xb}$ are uniformly bounded.
\end{prop}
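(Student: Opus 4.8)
The plan is to construct a composite Lyapunov function that combines a term for the translational/link dynamics, a term for the attitude dynamics, and two terms that absorb the integral errors $e_{\xb}$ and $e_I$. The overall strategy parallels the standard backstepping argument for geometric quadrotor control: the attitude controller forces $-fRe_3 \to A$ exponentially fast in the appropriate sense, and once the thrust vector tracks the fictitious input $u$, Proposition \ref{prop:propchap3_2} together with the Hurwitz choice of $\mathds{A}$ guarantees that the linearized (hence, locally, the nonlinear) translational/link subsystem is asymptotically stable. First I would write the closed-loop translational dynamics in the form $\dot z_1 = \mathds{A} z_1 + \mathds{B}(\g(\xb,\dot\xb) + \Delta_x - K_z\sat_\sigma(e_\xb) + X)$, where $X$ denotes the perturbation coming from the difference between the ideal force $A$ and the actual thrust $-fRe_3$; a direct computation shows that this perturbation is bounded by a constant times $\|e_R\|$ (this is the usual $(b_{3_c} - Re_3)$ estimate, controlled once $\Psi(R,R_c) < \psi_1 < 1$), plus higher-order terms that vanish quadratically near the equilibrium.

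Next I would set up the Lyapunov candidate. For the attitude part, use the standard $\mathcal{V}_R = \tfrac12 e_\Omega \cdot J e_\Omega + k_R\Psi(R,R_c) + c_2 e_R\cdot J e_\Omega + \tfrac{k_I}{2}\|e_I - \tilde\Delta_R/k_I\|^2$-type expression (shifted so that the fixed disturbance $\Delta_R$ is cancelled at equilibrium by the integral term $e_I$), and invoke Proposition \ref{prop:propchap2_2}-style bounds to get $\dot{\mathcal{V}}_R \le -\lambda_m(W_2)\|(\|e_R\|,\|e_\Omega\|)\|^2$ along the closed loop. For the translational part, use $\mathcal{V}_x = z_1^T P z_1$ with the $P$ furnished by $\mathds{A}^T P + P\mathds{A} = -Q$, augmented by an integral-error term of the form $\tfrac12 K_z$-weighted $\|e_\xb - \Delta_x/(\text{gain})\|^2$ so that the constant disturbance $\Delta_x$ is annihilated at the shifted equilibrium; the bound $\|\Delta_x\|_\infty \le \delta$ from \refeqn{disturbancecond} is what guarantees the saturation $\sat_\sigma$ (with $\sigma$ chosen $>$ the required steady-state value) never clips the equilibrium offset. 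Differentiating, $\dot{\mathcal{V}}_x \le -\tfrac12\lambda_m(Q)\|z_1\|^2 + (\text{cross terms in }\|z_1\|\,\|e_R\|) + (\text{higher-order remainder})$, using that $P\mathds{B}$ times the control perturbation telescopes against $e_\xb$.

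Then I would form $\mathcal{V} = \mathcal{V}_x + \mathcal{V}_R$ and show that $\dot{\mathcal{V}}$ is bounded above by a negative-definite quadratic form in $(\|z_1\|, \|e_R\|, \|e_\Omega\|)$ provided the gains satisfy inequalities of exactly the type in \refeqn{c1b}--\refeqn{kRkWb} and \refeqn{c2}: the cross terms coupling attitude error into translational error must be dominated by the diagonal terms, which is a Schur-complement condition on a $3\times 3$ (block) matrix analogous to $W_1, W_{12}, W_2$. The higher-order term $\g(\xb,\dot\xb)$ and the attitude-induced perturbation are both $o(\|z_1\|) + O(\|e_R\|)$ near the equilibrium, so on a sufficiently small sublevel set of $\mathcal{V}$ (intersected with $\{\Psi(R,R_c) < \psi_1\}$) they are absorbed. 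This yields stability in the sense of Lyapunov and, via boundedness of $\mathcal{V}$, uniform boundedness of the shifted integral errors — hence of $e_I$ and $e_\xb$ themselves. Finally, asymptotic convergence of $e_R, e_\Omega, \xb, \dot\xb$ follows from an application of LaSalle's invariance principle (or Barbalat's lemma on $\dot{\mathcal{V}}$, which is uniformly continuous once all signals are shown bounded): the largest invariant set in $\{\dot{\mathcal{V}} = 0\}$ forces $z_1 = 0$ and $e_R = e_\Omega = 0$, and on that set the dynamics pin the integral terms to the constants that exactly cancel $\Delta_x, \Delta_R$.

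\textbf{Main obstacle.} The hardest part will be controlling the coupling perturbation $X$ between the ideal force $A$ and the actual thrust $-fRe_3$ and showing it is genuinely dominated: unlike the point-mass case, here $A$ itself depends on the full state $\xb, \dot\xb, e_\xb$ (through $K_x, K_{\dot x}, K_z$), so $\dot\Omega_c$ and the time-derivatives appearing in $M$ involve $\ddot\xb$, which in turn couples back through the singular mass matrix $\Mb$ of Proposition \ref{prop:propchap3_2}; closing this loop cleanly — i.e., verifying that all these feedback terms remain higher-order or linearly bounded by $\|z_1\|$ and $\|e_R\|$ on the relevant sublevel set, with no algebraic loop — is the delicate estimate, and it is also where the precise (and somewhat implicit) "there exist controller parameters and gains" quantifier in the statement does its work.
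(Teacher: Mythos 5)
Your proposal follows essentially the same route as the paper's proof in Appendix A.7: the same shifted-integral Lyapunov terms for $e_I$ and $e_{\xb}$ (the paper writes the translational one as $2\int_{p_{eq}}^{e_{\xb}}(\Bb K_{z}\satr_{\sigma}(\mu)-\Bb\Delta_{x})\cdot d\mu$, which reduces to your quadratic form in the unsaturated region and makes the cancellation against $\dot{e}_{\xb}=(P\mathds{B})^{T}z_{1}$ exact even under saturation, with $\delta<k_z\sigma$ playing the role you describe), the same bound $\norm{X}\leq\norm{A}\norm{e_R}$ with $\norm{A}$ linearly bounded by $\norm{z_1}$, and the same LaSalle--Yoshizawa conclusion. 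The coupling you flag as the main obstacle is handled there exactly as you anticipate, by absorbing the attitude-induced perturbation into the off-diagonal entry of a $2\times 2$ matrix $W$ in $(\norm{z_1},\norm{z_2})$ and imposing positive definiteness.
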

\begin{proof}
See Appendix \ref{sec:pfchap3_3}
\end{proof}
By utilizing geometric control systems for quadrotor, we show that the hanging equilibrium of the links can be asymptotically stabilized while translating the quadrotor to a desired position. The control systems proposed explicitly consider the coupling effects between the cable/load dynamics and the quadrotor dynamics. We presented a rigorous Lyapunov stability analysis to establish stability properties without any timescale separation assumptions or singular perturbation, and a new nonlinear integral control term is designed to guarantee robustness against unstructured uncertainties in both rotational and translational dynamics.

}


\subsection {\normalsize Numerical Example}\label{sec:chap3numerical}
{\addtolength{\leftskip}{0.5in}
The desirable properties of the proposed control system are illustrated by a numerical example. Properties of a quadrotor are chosen as
\begin{align*}
m=0.5\,\mathrm{kg},\quad J=\mathrm{diag}[0.557,\,0.557,\,1.05]\times 10^{-2}\,\mathrm{kgm^2}.
\end{align*}
Five identical links with $n=5$, $m_i=0.1\,\mathrm{kg}$, and $l_i=0.1\,\mathrm{m}$ are considered. Controller parameters are selected as follows: $k_x=12.8$, $k_v=4.22$, ${k_R}=0.65$, ${k_\Omega}= 0.11$, $k_{I}=1.5$, $c_{1}=c_{2}=0.7$. Also $k_{q}$ and $k_{\omega}$ are defined as 
\begin{align*}
&k_q=[11.01,\,6.67,\,1.97,\,0.41,\,0.069],\\
&k_\omega=[0.93,\,0.24,\,0.032,\,0.030,\,0.025].
\end{align*}

\begin{figure}
\centerline{
	\subfigure[Attitude error function $\psi$]{
		\includegraphics[width=0.4\columnwidth]{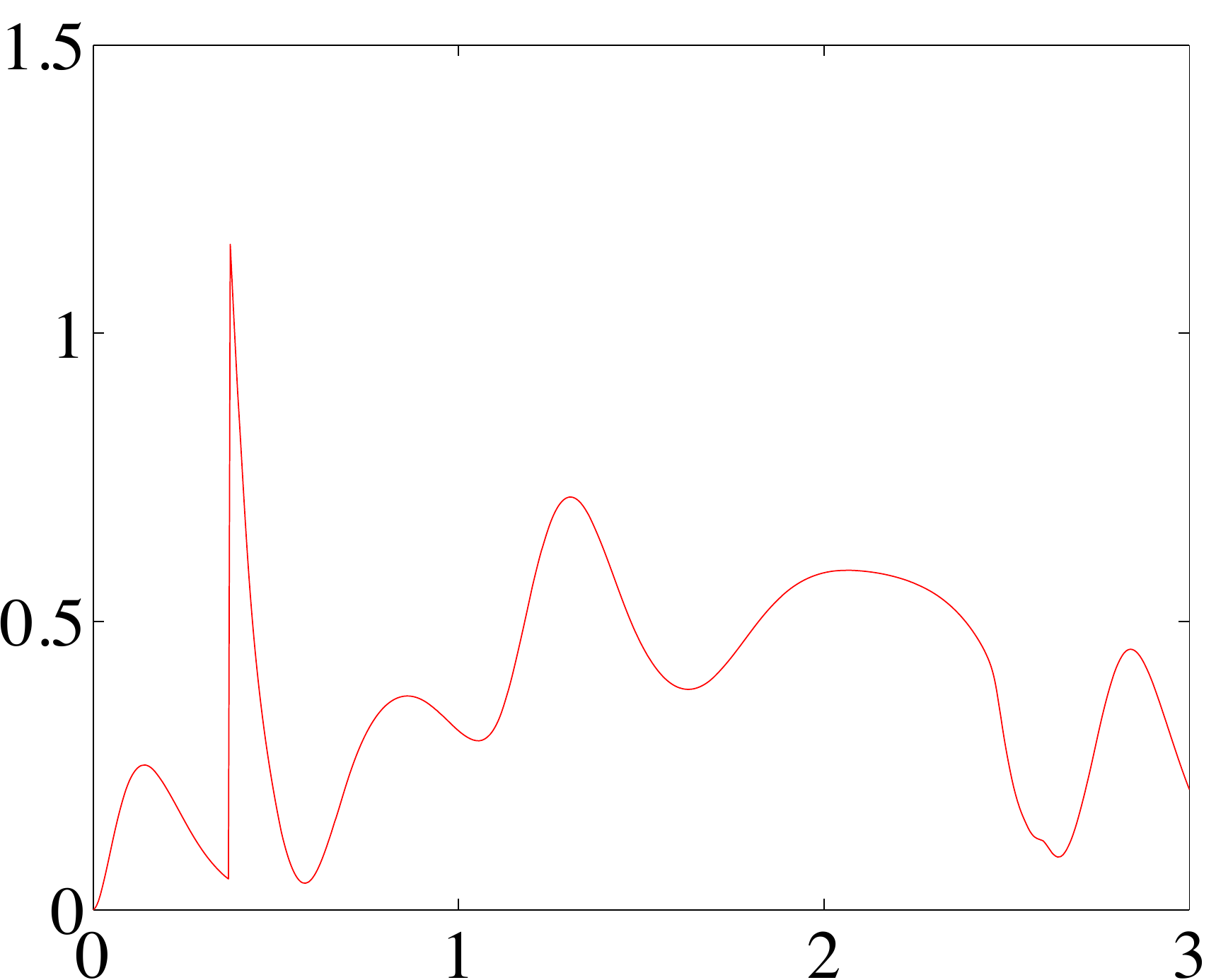}\label{fig:psi_without}}
	\subfigure[Direction error $e_{q}$ and angular velocity error $e_{\omega}$ for links]{
		\includegraphics[width=0.4\columnwidth]{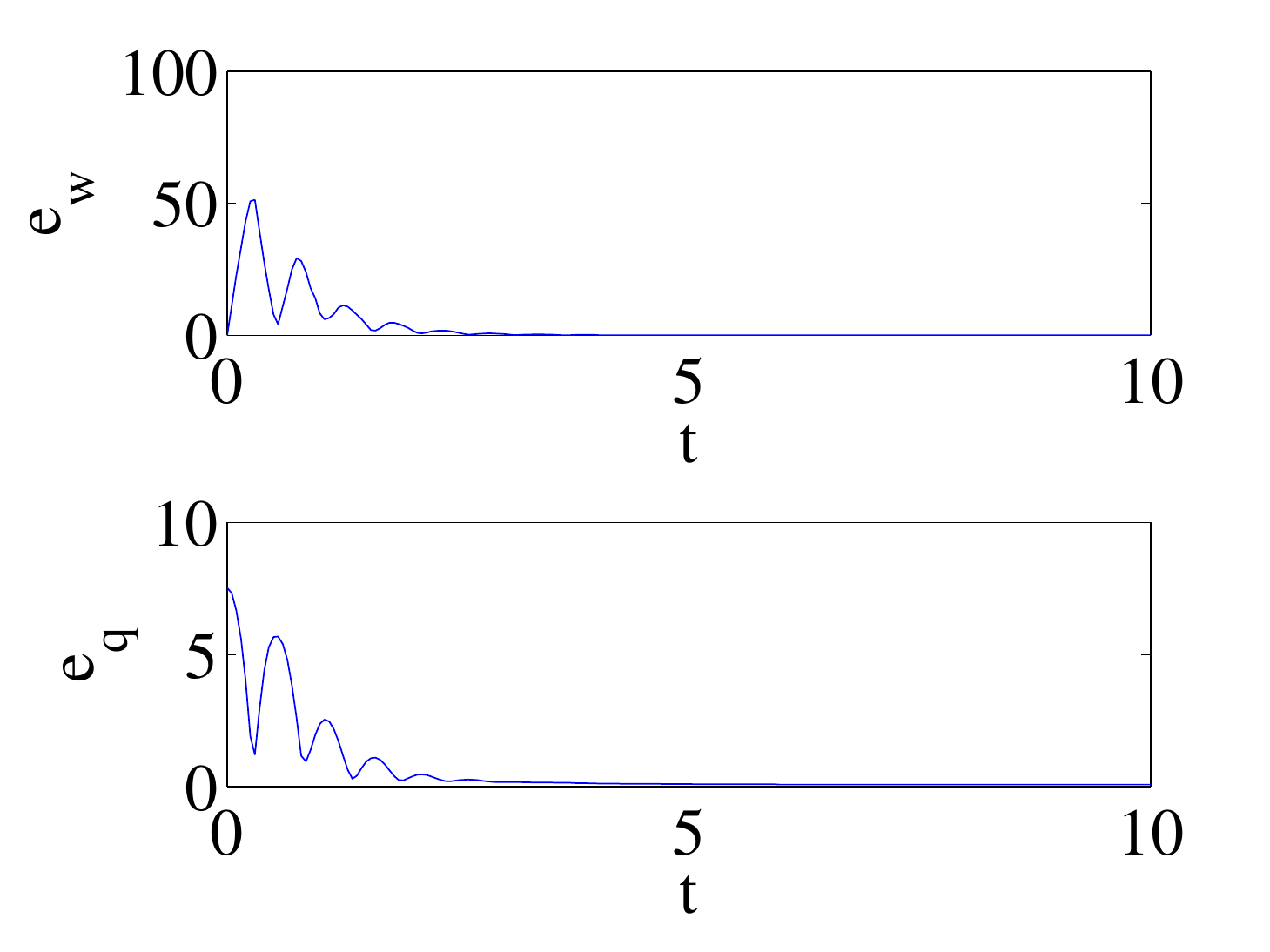}\label{fig:errors_without}}
}
\centerline{
	\subfigure[Quadrotor angular velocity $\Omega$:blue, $\Omega_{d}$:red]{
		\includegraphics[width=0.4\columnwidth]{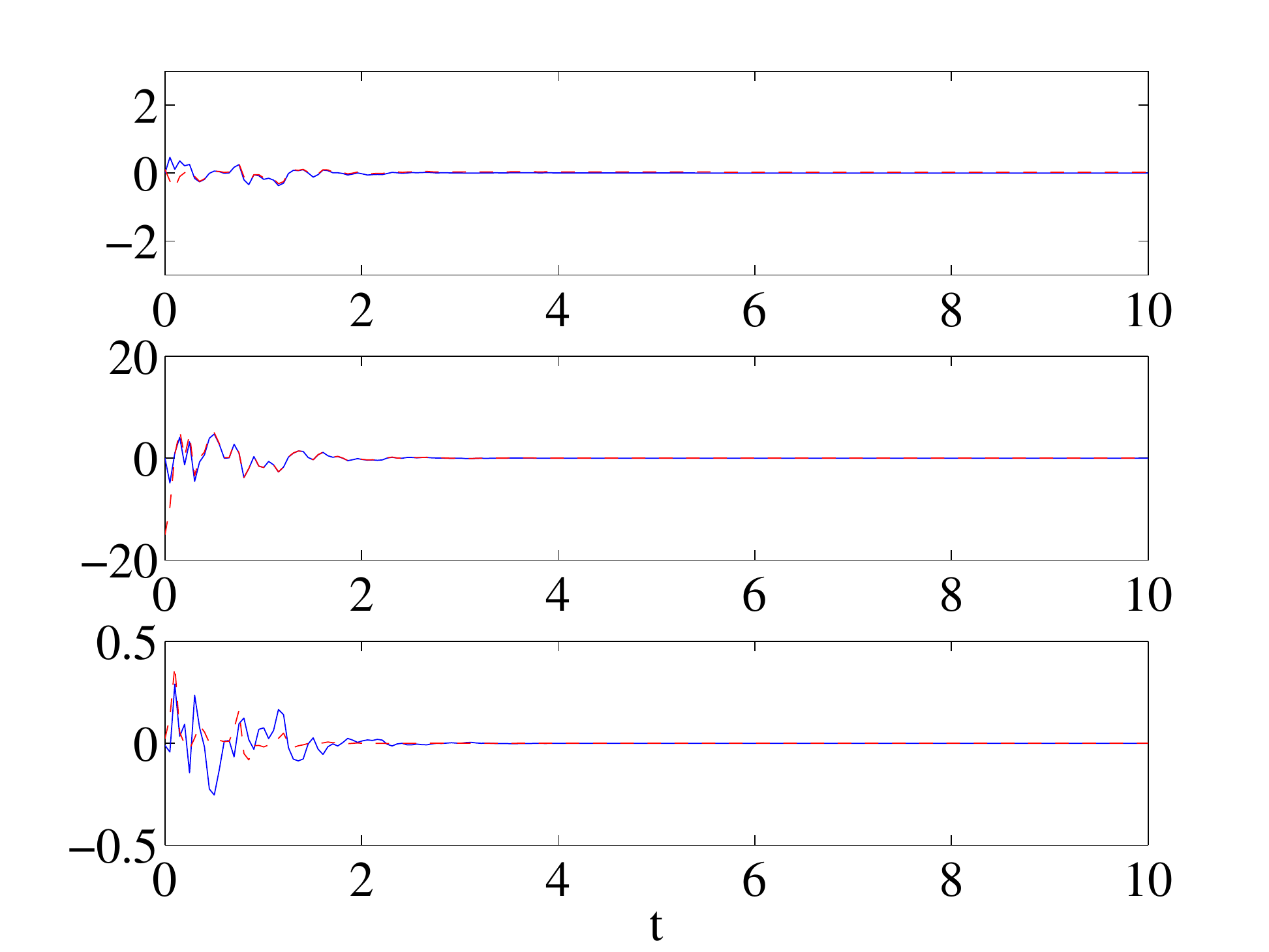}}
	\subfigure[Control force $u$]{
		\includegraphics[width=0.4\columnwidth]{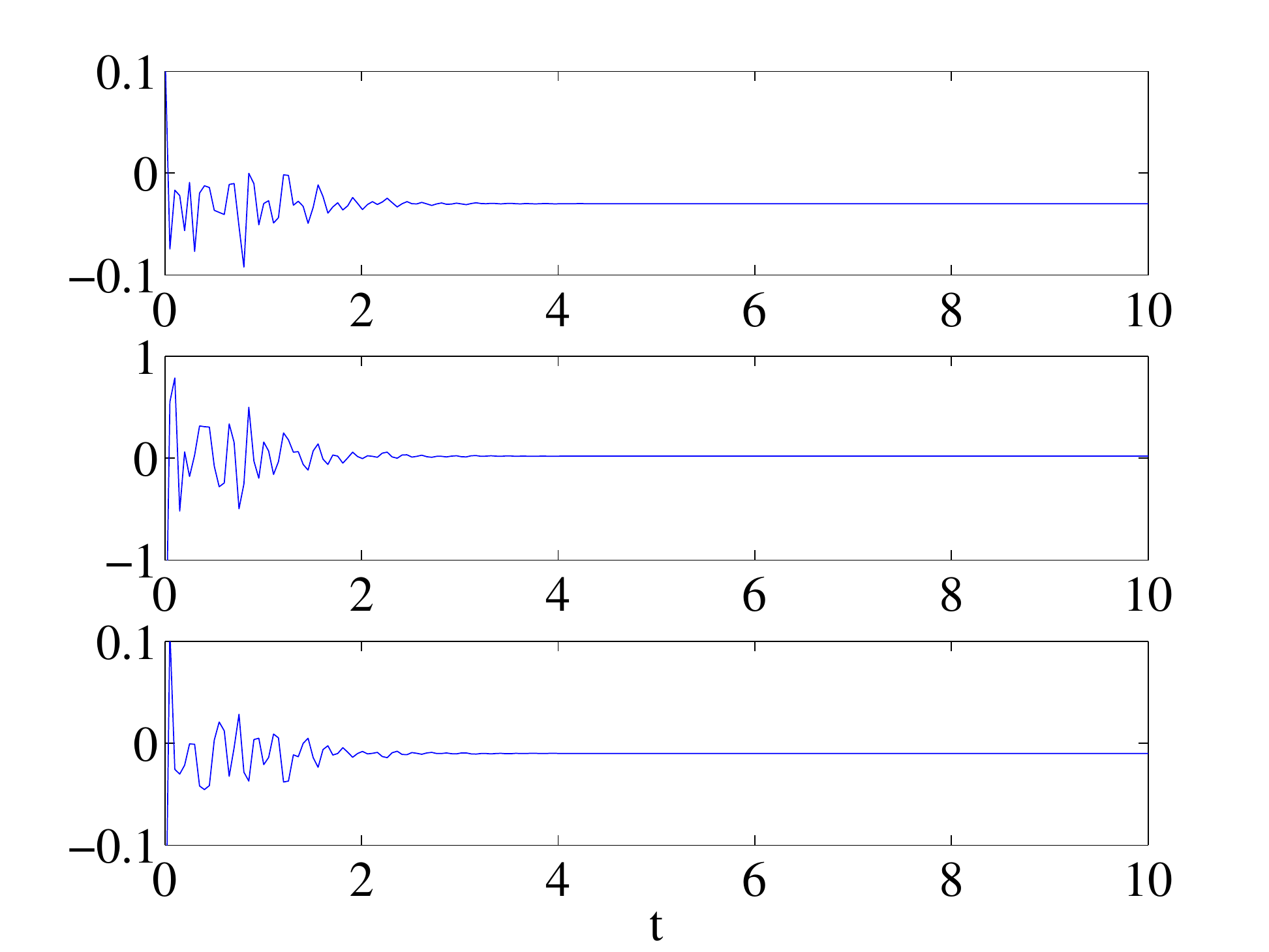}}
}
\centerline{
	\subfigure[Quadrotor position]{
		\includegraphics[width=0.4\columnwidth]{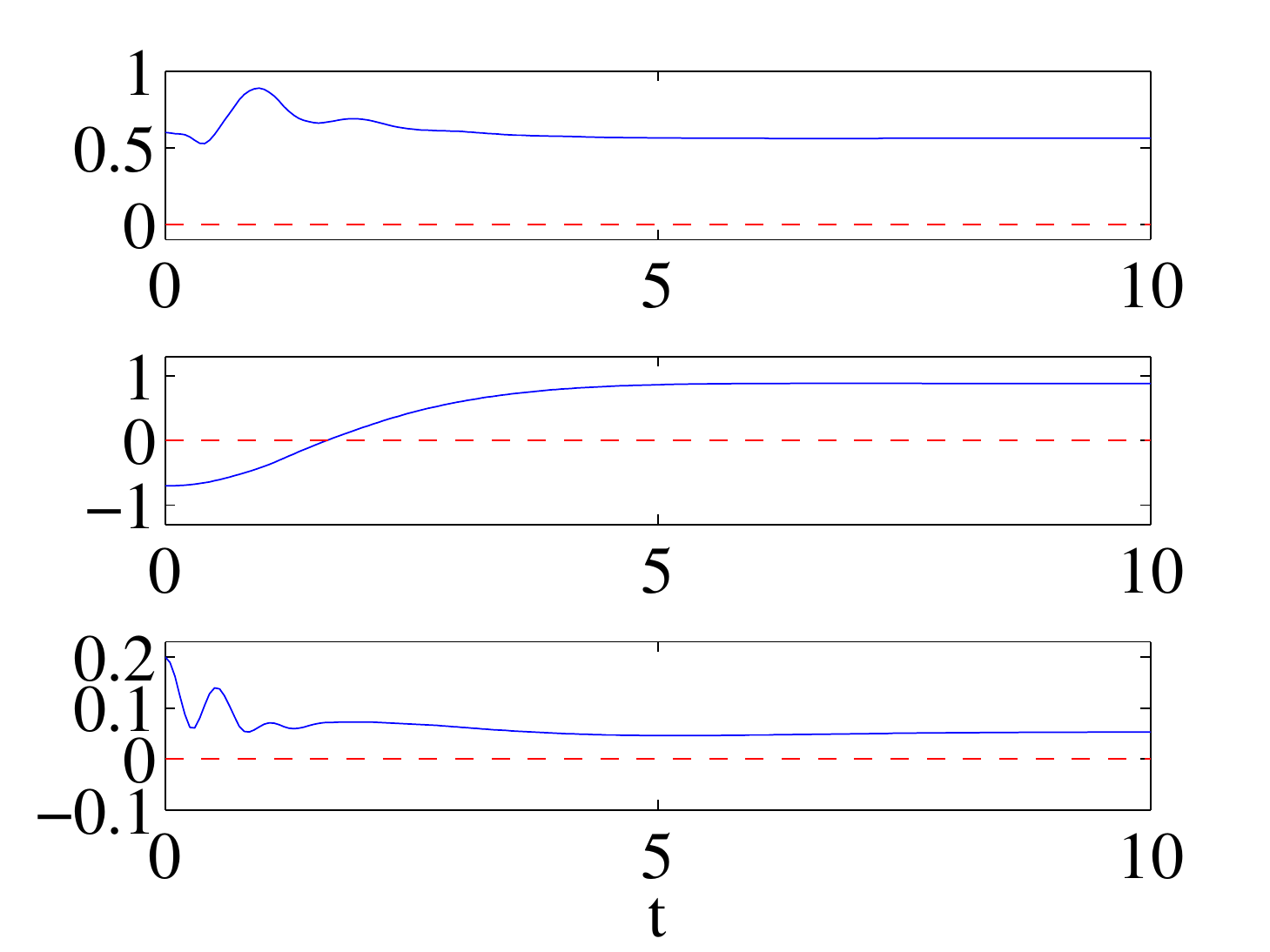}\label{fig:pos_without}}
	\subfigure[Quadrotor velocity]{
		\includegraphics[width=0.4\columnwidth]{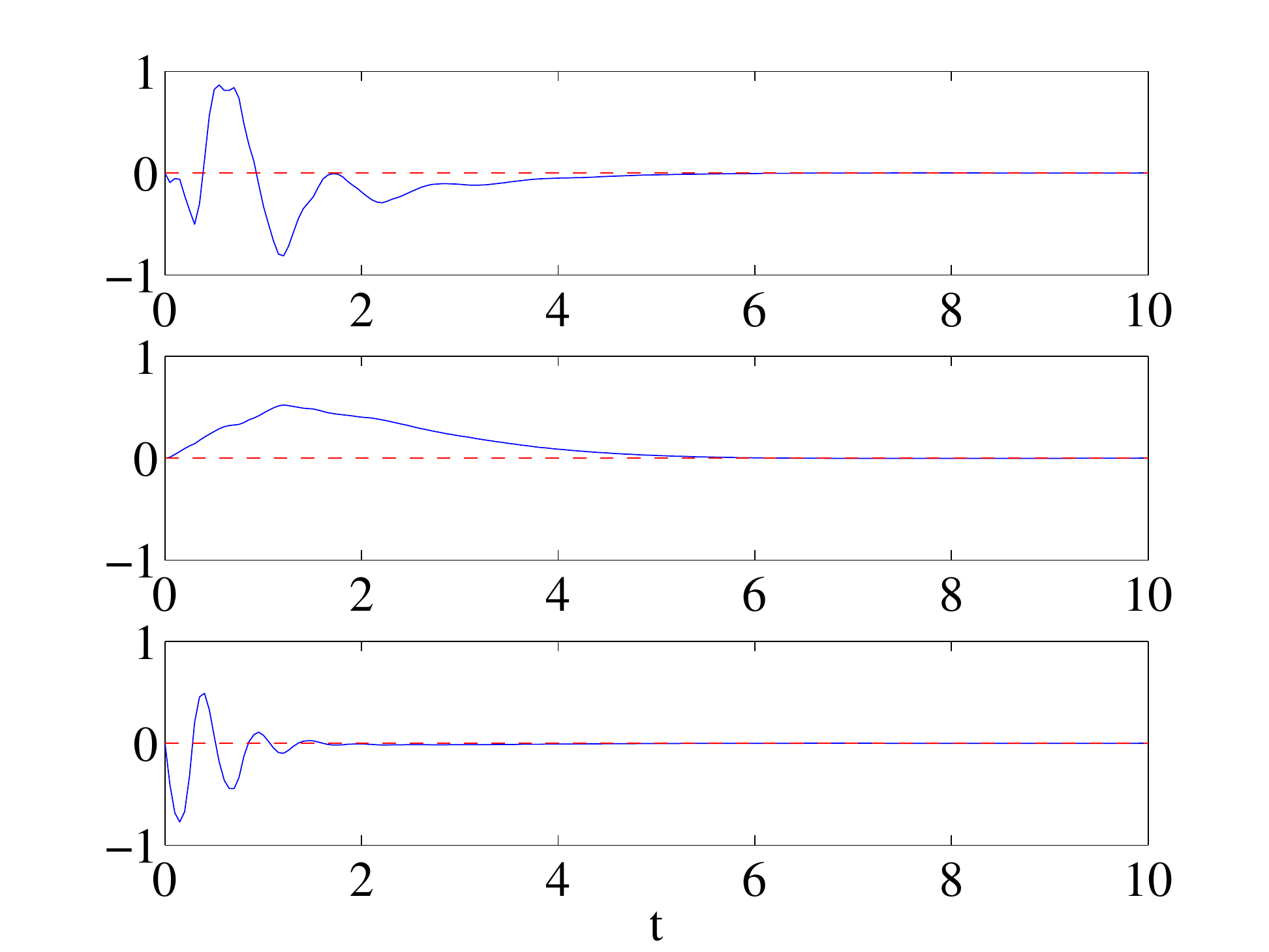}}
}
\caption{Stabilization of a payload connected to a quadrotor with 5 links without Integral term}\label{fig:simresultsWO}
\end{figure}
The desired location of the quadrotor is selected as $x_d=0_{3\times 1}$. The initial conditions for the quadrotor are given by
\begin{gather*}
x(0)=[0.6;-0.7;0.2], \ \dot{x}(0)=0_{3\times 1},\\
R(0)=I_{3\times 3},\quad \Omega(0)=0_{3\times 1}.
\end{gather*}
The initial direction of the links are chosen such that the cable is curved along the horizontal direction, as illustrated at Figure \ref{fig:fisrt_sub11}, and the initial angular velocity of each link is chosen as zero. 
The following two fixed disturbances are included in the numerical simulation.
\begin{align*}
&\Delta_R=[0.03,-0.02,0.01]^T,\\
&\Delta_x=[-0.0125,0.0125,0.01]^T.
\end{align*}

\begin{figure}
\centerline{
	\subfigure[Attitude error function $\psi$]{
		\includegraphics[width=0.4\columnwidth]{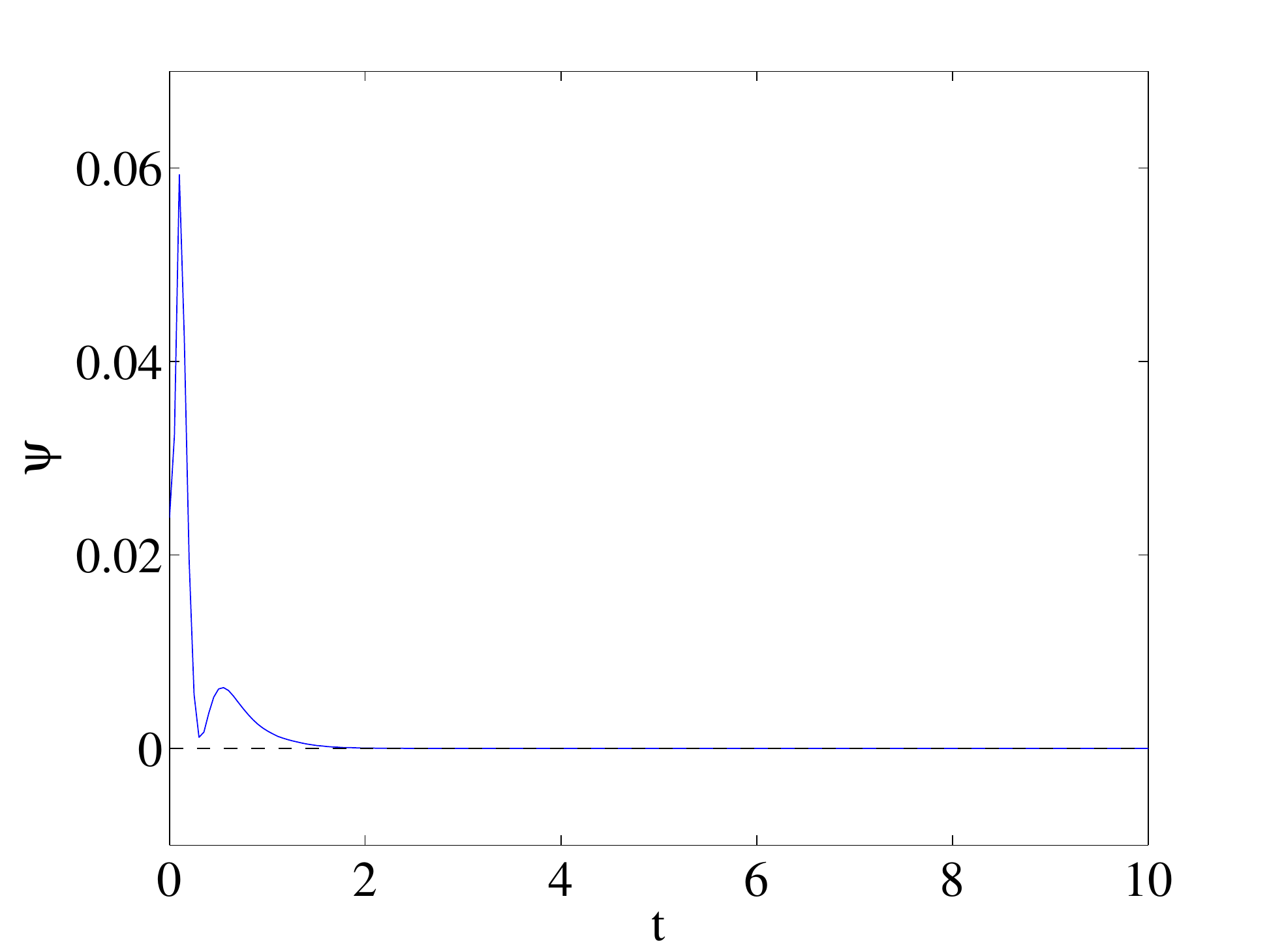}\label{fig:psi_with}}
	\subfigure[Direction error $e_{q}$ and angular velocity error $e_{\omega}$ for links]{
		\includegraphics[width=0.4\columnwidth]{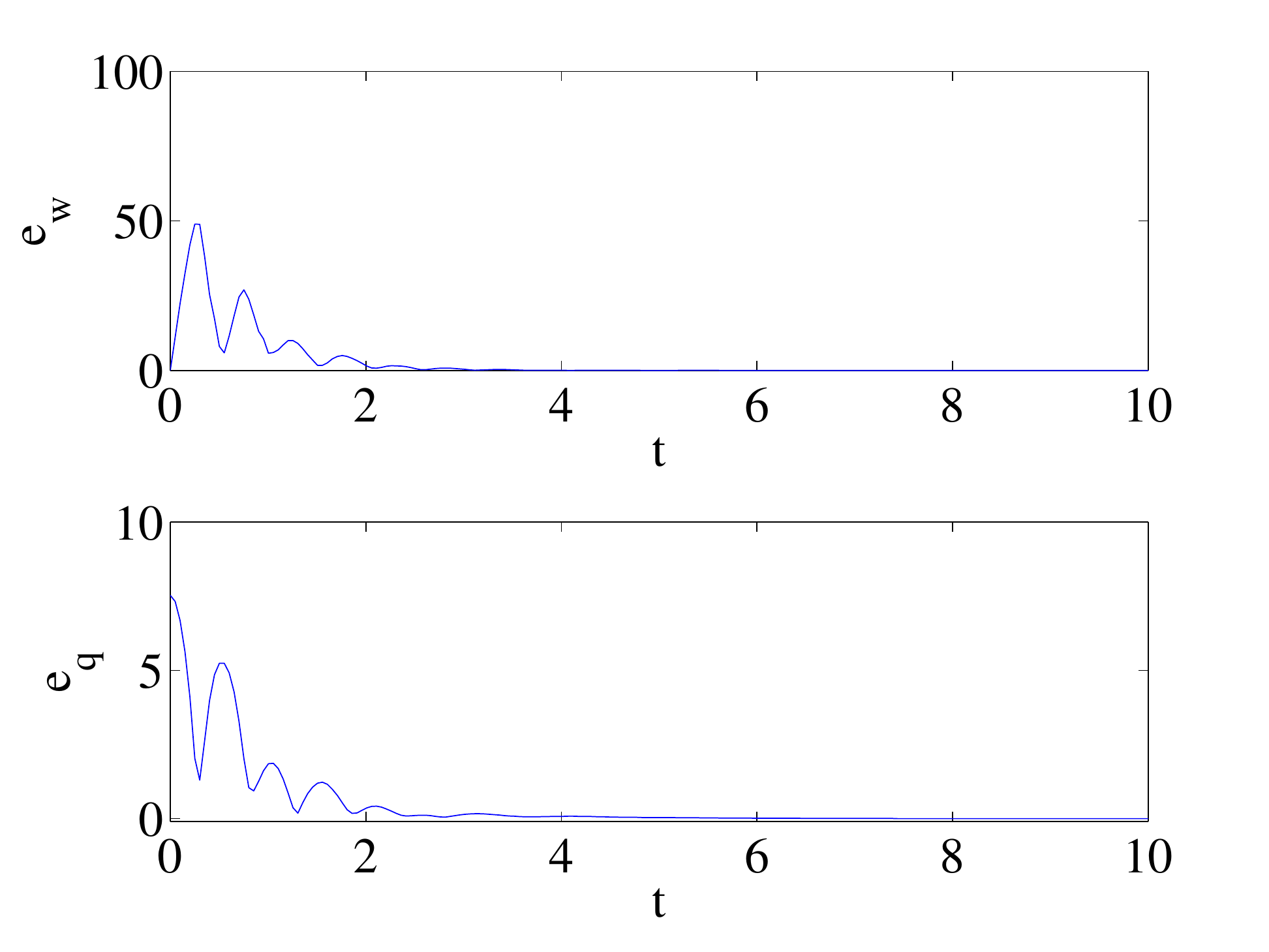}\label{fig:errors_with}}
}
\centerline{
	\subfigure[Quadrotor angular velocity $\Omega$:blue, $\Omega_{d}$:red]{
		\includegraphics[width=0.4\columnwidth]{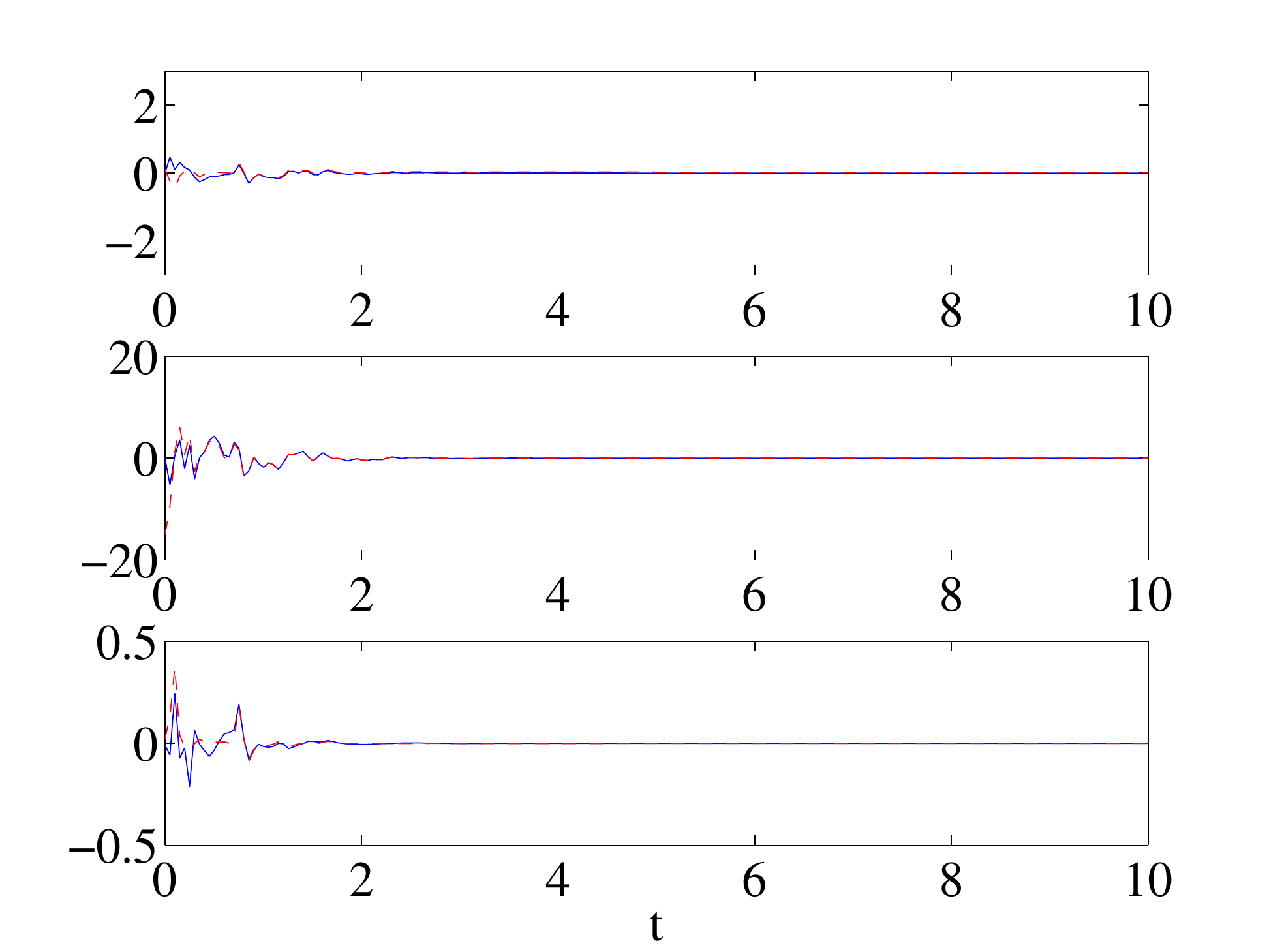}}
	\subfigure[Control force $u$]{
		\includegraphics[width=0.4\columnwidth]{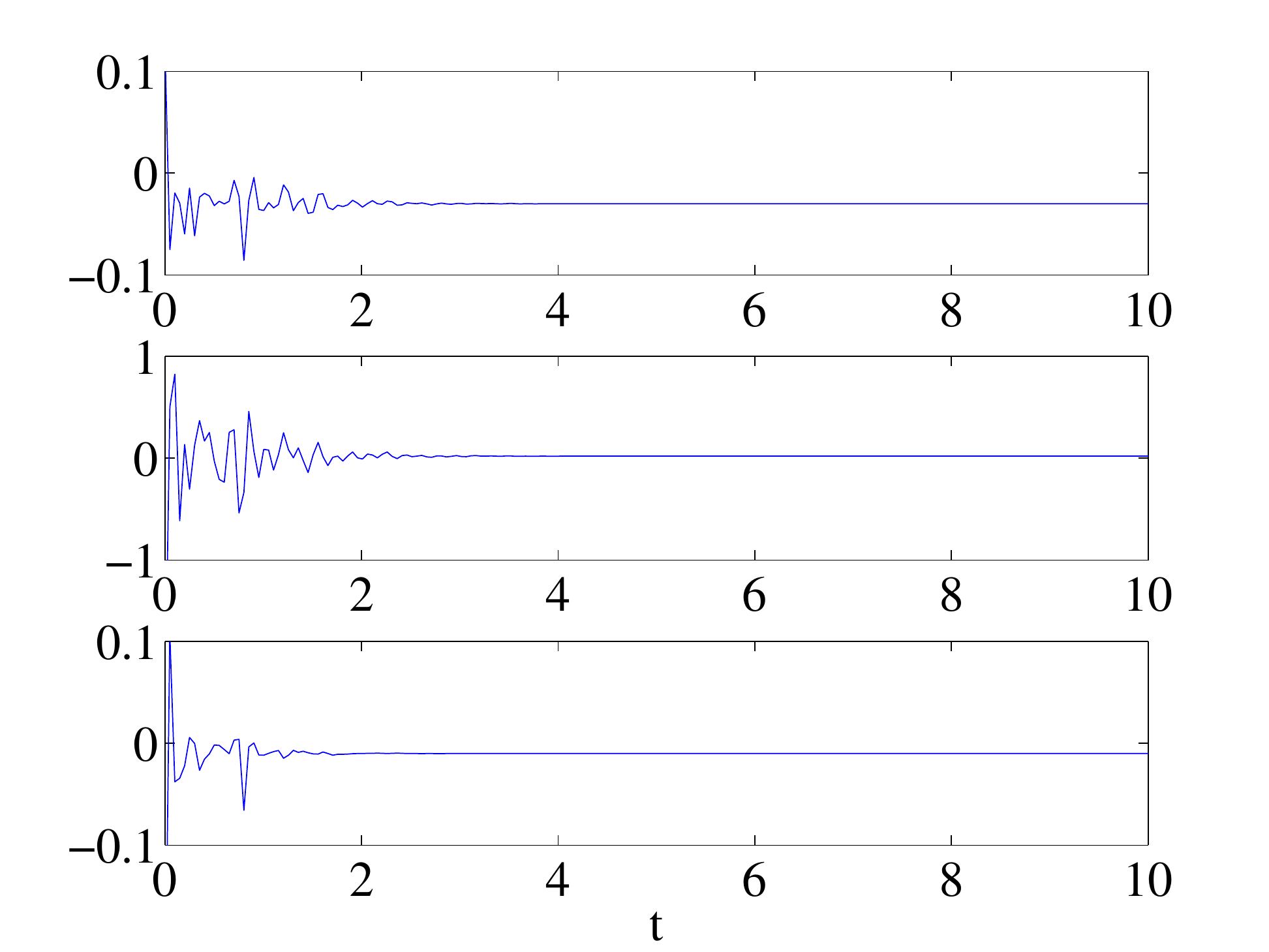}}
}
\centerline{
	\subfigure[Quadrotor position]{
		\includegraphics[width=0.4\columnwidth]{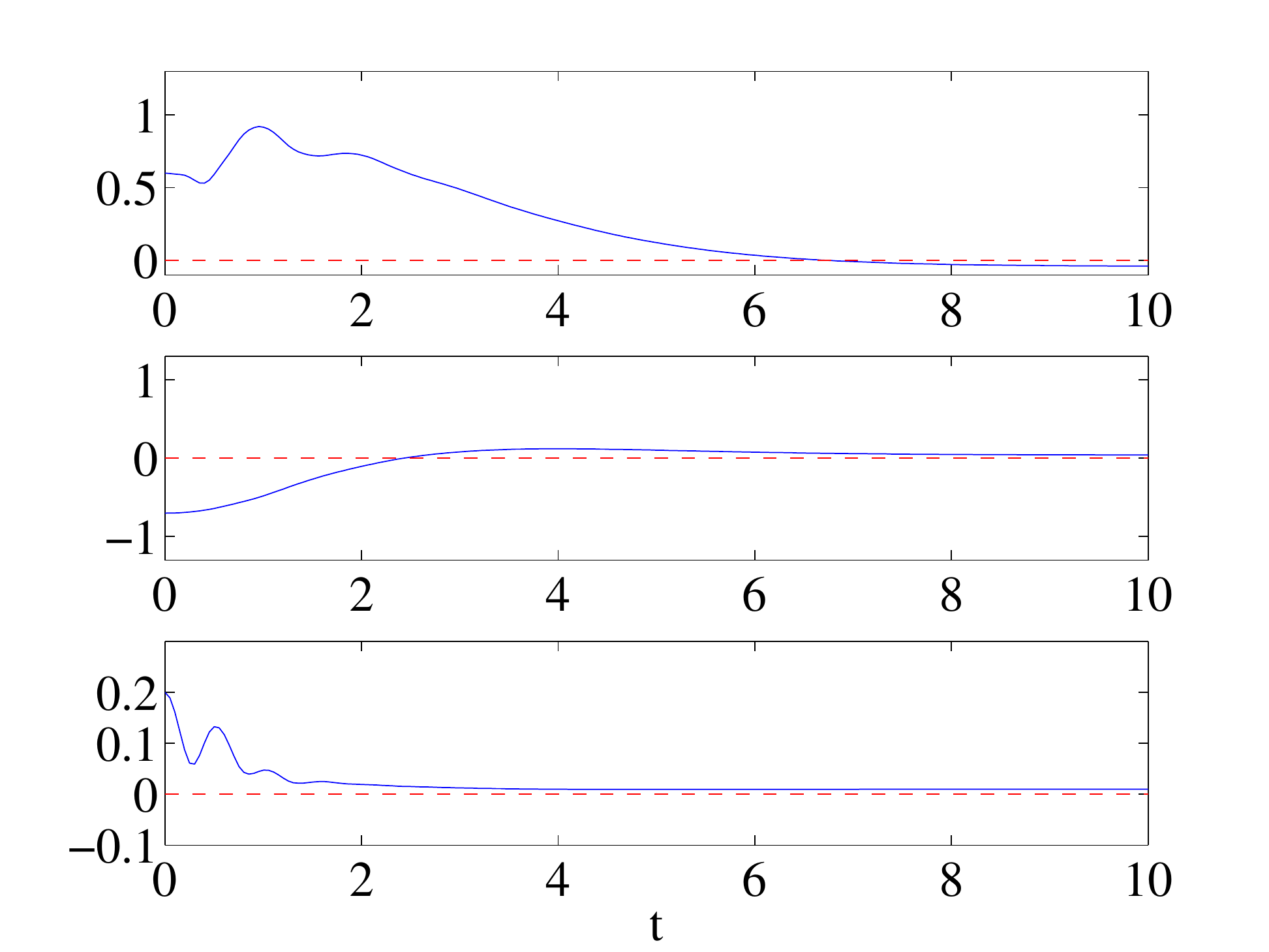}\label{fig:pos_with}}
	\subfigure[Quadrotor velocity]{
		\includegraphics[width=0.4\columnwidth]{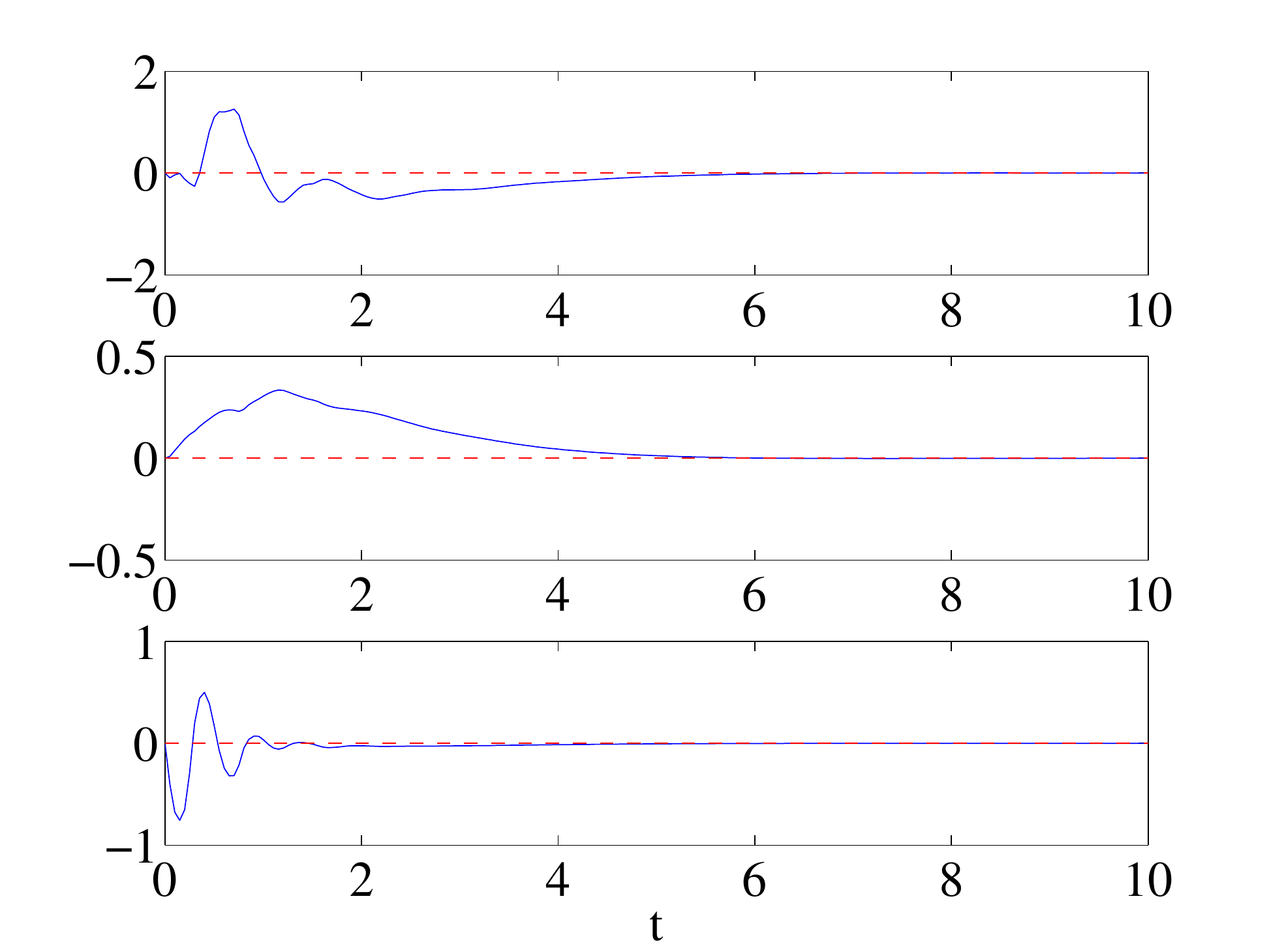}}
}
\caption{Stabilization of a payload connected to a quadrotor with 5 links with Integral term}\label{fig:simresultsW}
\end{figure}

We considered two cases for this numerical simulation to compare the effect of the proposed integral term in the presence of disturbances as follows: (i) with integral term and (ii) without integral term, to emphasize the effect of the integral term. The simulation results for each case are illustrated  at Figures \ref{fig:simresultsWO} and \ref{fig:simresultsW}, respectively. The corresponding maneuvers of the quadrotor and the links for the second case are illustrated at Figure \ref{animationsim}. 

Figures \ref{fig:psi_without} and \ref{fig:psi_with} show the attitude error function defined as
\begin{align}
\psi(R,R_{d})=\frac{1}{2}\trs{[I-R_{c}^{T}R]},
\end{align}
which represents the difference between the desired attitude and the actual attitude of the quadrotor. It is shown that there is a steady state error for the first case without the integral control term, but the error is eliminated at Figure \ref{fig:psi_with} for the second case. Next, we define the two following error cumulative variables to show the stabilizing performance for all links:
\begin{align}
e_{q}=\sum_{i=1}^{n}{\|q_{i}-e_3\|},\quad e_{\omega}=\sum_{i=1}^{n}{\|\omega_{i}\|},
\end{align}
and the results of numerical simulation for these error functions are presented in \ref{fig:errors_without} and \ref{fig:errors_with} for each case. The initial errors for the links are quite large, but they converge to zero nicely. For the first case, there exist a small steady state error in both $e_w$ and $e_q$.  Figures \ref{fig:pos_without} and \ref{fig:pos_with} show the desired location of the quadrotor (dashed line) and the actual location (solid like) for each case.

These numerical examples verifies that under the proposed control system, all of the position and attitude of the quadrotor, the direction of links, and the location of the payload asymptotically converge to their desire values, and the presented integral terms are effective in eliminating steady state errors caused by disturbances. 
 
\begin{figure}
\centering
\subfigure[$ t =0 $]
{
\includegraphics[width=0.15\columnwidth]{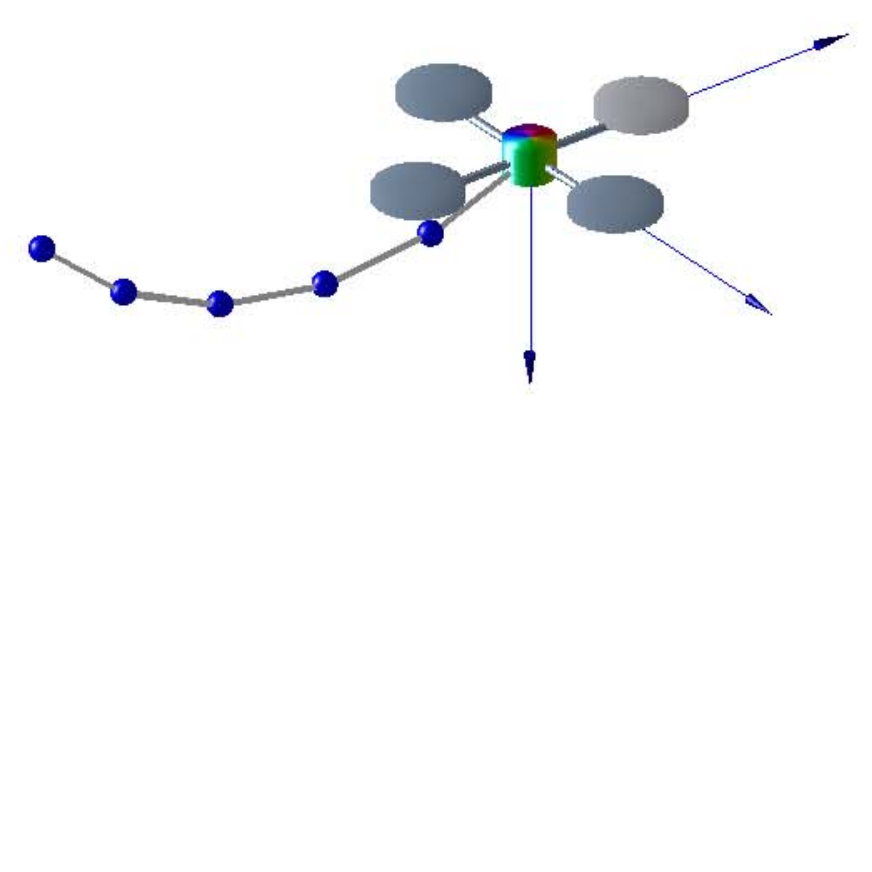}
\label{fig:fisrt_sub11}
}
\subfigure[$ t =0.2 $]
{
\includegraphics[width=0.15\columnwidth]{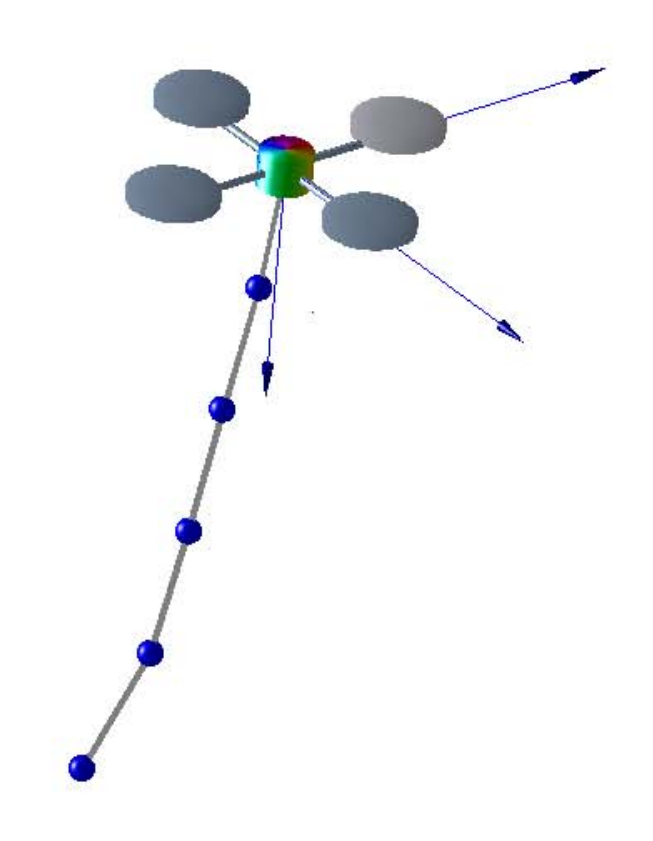}
}
\subfigure[$ t =0.35 $]
{
\includegraphics[width=0.15\columnwidth]{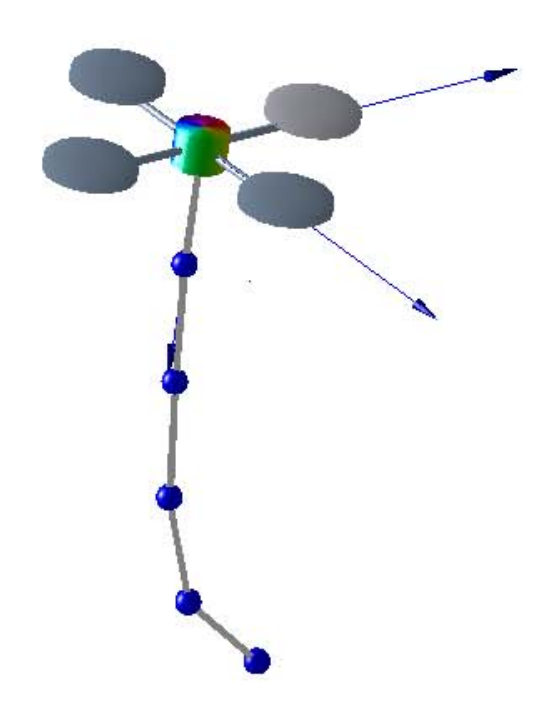}
}
\subfigure[$ t =0.40 $]
{
\includegraphics[width=0.15\columnwidth]{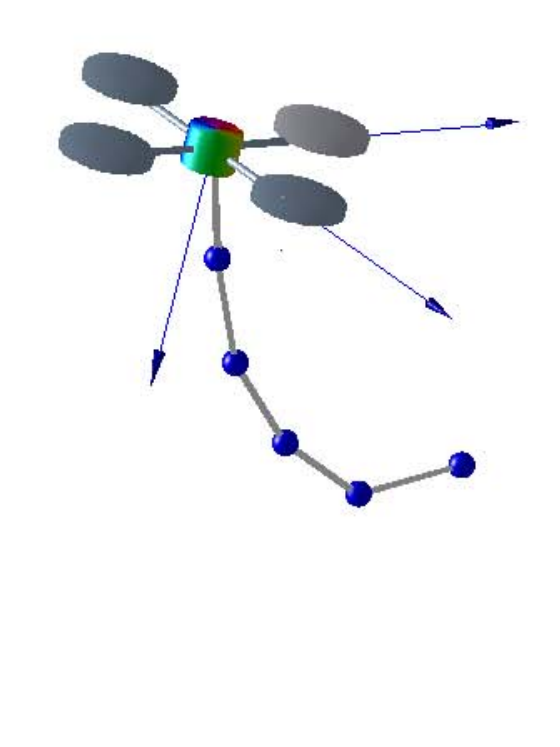}
}
\subfigure[$ t =0.42 $]
{
\includegraphics[width=0.15\columnwidth]{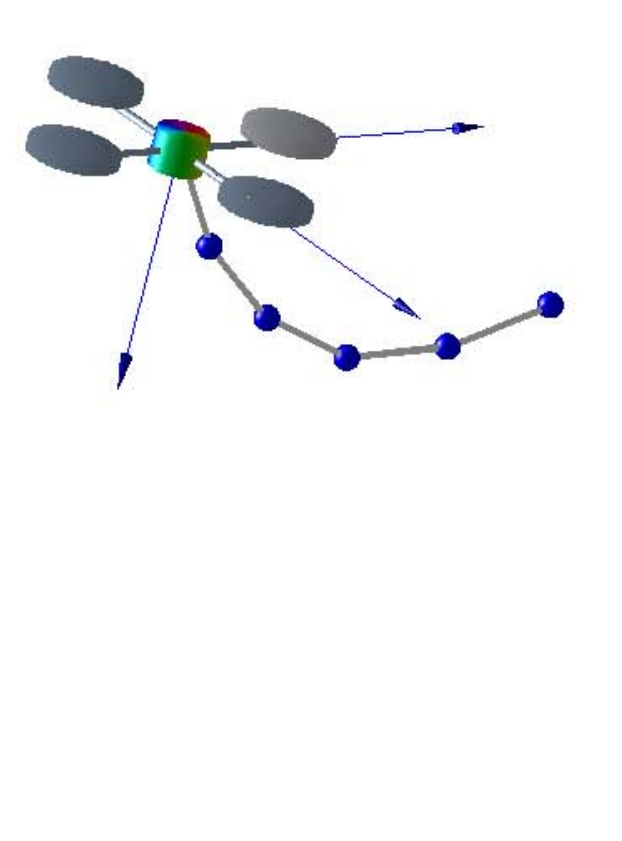}
}\\
\subfigure[$ t =0.45 $]
{
\includegraphics[width=0.15\columnwidth]{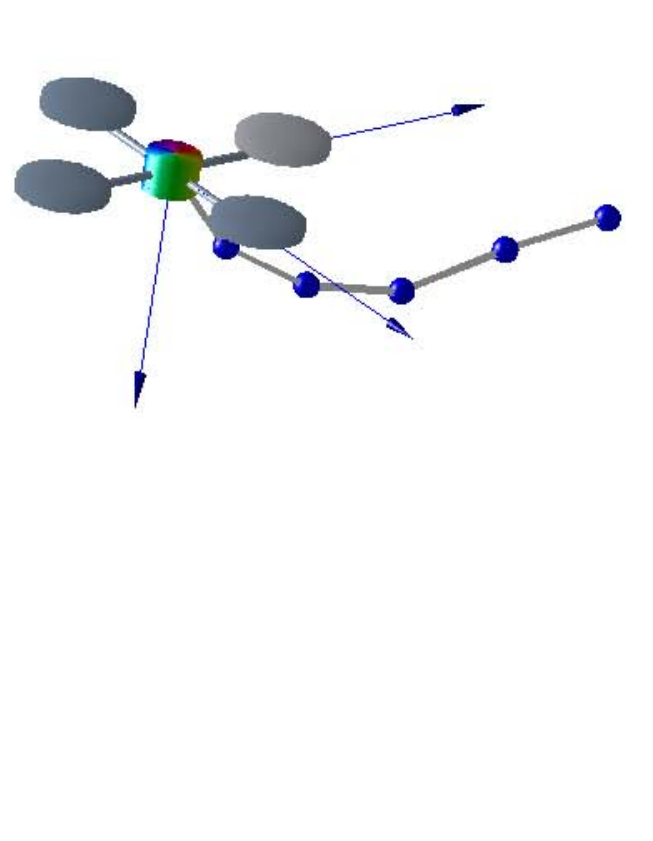}
}
\subfigure[$ t =0.5 $]
{
\includegraphics[width=0.15\columnwidth]{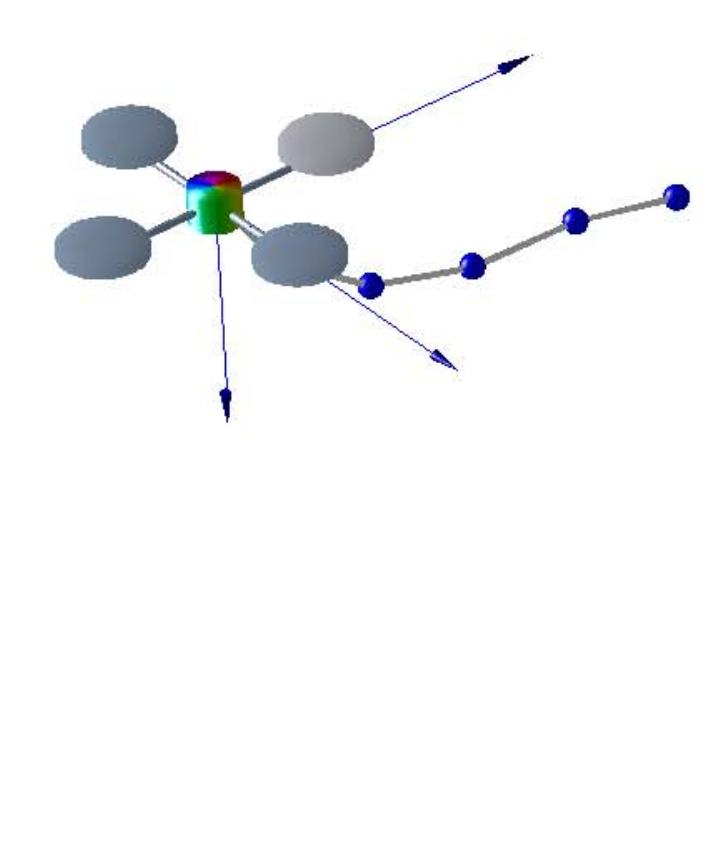}
}
\subfigure[$ t =0.6 $]
{
\includegraphics[width=0.15\columnwidth]{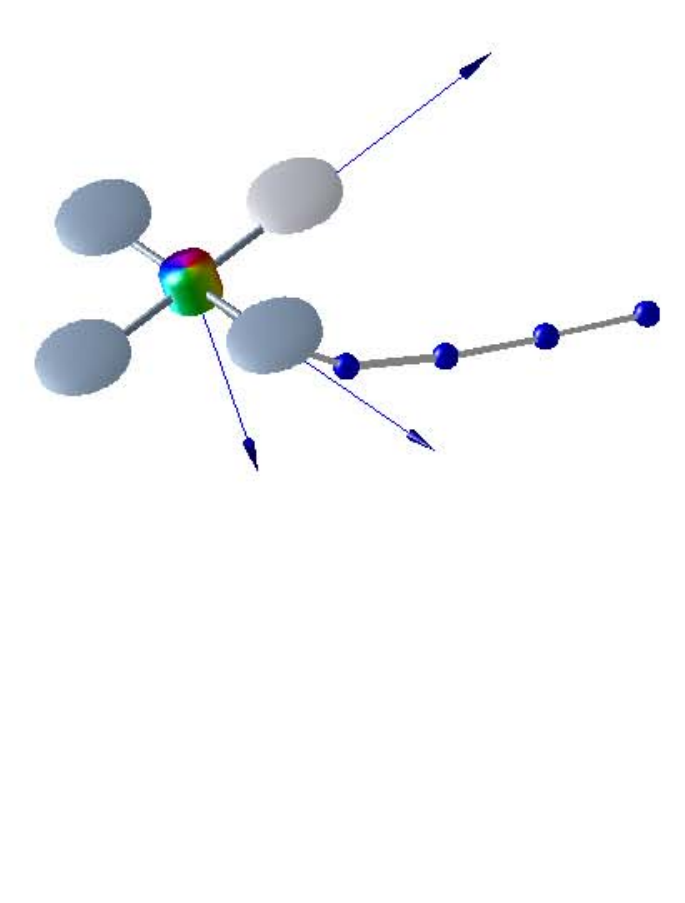}
}
\subfigure[$ t =0.7 $]
{
\includegraphics[width=0.15\columnwidth]{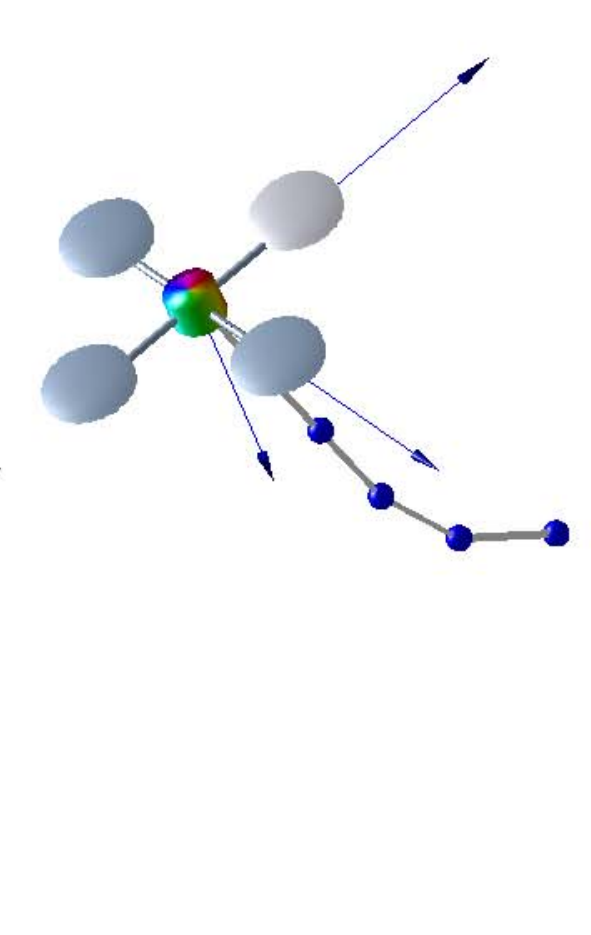}
}
\subfigure[$ t =0.8 $]
{
\includegraphics[width=0.15\columnwidth]{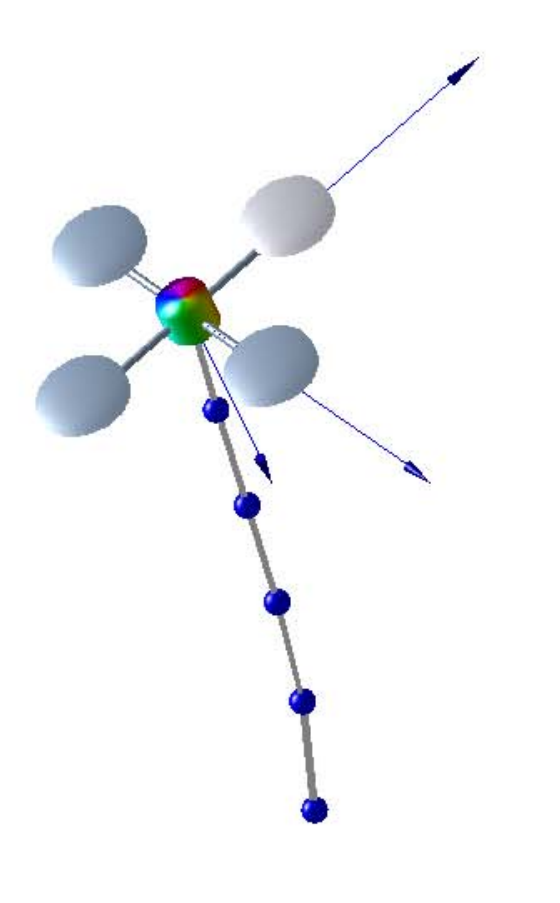}
}\\\subfigure[$ t =0.9 $]
{
\includegraphics[width=0.15\columnwidth]{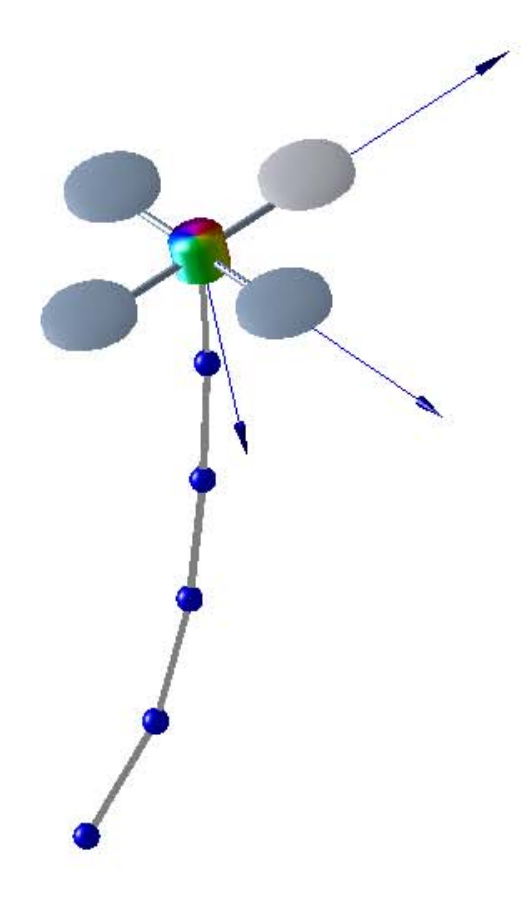}
}
\subfigure[$ t =1.3 $]
{
\includegraphics[width=0.15\columnwidth]{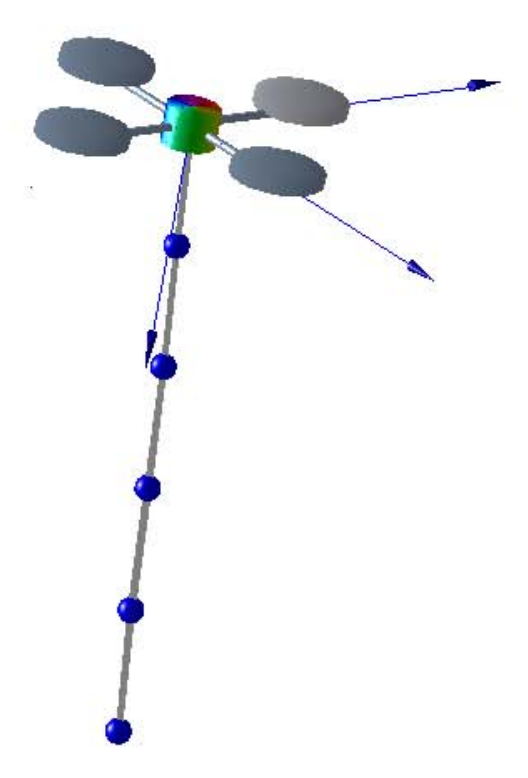}
}
\subfigure[$ t =1.5 $]
{
\includegraphics[width=0.15\columnwidth]{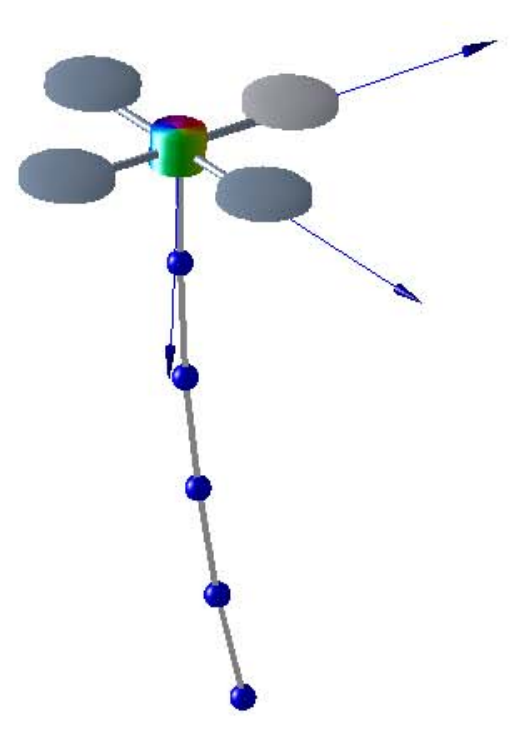}
}
\subfigure[$ t =2.0 $]
{
\includegraphics[width=0.15\columnwidth]{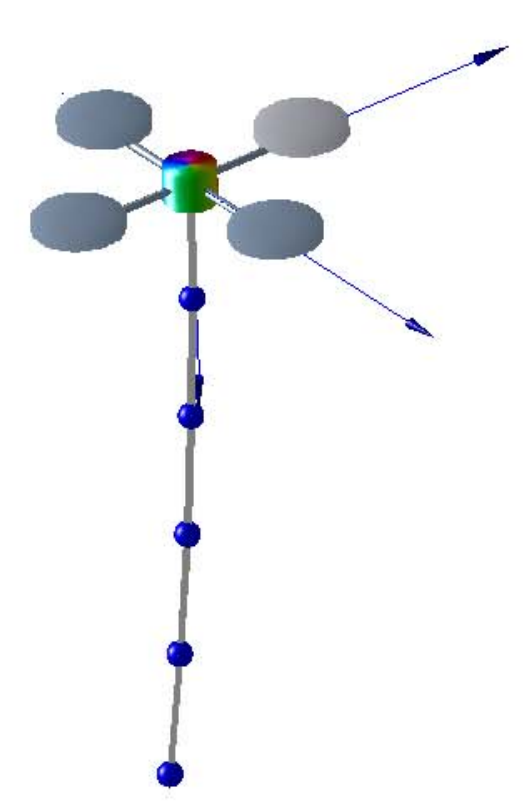}
}
\subfigure[$ t =10.0 $]
{
\includegraphics[width=0.15\columnwidth]{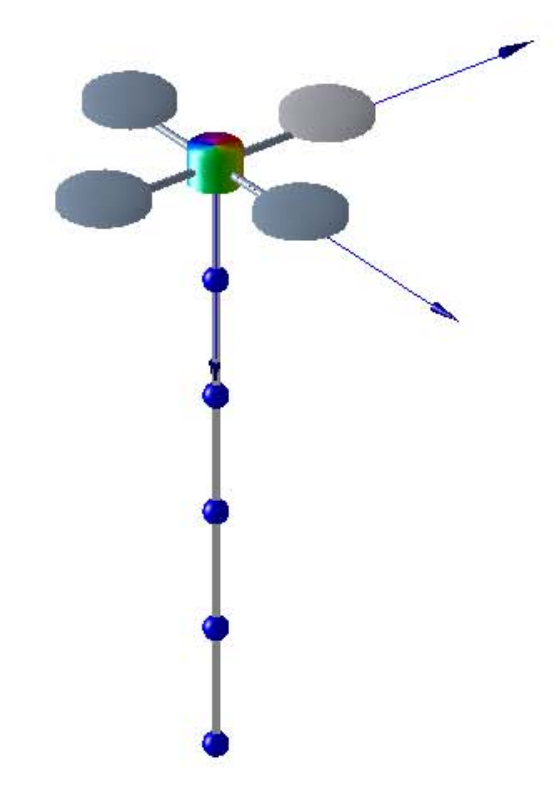}
}
\caption{Snapshots of the controlled maneuver}
\label{animationsim}
\end{figure}
}

\newcommand{\Pb}{\mathbf{P}}
\newcommand{\Nb}{\mathbf{N}}
\newcommand{\zb}{\mathbf{z}}

\newpage
\begin{singlespace}
\section{\protect \centering Chapter 4: Multiple Quadrotor UAVs Transporting a Rigid Body}
\end{singlespace}
\setcounter{section}{4}
\doublespacing
There are various applications for aerial load transportation such as usage in construction, military operations, emergency response, or delivering packages. Load transportation with the cable-suspended load has been studied traditionally for a helicopter~\cite{CicKanJAHS95,BerPICRA09} or for small unmanned aerial vehicles such as quadrotor UAVs~\cite{PalCruIRAM12,MicFinAR11,MazKonJIRS10}. 

In most of the prior works, the dynamics of aerial transportation has been simplified due to the inherent dynamic complexities. For example, it is assumed that the dynamics of the payload is considered completely decoupled from quadrotors, and the effects of the payload and the cable are regarded as arbitrary external forces and moments exerted to the quadrotors~\cite{ ZamStaJDSMC08, SchMurIICRA12, PalFieIICRA12}, thereby making it challenging to suppress the swinging motion of the payload actively, particularly for agile aerial transportations.

The complete dynamic model of an arbitrary number of quadrotors transporting a rigid body presented in this chapter where each quadrotor is connected to the rigid body via a flexible cable. Each flexible cable is modeled as an arbitrary number of serially connected links, and it is valid for various masses and lengths. A coordinate free form of equations of motion is derived according to Lagrange mechanics on a nonlinear manifold for the full dynamic model. These sets of equations of motion are presented in a complete and organized manner without any simplification.

Another contribution of this chapter is designing a control system to stabilize the rigid body at desired position. Geometric nonlinear controllers is utilized~\cite{LeeLeoPICDC10,LeeLeoAJC13,Farhad2013}, and they are generalized for the presented model. More explicitly, we show that the rigid body payload is asymptotically transported into a desired location, while aligning all of the links along the vertical direction corresponding to a hanging equilibrium.

The unique property of the proposed control system is that the nontrivial coupling effects between the dynamics of rigid payload, flexible cables, and multiple quadrotors are explicitly incorporated into control system design, without any simplifying assumption. Another distinct feature is that the equations of motion and the control systems are developed directly on the nonlinear configuration manifold intrinsically. Therefore, singularities of local parameterization are completely avoided to generate agile maneuvers of the payload in a uniform way. In short, the proposed control system is particularly useful for rapid and safe payload transportation in complex terrain, where the position of the payload should be controlled concurrently while suppressing the deformation of the cables.

\begin{figure}
\centerline{
	\setlength{\unitlength}{0.09\columnwidth}\scriptsize
\begin{picture}(5,8.8)(0,0)
\put(0,0){\includegraphics[width=.6\columnwidth]{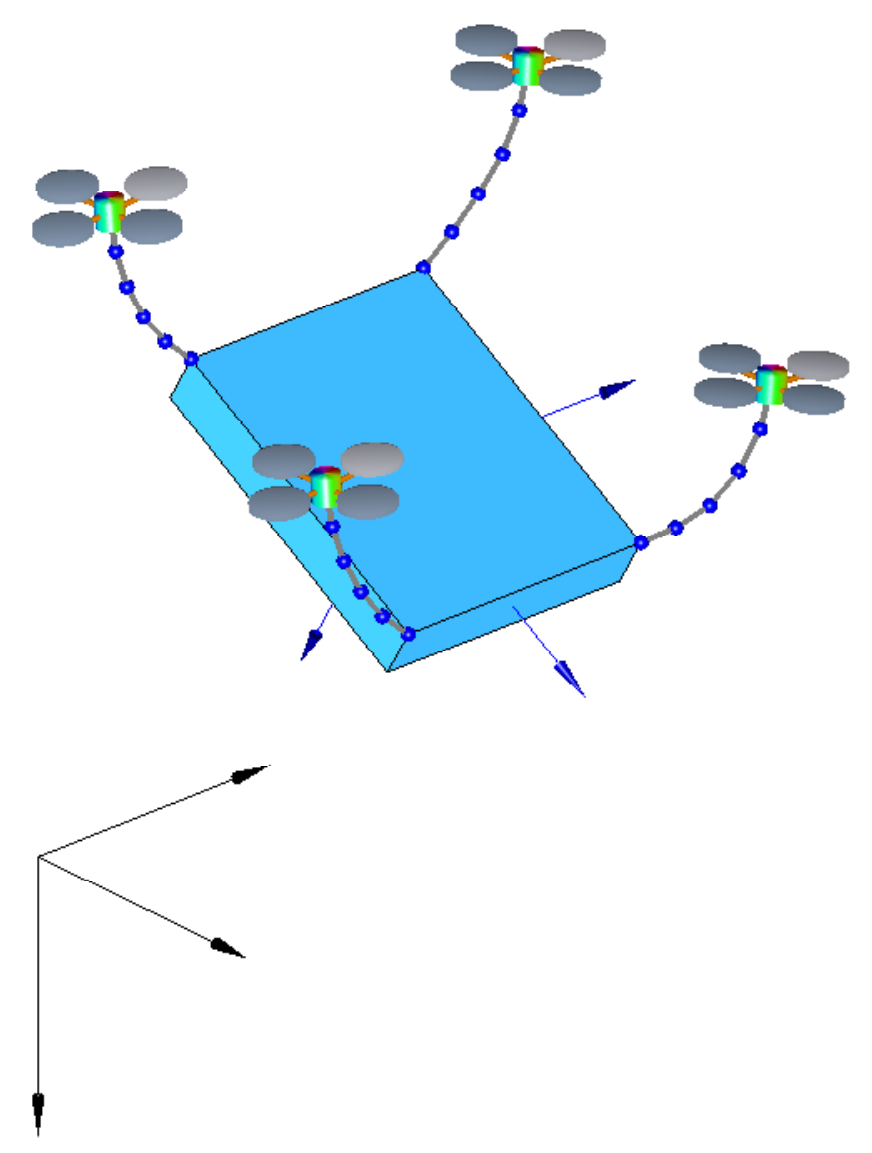}}
\put(3.6,5.3){\shortstack[c]{$m_{0}$}}
\put(3.6,4.9){\shortstack[c]{$J_{0}$}}
\put(-0.4,7.3){\shortstack[c]{$m_{1}$}}
\put(0.3,6.2){\shortstack[c]{$m_{1j}$}}
\put(-0.4,6.9){\shortstack[c]{$J_{1}$}}
\put(4.9,8.5){\shortstack[c]{$m_{2}$}}
\put(3.9,7.1){\shortstack[c]{$m_{2j}$}}
\put(4.9,8.1){\shortstack[c]{$J_{2}$}}
\put(6.7,6.0){\shortstack[c]{$m_{3}$}}
\put(5.9,5.0){\shortstack[c]{$m_{3j}$}}
\put(6.7,5.6){\shortstack[c]{$J_{3}$}}
\put(2.2,2.8){\shortstack[c]{$e_{1}$}}
\put(1.9,1.3){\shortstack[c]{$e_{2}$}}
\put(0.1,-0.1){\shortstack[c]{$e_{3}$}}
\put(4.9,5.9){\shortstack[c]{$b_{1}$}}
\put(4.6,3.2){\shortstack[c]{$b_{2}$}}
\put(2.0,3.4){\shortstack[c]{$b_{3}$}}
\end{picture}
}
\caption{Quadrotor UAVs with a rigid body payload. Cables are modeled as a serial connection of an arbitrary number of links (only 4 quadrotors with 5 links in each cable are illustrated).}\label{fig:fig1}
\end{figure}

This chapter is organized as follows. A dynamic model is presented and the problem is formulated at Section \ref{sec:chap4dynamic}. Control systems are constructed at Sections \ref{sec:chap4control1} and \ref{sec:chap4control2}, which are followed by numerical examples in Section \ref{sec:chap4numerical}.


\subsection {\normalsize Quadrotor Dynamic Model}\label{sec:chap4dynamic}
{\addtolength{\leftskip}{0.5in}
Consider a rigid body with mass $m_{0}\in\Re$ and moment of inertia $J_{0}\in\Re^{3\times 3}$, being transported with arbitrary $n$ numbers of quadrotors. The location of the mass center of the rigid body is denoted by $x_{0}\in\Re^{3}$, and its attitude is given by $R_{0}\in\SO$, where the special orthogonal group is given by $\SO=\{R\in\Re^{3\times 3} \mid R^{T}R=I,\det(R)=1\}$. Figure \ref{fig:fig1} illustrates the system with an inertial frame. We choose an inertial frame $\{\vec{e}_{1},\vec{e}_{2},\vec{e}_{3}\}$ and body fixed frame $\{\vec{b}_{1},\vec{b}_{2},\vec{b}_{3}\}$ attached to the payload. We also consider a body fixed frame attached to the $i$-th quadrotor $\{\vec{b}_{1_i},\vec{b}_{2_i},\vec{b}_{3_i}\}$. In the inertial frame, the third axes $\vec{e}_{3}$ points downward with gravity and the other axes are chosen to form an orthonormal frame. 

The mass and the moment of inertia of the $i$-th quadrotor are denoted by $m_{i}\in\Re$ and $J_{i}\in\Re^{3\times 3}$ respectively. The cable connecting each quadrotor to the rigid body is modeled as an arbitrary numbers of links for each quadrotor with varying masses and lengths. The direction of the $j$-th link of the $i$-th quadrotor, measured outward from the quadrotor toward the payload is defined by the unit vector $q_{ij}\in\Sph^2$, where $\Sph^2=\{q\in\Re^{3}\mid\|q\|=1\}$, where the mass and length of that link is denoted with $m_{ij}$ and $l_{ij}$ respectively. The number of links in the cable connected to the $i$-th quadrotor is defined as $n_{i}$.

The configuration manifold for this system is given by $\SO\times \Re^{3}\times (\SO^{n})\times (\Sph^2)^{\sum_{i=1}^{n}n_{i}}$. The $i$-th quadrotor can generate a thrust force of $-f_{i}R_{i}e_{3}\in\Re^{3}$ with respect to the inertial frame, where $f_{i}\in\Re$ is the total thrust magnitude of the $i$-th quadrotor. It also generates a moment $M_{i}\in\Re^{3}$ with respect to its body-fixed frame.
Throughout this chapter, the two norm of a matrix $A$ is denoted by $\|A\|$. The standard dot product is denoted by $x\cdot y=x^{T}y$ for any $x,y\in\Re^{3}$.
\subsubsection {\normalsize Lagrangian}
The kinematics equations for the links, payload, and quadrotors are given by
\begin{gather}
\dot{q}_{ij}=\omega_{ij}\times q_{ij}=\hat{\omega}_{ij}q_{ij},\\
\dot{R}_{0}=R_{0}\hat{\Omega}_{0},\\
\dot{R}_{i}=R_{i}\hat{\Omega}_{i},
\end{gather}
where $\omega_{ij}\in\Re^{3}$ is the angular velocity of the $j$-th link in the $i$-th cable satisfying $q_{ij}\cdot\omega_{ij}=0$. Also, $\Omega_{0}\in\Re^{3}$ is the angular velocity of the payload and $\Omega_{i}\in\Re^{3}$ is the angular velocity of the $i$-th quadrotor, expressed with respect to the corresponding body fixed frame. The hat map $\hat{\cdot}:\Re^{3}\rightarrow\so$ is defined by the condition that $\hat{x}y=x\times y$ for all $x,y\in\Re^{3}$, and the inverse of the hat map is denoted by the vee map $\vee:\so\rightarrow\Re^{3}$.

The position of the $i$-th quadrotor is given by
\begin{align}\label{eqn:xi}
x_{i}=x_{0}+R_{0}\rho_{i}-\sum_{a=1}^{n_{i}}{l_{ia}q_{ia}},
\end{align}
where $\rho_{i}\in\Re^{3}$ is the vector from the center of mass of the rigid body to the point that $i$-th quadrotor is connected to rigid body via the cable. Similarly the position of the $j$-th link in the cable connecting the $i$-th quadrotor to the rigid body is given by
\begin{align}\label{eqn:xij}
x_{ij}=x_{0}+R_{0}\rho_{i}-\sum_{a=j+1}^{n_{i}}{l_{ia}q_{ia}}.
\end{align}

We derive equations of motion according to Lagrangian mechanics. Total kinetic energy of the system is given by
\begin{align}
T=&\frac{1}{2}m_{0}\|\dot{x}_{0}\|^{2}+\sum_{i=1}^{n}\sum_{j=1}^{n_{i}}{\frac{1}{2}m_{ij}\|\dot{x}_{ij}\|^{2}}+\frac{1}{2}\sum_{i=1}^{n}{m_{i}\|\dot{x}_{i}\|^{2}}\nonumber\\
&+\frac{1}{2}\sum_{i=1}^{n}{\Omega_{i}\cdot J_{i}\Omega_{i}}+\frac{1}{2}\Omega_{0}\cdot J_{0}\Omega_{0}.
\end{align}
The gravitational potential energy is given by
\begin{align}
V=-m_{0}ge_{3}\cdot x_{0}-\sum_{i=1}^{n}{m_{i}ge_{3}}\cdot x_{i}-\sum_{i=1}^{n}\sum_{j=1}^{n_{i}}{m_{ij}ge_{3}}\cdot x_{ij},
\end{align}
where it is assumed that the unit-vector $e_{3}$ points downward along the gravitational acceleration as shown at Figure \ref{fig:fig1}. The corresponding Lagrangian of the system is $L=T-V$.

\subsubsection {\normalsize Euler-Lagrange equations}
Coordinate-free form of Lagrangian mechanics on the two-sphere $\Sph^2$ and the special orthogonal group $\SO$ for various multi-body systems has been studied in~\cite{Lee08,LeeLeoIJNME08}. The key idea is representing the infinitesimal variation of $R_i\in\SO$ in terms of the exponential map
\begin{align}
\delta R_{i} = \frac{d}{d\epsilon}\bigg|_{\epsilon = 0} R_{i}\exp(\epsilon \hat\eta_{i}) = R_{i}\hat\eta_{i},\label{eqn:delR}
\end{align}
for $\eta_{i}\in\Re^3$. The corresponding variation of the angular velocity is given by $\delta\Omega_{i}=\dot\eta_{i}+\Omega_{i}\times\eta_{i}$. Similarly, the infinitesimal variation of $q_{ij}\in\Sph^2$ is given by
\begin{align}
\delta q_{ij} = \xi_{ij}\times q_{ij},\label{eqn:delqi}
\end{align}
for $\xi_{ij}\in\Re^3$ satisfying $\xi_{ij}\cdot q_{ij}=0$. This lies in the tangent space as it is perpendicular to $q_{i}$. Using these, we obtain the following Euler-Lagrange equations.

\begin{prop}\label{prop:propchap4_1}
By using the above expressions, the equations of motion can be obtained from Hamilton's principle:
\begin{gather}\label{eqn:EOM}
M_{T}\ddot{x}_{0}-\sum_{i=1}^{n}\sum_{j=1}^{n_{i}}{M_{0ij}l_{ij}\ddot{q}_{ij}}-\sum_{i=1}^{n}{M_{iT}R_{0}\hat{\rho}_{i}\dot{\Omega}_{0}} \nonumber\\
=M_{T}ge_{3}+\sum_{i=1}^{n}{-f_{i}R_{i}e_{3}}-\sum_{i=1}^{n}{M_{iT}R_{0}\hat{\Omega}_{0}^2 \rho_{i}},\label{eqn:EOMM1}\\
\bar{J}_{0}\dot{\Omega}_{0}+\sum_{i=1}^{n}{M_{iT}\hat{\rho}_{i}R_{0}^{T}\ddot{x}_{0}}-\sum_{i=1}^{n}\sum_{j=1}^{n_{i}}{M_{0ij}l_{ij}\hat{\rho}_{i}R_{0}^{T}\ddot{q}_{ij}} \nonumber\\
=\sum_{i=1}^{n}{\hat{\rho}_{i}R_{0}^{T}(-f_{i}R_{i}e_{3}+M_{iT}ge_{3})}-\hat{\Omega}_{0}\bar{J}_{0}\Omega_{0},\label{eqn:EOMM2}\\
\sum_{k=1}^{n_{i}}{M_{0ij}l_{ik}\hat{q}_{ij}^{2}\ddot{q}_{ik}}-M_{0ij}\hat{q}_{ij}^{2}\ddot{x}_{0}+M_{0ij}\hat{q}_{ij}^{2}R_{0}\hat{\rho}_{i}\dot{\Omega}_{0} \nonumber\\
=M_{0ij}\hat{q}_{ij}^{2}R_{0}\hat{\Omega}_{0}^{2}\rho_{i}-\hat{q}_{ij}^{2}(M_{0ij}ge_{3}-f_{i}R_{i}e_{3}),\label{eqn:EOMM3}\\
J_{i}\Omega_{i}+\Omega_{i}\times J_{i}\Omega_{i}=M_{i}\label{eqn:EOMM4}.
\end{gather}
Here the total mass $M_{T}$ of the system and the mass of the $i$-th quadrotor and its flexible cable $M_{iT}$ are defined as
\begin{gather}
M_{T}=m_{0}+\sum_{i=1}^{n}M_{iT},\; M_{iT}=\sum_{j=1}^{n_{i}}{m_{ij}}+m_{i},\label{eqn:def1}
\end{gather}
and the constants related to the mass of links are given as
\begin{align}
M_{0ij}&=m_{i}+\sum_{a=1}^{j-1}{m_{ia}}\label{eqn:def3},
\end{align}
The equations of motion can be rearranged in a matrix form as follow
\begin{align}
\Nb\ddot{X}=\Pb
\end{align}
where the state vector $X\in\Re^{D_{X}}$ with $D_{X}=6+3\sum_{i=1}^{n}n_{i}$ is given by
\begin{align}
X=[{x}_{0},\; {\Omega}_{0},\; {q}_{1j},\; {q}_{2j},\; \cdots,\; {q}_{nj}]^{T},
\end{align}
and matrix $\Nb\in\Re^{D_{X}\times D_{X}}$ is defined as
\begin{align}\label{eqn:EOM11}
\Nb=\begin{bmatrix}
M_{T}I_{3}&\Nb_{x_{0}\Omega_{0}}&\Nb_{x_{0}1}&\Nb_{x_{0}2}&\cdots&\Nb_{x_{0}n}\\
\Nb_{\Omega_{0} x_{0}}&\bar{J}_{0}&\Nb_{\Omega_{0}1}&\Nb_{\Omega_{0}2}&\cdots&\Nb_{\Omega_{0}n}\\
\Nb_{1 x_{0}}&\Nb_{1\Omega_{0}}&\Nb_{qq1}&0&\cdots&0\\
\Nb_{2 x_{0}}&\Nb_{2\Omega_{0}}&0&\Nb_{qq2}&\cdots&0\\
\vdots&\vdots&\vdots&\vdots&\vdots&\vdots\\
\Nb_{n x_{0}}&\Nb_{n\Omega_{0}}&0&0&\cdots&\Nb_{qqn}
\end{bmatrix},
\end{align}
where the sub-matrices are defined as
\begin{gather}
\Nb_{x_{0}\Omega_{0}}=-\sum_{i=1}^{n}{M_{iT}R_{0}\hat{\rho}_{i}};\; \Nb_{\Omega_{0} x_{0}}=\Mb_{x_{0}\Omega_{0}}^{T},\nonumber\\
\Nb_{x_{0}i}=-[M_{0i1}l_{i1}{I}_{3},\; M_{0i2}l_{i2}{I}_{3},\; \cdots,\;M_{0in_{i}}l_{in_{i}}{I}_{3}],\nonumber\\
\Nb_{\Omega_{0}i}=-[M_{0i1}l_{i1}\hat{\rho}_{i}R_{0}^{T},\; M_{0i2}l_{i2}\hat{\rho}_{i}R_{0}^{T},\; \cdots,\; M_{0in_{i}}l_{in_{i}}\hat{\rho}_{i}R_{0}^{T}],\nonumber\\
\Nb_{ix_{0}}=-[M_{0i1}\hat{q}_{i1}^{2},\; M_{0i2}\hat{q}_{i2}^{2},\; \cdots,\;M_{0in_{i}}\hat{q}_{in_{i}}^{2}]^{T},\nonumber\\
\Nb_{i\Omega_{0}}=[M_{0i1}\hat{q}_{i1}^{2}R_{0}\hat{\rho}_{i},\; M_{0i2}\hat{q}_{i2}^{2}R_{0}\hat{\rho}_{i},\; \cdots,\; M_{0in_{i}}\hat{q}_{in_{i}}^{2}R_{0}\hat{\rho}_{i}]^{T},
\end{gather}
and the sub-matrix $\Nb_{qqi}\in\Re^{3n_i\times 3n_i}$ is given by
\begin{align}\Nb_{qqi}=
\begin{bmatrix}
-M_{011}l_{i1}I_{3}&M_{012}l_{i2}\hat{q}_{i2}^2&\cdots&M_{01n_{i}}l_{in_{i}}\hat{q}_{in_{i}}^2\\
M_{021}l_{i1}\hat{q}_{i1}^2&-M_{022}l_{i2}I_{3}&\cdots&M_{02n_{i}}l_{in_{i}}\hat{q}_{in_{i}}^2\\
\vdots&\vdots&&\vdots\\
M_{0n_{i}1}l_{i1}\hat{q}_{i1}^2&M_{0n_{i}2}l_{i2}\hat{q}_{i2}^2&\cdots&-M_{0n_{i}n_{i}}l_{in_{i}}I_{3}
\end{bmatrix}.
\end{align}
The $\Pb\in\Re^{D_{X}}$ matrix is
\begin{align}
\Pb=[P_{x_{0}},\; P_{\Omega_{0}},\; P_{1j},\; P_{2j},\; \cdots,\; P_{nj}]^{T},
\end{align}
and sub-matrices of $\Pb$ matrix are also defined as 
\begin{align*}
P_{x_{0}}&=M_{T}ge_{3}+\sum_{i=1}^{n}{-f_{i}R_{i}e_{3}}-\sum_{i=1}^{n}{M_{iT}R_{0}\hat{\Omega}_{0}^2\rho_{i}},\\
P_{\Omega_{0}}&=-\hat{\Omega}_{0}\bar{J}_{0}\Omega_{0}+\sum_{i=1}^{n}{\hat{\rho}_{i}R_{0}^T(M_{iT}ge_{3}-f_{i}R_{i}e_{3})},\nonumber\\
P_{ij}=&-\hat{q}_{ij}^2(-f_{i}R_{i}e_{3}+M_{0ij}ge_{3})+M_{0ij}\hat{q}_{ij}^2 R_{0}\hat{\Omega}_{0}^2 \rho_{i}+M_{0ij}\|\dot{q}_{ij}\|^{2}q_{ij}.\nonumber
\end{align*}
\end{prop}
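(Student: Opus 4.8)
The plan is to derive \refeqn{EOMM1}--\refeqn{EOMM4} from the Lagrange--d'Alembert principle applied to $L=T-V$, using the intrinsic variations appropriate to the configuration manifold $\SO\times\Re^{3}\times\SO^{n}\times(\Sph^2)^{\sum_i n_i}$, following the coordinate-free formalism on $\SO$ and $\Sph^2$ of \cite{Lee08,LeeLeoIJNME08}. First I would differentiate the kinematic relations \refeqn{xi} and \refeqn{xij} to express the velocities $\dot x_i$ and $\dot x_{ij}$ in terms of the configuration rates: using $\dot R_0\rho_i=R_0\hat\Omega_0\rho_i=-R_0\hat\rho_i\Omega_0$ and $\dot q_{ia}=\hat\omega_{ia}q_{ia}$, each $\dot x_{ij}$ is a fixed linear combination of $\dot x_0$, $\Omega_0$, and the $\dot q_{ia}$ with $a>j$. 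Substituting these into $T$ and regrouping the double sums, the kinetic energy collapses to a quadratic form in $(\dot x_0,\Omega_0,\{\dot q_{ij}\},\{\Omega_i\})$ whose coefficients are exactly the lumped constants $M_T$, $M_{iT}$ of \refeqn{def1} and $M_{0ij}$ of \refeqn{def3}, together with the quadrotor inertias $J_i$ and the augmented payload inertia $\bar{J}_0$ (the correction in $\bar{J}_0$ coming from the $\hat\rho_i$-dependence of the attachment-point velocities).

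Next I would take the infinitesimal variation of the action $\int(T-V)\,dt$, using $\delta R_0=R_0\hat\eta_0$ with $\delta\Omega_0=\dot\eta_0+\Omega_0\times\eta_0$, $\delta R_i=R_i\hat\eta_i$ with $\delta\Omega_i=\dot\eta_i+\Omega_i\times\eta_i$, $\delta q_{ij}=\xi_{ij}\times q_{ij}$ with $\xi_{ij}\cdot q_{ij}=0$, and the ordinary variation $\delta x_0$. After integrating by parts in time to remove $\dot\eta$, $\dot\xi$, and $\delta\dot x_0$, I would collect the coefficients of the independent variations $\delta x_0$, $\eta_0$, $\xi_{ij}$, and $\eta_i$. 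The non-conservative generalized forces are appended on the right via virtual work: the thrust $-f_iR_ie_3$ acts at $x_i$, so it contributes to the $\delta x_0$, $\eta_0$, and every $\xi_{ij}$ equation because $x_i$ in \refeqn{xi} depends on $x_0$, $R_0$, and all $q_{ia}$; the control moment $M_i$ enters only the $\eta_i$ equation. Setting each coefficient to zero produces \refeqn{EOMM1}--\refeqn{EOMM4}: the gyroscopic terms $\hat\Omega_0\bar{J}_0\Omega_0$ and $\Omega_i\times J_i\Omega_i$ come from the $\Omega\times\eta$ part of the attitude variations; the centripetal terms $M_{iT}R_0\hat\Omega_0^2\rho_i$ and $M_{0ij}\|\dot q_{ij}\|^2 q_{ij}$ come from time-differentiating $\dot R_0\rho_i$ and $\dot q_{ij}$; and the gravity terms proportional to $M_{iT}ge_3$ and $M_{0ij}ge_3$ come from $\delta V$.

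One point requiring care is the $\Sph^2$ constraint. Since $\xi_{ij}$ is only required to be orthogonal to $q_{ij}$, the link equation is obtained by projecting the unconstrained Euler--Lagrange expression onto the tangent space $T_{q_{ij}}\Sph^2$, i.e. by left-multiplication by $I-q_{ij}q_{ij}^{T}=-\hat q_{ij}^2$; this is precisely what produces the $\hat q_{ij}^2$-weighted form \refeqn{EOMM3} and what gives $\Nb_{qqi}$ its block structure with $-M_{0ii}l_{ii}I_3$ on the diagonal (using $\hat q_{ij}^2 q_{ij}=0$ and $\hat q_{ij}^4=-\hat q_{ij}^2$). Finally, to reach the matrix form $\Nb\ddot X=\Pb$ of \refeqn{EOM11}, I would move all second-derivative terms $\ddot x_0$, $\dot\Omega_0$, $\ddot q_{ij}$ to the left and everything else to the right, and read off the blocks $\Nb_{x_0\Omega_0}$, $\Nb_{x_0 i}$, $\Nb_{\Omega_0 i}$, $\Nb_{qqi}$ and the forcing $P_{x_0}$, $P_{\Omega_0}$, $P_{ij}$ directly from \refeqn{EOMM1}--\refeqn{EOMM3}.

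The main obstacle I anticipate is the bookkeeping of the coupled sums rather than any conceptual difficulty. Each $\dot x_{ij}$ and $\dot x_i$ carries a partial sum $\sum_a l_{ia}\dot q_{ia}$, so $T$ contains many cross terms, both among the links of one cable and between those links and the payload translation and rotation; one must verify that after summing over $j$ (and over $i$) the coefficients collapse to the compact constants \refeqn{def1}--\refeqn{def3}. In particular the appearance of $M_{0ij}=m_i+\sum_{a=1}^{j-1}m_{ia}$ — rather than the full payload-plus-cable mass — is explained by the index range in \refeqn{xij}: the position of $m_{ij}$ does not involve $q_{ia}$ for $a\le j$, so varying $q_{ij}$ moves only the quadrotor and the links inboard of it. Tracking these index ranges, together with the consistent sign from $\dot R_0\rho_i=-R_0\hat\rho_i\Omega_0$, is where the derivation is most error-prone; the rest is a routine, if lengthy, application of the variational formalism on $\SO$ and $\Sph^2$.
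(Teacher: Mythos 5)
Your proposal is correct and follows essentially the same route as the paper's proof: expand $T$ and $V$ using the differentiated position constraints so the coefficients collapse to $M_T$, $M_{iT}$, $M_{0ij}$, $\bar J_0$; vary the action with $\delta R_0=R_0\hat\eta_0$, $\delta R_i=R_i\hat\eta_i$, $\delta q_{ij}=\xi_{ij}\times q_{ij}$; append the thrust and moment contributions through the Lagrange--d'Alembert virtual work (with the thrust entering the $\delta x_0$, $\eta_0$, and $\xi_{ij}$ equations because $x_i$ depends on all of these); and project the link equations onto $T_{q_{ij}}\Sph^2$ before rearranging into $\Nb\ddot X=\Pb$. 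Your identification of the index-range origin of $M_{0ij}$ and of the tangent-space projection producing the $\hat q_{ij}^2$ structure matches the paper's derivation.
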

\begin{proof}
See Appendix~\ref{sec:pfchap4_1}
\end{proof}
These equations are derived directly on a nonlinear manifold without any simplification. The dynamics of the payload, flexible cables, and quadrotors are considered explicitly, and they avoid singularities and complexities associated to local coordinates.   

}


\subsection {\normalsize Controlled System Design for Simplified Dynamic Model}\label{sec:chap4control1}
{\addtolength{\leftskip}{0.5in}
Let $x_{0_{d}}\in\Re^{3}$ be the desired position of the payload. The desired attitude of the payload is considered as $R_{0_d}=I_{3\times 3}$, and the desired direction of links is aligned along the vertical direction. The corresponding location of the $i$-th quadrotor at this desired configuration is given by
\begin{align}
x_{i_d}=x_{0_d}+\rho_{i}-\sum_{a=1}^{n_{i}}{l_{ia}e_{3}}.
\end{align}
We wish to design control forces $f_{i}$ and control moments $M_{i}$ of quadrotors such that this desired configuration becomes asymptotically stable.
\subsubsection {\normalsize Simplified Dynamic Model}
Control forces for each quadrotor is given by $-f_{i}R_{i}e_{3}$ for the given equations of motion \refeqn{EOMM1}, \refeqn{EOMM2}, \refeqn{EOMM3}, \refeqn{EOMM4}. As such, the quadrotor dynamics is under-actuated. The total thrust magnitude of each quadrotor can be arbitrary chosen, but the direction of the thrust vector is always along the third body fixed axis, represented by $R_ie_3$. But, the rotational attitude dynamics of the quadrotors are fully actuated, and they are not affected by the translational dynamics of the quadrotors or the dynamics of links. 

Based on these observations, in this section, we simplify the model by replacing the $-f_{i}R_{i}e_{3}$ term by a fictitious control input $u_{i}\in\Re^{3}$, and design an expression for $u$ to asymptotically stabilize the desired equilibrium. In another words, we assume that the attitude of the quadrotor can be instantaneously changed. The effects of the attitude dynamics are studied at the next section.

\subsubsection {\normalsize Linear Control System}
The control system for the simplified dynamic model is developed based on the linearized equations of motion. At the desired equilibrium, the position and the attitude of the payload are given by $x_{0_d}$ and $R_{0}^{*}=I_{3}$, respectively, where the superscript $*$ denotes the value of a variable at the desired equilibrium throughout this paper. Also, we have $q_{ij}^{*}=e_{3}$ and $R_{i}^{*}=I_{3}$. In this equilibrium configuration, the control input for the $i$-th quadrotor is
\begin{align}
u_{i}^{*}=-f_{i}^{*}R_{i}^{*}e_{3},
\end{align}
where the total thrust is $f_{i}^{*}=(M_{iT}+\frac{m_{0}}{n})g$.

The variation of $x_{0}$ is given by
\begin{align}\label{eqn:xlin}
\delta x_{0}=x_{0}-x_{0_{d}},
\end{align}
and the variation of the attitude of the payload is defined as
\begin{align*}
\delta R_0 = R_0^* \hat\eta_0 = \hat\eta_0,
\end{align*}
for $\eta_0\in\Re^3$. The variation of $q_{ij}$ can be written as
\begin{align}\label{eqn:qlin}
\delta q_{ij}=\xi_{ij}\times e_{3},
\end{align}
where $\xi_{ij}\in\Re^{3}$ with $\xi_{ij}\cdot e_{3}=0$. The variation of $\omega_{ij}$ is given by $\delta \omega_{ij}\in\Re^{3}$ with $\delta\omega_{ij}\cdot e_{3}=0$. Therefore, the third element of each of $\xi_{ij}$ and $\delta\omega_{ij}$ for any equilibrium configuration is zero, and they are omitted in the following linearized equations. The state vector of the linearized equation is composed of $C^{T}\xi_{ij}\in\Re^{2}$, where $C=[e_{1},\;e_{2}]\in\Re^{3\times 2}$. The variation of the control input $\delta u_{i}\in\Re^{3\times 1}$, is given as $\delta u_i = u_i-u_i^*$.

\begin{prop}\label{prop:propchap4_2}
The linearized equations of the simplified dynamic model are given by 
\begin{align}\label{eqn:EOMLin}
\Mb\ddot \xb  + \Gb\xb = \Bb \delta u,
\end{align}
where the state vector $\xb\in\Re^{D_{\xb}}$ with $D_{\xb}=6+2\sum_{i=1}^{n}n_{i}$ is given by
\begin{align*}
\xb=\begin{bmatrix}
\delta{x}_{0},\eta_{0},C^{T}{\xi}_{1j},C^T{\xi}_{2j},\cdots,C^{T}{\xi}_{nj}\\
\end{bmatrix},
\end{align*}
and $\delta u=[\delta u_{1}^T,\; \delta u_{2}^T,\;\cdots,\;\delta u_{n}^T]^{T}\in\Re^{3n\times 1}$. The matrix $\Mb\in\Re^{D_{\xb}\times D_{\xb}}$ are defined as
\begin{align*}\Mb=
\begin{bmatrix}
M_{T}I_{3}&\Mb_{x_{0}\Omega_{0}}&\Mb_{x_{0}1}&\Mb_{x_{0}2}&\cdots&\Mb_{x_{0}n}\\
\Mb_{\Omega_{0} x_{0}}&\bar{J}_{0}&\Mb_{\Omega_{0}1}&\Mb_{\Omega_{0}2}&\cdots&\Mb_{\Omega_{0}n}\\
\Mb_{1 x_{0}}&\Mb_{1\Omega_{0}}&\Mb_{qq1}&0&\cdots&0\\
\Mb_{2 x_{0}}&\Mb_{2\Omega_{0}}&0&\Mb_{qq2}&\cdots&0\\
\vdots&\vdots&\vdots&\vdots&\vdots&\vdots\\
\Mb_{n x_{0}}&\Mb_{n\Omega_{0}}&0&0&\cdots&\Mb_{qqn}
\end{bmatrix},
\end{align*}
where the sub-matrices are defined as
\begin{gather}
\Mb_{x_{0}\Omega_{0}}=-\sum_{i=1}^{n}{M_{iT}\hat{\rho}_{i}};\; \Mb_{\Omega_{0} x_{0}}=\Mb_{x_{0}\Omega_{0}}^{T},\nonumber\\
\Mb_{x_{0}i}=[M_{0i1}l_{i1}\hat{e}_{3}C,\; M_{0i2}l_{i2}\hat{e}_{3}C,\; \cdots,\;M_{0in_{i}}l_{in_{i}}\hat{e}_{3}C],\nonumber\\
\Mb_{\Omega_{0}i}=[M_{0i1}l_{i1}\hat{\rho}_{i}C,\; M_{0i2}l_{i2}\hat{\rho}_{i}C,\; \cdots,\; M_{0in_{i}}l_{in_{i}}\hat{\rho}_{i}C],\nonumber\\
\Mb_{ix_{0}}=-[M_{0i1}C^{T}\hat{e}_{3},\; M_{0i2}C^{T}\hat{e}_{3},\; \cdots,\; M_{0in_{i}}C^{T}\hat{e}_{3}],\\
\Mb_{i\Omega_{0}}=[M_{0i1}C^{T}\hat{e}_{3}\hat{\rho}_{i},\; M_{0i2}C^{T}\hat{e}_{3}\hat{\rho}_{i},\;\cdots,\; M_{0in_{i}}C^{T}\hat{e}_{3}\hat{\rho}_{i}],
\end{gather}
and the sub-matrix $\Mb_{qqi}\in\Re^{2n_i\times 2n_i}$ is given by
\begin{align}\Mb_{qqi}=
\begin{bmatrix}
M_{i11}l_{i1}I_{2}&M_{i12}l_{i2}I_{2}&\cdots&M_{i1n_{i}}l_{in_{i}}I_{2}\\
M_{i21}l_{i1}I_{2}&M_{i22}l_{i2}I_{2}&\cdots&M_{i2n_{i}}l_{in_{i}}I_{2}\\
\vdots&\vdots&&\vdots\\
M_{in_{i}1}l_{i1}I_{2}&M_{in_{i}2}l_{i2}I_{2}&\cdots&M_{in_{i}n_{i}}l_{in_{i}}I_{2}
\end{bmatrix}.
\end{align}
The matrix $\Gb\in\Re^{D_{\xb}\times D_{\xb}}$ is defined as
\begin{align*}
\Gb=\begin{bmatrix}
0&0&0&0&0&0\\
0&\Gb_{\Omega_{0}\Omega_{0}}&0&0&0&0\\
0&0&\Gb_{1}&0&0&0\\
0&0&0&\Gb_{2}&0&0\\
\vdots&\vdots&\vdots&\vdots&\vdots&\vdots\\
0&0&0&0&0&\Gb_{n}
\end{bmatrix},
\end{align*}
where $\Gb_{\Omega_{0}\Omega_{0}}=\sum_{i=1}^{n}\frac{m_{0}}{n}g\hat{\rho}_{i}\hat{e}_{3}$ and the sub-matrices $\Gb_{i}\in\Re^{2n_{i}\times 2n_{i}}$ are
\begin{align*}
\Gb_{i}= \diag[(-M_{iT}-\frac{m_{0}}{n}+M_{0ij})ge_{3}I_{2}].
\end{align*}
The matrix $\Bb\in\Re^{D_{\xb}\times 3n}$ is given by 
\begin{align*}
\Bb=\begin{bmatrix}
I_{3}&I_{3}&\cdots&I_{3}\\
\hat{\rho}_{1}&\hat{\rho}_{2}&\cdots&\hat{\rho}_{n}\\
\Bb_{\Bb}&0&0&0\\
0&\Bb_{\Bb}&0&0\\
\vdots&\vdots&\vdots&\vdots\\
0&0&0&\Bb_{\Bb}
\end{bmatrix},
\end{align*}
where $\Bb_{\Bb}=-[C^{T}\hat{e}_{3},\; C^{T}\hat{e}_{3},\; \cdots,\; C^{T}\hat{e}_{3}]^{T}$.
\end{prop}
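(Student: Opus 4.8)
The statement to prove (Proposition~\ref{prop:propchap4_2}) is a \emph{linearization claim}: it asserts that linearizing the simplified coordinate-free equations of motion (\refeqn{EOMM1}, \refeqn{EOMM2}, \refeqn{EOMM3} with $-f_iR_ie_3$ replaced by $u_i$) about the hanging equilibrium yields the matrix system $\Mb\ddot\xb + \Gb\xb = \Bb\,\delta u$ with the explicitly listed block matrices. So this is a direct, if lengthy, computation; the proof is a bookkeeping exercise rather than a conceptual argument.

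\textbf{Plan.} First I would record the equilibrium data: $x_0=x_{0_d}$, $R_0^*=I$, $q_{ij}^*=e_3$, $R_i^*=I$, all angular velocities zero, and $f_i^*=(M_{iT}+\tfrac{m_0}{n})g$ with $u_i^*=-f_i^*e_3$. Next I would substitute the perturbations $x_0 = x_{0_d}+\delta x_0$, $R_0 = I+\hat\eta_0$ (so $\dot\Omega_0=\ddot\eta_0$ to first order), $q_{ij}=e_3+\xi_{ij}\times e_3$ (so $\dot q_{ij}=\dot\xi_{ij}\times e_3$ and $\ddot q_{ij}=\ddot\xi_{ij}\times e_3$ to first order, using $\xi_{ij}\cdot e_3=0$), $R_i=I+\hat\eta_i$, and $u_i = u_i^*+\delta u_i$ into each of the three translational/rotational/link equations. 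The key linear-algebra identities I would use repeatedly are: $\hat q_{ij}^2\big|_{q_{ij}=e_3} = e_3e_3^T-I = -CC^T$ (the projector onto the plane perpendicular to $e_3$, up to sign); $\hat e_3(\xi\times e_3) = \xi$ for $\xi\cdot e_3=0$; $\ddot q_{ij}\cdot$ (anything) picks out only the $C^T\ddot\xi_{ij}$ components; and the fact that terms quadratic in the perturbations ($\|\dot q_{ij}\|^2 q_{ij}$, $\hat\Omega_0^2\rho_i$, $\hat\Omega_0\bar J_0\Omega_0$, $\Omega_i\times J_i\Omega_i$) drop out at first order. I would then collect the coefficient of each $\delta x_0$, $\eta_0$, $C^T\xi_{ij}$, and each $\ddot{(\cdot)}$, and match them against the claimed entries of $\Mb$, $\Gb$, $\Bb$. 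For the gravitational stiffness $\Gb$, the nonzero blocks come from differentiating the $M_{0ij}ge_3$-type and $\hat\rho_i R_0^T(M_{iT}ge_3)$-type terms with respect to $\eta_0$ and $\xi_{ij}$; this is where the combination $(-M_{iT}-\tfrac{m_0}{n}+M_{0ij})g$ and $\Gb_{\Omega_0\Omega_0}=\sum_i \tfrac{m_0}{n}g\hat\rho_i\hat e_3$ will emerge, and I would double-check signs there carefully. Finally I would note that the attitude equation \refeqn{EOMM4} for each quadrotor decouples entirely at the linear level (it does not involve $x_0,\Omega_0,q_{ij}$) and so is simply omitted from the reduced state $\xb$, consistent with $R_i$ being instantaneously controllable in the simplified model.

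\textbf{Main obstacle.} The conceptual content is minimal; the real difficulty is \emph{getting every sign, projection, and index range correct} across three coupled vector equations with nested double sums over $i$ and $j$, and confirming that the reduction from $3$-vectors to $2$-vectors via $C=[e_1,e_2]$ lands exactly on the stated $2n_i\times 2n_i$ and mixed $3\times 2$ blocks. In particular, the link equation \refeqn{EOMM3} carries a factor $\hat q_{ij}^2$ in \emph{every} term, and keeping straight that $\hat q_{ij}^2\to -CC^T$ while $\hat q_{ij}^2 R_0\hat\rho_i\dot\Omega_0$ linearizes to $-CC^T\hat\rho_i\ddot\eta_0$ (then left-multiplied by $C^T$ to extract two components, giving the $-C^T\hat e_3\hat\rho_i$-type blocks after an identity $C^T\hat q_{ij}^2 = -C^T$ is applied) is the error-prone part. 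I would organize the computation equation-by-equation, verify one representative off-diagonal block in full detail (e.g.\ $\Mb_{x_0 i}$ and $\Mb_{i x_0}$, checking $\Mb_{qx}=\Mb_{xq}^T$ symmetry as a sanity check), and then assert the remaining blocks follow by the identical manipulation. The detailed version of all of this is deferred to Appendix~\ref{sec:pfchap4_2}.
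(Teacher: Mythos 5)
Your proposal is correct and follows essentially the same route as the paper's proof: substitute the perturbation expansions $\delta x_0$, $\delta R_0=R_0\hat\eta_0$, $\delta q_{ij}=\xi_{ij}\times e_3$ and the linearized kinematics $\dot\xi_{ij}=\delta\omega_{ij}$ into \refeqn{EOMM1}--\refeqn{EOMM3}, drop higher-order terms, and collect coefficients into the block matrices, using the same projection identity (your $\hat q_{ij}^2\big|_{q_{ij}=e_3}=-CC^T$ is equivalent to the paper's $C^T\hat e_3^2C=-I_2$). The paper likewise defers the full block-by-block bookkeeping to a companion reference, so your level of detail matches its proof.
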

\begin{proof}
See Appendix~\ref{sec:pfchap4_2}
\end{proof}
We present the following PD-type control system for the linearized dynamics
\begin{align}
\delta u_{i}
=&-K_{x_{i}}\xb-K_{\dot{x}_{i}}\dot{\xb},
\end{align}
for controller gains $K_{x_{i}},K_{\dot{x}_{i}}\in\Re^{3\times D_{\xb}}$. Provided that \refeqn{EOMLin} is controllable, we can choose the combined controller gains $K_{x}=[K_{x_{1}}^T,\,\ldots\;K_{x_{n}}^T]^{T}$, $K_{\dot{x}}=[K_{\dot{x}_{1}}^T,\,\ldots K_{\dot{x}_{n}}^T]^{T}\in\Re^{3n\times D_{\xb}}$ such that the equilibrium is asymptotically stable for the linearized equation \refeqn{EOMLin}. 

}


\subsection {\normalsize Controlled System Design for Full Dynamic Model}\label{sec:chap4control2}
{\addtolength{\leftskip}{0.5in}

The control system designed at the previous section is based on a simplifying assumption that each quadrotor can generates a thrust along any direction. In the full dynamic model, the direction of the thrust for each quadrotor is parallel to its third body-fixed axis always. In this section, the attitude of each quadrotor is controlled such that the third body-fixed axis becomes parallel to the direction of the ideal control force designed in the previous section. The central idea is that the attitude $R_{i}$ of the quadrotor is controlled such that its total thrust direction $-R_{i}e_{3}$, corresponding to the third body-fixed axis, asymptotically follows the direction of the fictitious control input $u_{i}$. By choosing the total thrust magnitude properly, we can guarantee asymptotical stability for the full dynamic model. 

Let $A_{i}\in\Re^{3}$ be the ideal total thrust of the $i$-th quadrotor that asymptotically stabilize the desired equilibrium. Therefor, we have
\begin{align}\label{eqn:Ai}
A_{i}=u_{i}^{*}+\delta u_{i}=-K_{x_{i}}\xb-K_{\dot{x}_{i}}\dot{\xb}+u_{i}^{*},
\end{align} 
where $f_{i}^{*}$ and $u_{i}^{*}$ are the total thrust and control input of each quadrotor at its equilibrium respectively. 

From the desired direction of the third body-fixed axis of the $i$-th quadrotor, namely $b_{3_{i}}\in\Sph^2$, is given by
\begin{align}
b_{3_{i}}=-\frac{A_{i}}{\|A_{i}\|}.
\end{align}
This provides a two-dimensional constraint on the three dimensional desired attitude of each quadrotor, such that there remains one degree of freedom. To resolve it, the desired direction of the first body-fixed axis $b_{1_{i}}(t)\in\Sph^2$ is introduced as a smooth function of time. Due to the fact that the first body-fixed axis is normal to the third body-fixed axis, it is impossible to follow an arbitrary command $b_{1_{i}}(t)$ exactly. Instead, its projection onto the plane normal to $b_{3_{i}}$ is followed, and the desired direction of the second body-fixed axis is chosen to constitute an orthonormal frame~\cite{LeeLeoAJC13}. More explicitly, the desired attitude of the $i$-th quadrotor is given by
\begin{align}
R_{i_{c}}=\begin{bmatrix}
-\frac{(\hat{b}_{3_{i}})^{2}b_{1_{i}}}{\|(\hat{b}_{3_{i}})^{2}b_{1_{i}}\|} & \frac{\hat{b}_{3_{i}}b_{1_{i}}}{\|\hat{b}_{3_{i}}b_{1_{i}}\|} & b_{3_{i}}\end{bmatrix},
\end{align}
which is guaranteed to be an element of $\so$. The desired angular velocity is obtained from the attitude kinematics equation, $\Omega_{i_{c}}=(R_{i_{c}}^{T}\dot{R}_{i_{c}})^\vee\in\Re^{3}$. Define the tracking error vectors for the attitude and the angular velocity of the $i$-th quadrotor as
\begin{align}
e_{R_{i}}=\frac{1}{2}(R_{i_{c}}^{T}R_{i}-R_{i}^{T}R_{i_{c}})^{\vee},\; e_{\Omega_{i}}=\Omega_{i}-R_{i}^{T}R_{i_{c}}\Omega_{i_{c}},
\end{align}
and a configuration error function on $\SO$ as follows
\begin{align}
\Psi_{i}= \frac{1}{2}\trs[I- R_{{i}_c}^T R_{i}].
\end{align}
The thrust magnitude is chosen as the length of $u_{i}$, projected on to $-R_{i}e_{3}$, and the control moment is chosen as a tracking controller on $\SO$:
\begin{align}
f_{i}=&-A_{i}\cdot R_{i}e_{3},\label{eqn:fi}\\
M_{i}=&-k_R e_{R_{i}} -k_\Omega e_{\Omega_{i}}+(R_{i}^TR_{c_i}\Omega_{c_{i}})^\wedge J_{i} R_{i}^T R_{c_i} \Omega_{c_i} + J_{i} R_{i}^T R_{c_i}\dot\Omega_{c_i},\label{eqn:Mi}
\end{align}
where $k_{R}$ and $k_{\Omega}$ are positive constants. Stability of the corresponding controlled systems for the full dynamic model can be studied by showing the the error due to the discrepancy between the desired direction $b_{3_{i}}$ and the actual direction $R_{i}e_{3}$. This stability is shown via a Lyapunov analysis.

\begin{prop}\label{prop:propchap4_3}
Consider the full dynamic model defined by \refeqn{EOMM1}, \refeqn{EOMM2}, \refeqn{EOMM3}, \refeqn{EOMM4}. For the command $x_{0_{d}}$ and the desired direction of the first body-fixed axis $b_{1_{i}}$, control inputs for quadrotors are designed as \refeqn{fi} and \refeqn{Mi}. Then, the equilibrium of zero tracking errors for $e_{x_{0}},\; \dot{e}_{x_{0}},\; e_{R_0},\;e_{\Omega_{0}},\; e_{q_{ij}},\; e_{\omega_{ij}},\; e_{R_i},\;e_{\Omega_{i}}$, is exponentially stable.
\end{prop}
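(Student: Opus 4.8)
The plan is to mirror the cascade (backstepping) architecture already used for Proposition~\ref{prop:propchap3_3}: regard the closed-loop translational/payload/link dynamics under the \emph{actual} thrusts $-f_iR_ie_3$ as the exponentially stable linear system of Section~\ref{sec:chap4control1} driven by a perturbation that is generated solely by the attitude tracking errors of the quadrotors, and then show that this perturbation decays exponentially because each control moment \refeqn{Mi} is exponentially stabilizing on $\SO$. Throughout, one works on a neighbourhood of the hanging equilibrium where $\Nb$ is invertible (equivalently, the reduced inertia matrix $\Mb$ is positive definite), where $(\Mb,\Gb,\Bb)$ is controllable so that $K_x,K_{\dot x}$ render $\mathds{A}$ Hurwitz, and where $\Psi_i<2$ and $f_i^\ast=(M_{iT}+\tfrac{m_0}{n})g>0$ keep $A_i$ bounded away from zero.

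First I would treat the $i$-th attitude error subsystem. Substituting \refeqn{Mi} into \refeqn{EOMM4} cancels the gyroscopic term and the feedforward, leaving the standard geometric attitude-tracking error dynamics for $(e_{R_i},e_{\Omega_i})$ relative to $(R_{i_c},\Omega_{i_c})$, together with $\dot\Psi_i=e_{R_i}\cdot e_{\Omega_i}$ and $\|\dot e_{R_i}\|\le\|e_{\Omega_i}\|$ from Proposition~\ref{prop:propchap1_first}. Using the attitude Lyapunov function $\mathcal V_{2i}=\tfrac12 e_{\Omega_i}\cdot J_i e_{\Omega_i}+k_R\Psi_i+c_2\,e_{R_i}\cdot J_i e_{\Omega_i}$ and repeating the estimate of Proposition~\ref{prop:propchap2_2}, for small enough $c_2$ (condition analogous to \refeqn{c2}) and $\Psi_i<2$ one obtains $\dot{\mathcal V}_{2i}\le -\beta_i(\|e_{R_i}\|^2+\|e_{\Omega_i}\|^2)$ up to terms proportional to $\|\Omega_{i_c}\|$ and $\|\dot\Omega_{i_c}\|$, and the bounds $b_1\|e_{R_i}\|^2\le\Psi_i\le b_2\|e_{R_i}\|^2$ of Proposition~\ref{prop:propchap1_first}(iv)--(v) make $\mathcal V_{2i}$ comparable to $\|e_{R_i}\|^2+\|e_{\Omega_i}\|^2$.

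Next I would quantify the coupling. Writing $-f_iR_ie_3=(A_i\cdot R_ie_3)R_ie_3$, the discrepancy from the ideal force is $A_i-(A_i\cdot R_ie_3)R_ie_3=\|A_i\|X_i$, where $X_i$ is the component of $-b_{3_i}=A_i/\|A_i\|$ orthogonal to $R_ie_3$; following the estimate used in Proposition~\ref{prop:propchap2_3}, $\|X_i\|^2\le\Psi_i(2-\Psi_i)$, so $\|X_i\|\to 0$ as $e_{R_i}\to0$ and $\|X_i\|\le c\|e_{R_i}\|$ on the domain of interest. Consequently the linearized, $\delta u_i=-K_{x_i}\xb-K_{\dot x_i}\dot\xb$-closed translational/payload/link dynamics can be written as $\dot z_1=\mathds{A}z_1+\mathds{B}\,\Delta$ with $z_1=[\xb;\dot\xb]$, $\mathds{A}$ Hurwitz, and $\|\Delta\|\le c_1(1+\|z_1\|)\sum_i\|e_{R_i}\|+\|\g(\xb,\dot\xb)\|$ where $\g$ collects higher-order terms quadratic in $z_1$. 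With $\mathds{A}^TP+P\mathds{A}=-Q$ set $\mathcal V_1=z_1^TPz_1$.

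Finally I would combine everything via $\mathcal V=\mathcal V_1+\sum_i\mathcal V_{2i}$: $\dot{\mathcal V}$ is bounded above by a quadratic form in $(\|z_1\|,\{\|e_{R_i}\|\},\{\|e_{\Omega_i}\|\})$ whose only off-diagonal entries are the coupling term $\|\Delta\|$ above and the feedforward terms $\Omega_{i_c},\dot\Omega_{i_c}$ entering Step~1. Choosing $k_R,k_\Omega$ large and $c_2$ small makes this form negative definite on a neighbourhood of the equilibrium, and since $\mathcal V$ is sandwiched between positive-definite quadratics in the same variables, the comparison lemma gives $\mathcal V(t)\le e^{-\kappa t}\mathcal V(0)$, hence exponential convergence of $e_{x_0},\dot e_{x_0},e_{R_0},e_{\Omega_0},e_{q_{ij}},e_{\omega_{ij}},e_{R_i},e_{\Omega_i}$ to zero. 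The step I expect to be the main obstacle is precisely this last assembly: bounding $\Omega_{i_c}$ and especially $\dot\Omega_{i_c}$ — which involves $\ddot\xb$, and hence loops back through the plant inertia $\Nb$ — by affine functions of $\|z_1\|$ without circular reasoning, and then verifying that all cross-coupling contributions can be dominated into a single sign-definite quadratic form by the gain choices.
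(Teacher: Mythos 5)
Your proposal follows essentially the same route as the paper's proof: a per-quadrotor geometric attitude Lyapunov function giving $\dot{\mathcal V}_{2_i}\le -z_{2_i}^TW_{2_i}z_{2_i}$, the add-and-subtract construction of $X_i$ with $\|X_i\|\le\|A_i\|\,\|e_{R_i}\|$ and $\|A_i\|\le 2K_{\max}\|z_1\|+B_1$, the quadratic form $\mathcal V_1=z_1^TPz_1$ for the Hurwitz linearized payload/link dynamics, and the final combined quadratic estimate absorbing the higher-order term $\g$ via $\|\g\|<\gamma\|z_1\|$ near the equilibrium. The obstacle you flag at the end (bounding $\Omega_{i_c},\dot\Omega_{i_c}$ without circularity) is real but is handled in the paper simply by postulating a constant $B_{2_i}$ with $\|d_i\|\le B_{2_i}$, so no further work is needed to match the paper's level of rigor.
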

\begin{proof}
See Appendix \ref{sec:pfchap4_3}
\end{proof}
}

\subsection {\normalsize Numerical Example}\label{sec:chap4numerical}
{\addtolength{\leftskip}{0.5in}
We demonstrate the desirable properties of the proposed control system with numerical examples. Two cases are presented. At the first case, a payload is transported to a desired position from the ground. The second case considers stabilization of a payload with large initial attitude errors.
\subsubsection {\normalsize Stabilization of the Rigid Body}
Consider four quadrotors $(n=4)$ connected via flexible cables  to a rigid body payload. Initial conditions are chosen as
\begin{gather*}
x_{0}(0)=[1.0,\; 4.8,\; 0.0]^{T}\,\mathrm{m},\; v_{0}(0)=0_{3\times 1},\\
q_{ij}(0)=e_{3},\; \omega_{ij}(0)=0_{3\times 1},\; R_{i}(0)=I_{3\times 3},\; \Omega_{i}(0)=0_{3\times 1}\\
R_{0}(0)=I_{3\times3},\; \Omega_{0}=0_{3\times 1}.
\end{gather*}
The desired position of the payload is chosen as
\begin{align}
x_{0_{d}}(t)=[0.44,\; 0.78,\; -0.5]^{T}\,\mathrm{m}.
\end{align}
The mass properties of quadrotors are chosen as
\begin{gather}
m_{i}=0.755\,\mathrm{kg},\nonumber\\ 
J_{i}=\diag[0.557,\; 0.557,\; 1.05]\times 10^{-2} \mathrm{kgm^2}.
\end{gather}
The payload is a box with mass $m_{0}=0.5\,\mathrm{kg}$, and its length, width, and height are $0.6$, $0.8$, and $0.2\,\mathrm{m}$, respectively. Each cable connecting the rigid body to the $i$-th quadrotor is considered to be $n_{i}=5$ rigid links. All the links have the same mass of $m_{ij}=0.01\,\mathrm{kg}$ and length of $l_{ij}=0.15\,\mathrm{m}$. Each cable is attached to the following points of the payload
\begin{gather*}
\rho_{1}=[0.3,\; -0.4,\; -0.1]^T\,\mathrm{m},\; \rho_{2}=[0.3,\; 0.4,\; -0.1]^T\,\mathrm{m},\\
\rho_{3}=[-0.3,\; -0.4,\; -0.1]^T\,\mathrm{m},\; \rho_{4}=[-0.3,\; 0.4,\; -0.1]^T\,\mathrm{m}.
\end{gather*}
\begin{figure}
\centerline{
	\subfigure[Payload position ($x_0$:blue, $x_{0_{d}}$:red)]{
		\includegraphics[width=0.4\columnwidth]{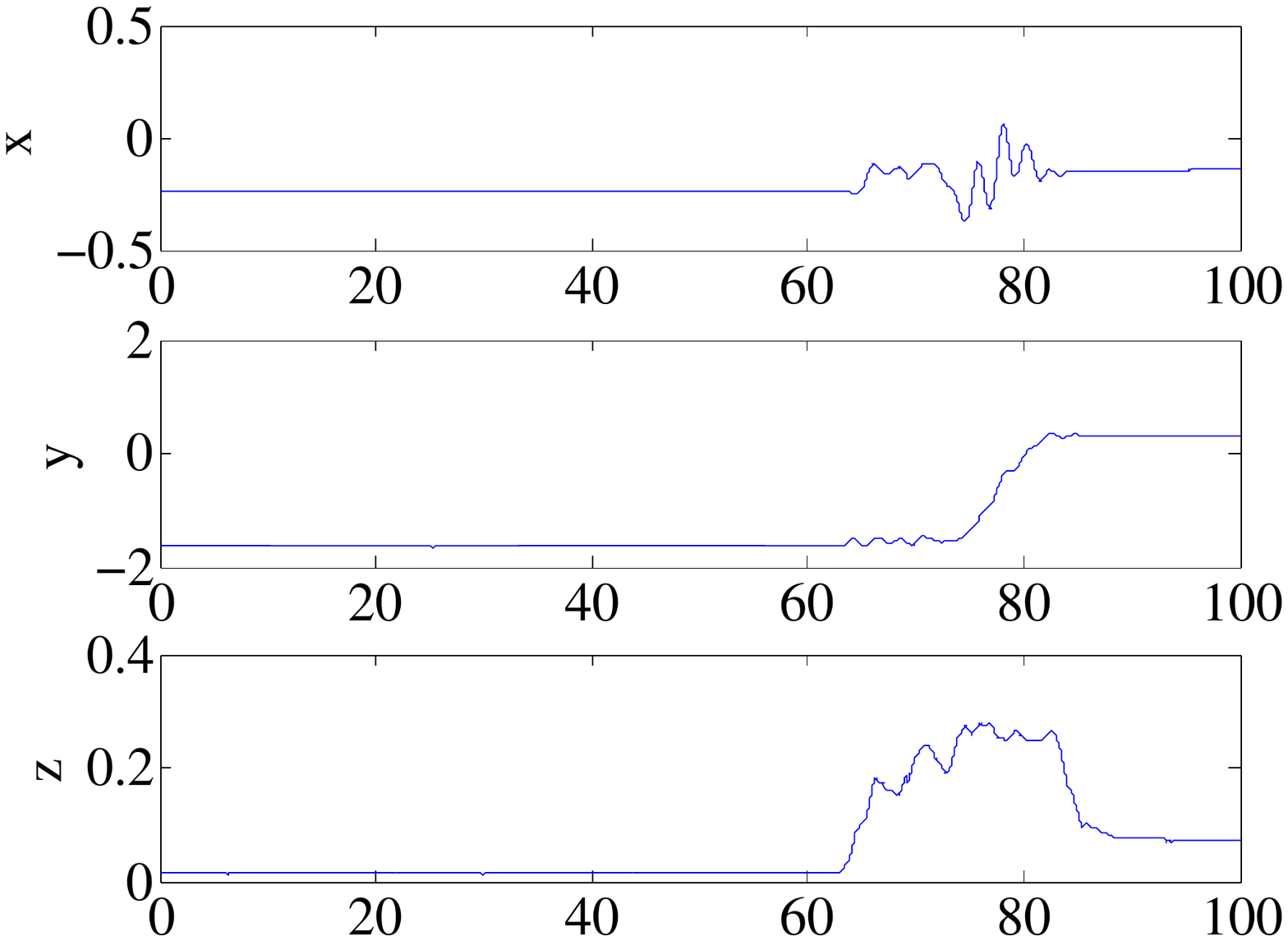}}
	\subfigure[Payload velocity ($v_0$:blue, $v_{0_{d}}$:red)]{
		\includegraphics[width=0.4\columnwidth]{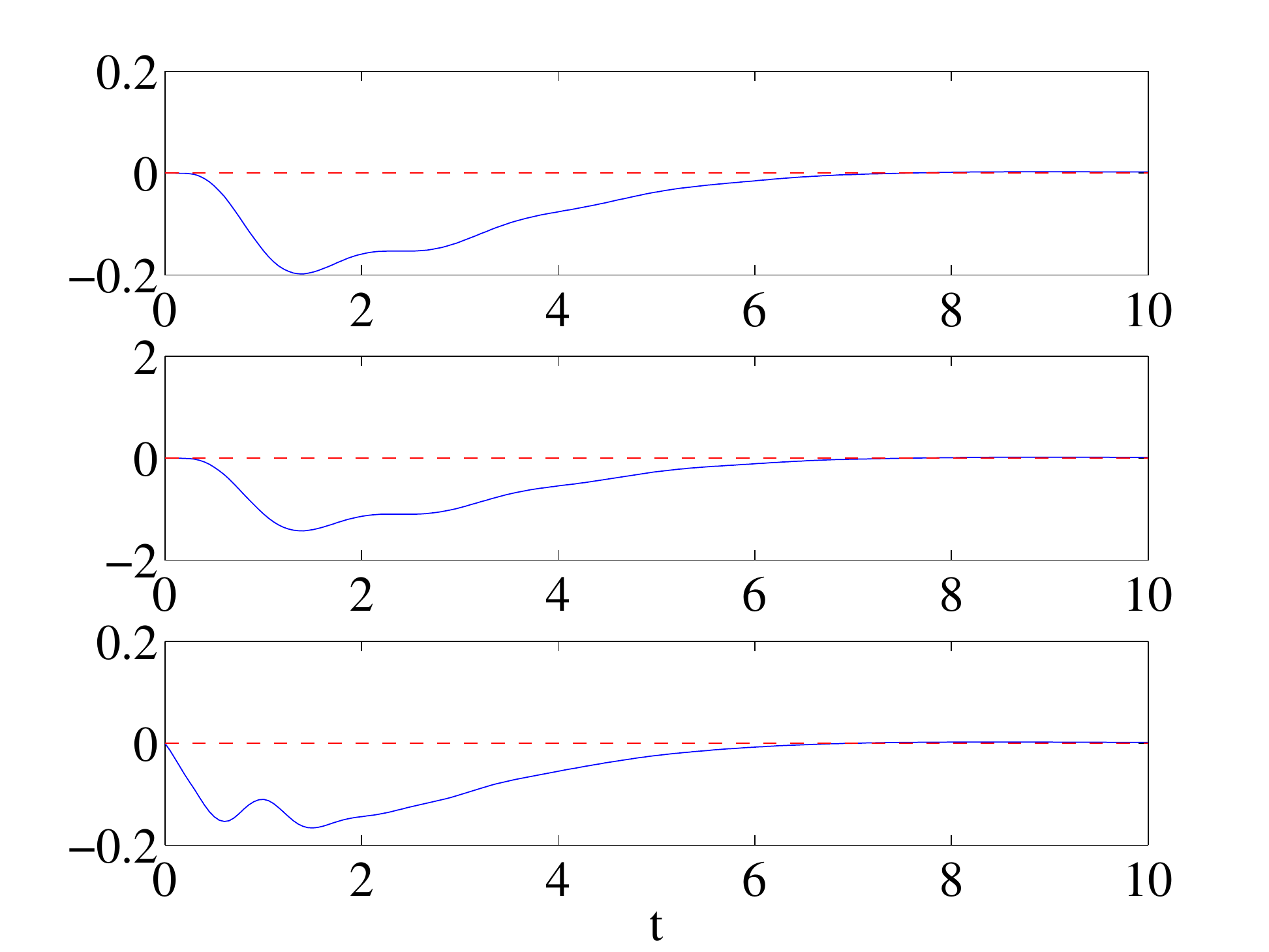}}
}
\centerline{
	\subfigure[Payload angular velocity $\Omega_{0}$]{
		\includegraphics[width=0.4\columnwidth]{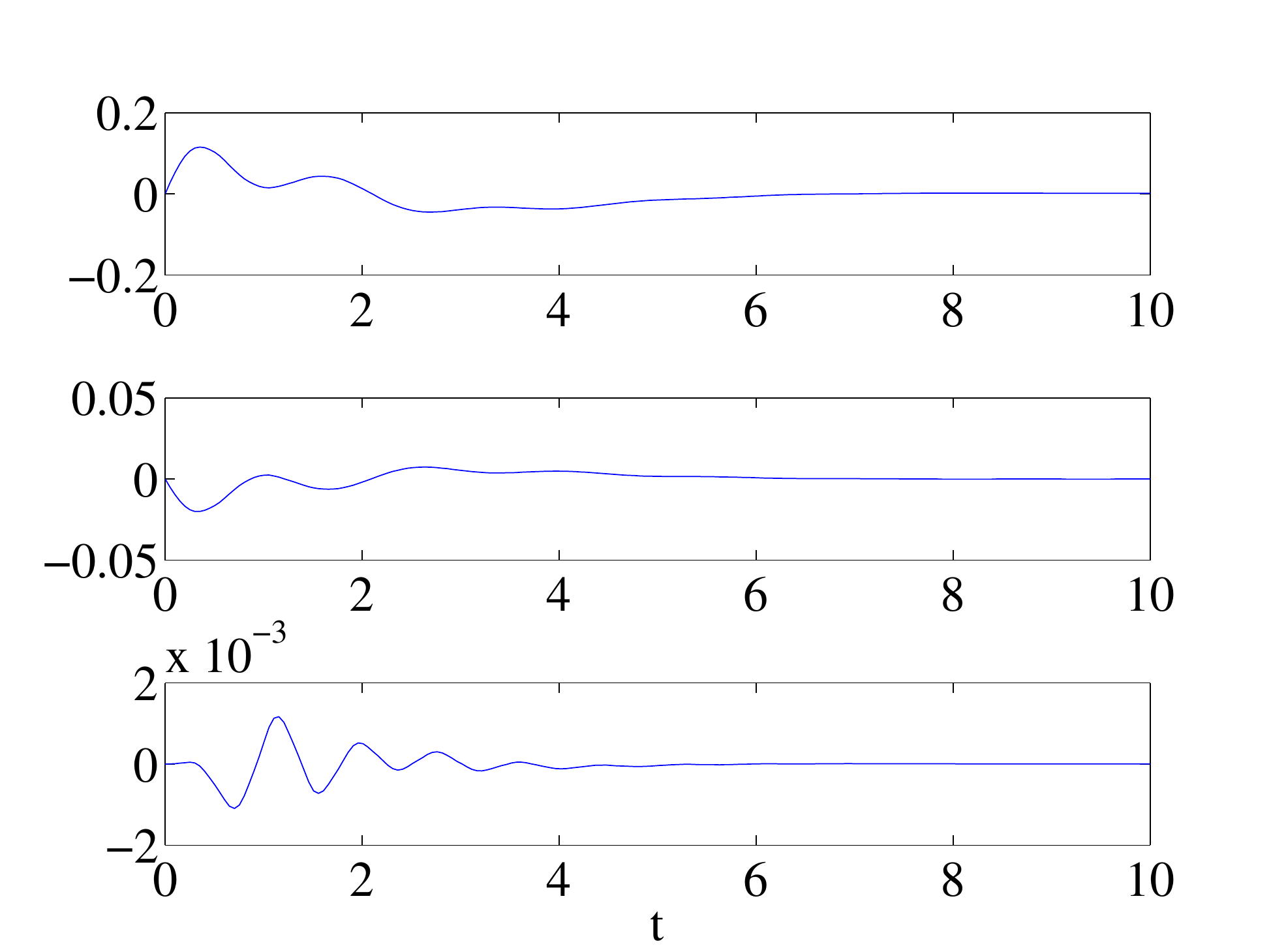}}
	\subfigure[Quadrotors angular velocity errors $e_{\Omega_{i}}$]{
		\includegraphics[width=0.4\columnwidth]{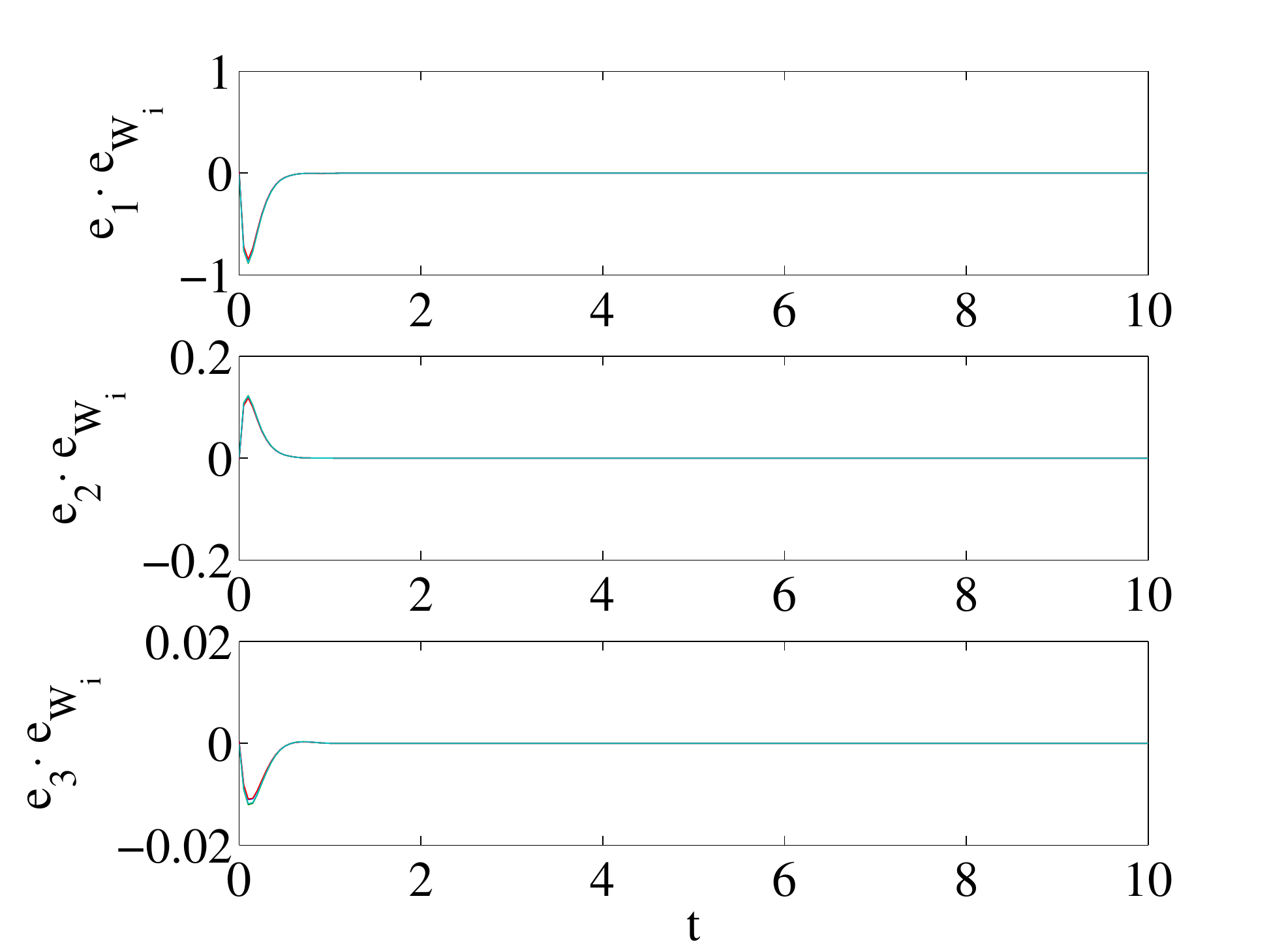}}
}
\centerline{
	\subfigure[Payload attitude error $\Psi_{0}$]{
		\includegraphics[width=0.4\columnwidth]{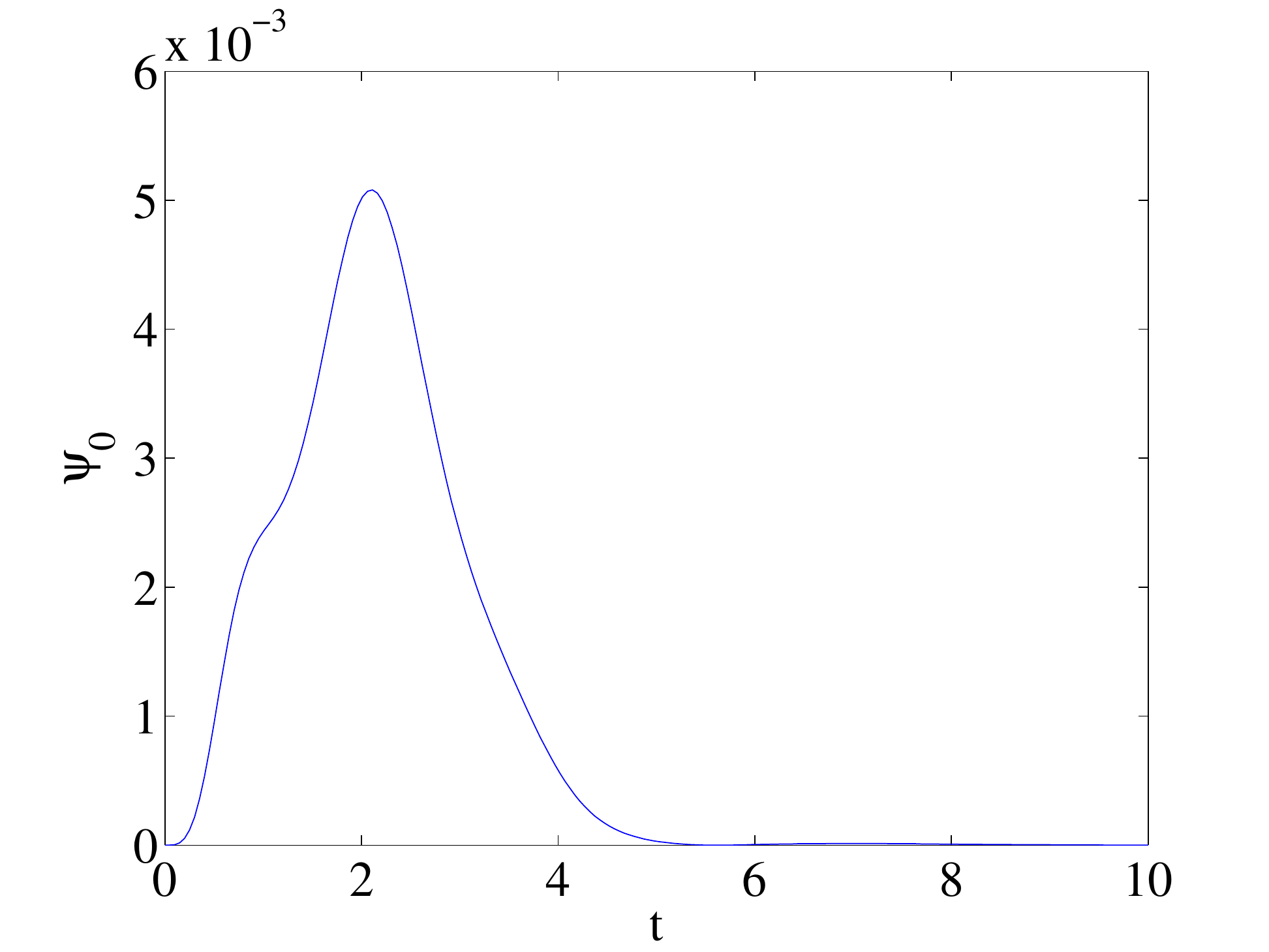}}
	\subfigure[Quadrotors attitude errors $\Psi_{i}$]{
		\includegraphics[width=0.4\columnwidth]{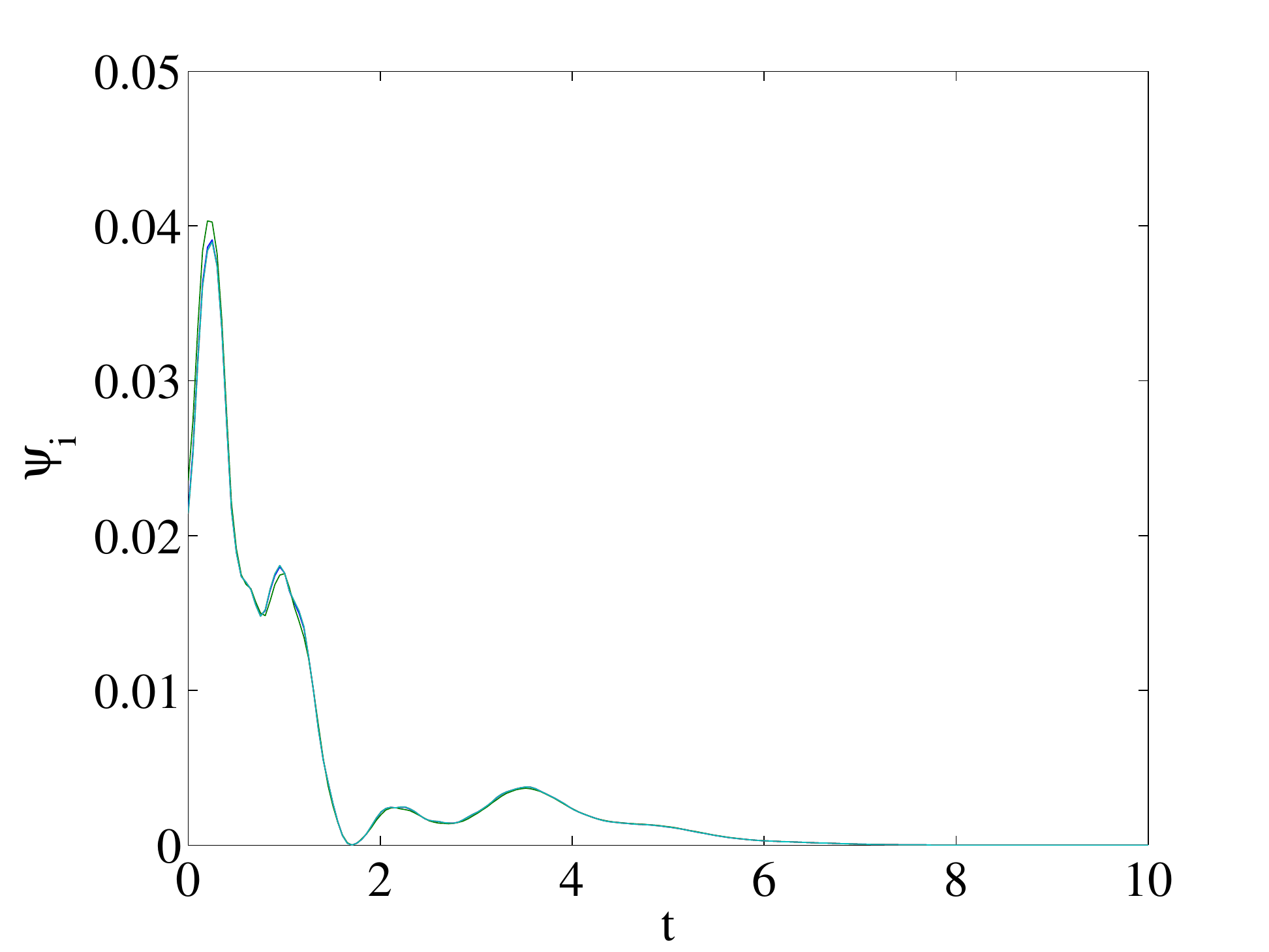}}
}
\centerline{
	\subfigure[Quadrotors total thrust inputs $f_{i}$]{
		\includegraphics[width=0.4\columnwidth]{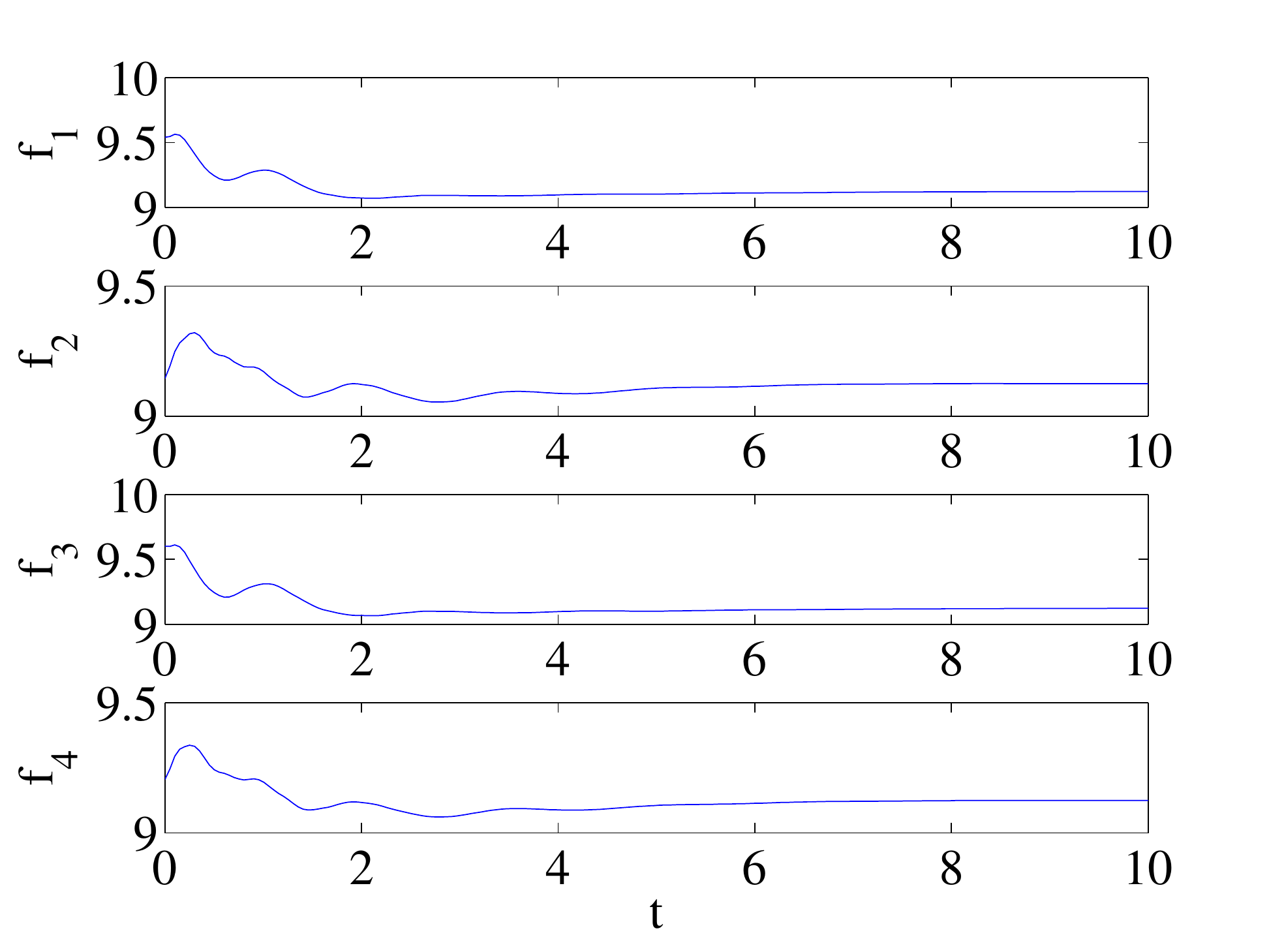}}
	\subfigure[Direction error $e_{q}$, and angular velocity error $e_{\omega}$ for the links]{
		\includegraphics[width=0.4\columnwidth]{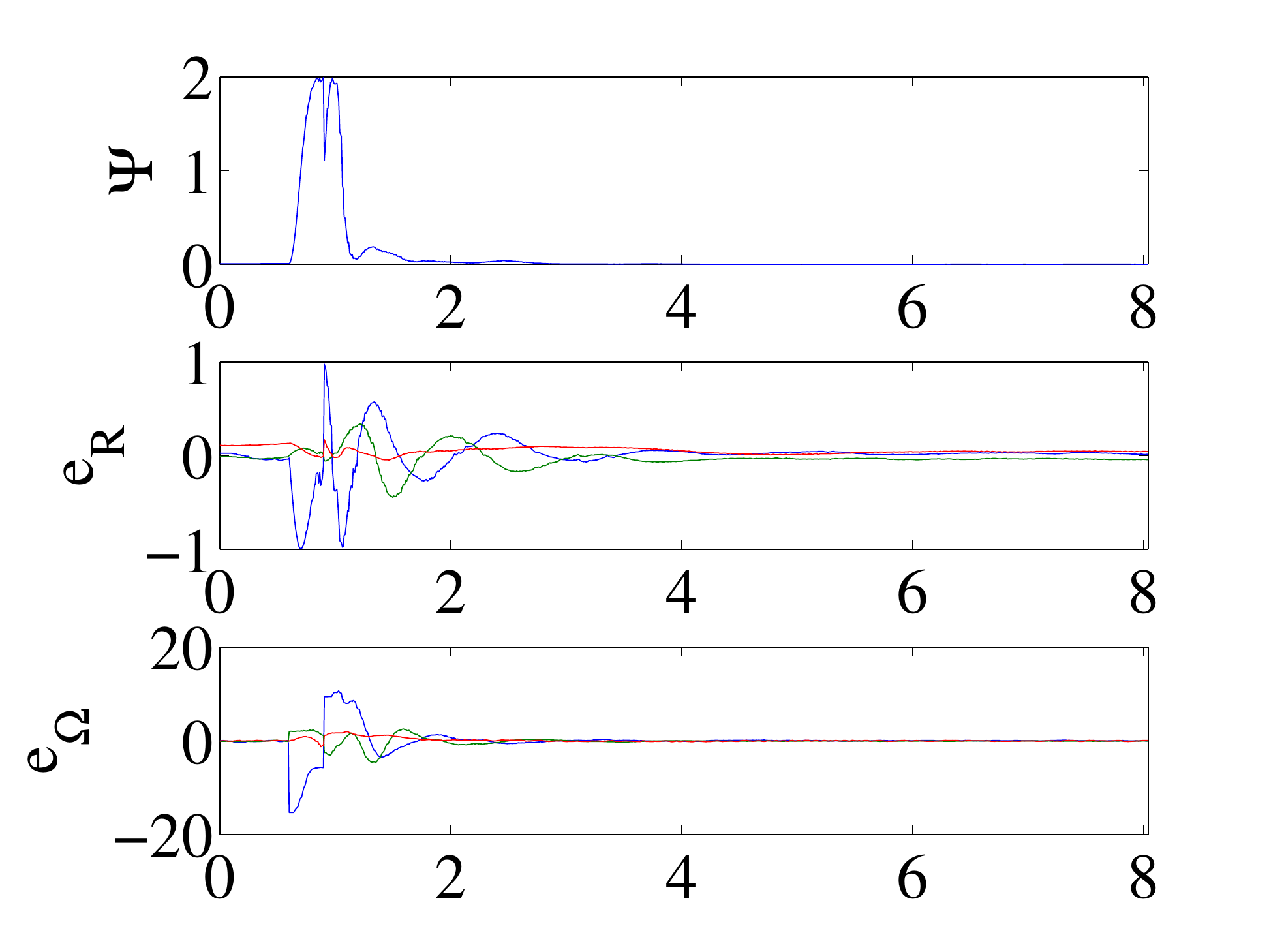}}
}
\caption{Stabilization of a rigid-body connected to multiple quadrotors}\label{fig:simresults1}
\end{figure}
Numerical simulation results are presented at Figure \ref{fig:simresults1}, which shows the position and velocity of the payload, and its tracking errors. We have also presented the link direction error defined as
\begin{align*}
e_{q}=\sum_{i=1}^{m}\sum_{j=1}^{n_{i}}{\|q_{ij}-e_{3}\|}.
\end{align*}
\begin{figure}
\centerline{
	\subfigure[3D perspective]{
		\includegraphics[width=0.8\columnwidth]{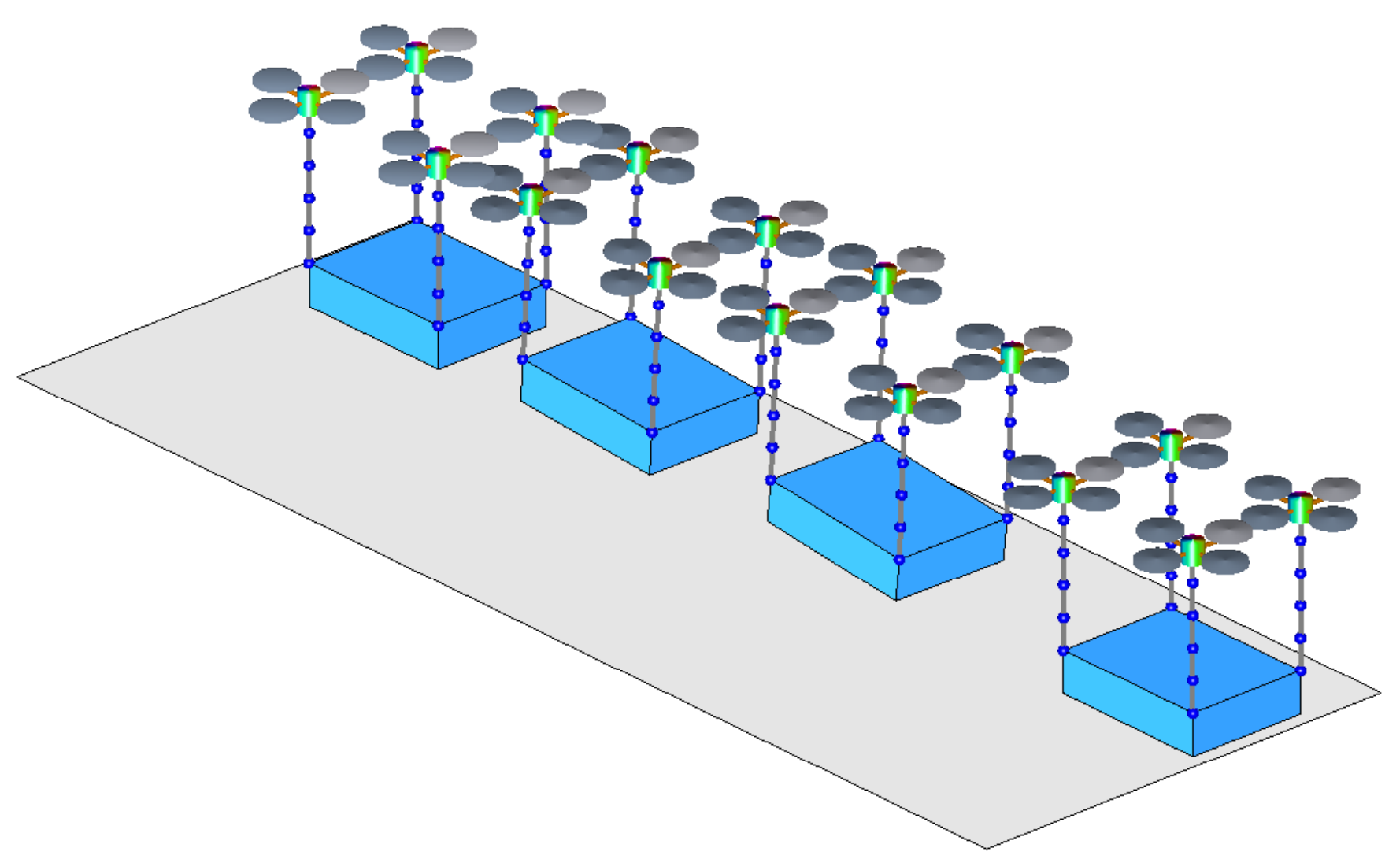}}
}
\centerline{
	\subfigure[Side view]{
		\includegraphics[width=0.8\columnwidth]{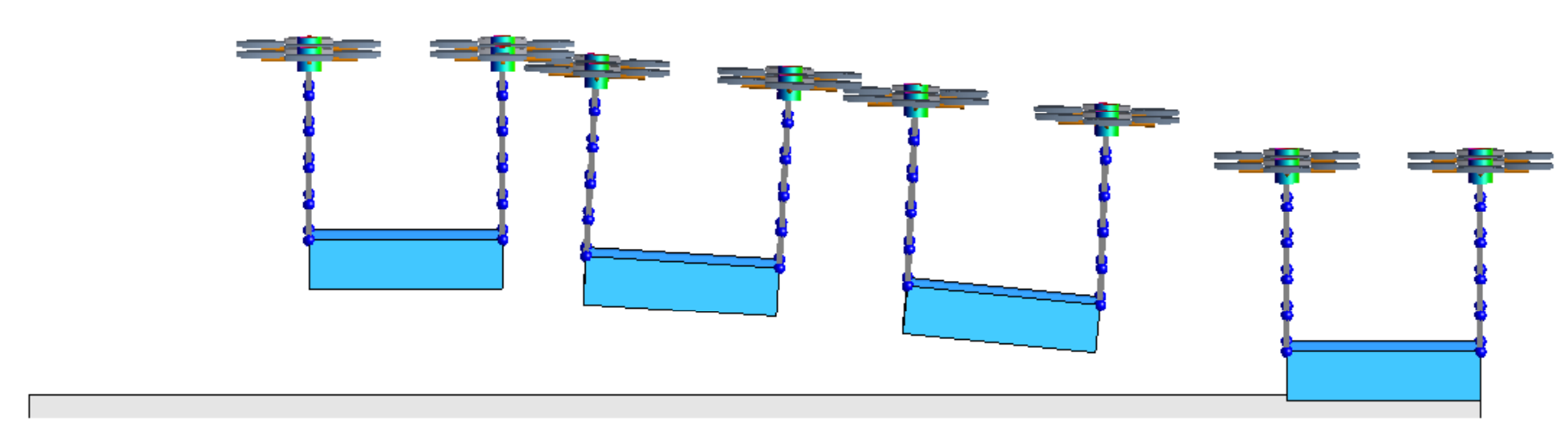}}
}
\centerline{
	\subfigure[Top view]{
		\includegraphics[width=0.8\columnwidth]{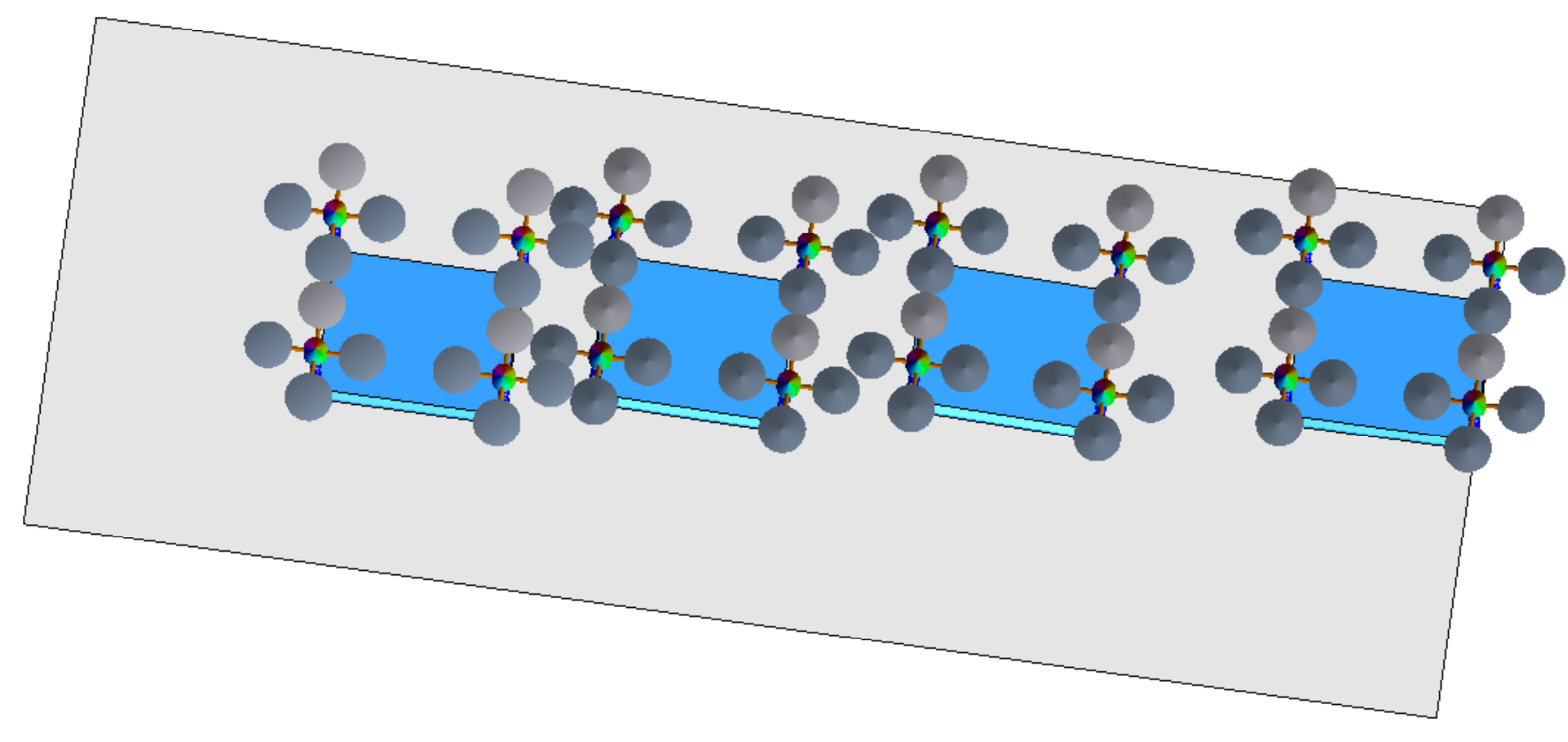}}
}
\caption{Snapshots of controlled maneuver}\label{fig:simresults1snap}
\end{figure}
\newpage
\subsubsection {\normalsize Payload Stabilization with Large Initial Attitude Errors}
In the second case, we consider large initial errors for the attitude of the payload and quadrotors. Initially, the rigid body is tilted in its $b_{1}$ axis by $30$ degrees, and the initial direction of the links are chosen such that two cables are curved along the horizontal direction. The initial conditions are given by
\begin{gather*}
x_{0}(0)=[2.4,\; 0.8,\; -1.0]^{T},\; v_{0}(0)=0_{3\times 1},\\
\omega_{ij}(0)=0_{3\times 1},\; \Omega_{i}(0)=0_{3\times 1}\\
R_{0}(0)=R_{x}(30^\circ),\; \Omega_{0}=0_{3\times 1},
\end{gather*}
where $R_x(30^\circ)$ denotes the rotation about the first axis by $30^\circ$. The initial attitude of quadrotors are chosen as
\begin{gather*}
R_{1}(0)=R_{y}(-35^\circ),\; R_{2}(0)=I_{3\times 3},\\ 
R_{3}(0)=R_{y}(-35^\circ),\; R_{4}(0)=I_{3\times 3}.
\end{gather*}
The properties of quadrotors and cables are identical to the previous case. The payload mass is $m=1.0\,\mathrm{kg}$ , and its length, width, and height are $1.0$, $1.2$, and $0.2\,\mathrm{m}$, respectively. 
\begin{figure}
\centerline{
	\subfigure[Payload position $x_0$:blue, $x_{0_{d}}$:red]{
		\includegraphics[width=0.4\columnwidth]{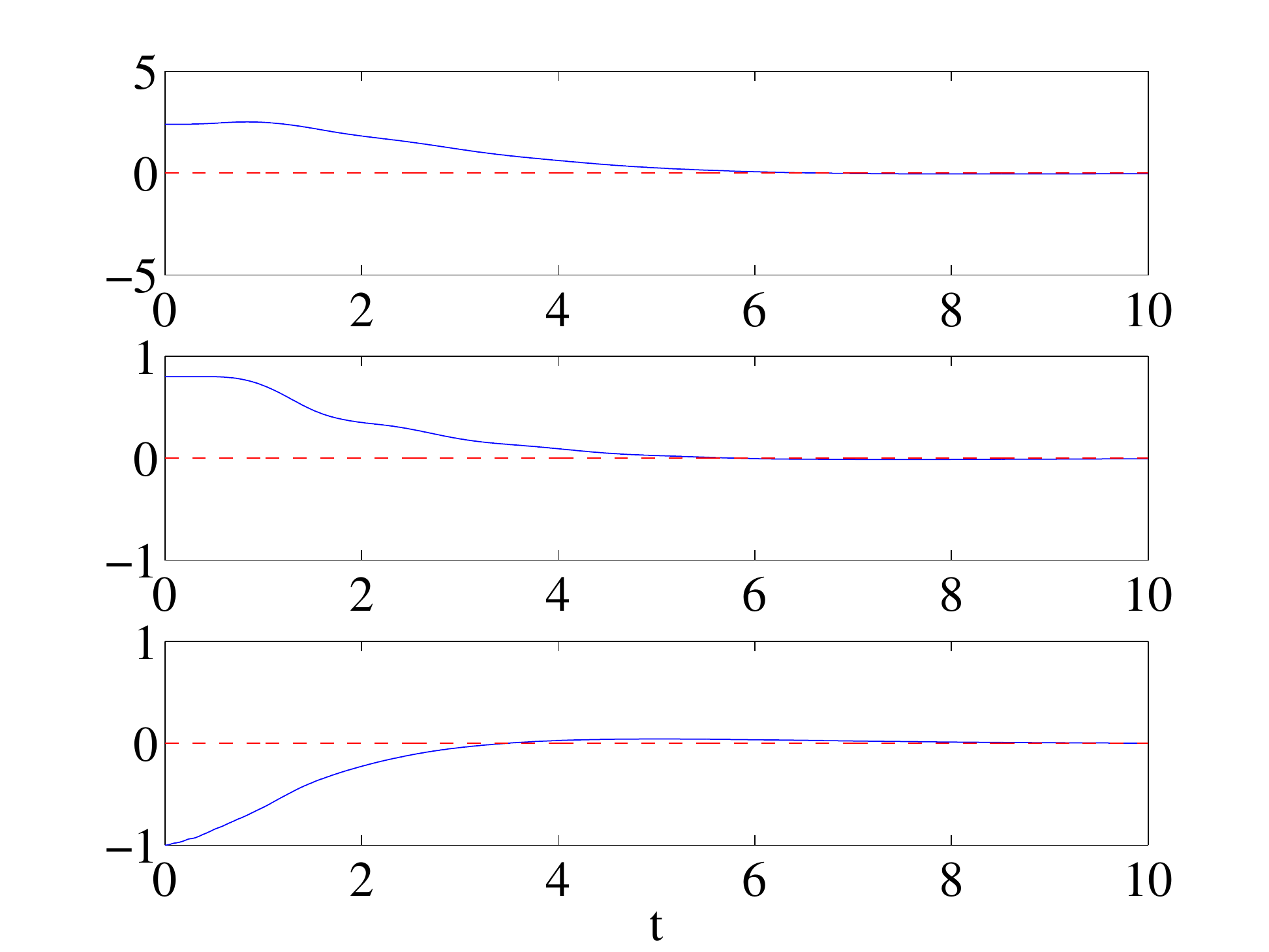}\label{fig:case2x0}}
	\subfigure[Payload velocity $v_0$:blue, $v_{0_{d}}$:red]{
		\includegraphics[width=0.4\columnwidth]{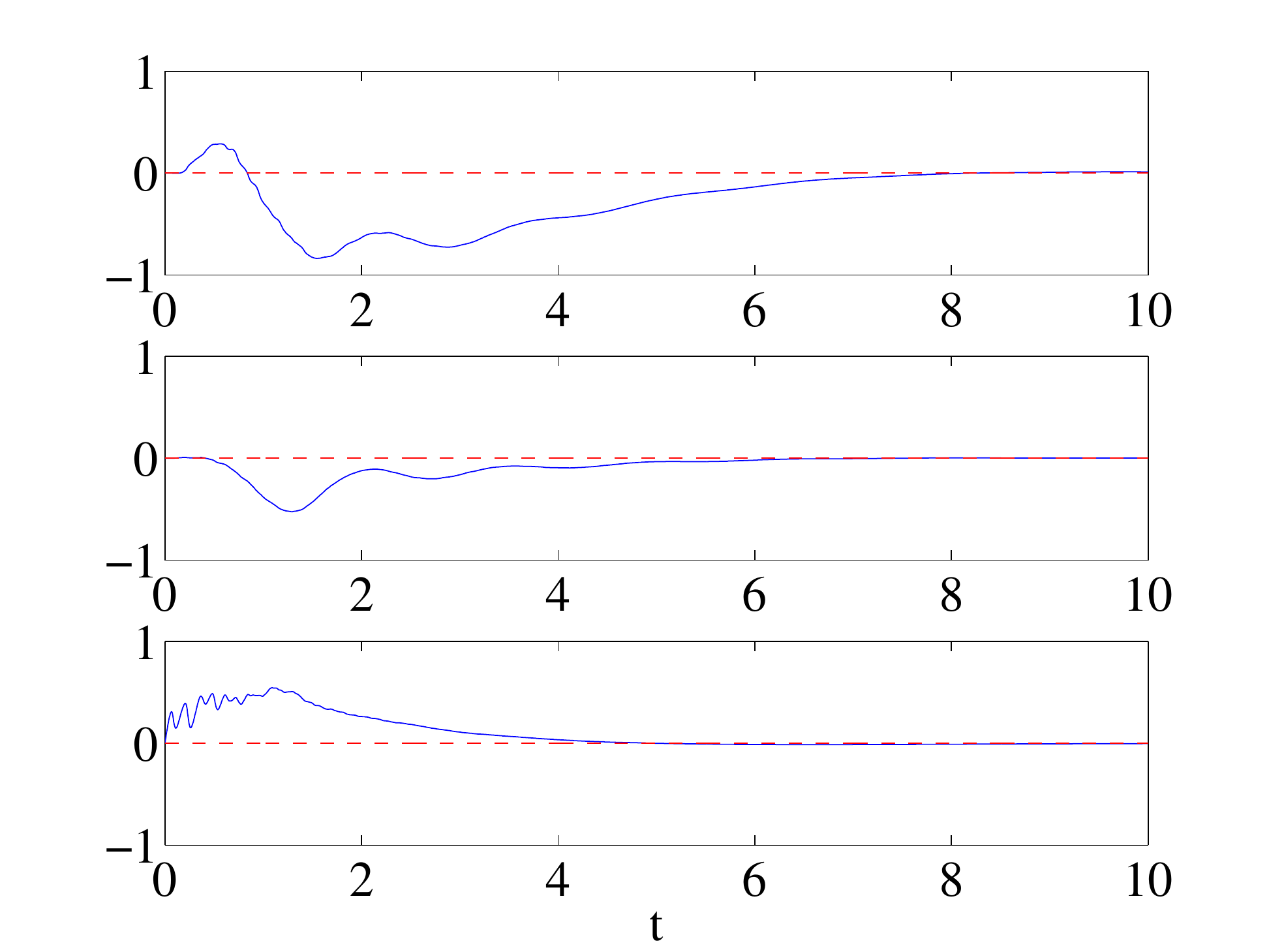}\label{fig:case2v0}}
}
\centerline{
	\subfigure[Payload angular velocity $\Omega_{0}$]{
		\includegraphics[width=0.4\columnwidth]{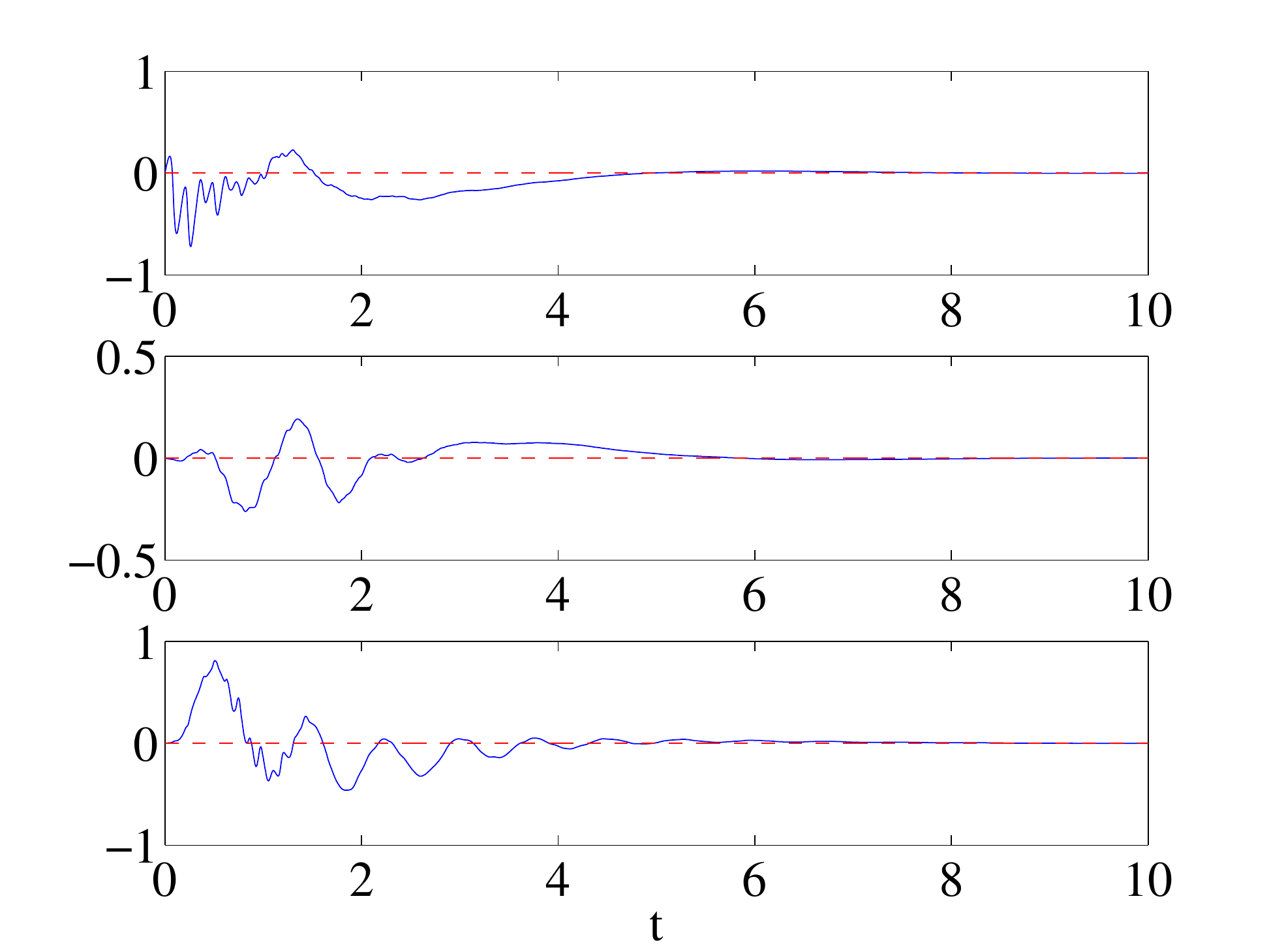}\label{fig:case2W0}}
	\subfigure[Quadrotors angular velocity errors $e_{\Omega_{i}}$]{
		\includegraphics[width=0.4\columnwidth]{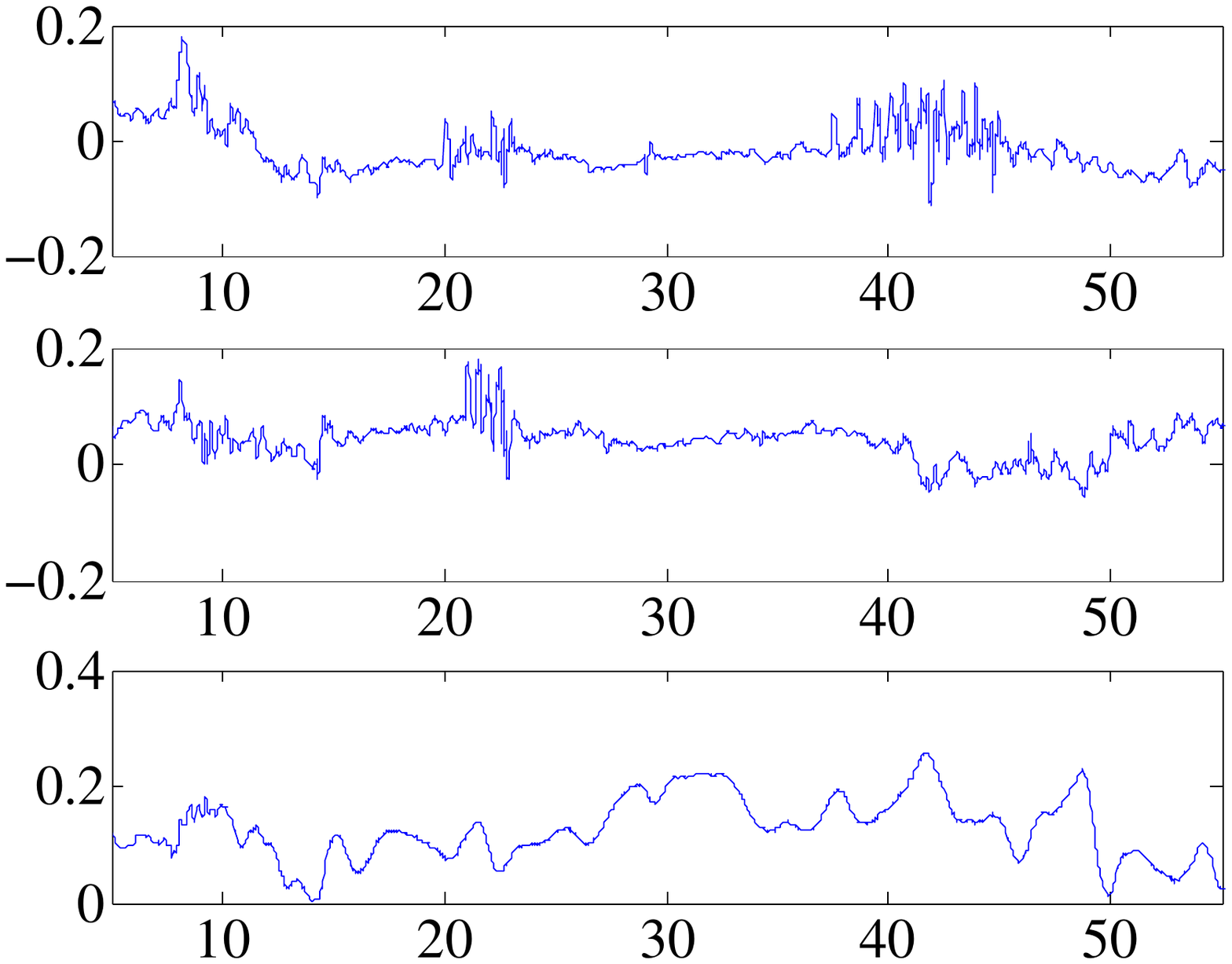}\label{fig:case2eW}}
}
\centerline{
	\subfigure[Payload attitude error $\psi_{0}$]{
		\includegraphics[width=0.4\columnwidth]{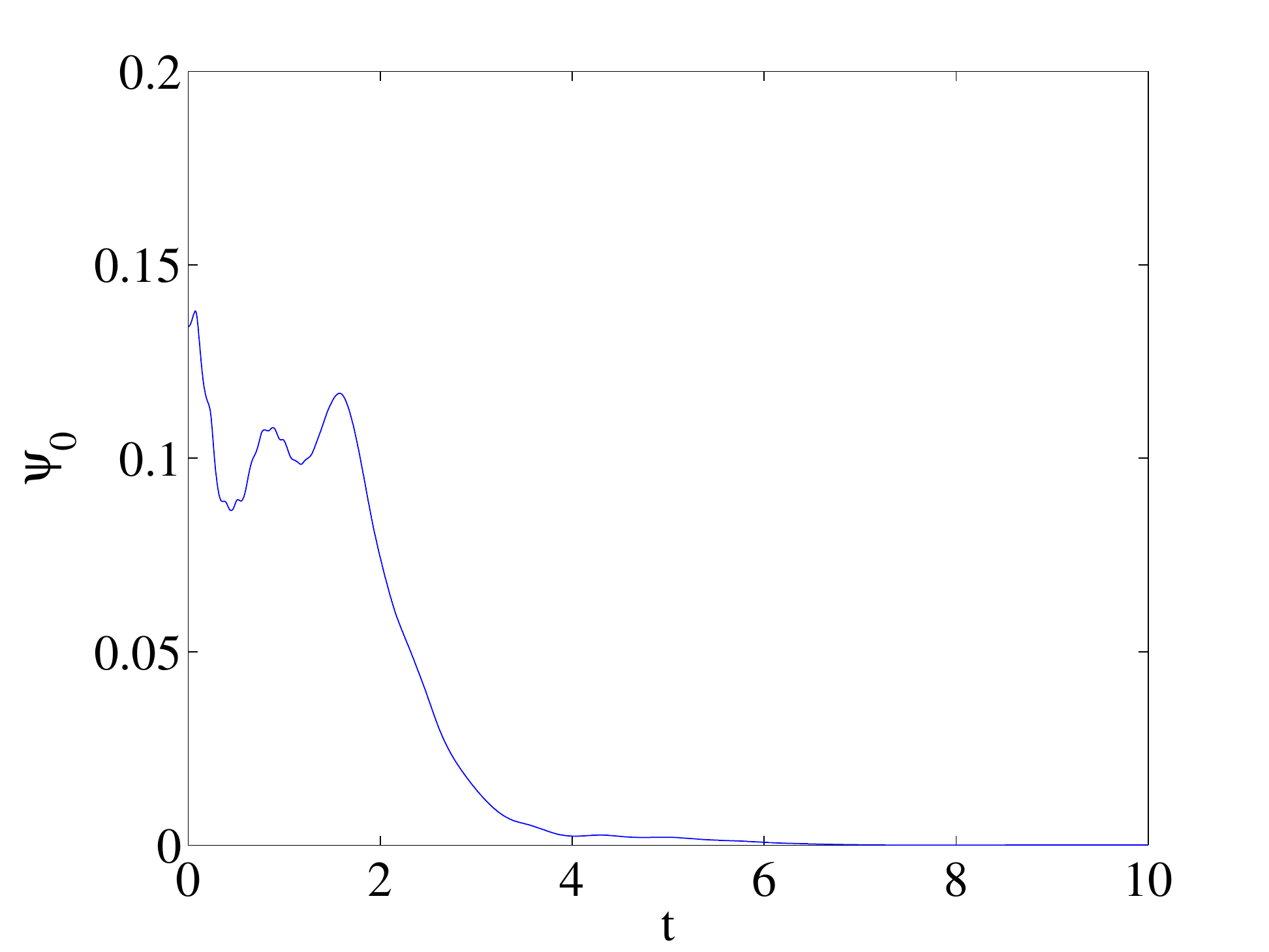}\label{fig:case2psi0}}
	\subfigure[Quadrotors attitude errors $\psi_{i}$]{
		\includegraphics[width=0.4\columnwidth]{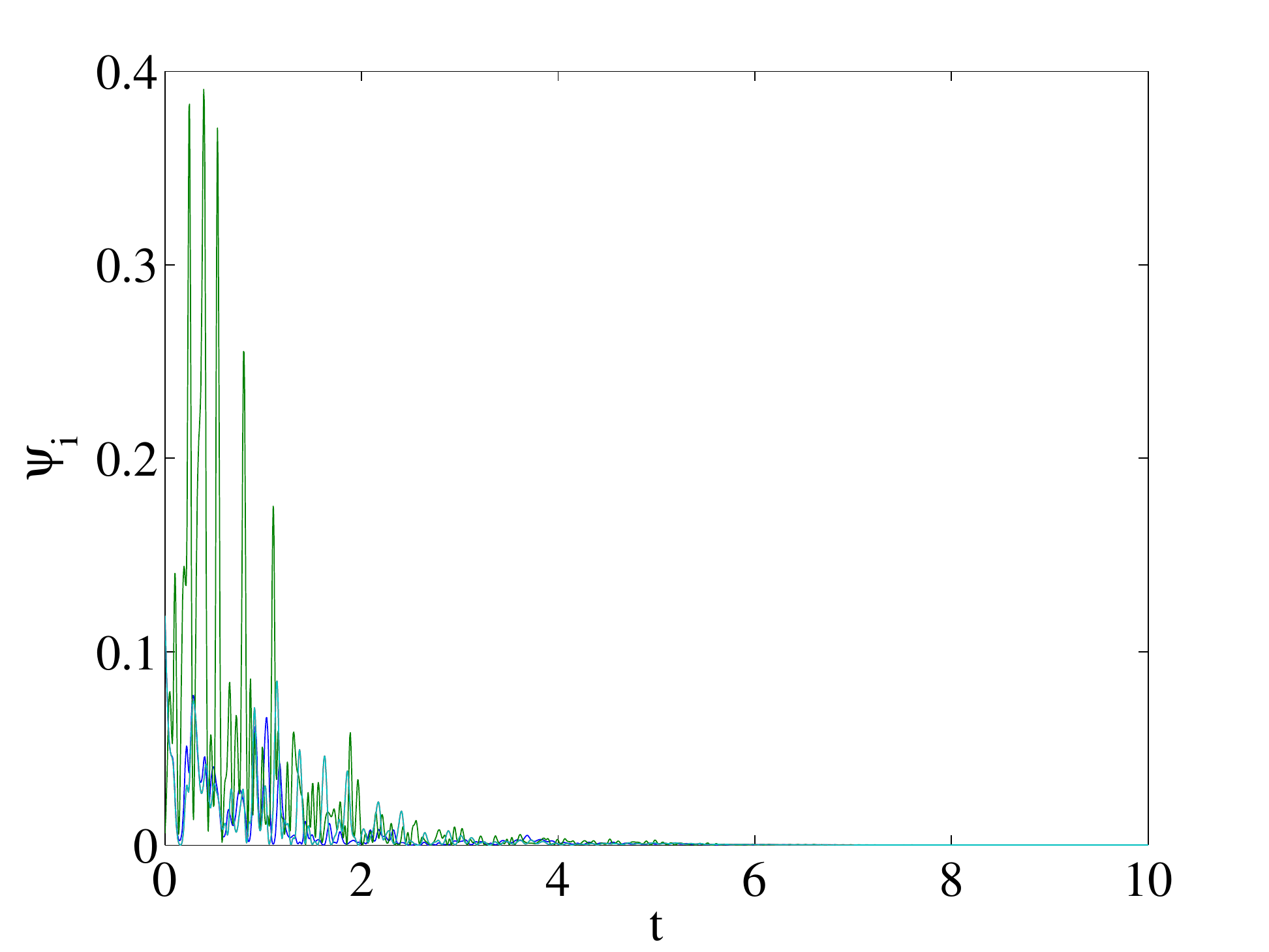}\label{fig:case2psii}}
}
\centerline{
	\subfigure[Quadrotors total thrust inputs $f_{i}$]{
		\includegraphics[width=0.4\columnwidth]{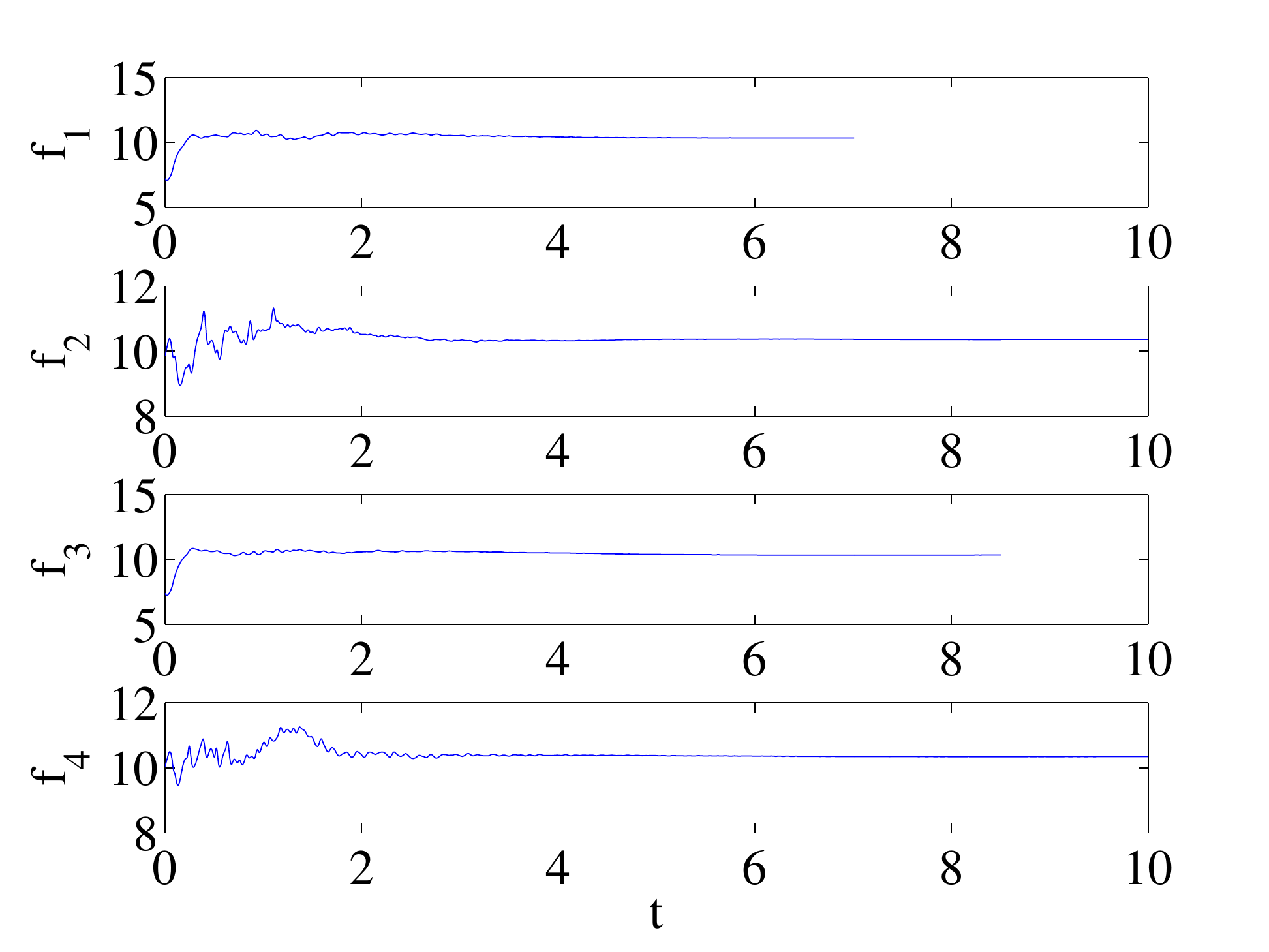}\label{fig:case2ui}}
	\subfigure[Direction error $e_{q}$, and angular velocity error $e_{\omega}$ for the links]{
		\includegraphics[width=0.4\columnwidth]{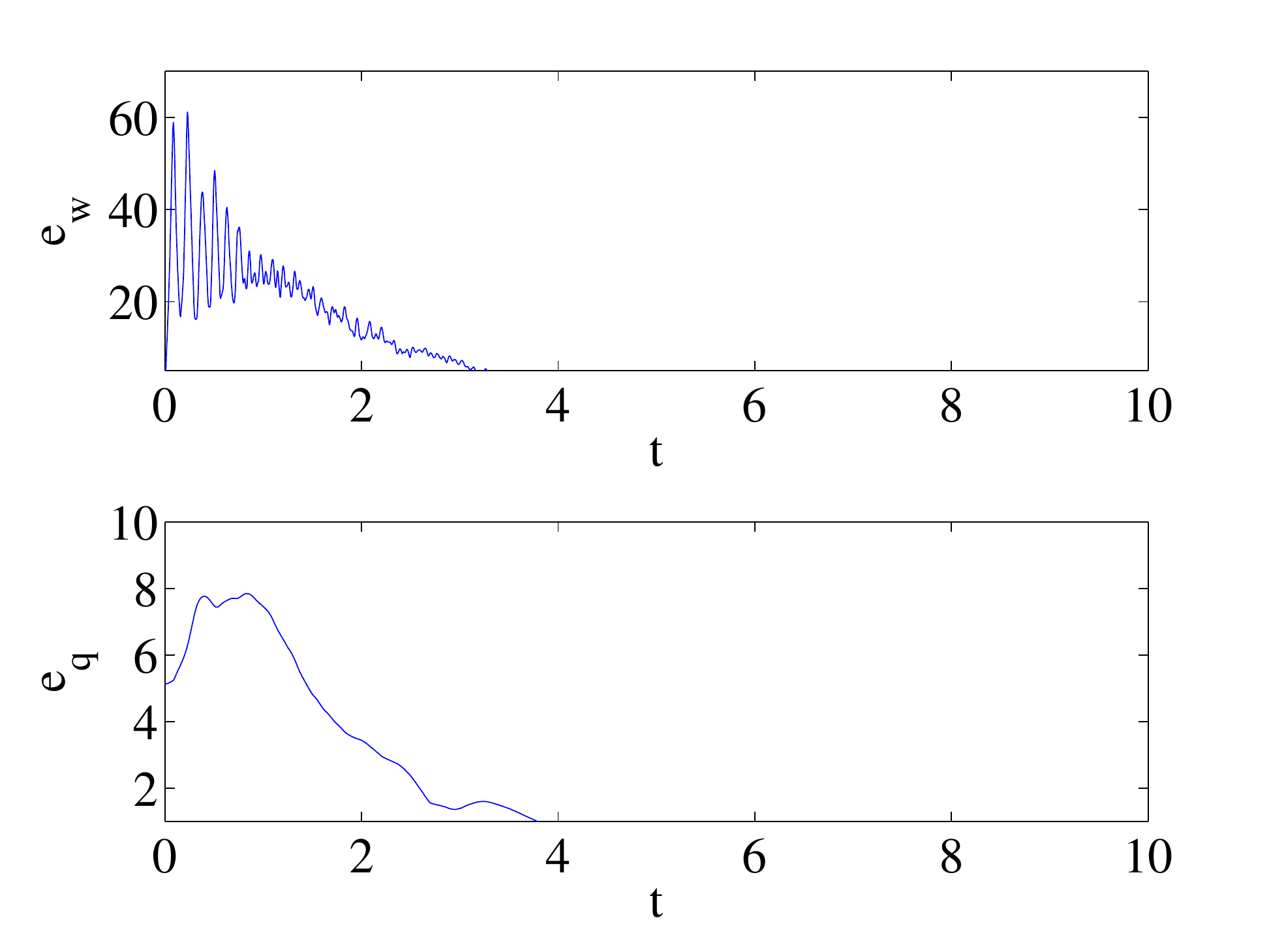}\label{fig:case2errors}}
}
\caption{Stabilization of a payload with multiple quadrotors connected with flexible cables.}\label{fig:simresults2}
\end{figure}
Figure \ref{fig:simresults2} illustrates the tracking errors, and the total thrust of each quadrotor. Snapshots of the controlled maneuvers is also illustrated at Figure \ref{fig:simresults3snap}. It is shown that the proposed controller is able to stabilize the payload and cables at their desired configuration even from the large initial attitude errors\footnote{A short animation of this numerical simulation is available at \url{http://youtu.be/j14tSuHd8oA}.}.
\begin{figure}
\centerline{
	\subfigure[$t=0$ Sec.]{
		\includegraphics[width=0.25\columnwidth]{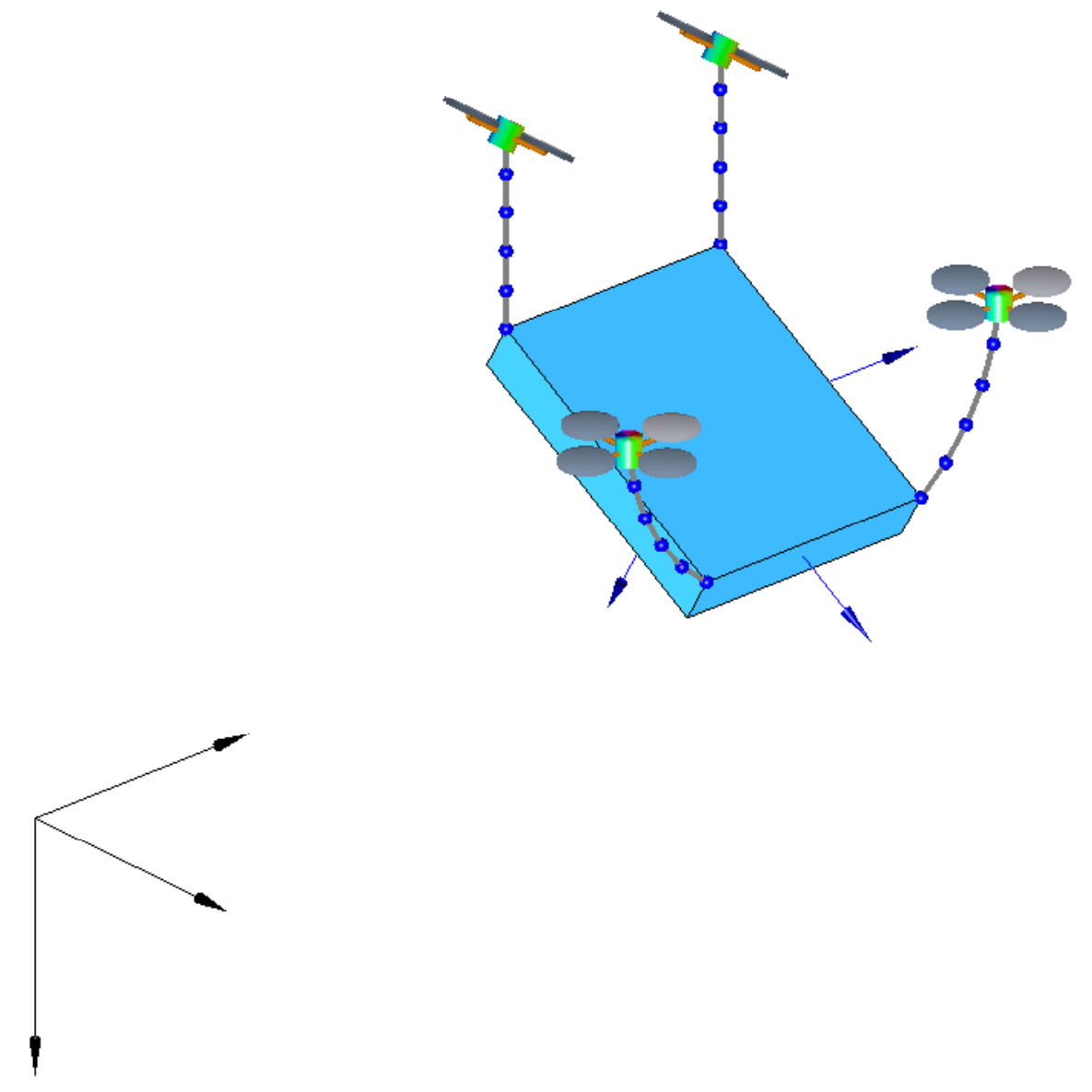}}
	\subfigure[$t=0.14$ Sec.]{
		\includegraphics[width=0.25\columnwidth]{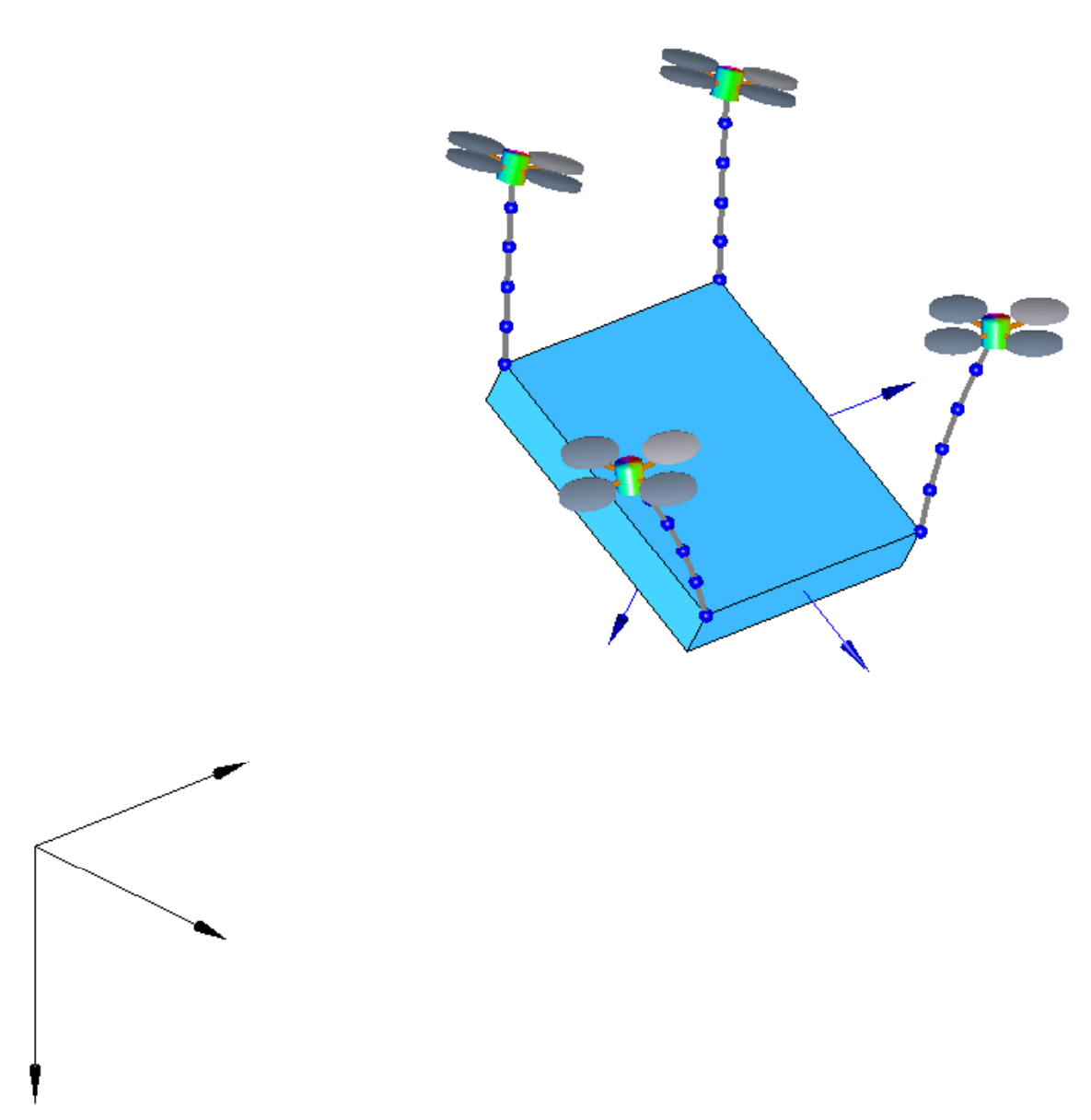}}
		\subfigure[$t=0.30$ Sec.]{
		\includegraphics[width=0.25\columnwidth]{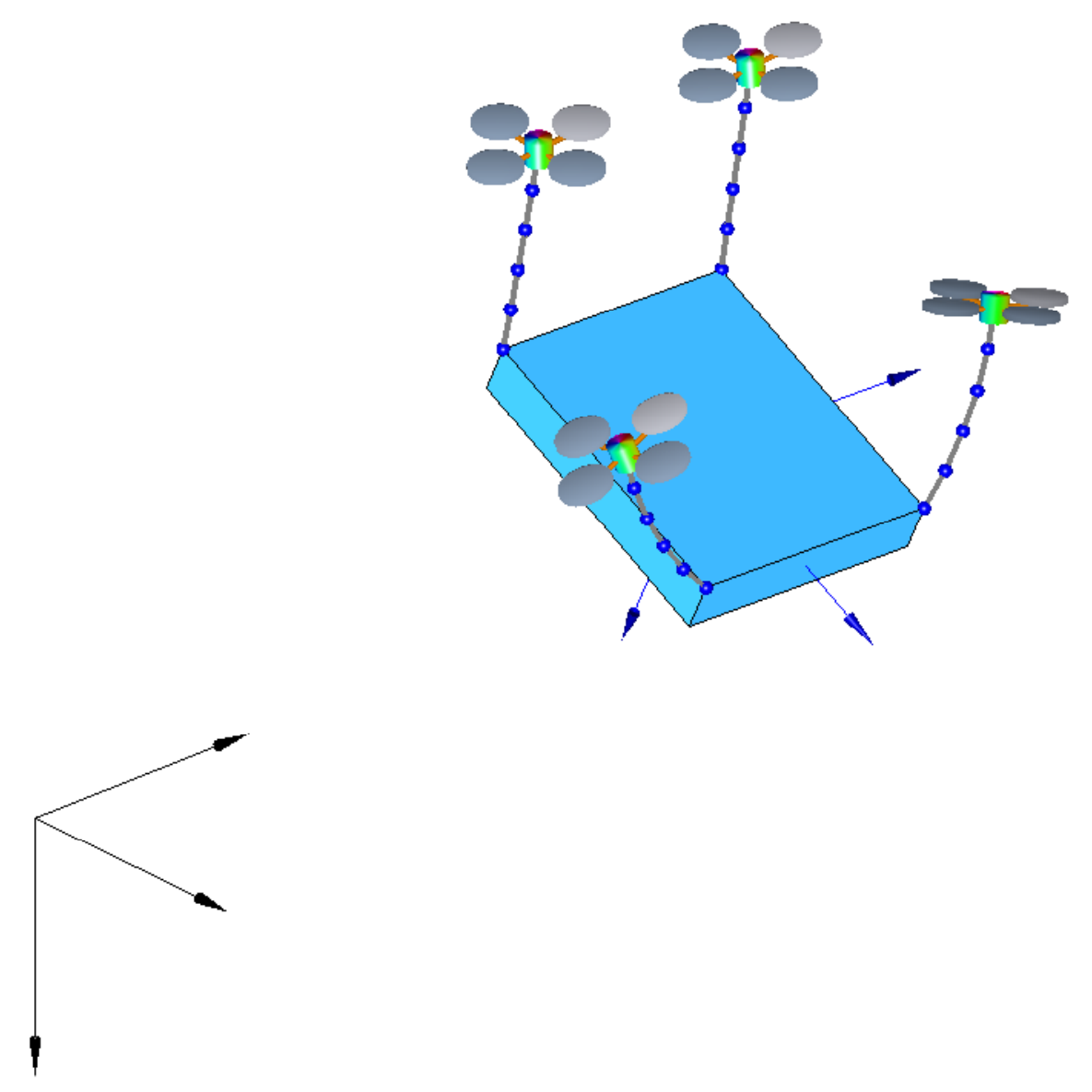}}
}
\centerline{
	\subfigure[$t=0.68$ Sec.]{
		\includegraphics[width=0.25\columnwidth]{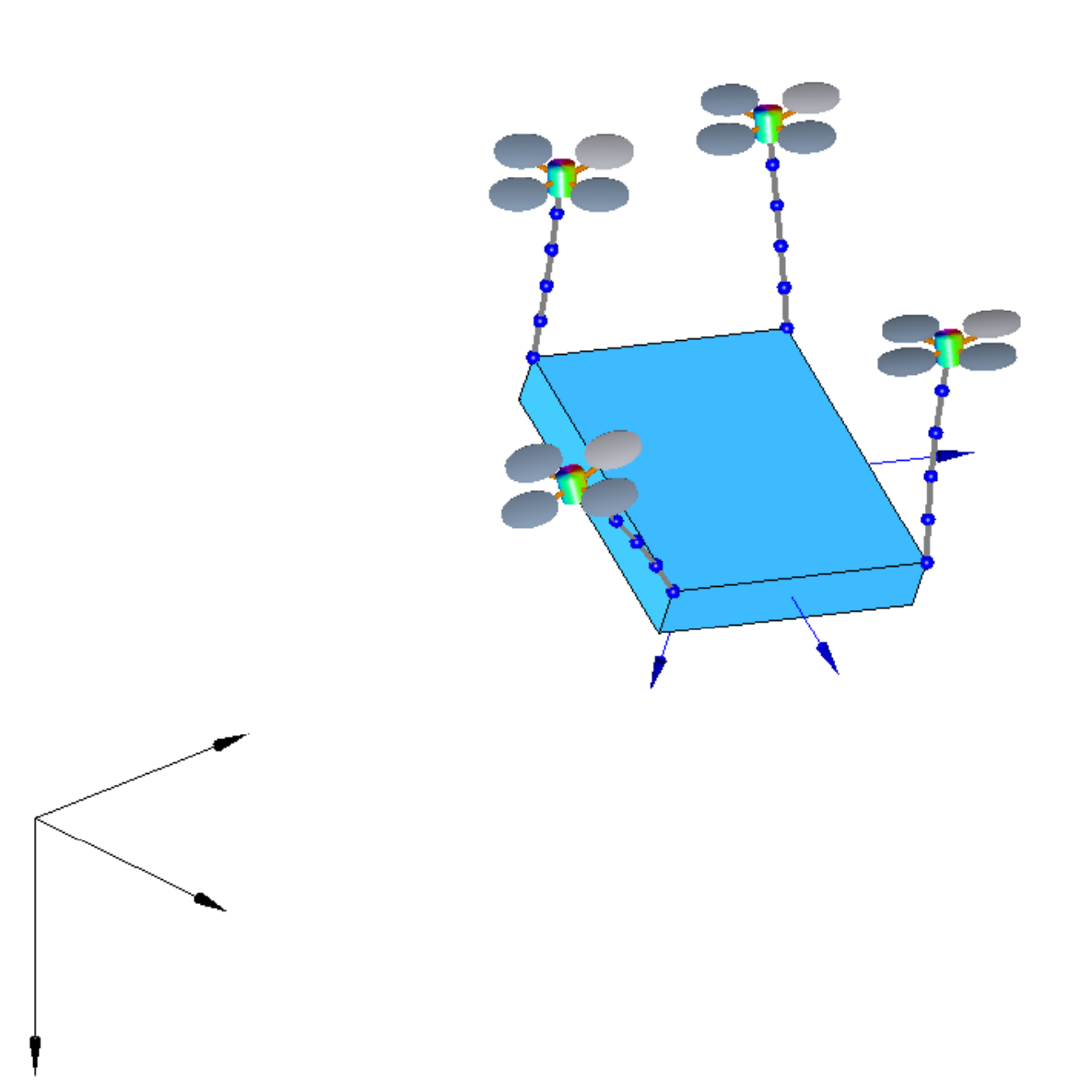}}
	\subfigure[$t=1.10$ Sec.]{
		\includegraphics[width=0.25\columnwidth]{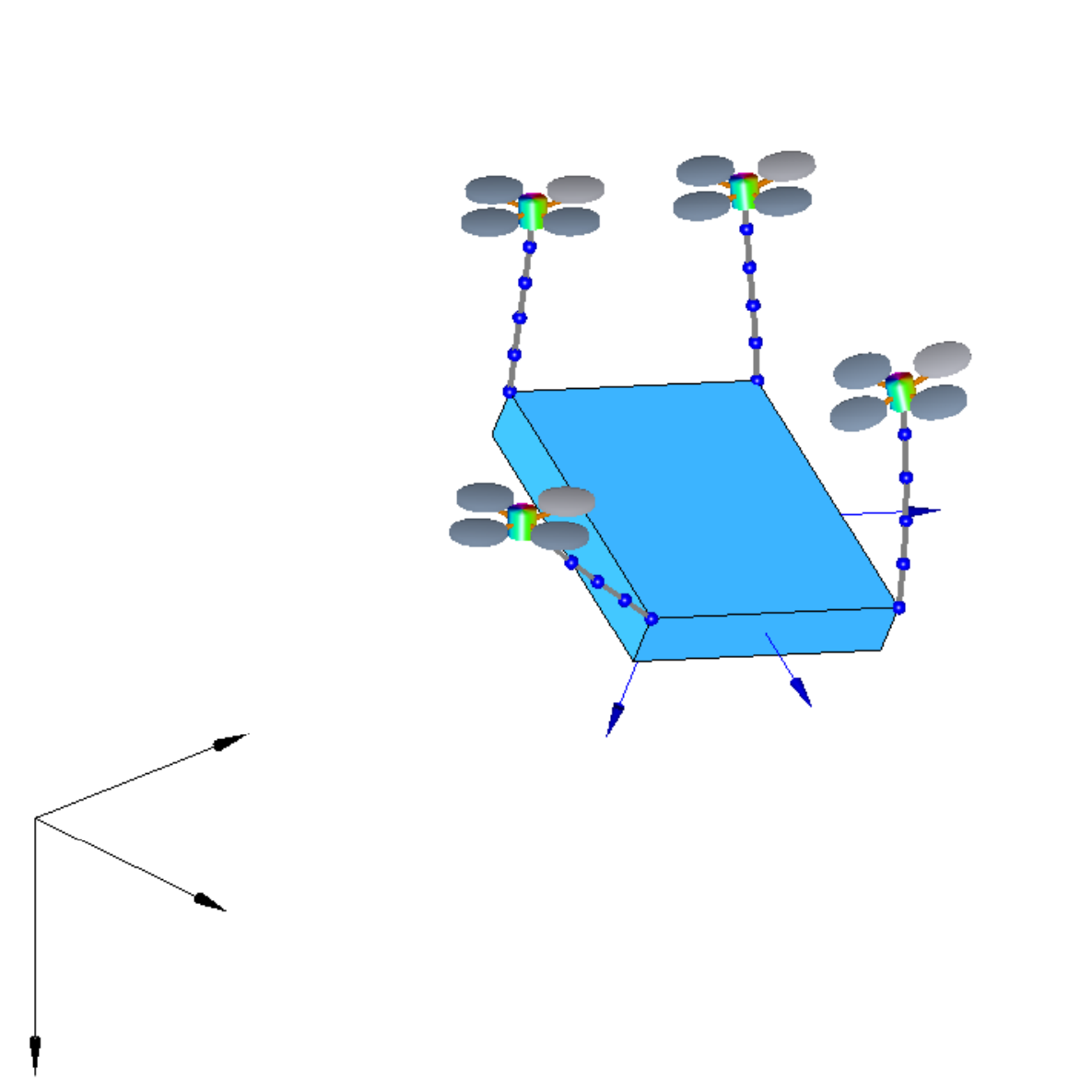}}
		\subfigure[$t=1.36$ Sec.]{
		\includegraphics[width=0.25\columnwidth]{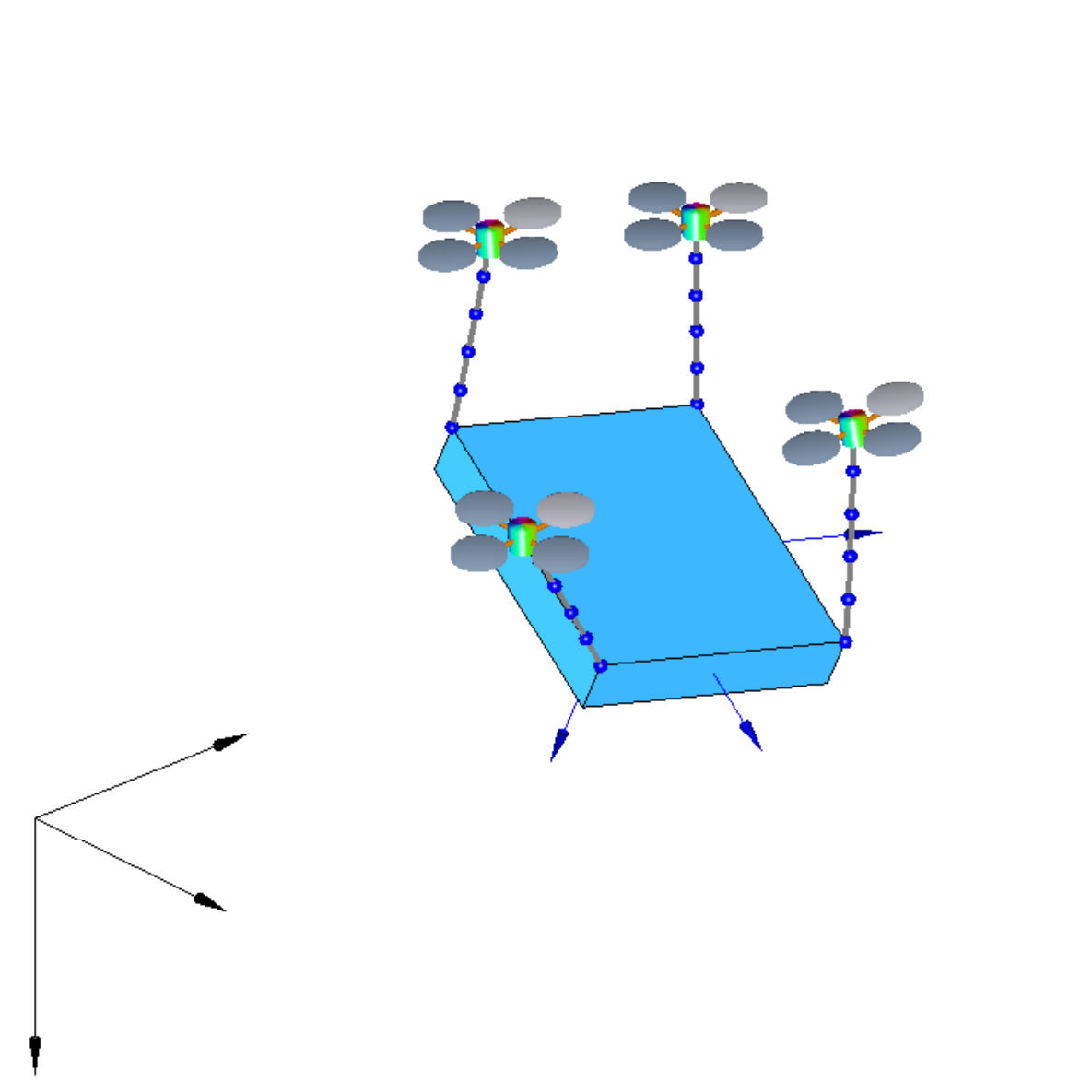}}
}
\centerline{
	\subfigure[$t=1.98$ Sec.]{
		\includegraphics[width=0.25\columnwidth]{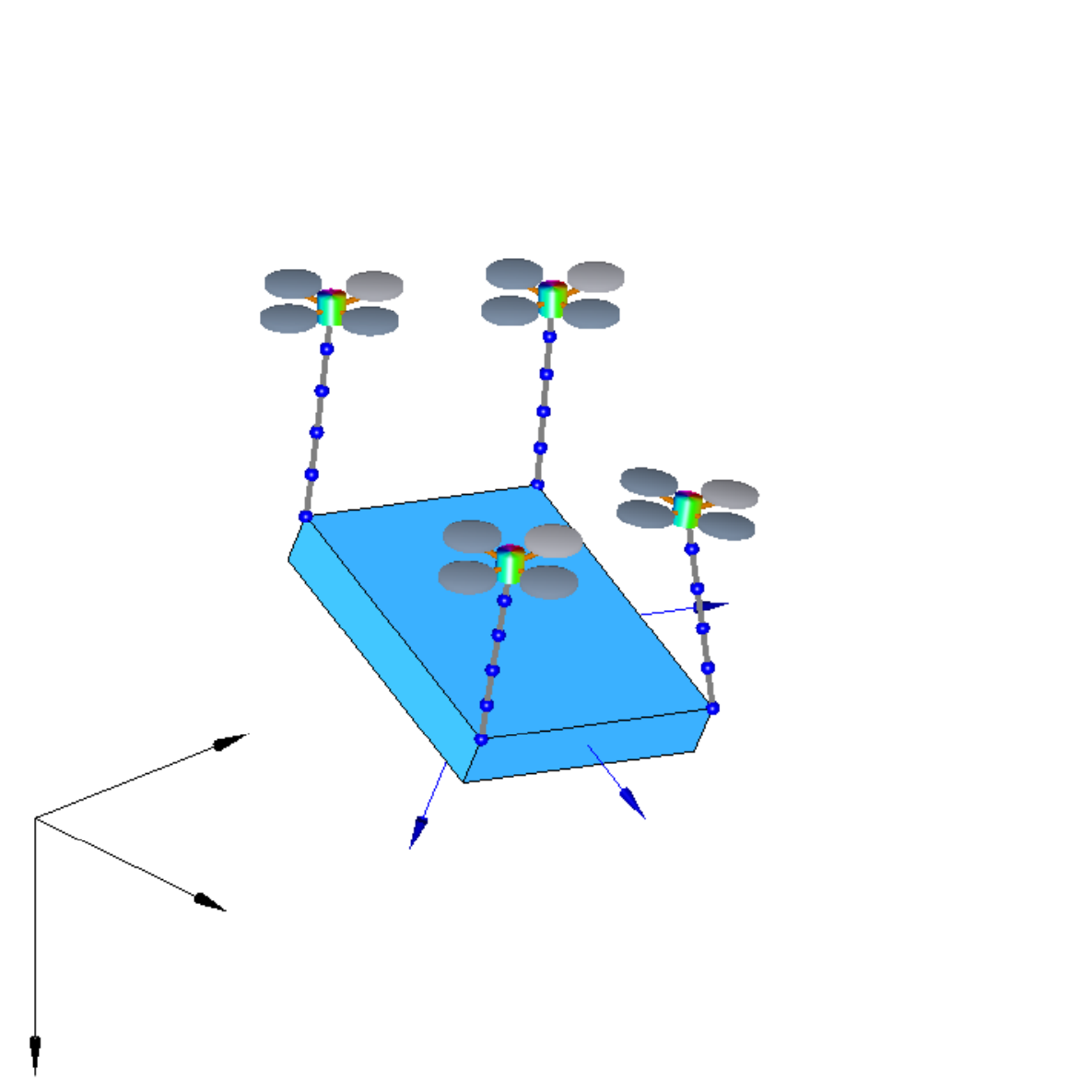}}
	\subfigure[$t=3.48$ Sec.]{
		\includegraphics[width=0.25\columnwidth]{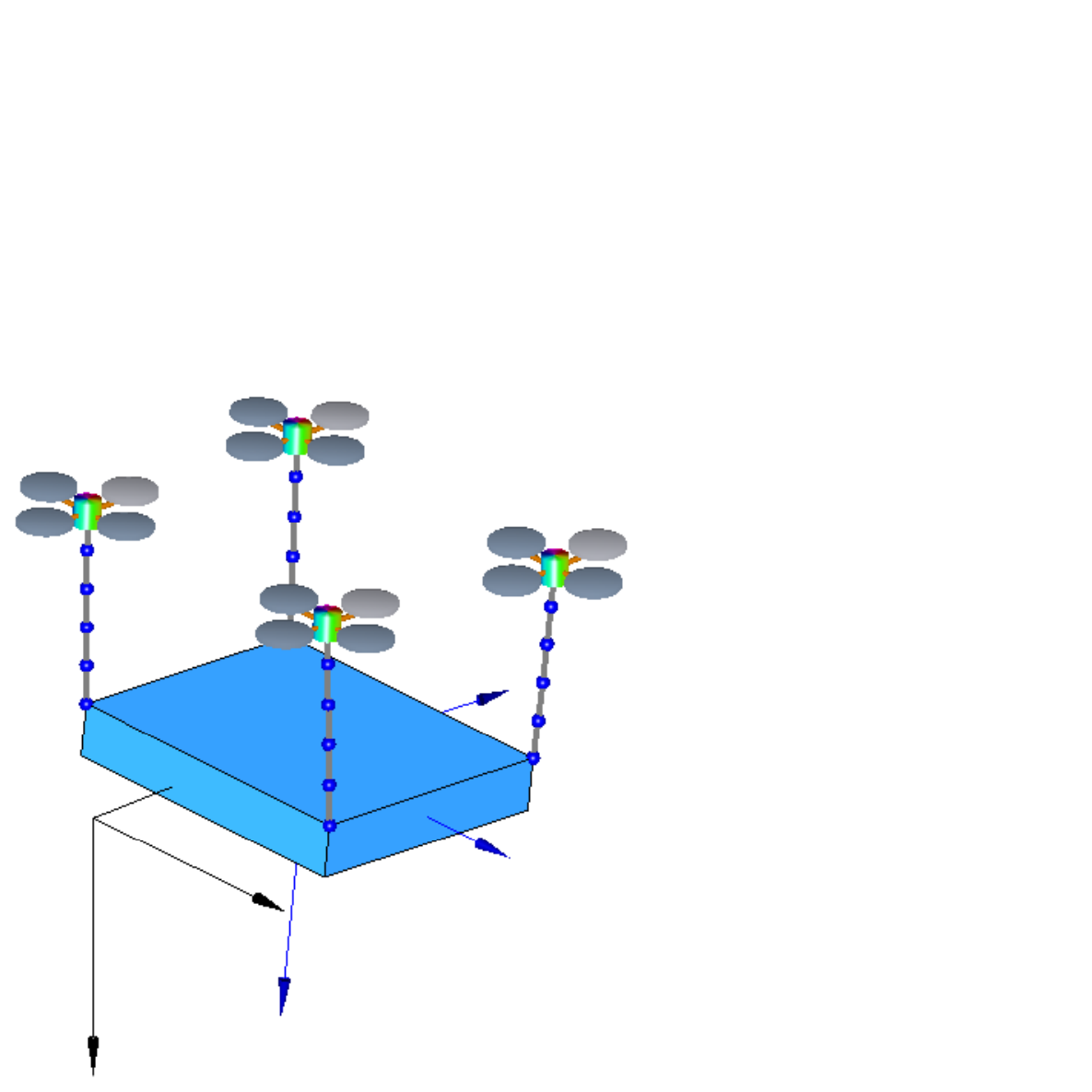}}
		\subfigure[$t=10$ Sec.]{
		\includegraphics[width=0.25\columnwidth]{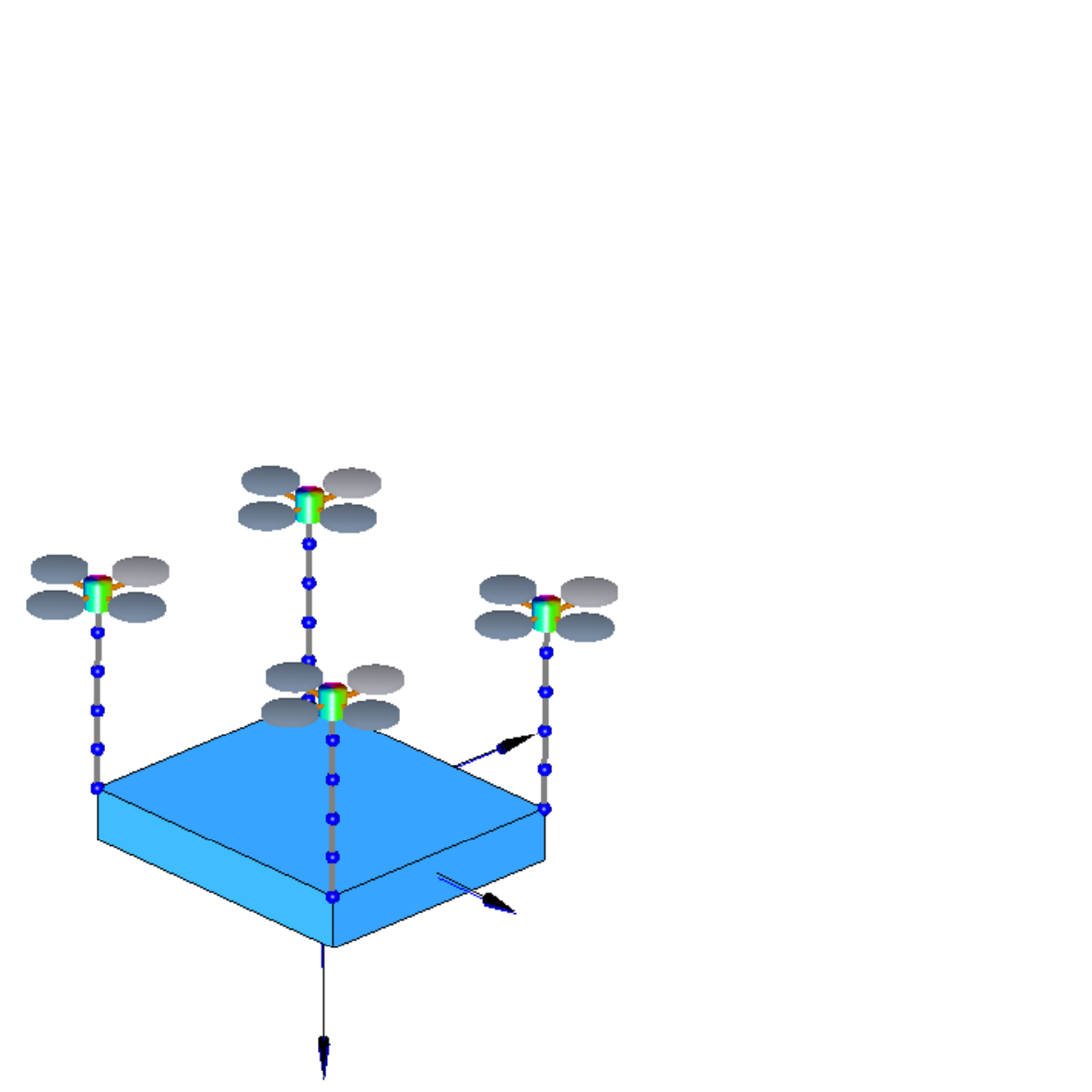}}
}
\caption{Snapshots of the controlled maneuver.}
\label{fig:simresults3snap}
\end{figure}
\newpage
\subsubsection {\normalsize Two Quadrotors Stabilizing a Rod}
The results of this chapter can be specialized to the following system. Consider two quadrotors transporting a rod to a fixed position as shown in Figure~\ref{fig:tworod}. 

\begin{figure}[h]
\centerline{
		\includegraphics[width=0.2\columnwidth]{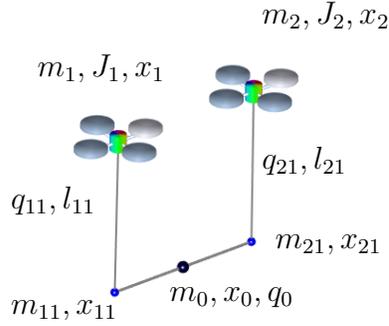}
		\put(-100,85){\shortstack[c]{$m_{1},J_{1},x_{1}$}}
		\put(-110,35){\shortstack[c]{$q_{11},l_{11}$}}
		\put(-110,-5){\shortstack[c]{$m_{11},x_{11}$}}
		\put(-50,0){\shortstack[c]{$m_{0},x_{0},q_{0}$}}
		\put(-10,20){\shortstack[c]{$m_{21},x_{21}$}}
		\put(-15,50){\shortstack[c]{$q_{21},l_{21}$}}
		\put(-15,105){\shortstack[c]{$m_{2},J_{2},x_{2}$}}
}
\caption{Two quadrotor with a rod}\label{fig:tworod}
\end{figure}

We compare stabilization of the rod with large initial attitude errors both for quadrotors and rod for two cases of with and without the integral term in presence of the fixed disturbances in both rotational and translational dynamics. 

The results of this numerical simulation is presented in Figure \ref{fig:simfinalwithout} and \ref{fig:simfinalwith} for without and with integral cases respectively. Snapshots of this maneuver with the integral term is also illustrated at Figure \ref{fig:simresults3snaptwoquad}.

\begin{figure}
\centerline{
	\subfigure[Rod position $x_0$:blue, $x_{0_{d}}$:red]{
		\includegraphics[width=0.4\columnwidth]{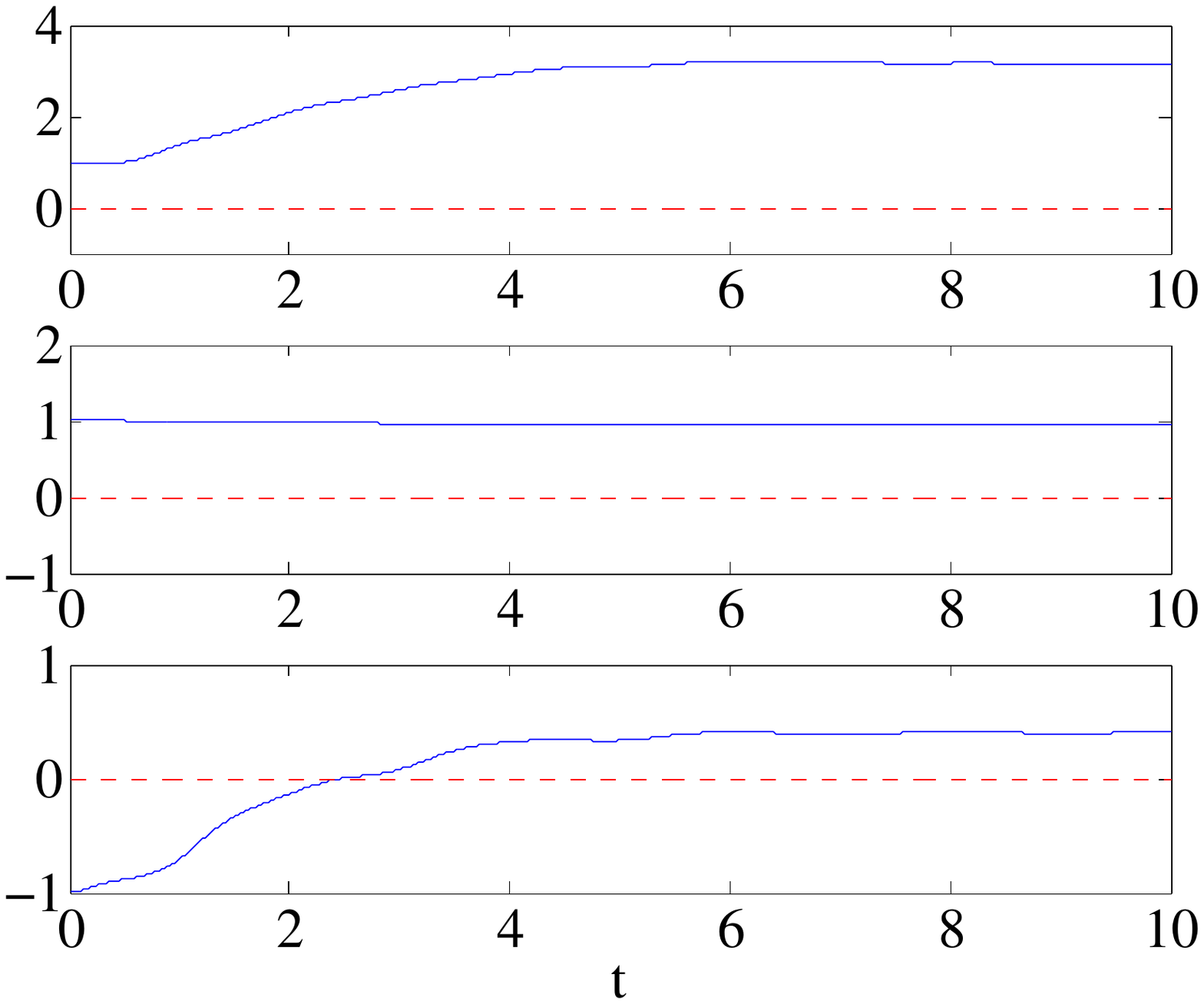}}
	\subfigure[Rod velocity $v_0$:blue, $v_{0_{d}}$:red]{
		\includegraphics[width=0.4\columnwidth]{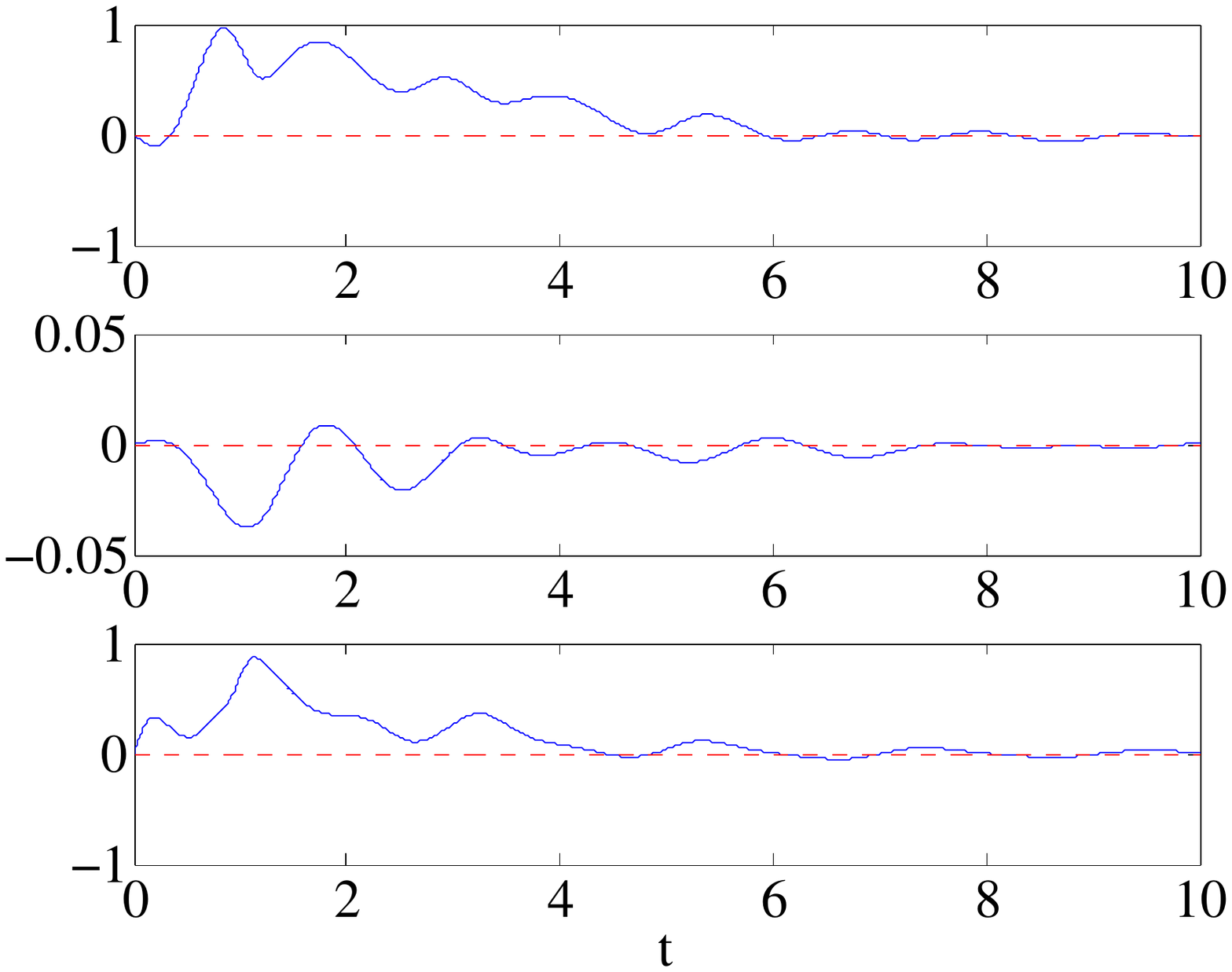}}
}
\centerline{
	\subfigure[Link errors $e_{\omega}, e_{q}$]{
		\includegraphics[width=0.4\columnwidth]{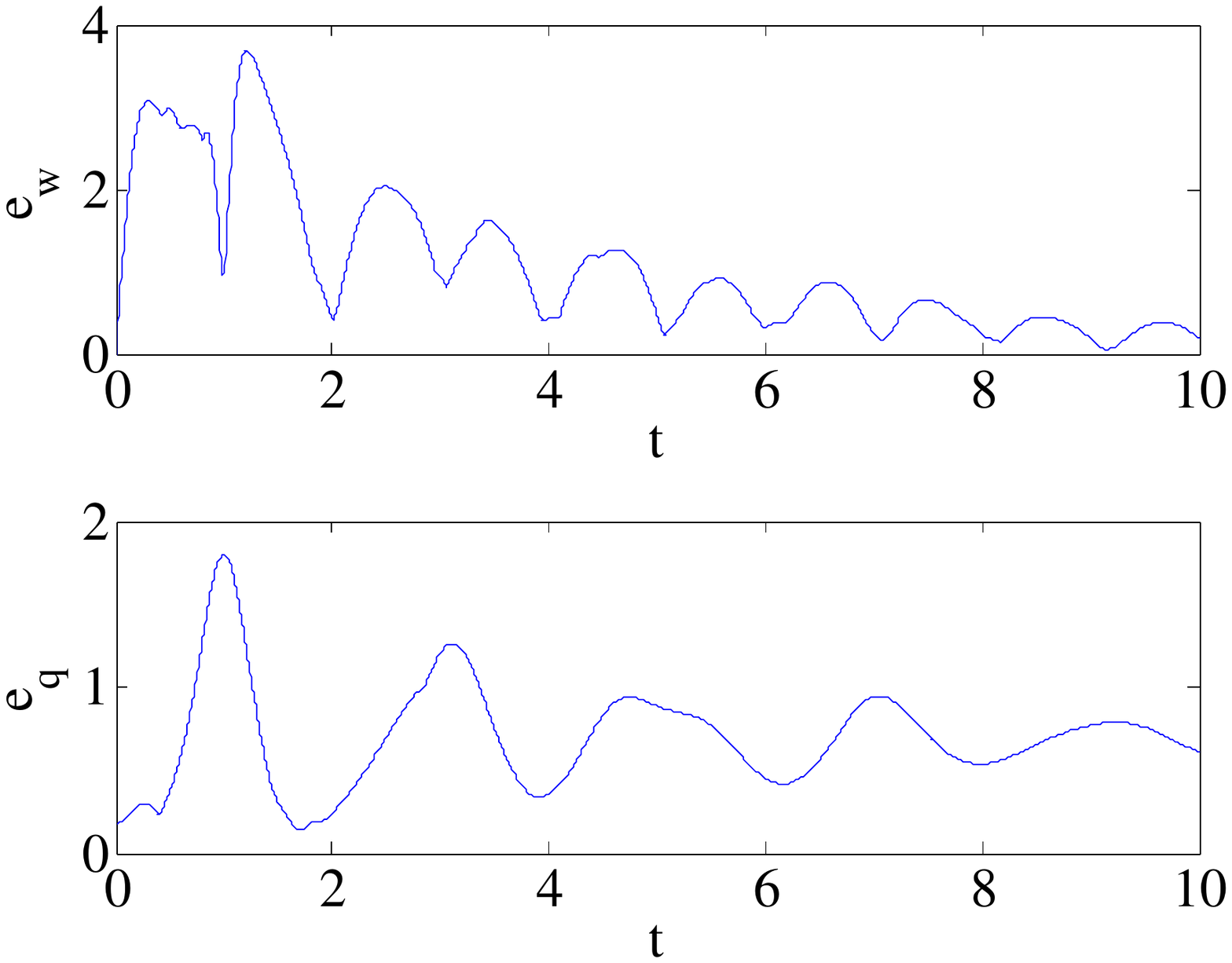}}
	\subfigure[Total Thrust $f_{i}$]{
		\includegraphics[width=0.4\columnwidth]{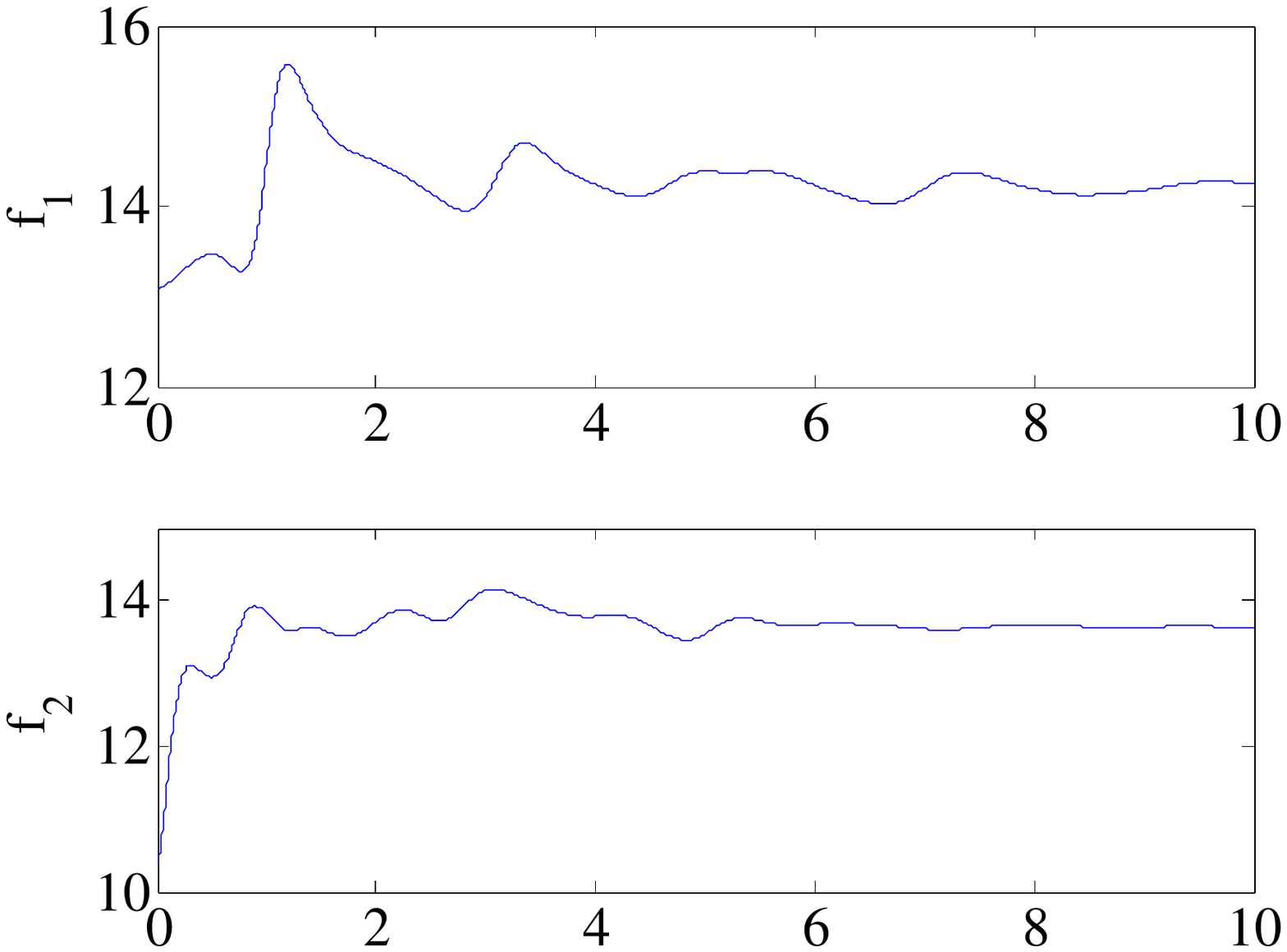}}
}
\centerline{
	\subfigure[Quads angular velocity errors $e_{\omega}$]{
		\includegraphics[width=0.4\columnwidth]{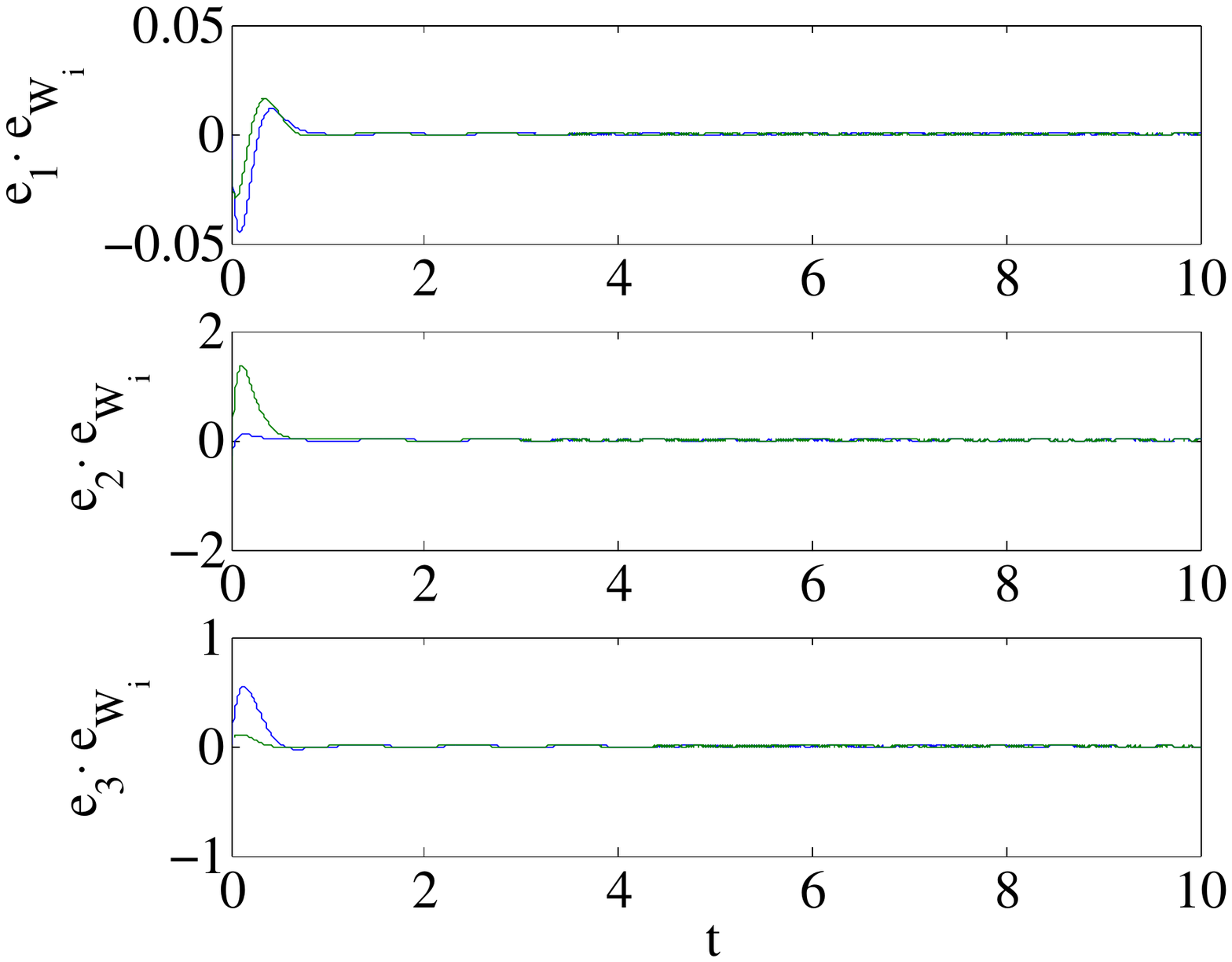}}
	\subfigure[Quads attitude errors $\psi_{i}$]{
		\includegraphics[width=0.4\columnwidth]{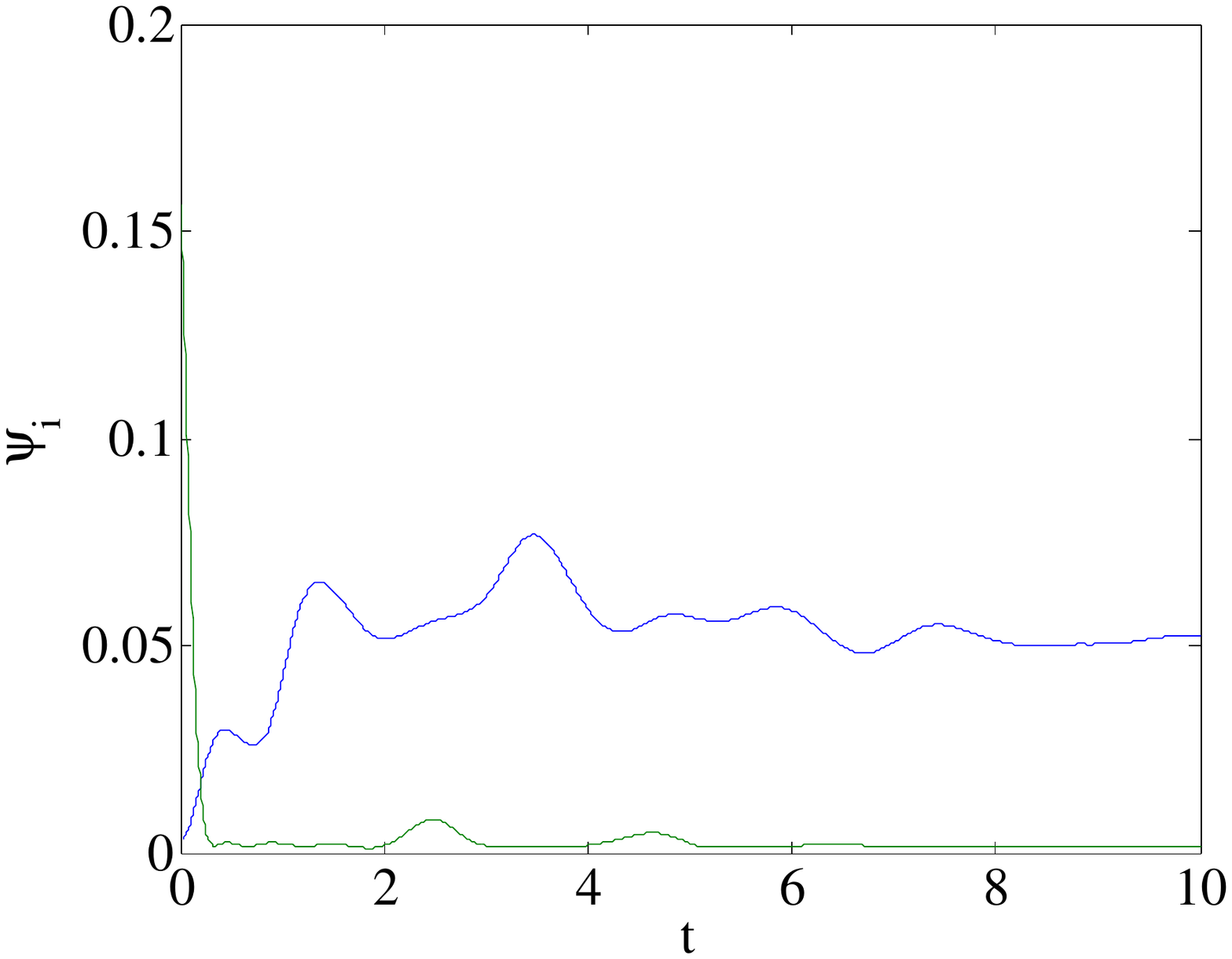}}
}
\caption{Stabilization of a rod with two quadrotors without Integral term.}
\label{fig:simfinalwithout}
\end{figure}

\begin{figure}
\centerline{
	\subfigure[Rod position $x_0$:blue, $x_{0_{d}}$:red]{
		\includegraphics[width=0.4\columnwidth]{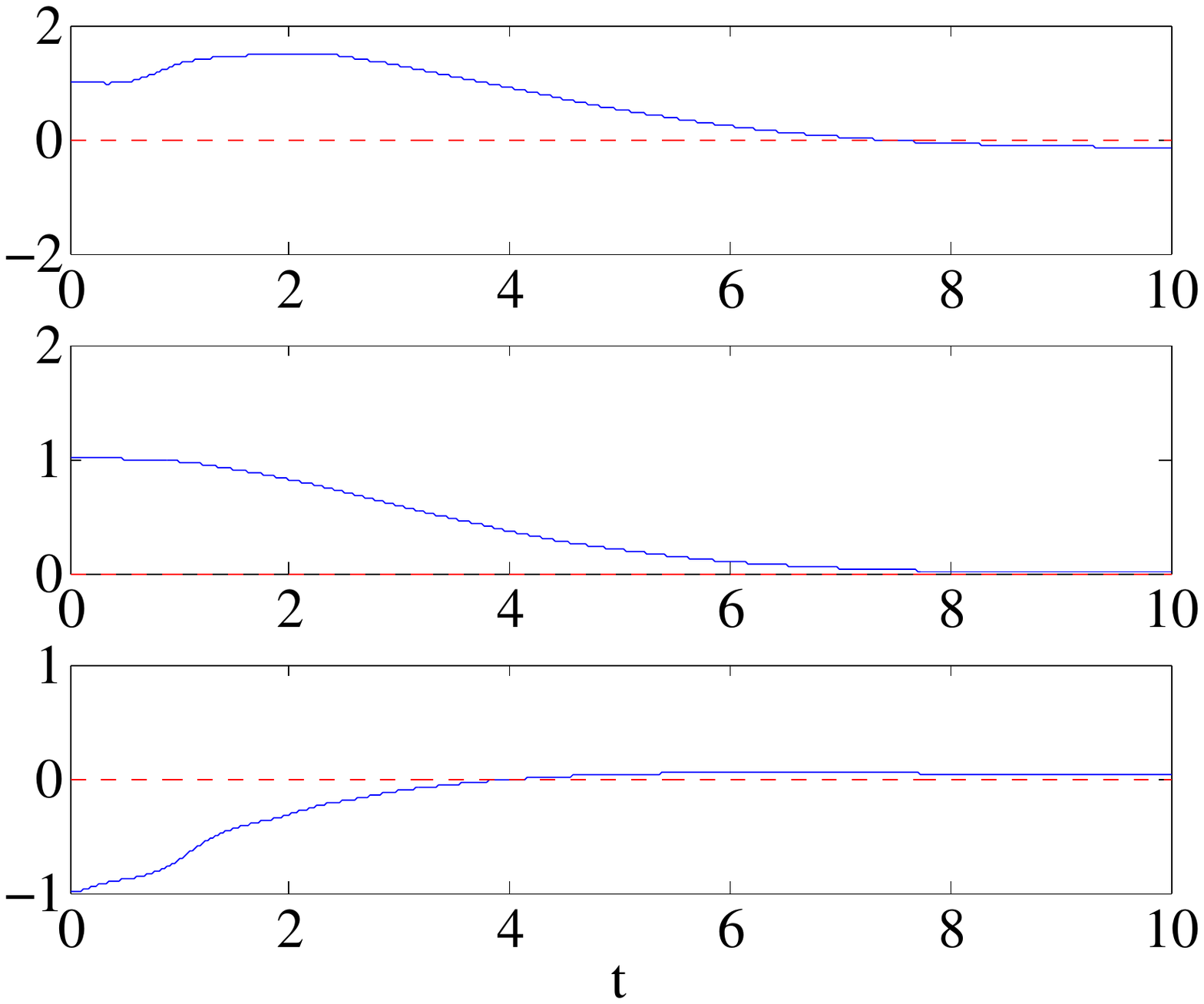}}
	\subfigure[Rod velocity $v_0$:blue, $v_{0_{d}}$:red]{
		\includegraphics[width=0.4\columnwidth]{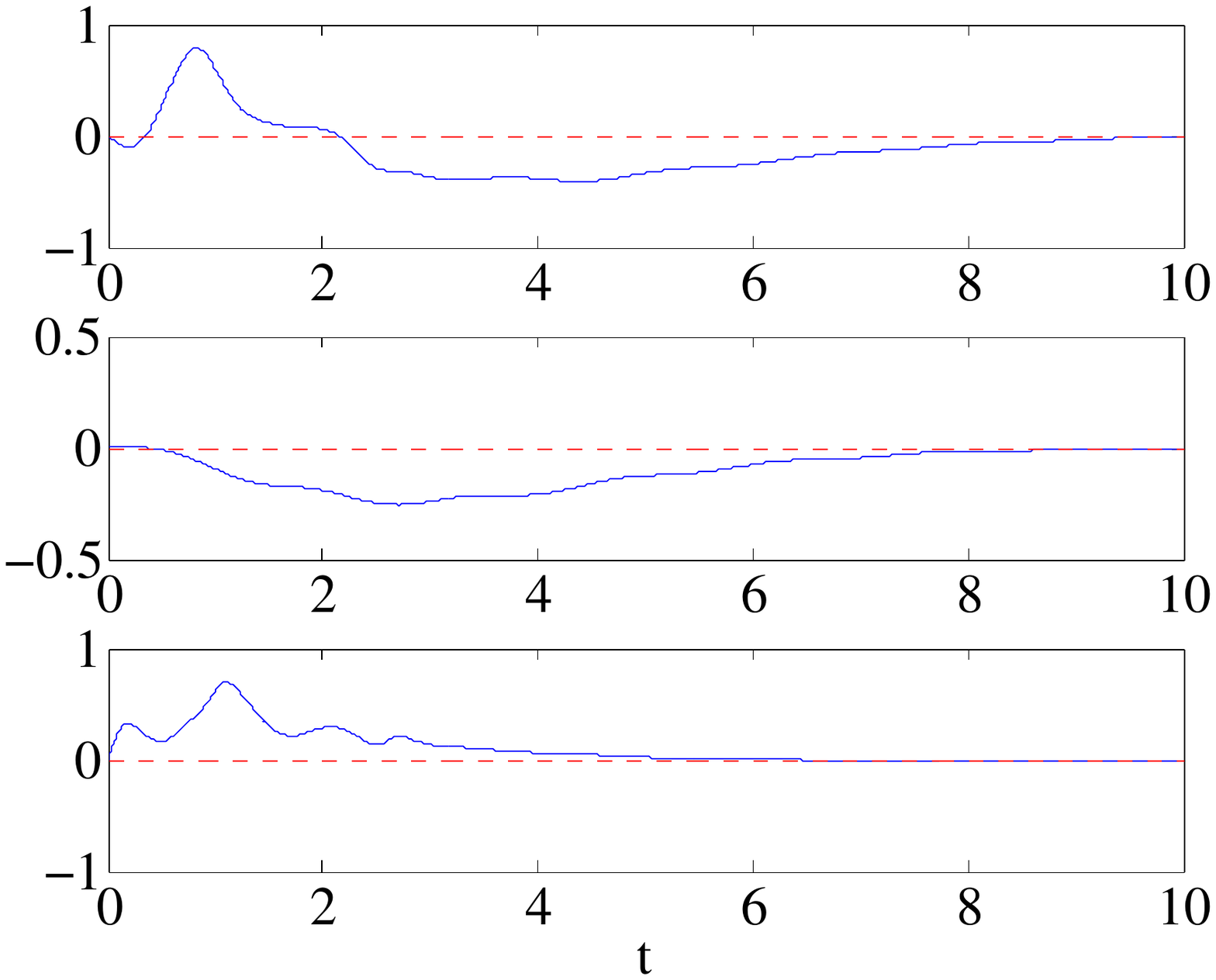}}
}
\centerline{
	\subfigure[Link errors $e_{\omega}, e_{q}$]{
		\includegraphics[width=0.4\columnwidth]{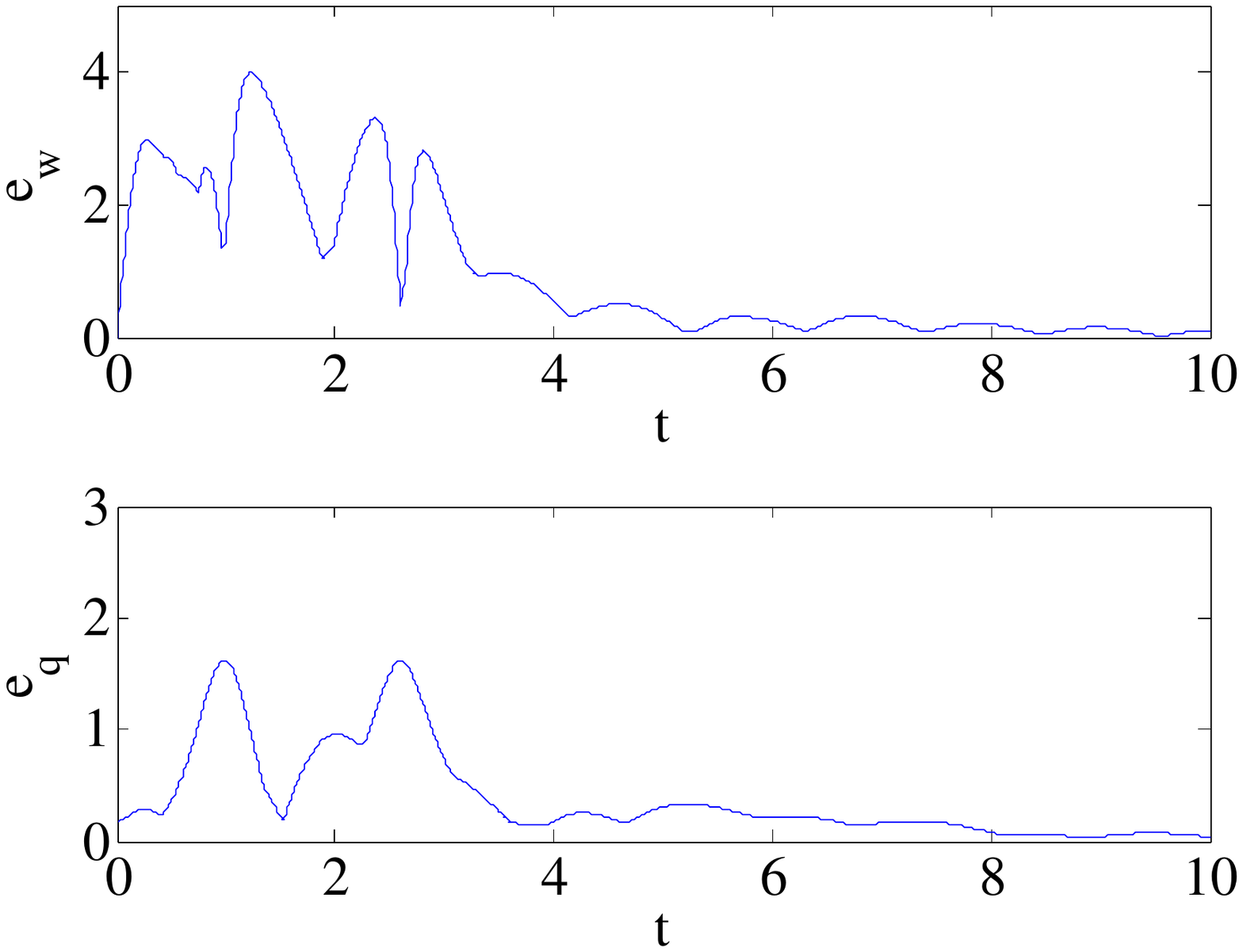}}
	\subfigure[Total Thrust $f_{i}$]{
		\includegraphics[width=0.4\columnwidth]{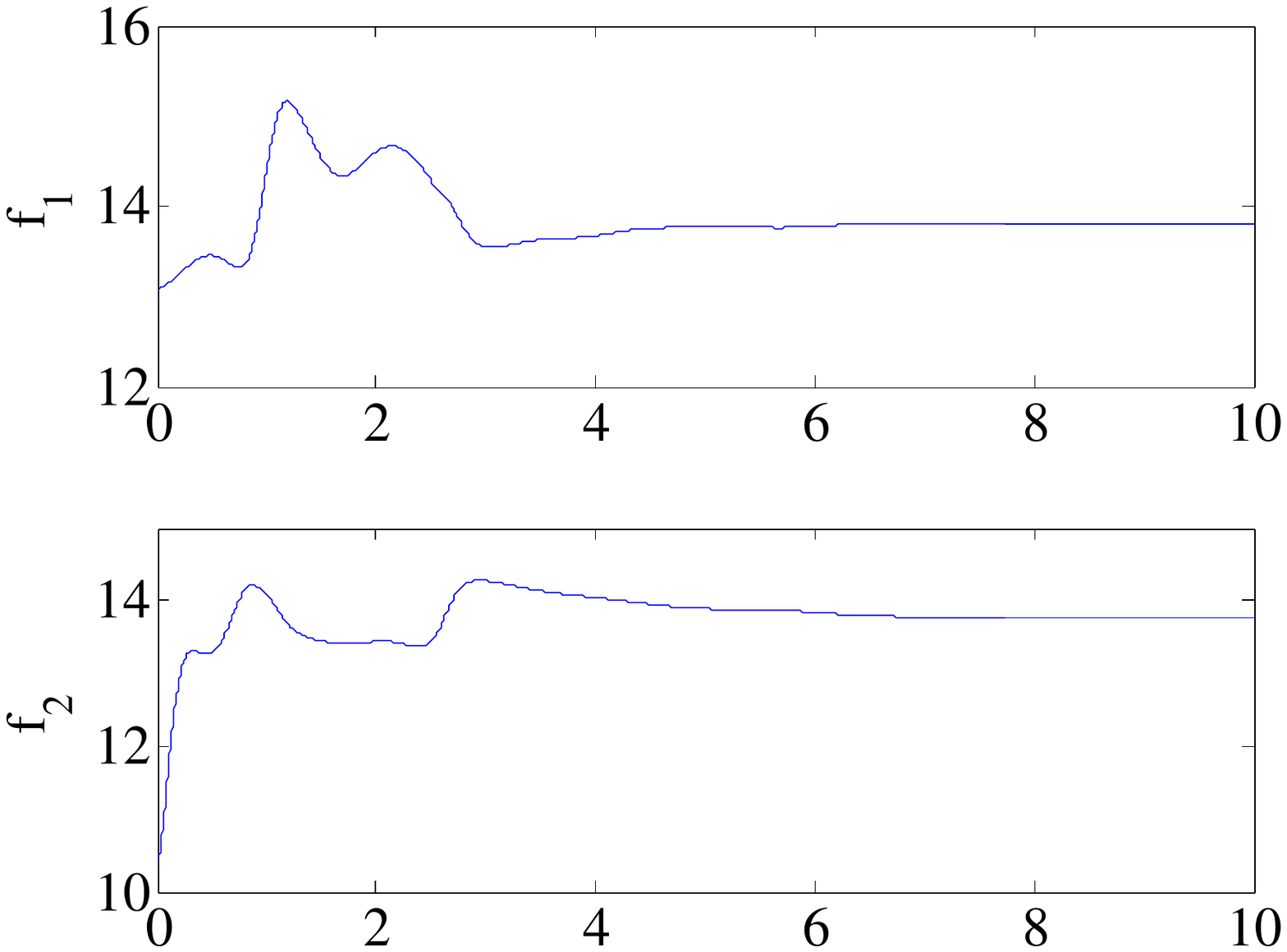}}
}
\centerline{
	\subfigure[Quads angular velocity errors $e_{\omega}$]{
		\includegraphics[width=0.4\columnwidth]{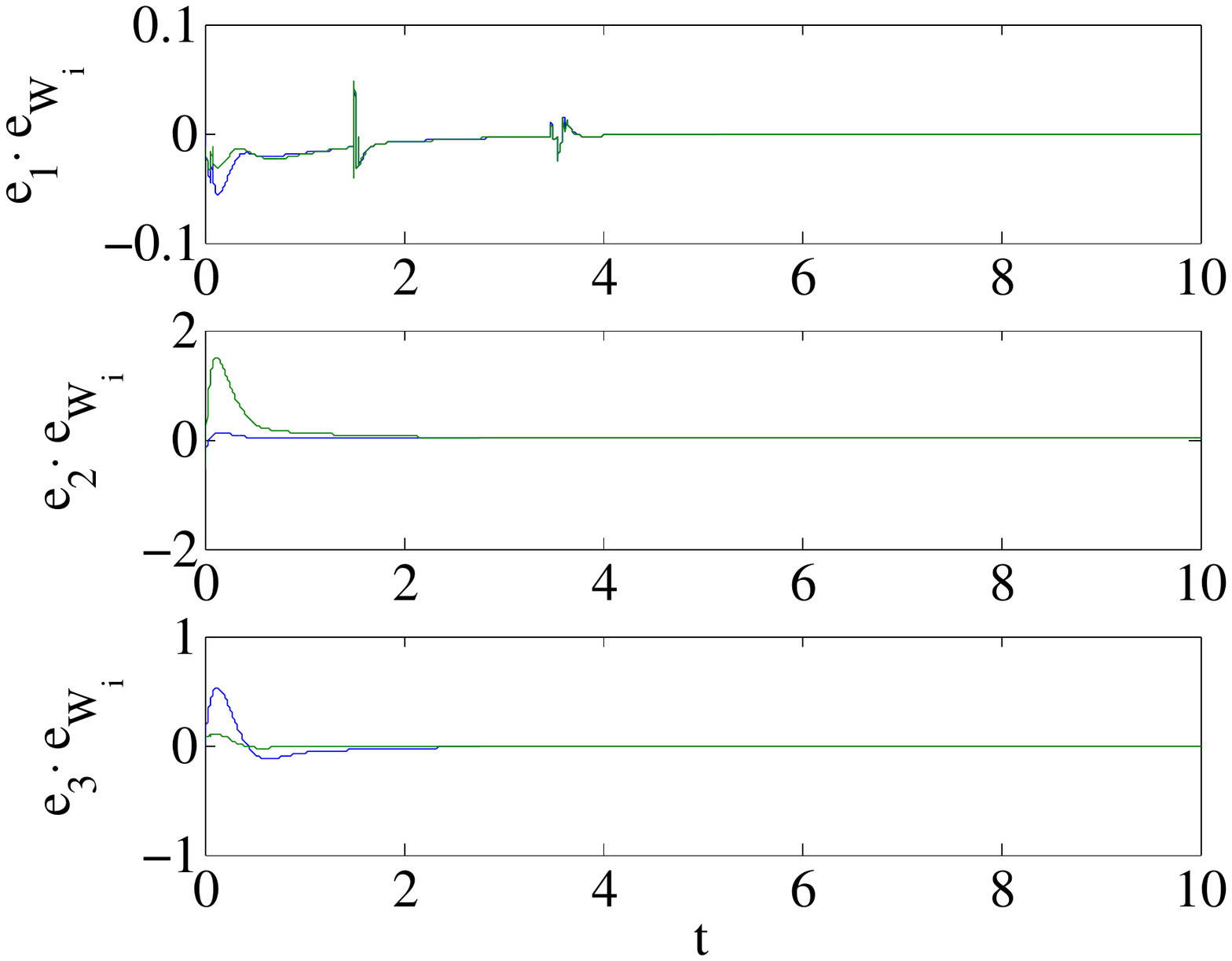}}
	\subfigure[Quads attitude errors $\psi_{i}$]{
		\includegraphics[width=0.4\columnwidth]{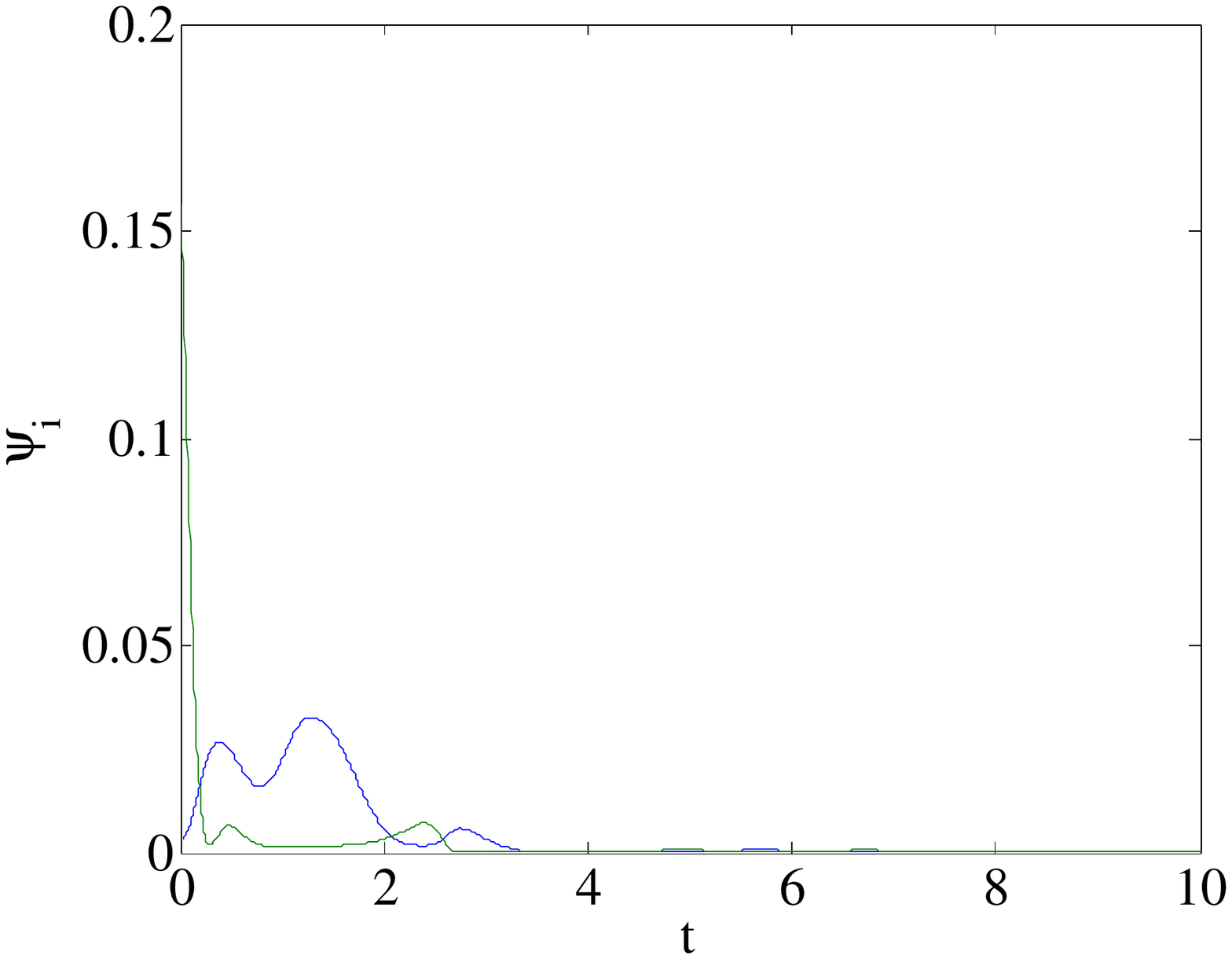}}
}
\caption{Stabilization of a rod with two quadrotors with Integral term.}
\label{fig:simfinalwith}
\end{figure}

\begin{figure}
\centerline{
	\subfigure[$t=0$ Sec.]{
		\includegraphics[width=0.25\columnwidth]{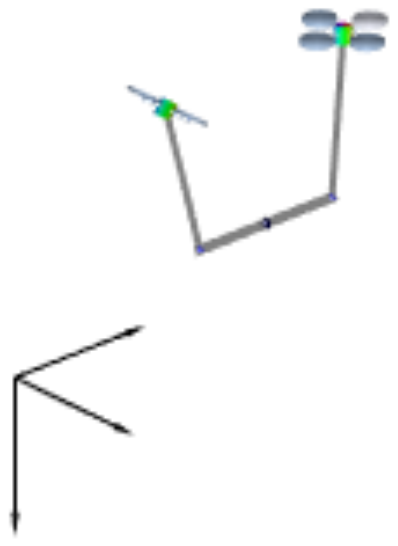}}
	\subfigure[$t=0.14$ Sec.]{
		\includegraphics[width=0.25\columnwidth]{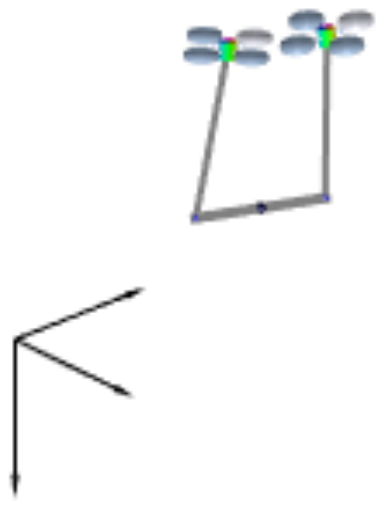}}
		\subfigure[$t=0.30$ Sec.]{
		\includegraphics[width=0.25\columnwidth]{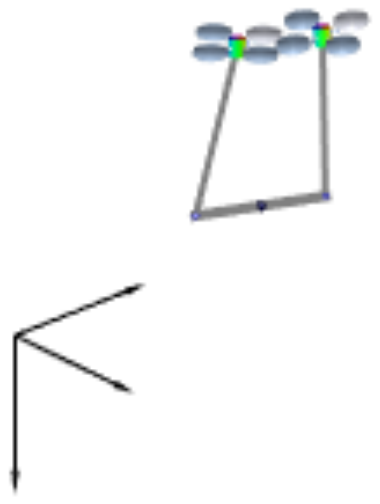}}
}
\centerline{
	\subfigure[$t=0.68$ Sec.]{
		\includegraphics[width=0.25\columnwidth]{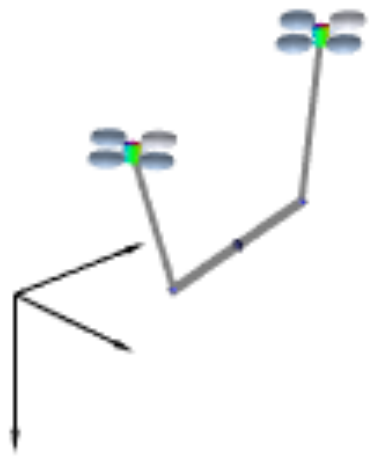}}
	\subfigure[$t=1.10$ Sec.]{
		\includegraphics[width=0.25\columnwidth]{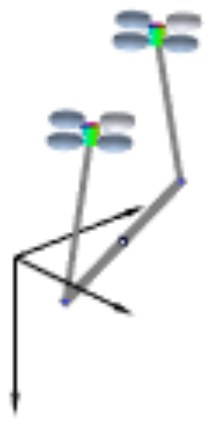}}
		\subfigure[$t=1.36$ Sec.]{
		\includegraphics[width=0.25\columnwidth]{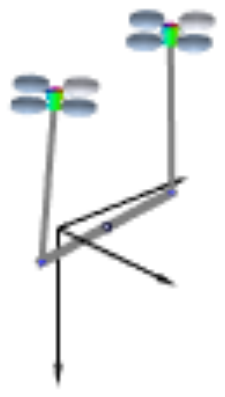}}
}
\centerline{
	\subfigure[$t=1.98$ Sec.]{
		\includegraphics[width=0.25\columnwidth]{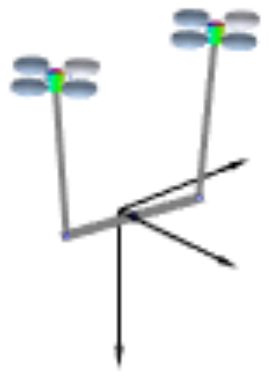}}
	\subfigure[$t=3.48$ Sec.]{
		\includegraphics[width=0.25\columnwidth]{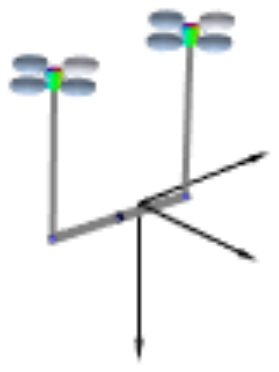}}
		\subfigure[$t=4.0$ Sec.]{
		\includegraphics[width=0.25\columnwidth]{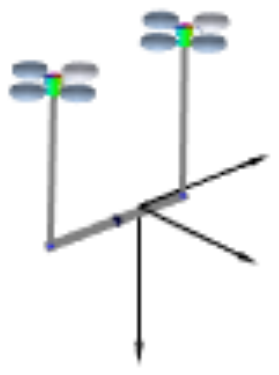}}
}
\caption{Snapshots of the controlled maneuver for two quadrotors stabilizing a rod.}
\label{fig:simresults3snaptwoquad}
\end{figure}

}

\newpage
\begin{singlespace}
\section{\protect \centering Chapter 5: Experiment}
\end{singlespace}
\setcounter{section}{5}
We develop an experimental testbed with quadrotor UAV's to validate our derivations and simulations. Two quadrotor UAV's are built from scratch by designing every parts followed by developing a software for real-time experiments. The hardware and software development for the experimental part of this dissertation is presented in the following. Finally, the experimental results for some aggressive trajectory tracking and payload transportation are provided.
\doublespacing
\subsection {\normalsize CAD Model and Calibration}
{\addtolength{\leftskip}{0.5in}
We developed an accurate CAD model as shown in Figure~\ref{fig:cadmodel} to identify several parameters of the quadrotor, such as moment of inertia and center of mass. Furthermore, a precise rotor calibration is performed for each rotor, with a custom-made thrust stand as shown in Figure~\ref{fig:stand} to determine the relation between the command in the motor speed controller and the actual thrust. For various values of motor speed commands, the corresponding thrust is measured, and those data are fitted with a second order polynomial. 

\begin{figure}[h]
\centerline{
	\subfigure[CAD Model]{
        \includegraphics[width=0.57\columnwidth]{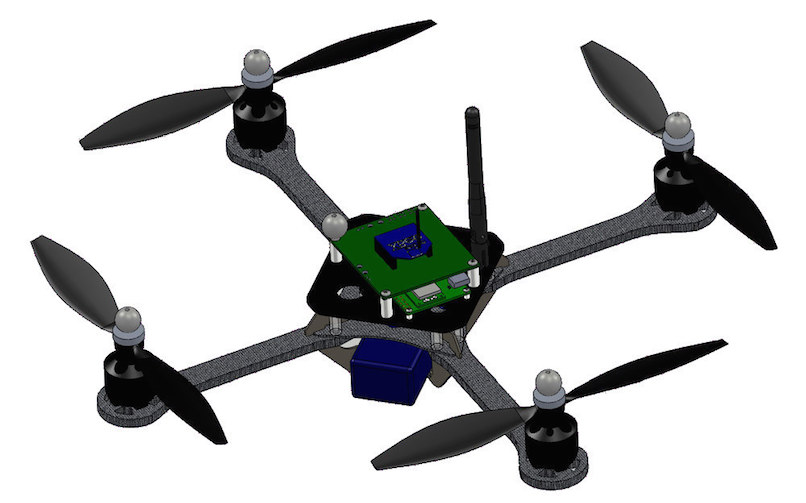}\label{fig:cadmodel}}
	\subfigure[Motor calibration setup]{
	\includegraphics[width=0.17\columnwidth]{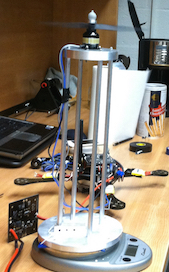}\label{fig:stand}}
}
\caption{Hardware development for a quadrotor UAV}
\end{figure}

}

\subsection {\normalsize Building Quadrotor}
{\addtolength{\leftskip}{0.5in}
The quadrotor UAV developed at the flight dynamics and control laboratory at the George Washington University is shown at Figure~\ref{fig:Quad}. The angular velocity is measured from inertial measurement unit (IMU) and the attitude is obtained from IMU data. Position of the UAV is measured from motion capture system (Vicon) as shown in Figure~\ref{fig:moca} and the velocity is estimated from the measurement. Ground computing system receives the Vicon data and send it to the UAV via XBee. The Gumstix is adopted as micro computing unit on the UAV.  

\begin{figure}[h]
\centerline{
\setlength{\unitlength}{0.1\columnwidth}\scriptsize
\begin{picture}(7,4)(0,0)
\put(0,0){\includegraphics[width=0.7\columnwidth]{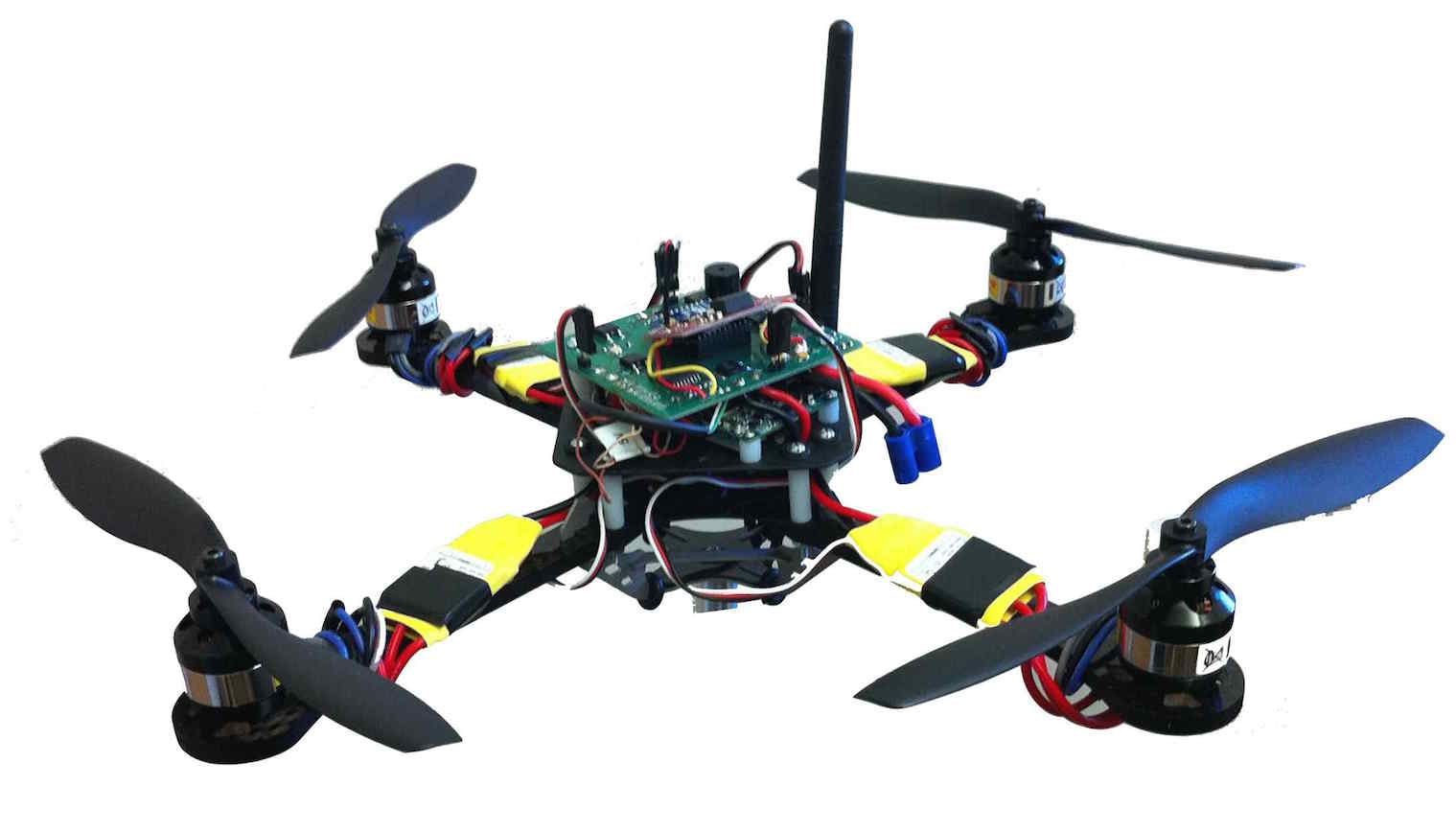}}
\put(1.95,3.2){\shortstack[c]{OMAP 600MHz\\Processor}}
\put(2.3,0){\shortstack[c]{Attitude sensor\\3DM-GX3\\ via UART}}
\put(0.85,1.4){\shortstack[c]{BLDC Motor\\ via I2C}}
\put(0.1,2.5){\shortstack[c]{Safety Switch\\XBee RF}}
\put(4.3,3.2){\shortstack[c]{WIFI to\\Ground Station}}
\put(5,2.0){\shortstack[c]{LiPo Battery\\11.1V, 2200mAh}}
\end{picture}
}
\caption{Hardware development}\label{fig:Quad}
\end{figure}

\begin{figure}[h]
\centerline{
        \includegraphics[width=0.57\columnwidth]{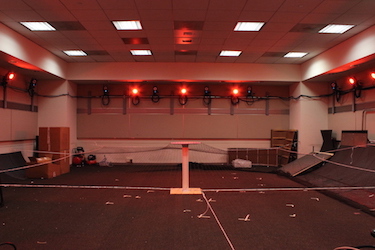}
}
\caption{Motion capture Laboratory}\label{fig:moca}
\end{figure}

\subsubsection {\normalsize Hardware Configuration}
Quadrotor is built using several parts as listed bellow. The motors are connected to the circuit board via speed controllers and I2C ports. The IMU is connected to the circuit board via serial ports and the computer module (GUMSTIX) handles the communications and controller computation.

\begin{itemize}
\item Gumstix Overo computer-in-module (OMAP 600MHz processor), running a non-realtime Linux operating system. It is connected to a ground station via WIFI (Figure~\ref{fig:gumstix}).
\item Microstrain 3DM-GX3 IMU, connected to Gumstix via UART (Figure~\ref{fig:IMU}).
\item BL-CTRL 2.0 motor speed controller, connected to Gumstix via I2C (Figure~\ref{fig:mc}).
\item Roxxy 2827-35 Brushless DC motors  (Figure~\ref{fig:motor}).
\item XBee RF module, connected to Gumstix via UART. (Figure~\ref{fig:xbee})
\end{itemize}
\begin{figure}[h]
\centerline{
	\subfigure[XBee]{
        \includegraphics[width=0.27\columnwidth]{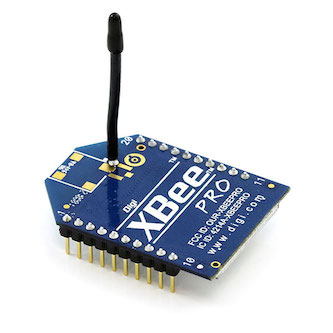}\label{fig:xbee}}
        \subfigure[IMU]{
        \includegraphics[width=0.27\columnwidth]{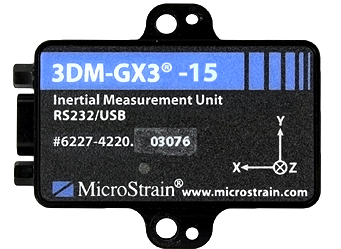}\label{fig:IMU}}
	\subfigure[Computer Module]{
	\includegraphics[width=0.37\columnwidth]{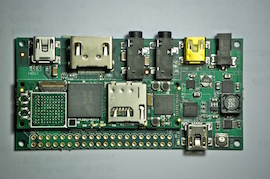}\label{fig:gumstix}}
}
\caption{Hardware parts for a quadrotor UAV}
\end{figure}
\begin{figure}[h]
\centerline{
	\subfigure[Motor speed controller]{
	\includegraphics[width=0.37\columnwidth]{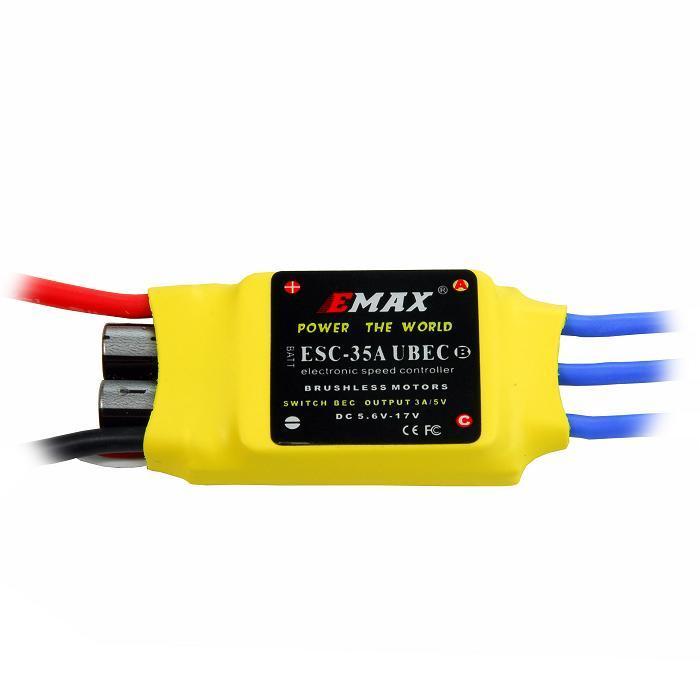}\label{fig:mc}}
	\subfigure[Brushless DC motors]{
	\includegraphics[width=0.37\columnwidth]{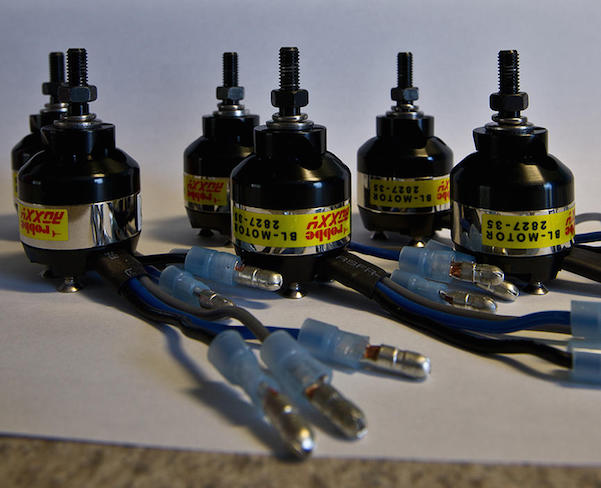}\label{fig:motor}}
}
\caption{Motors and speed controllers}
\end{figure}
Finally, these parts are all connected to the circuit board which has been custom designed and built for the experiment as illustrated in Figure~\ref{fig:hardfellow} and two quadrotors are prepared for the experiments as shown in Figure~\ref{fig:quadss}.
\begin{figure}[h]
\centerline{
	\includegraphics[width=0.6\columnwidth]{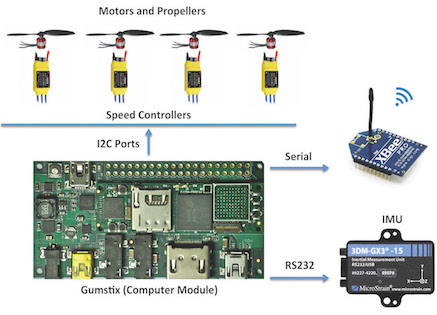}
}
\caption{Hardware connections}\label{fig:hardfellow}
\end{figure}
\begin{figure}[h]
\centerline{
	\subfigure[Quadrotor 1]{
	\includegraphics[width=0.37\columnwidth]{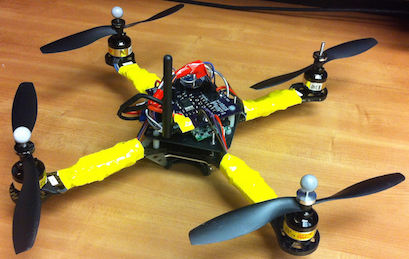}}
	\subfigure[Quadrotor 2]{
	\includegraphics[width=0.45\columnwidth]{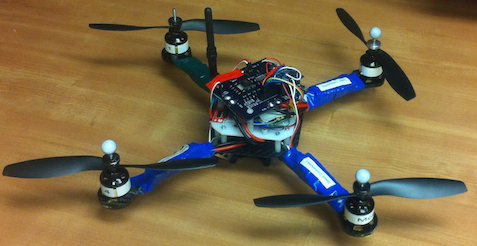}}
}
\caption{Quadrotors}\label{fig:quadss}
\end{figure}

}
{\addtolength{\leftskip}{0.5in}
\subsubsection {\normalsize Software Development}
Multi-threaded programming with C/C++ utilized to develop a complete and fast software which runs on the computer module and handles several tasks. It has three main threads, namely Vicon thread, IMU thread, and control thread. The Vicon thread receives the Vicon measurement and estimates linear velocity of the quadrotor. The IMU thread receives the IMU measurement and estimates the attitude. The last thread handles the control outputs at each time step. Also, control outputs are calculated at 120Hz which is fast enough to run any kind of aggressive maneuvers. Information flow of the system is illustrated in Figures \ref{fig:information_flow} and \ref{fig:information_flow2}.

\begin{figure}[h]
\centerline{
	\includegraphics[width=0.65\columnwidth]{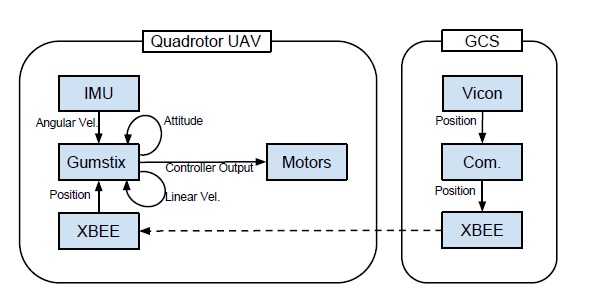}}
\caption{Information flow of overall system}\label{fig:information_flow}
\end{figure}
\begin{figure}[h]
\centerline{
	\includegraphics[width=0.85\columnwidth]{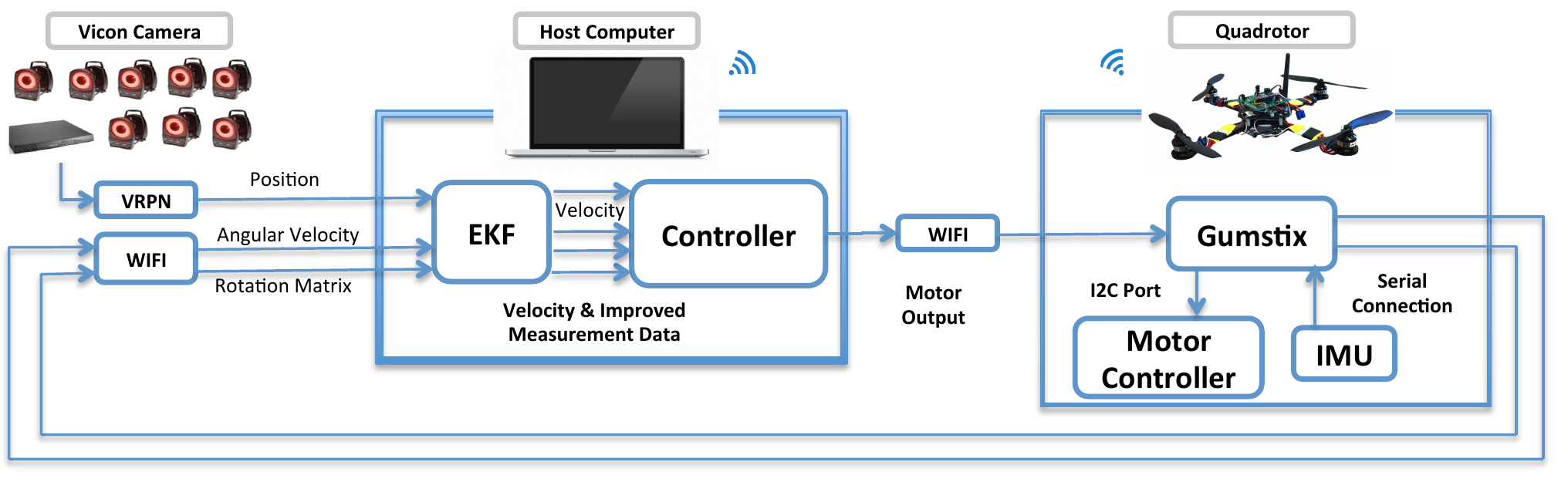}}
\caption{Information flow of overall system}\label{fig:information_flow2}
\end{figure}
We also need to tune the proportional, derivative, and integral gains for the experiment. By using the numerical simulation, we are able to find an estimate for this gains but they need to be tuned during the experiment in order to get the best performance of the controller.
 
}
\subsection {\normalsize Autonomous Trajectory Tracking}
{\addtolength{\leftskip}{0.5in}
\subsubsection {\normalsize Attitude Tracking Test}
Experimental results are provided for the attitude tracking control of a hardware system developed in FDCL. To test the attitude dynamics, it is attached to a spherical joint. As the center of rotation is below the center of gravity, there exists a destabilizing gravitational moment, and the resulting attitude dynamics is similar to an inverted rigid body pendulum. The control input is augmented with an additional term to eliminate the effects of the gravity.

The desired attitude command is described by using 3-2-1 Euler angles, i.e. $R_d(t)=R_d(\phi(t),\theta(t),\psi(t))$, where
$\phi(t) =  \frac{\pi}{9}\sin(\pi t)$, $\theta(t)=\frac{\pi}{9}\cos(\pi t)$, $\psi(t)=0$. This represents a combined rolling and pitching motion with a period of $2$ seconds. The results of the experiment are illustrated at Figures \ref{fig:QuadResult} and \ref{fig:Quadstand}. 
\begin{figure}[h]
\centerline{
	\subfigure[Attitude error function $\Psi$]{
		\includegraphics[width=0.35\columnwidth]{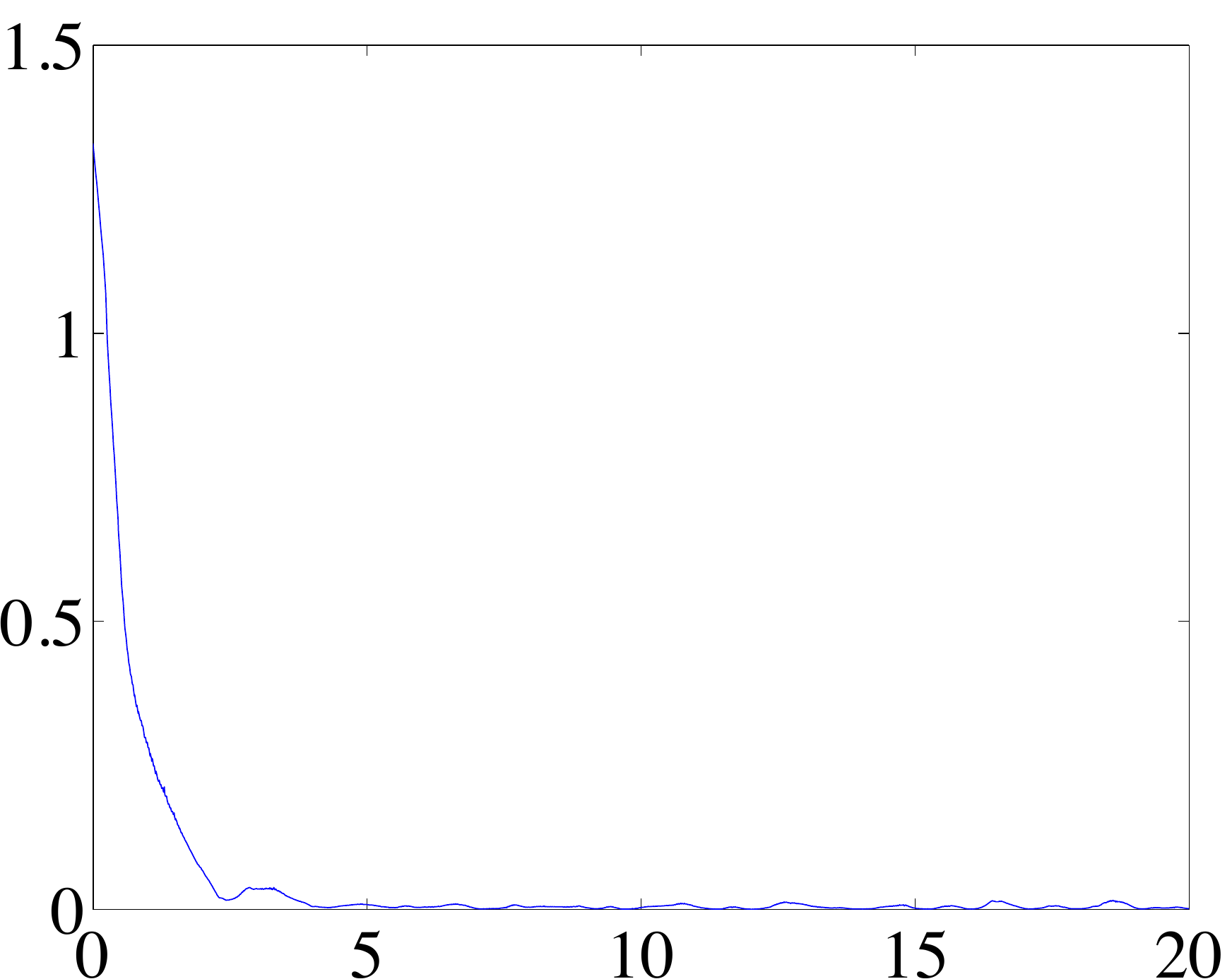}\label{fig:QuadPsi}}
	\subfigure[Attitude $R,R_d$]{
		\includegraphics[width=0.35\columnwidth]{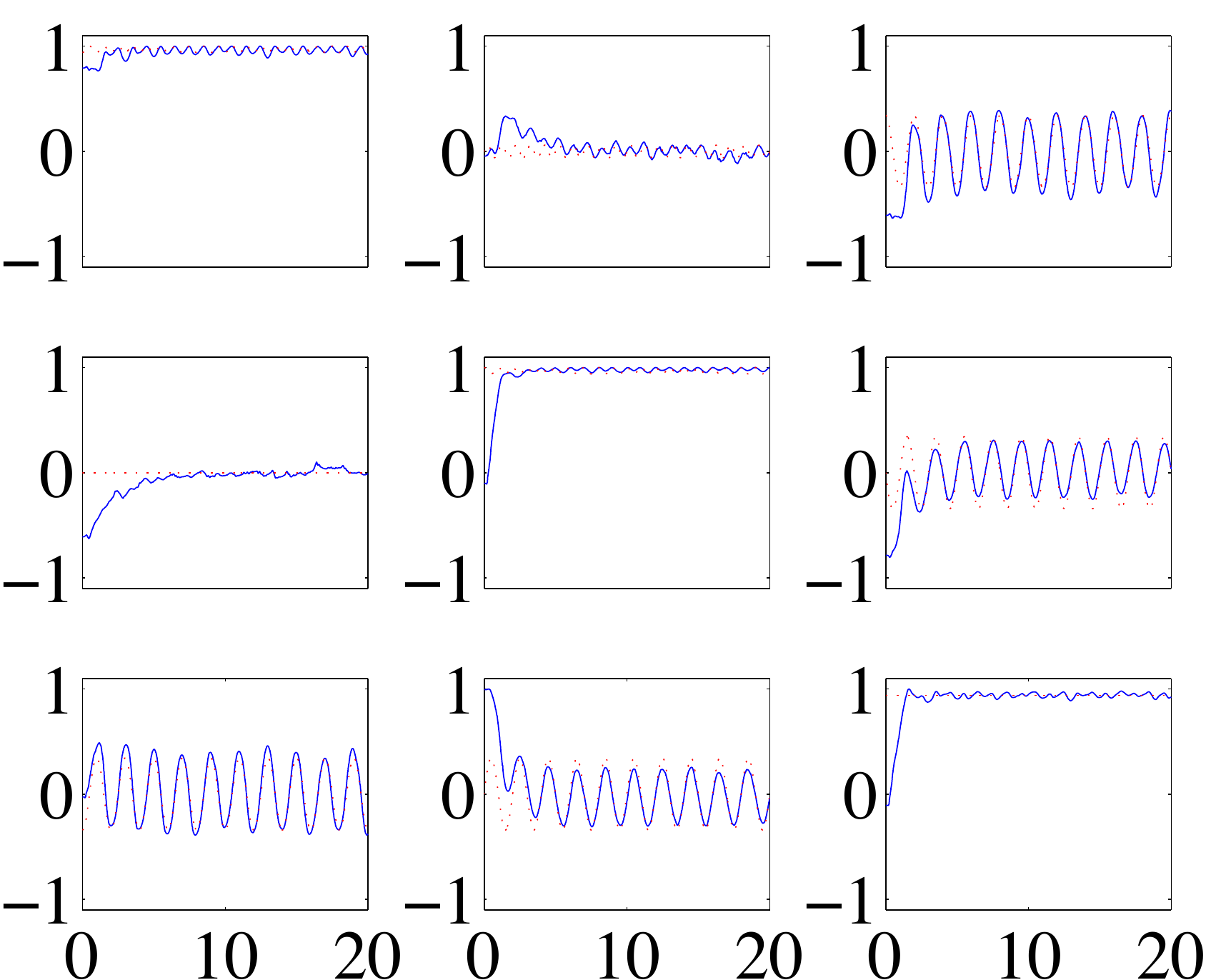}\label{fig:Quadx}}
}
\centerline{
	\subfigure[Angular velocity $\Omega,\Omega_d$ ($\mathrm{/sec}$)]{
		\includegraphics[width=0.35\columnwidth]{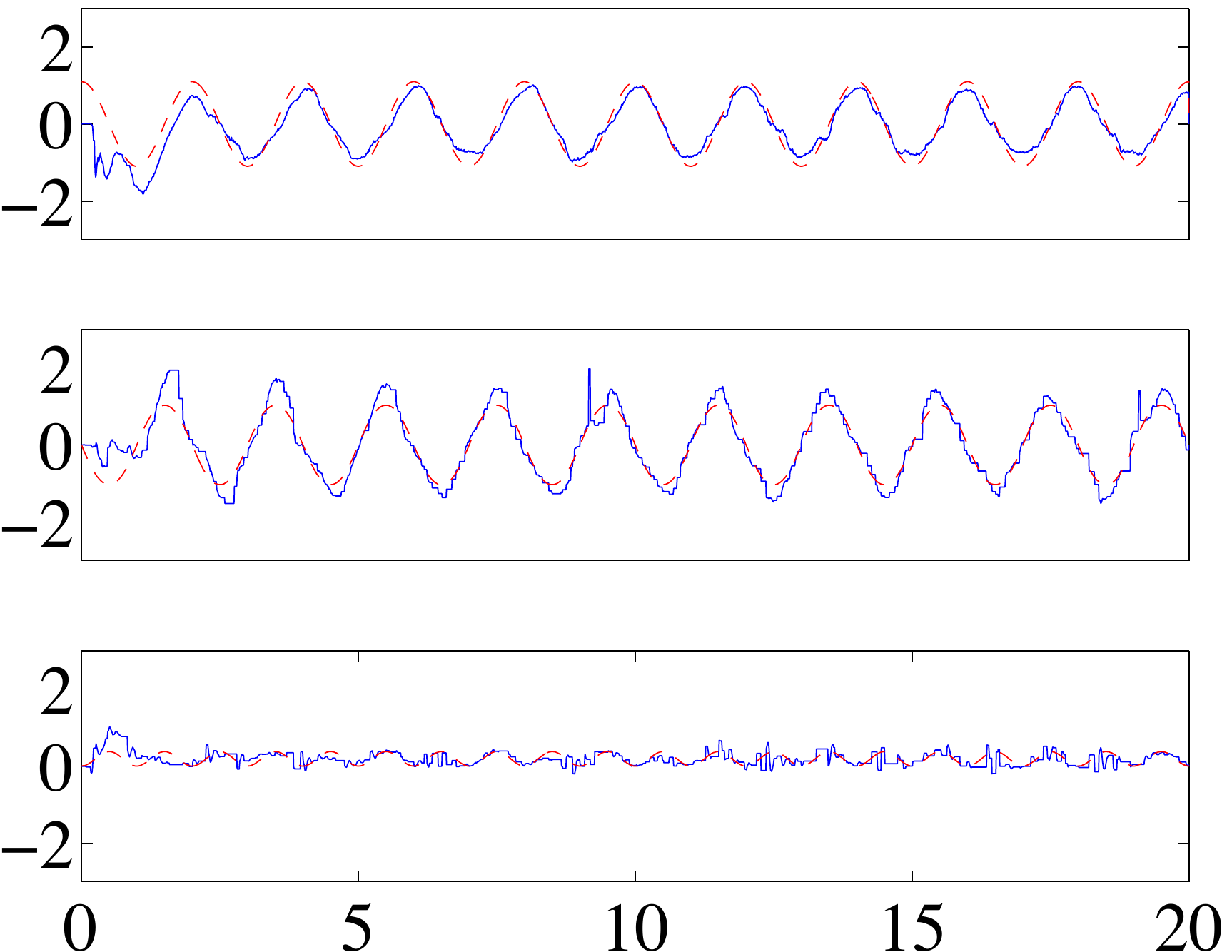}\label{fig:QuadW}}
	\subfigure[Thrust of each rotor ($\mathrm{N}$)]{
		\includegraphics[width=0.37\columnwidth]{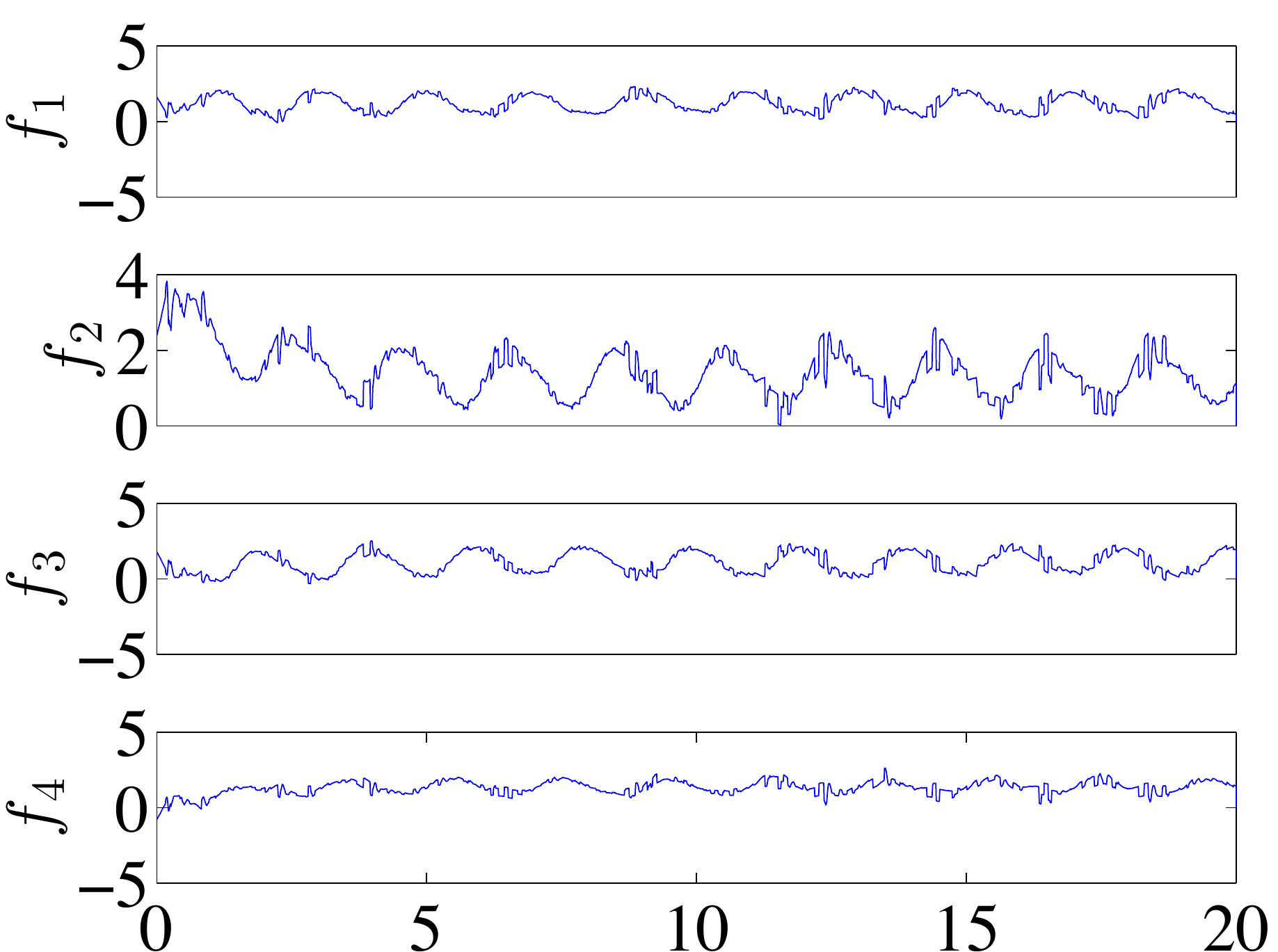}}
}
\caption{Attitude tracking test(dotted:desired, solid:actual)}\label{fig:QuadResult}
\end{figure}
\begin{figure}[h]
\centerline{
\includegraphics[width=0.35\columnwidth]{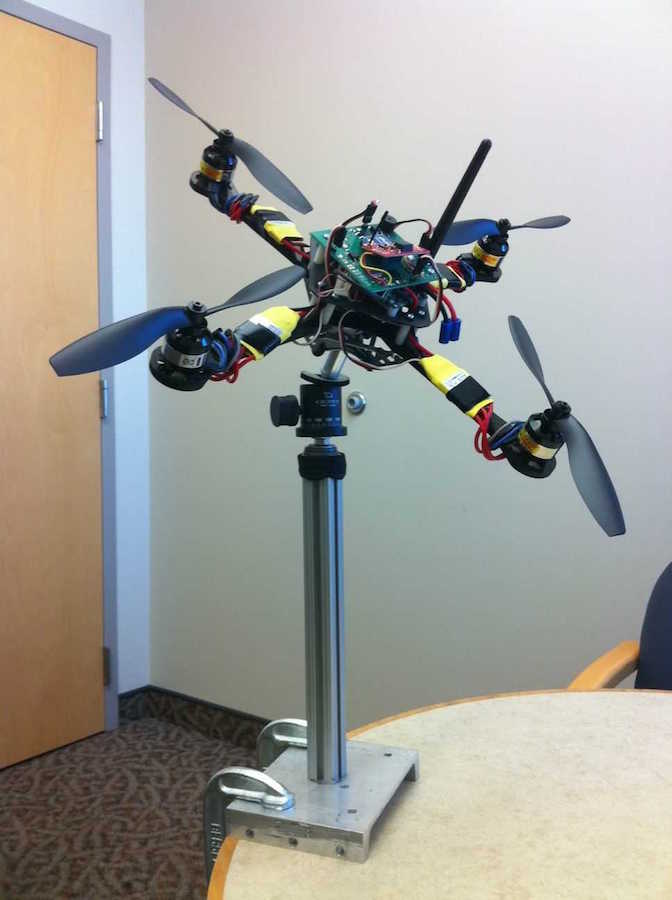}
}
\caption{Quadrotor on a stand}\label{fig:Quadstand}
\end{figure}

\newpage
\subsubsection {\normalsize Lissajous Curve Trajectory Tracking}
First, we consider a position tracking mode, where the desired trajectory is defined as the following Lissajous curve,
\begin{align*}
x_d (t) = \begin{cases}
x_o-\frac{t}{8}(x_o-x_i) &\hspace{-0.57cm} \mbox{if } 0 \leqq t < 8\\
[\sin(t-8)+\frac{\pi}{2}),\;\sin 2(t-8),\; -1.5]\,\mathrm{m} & \mbox{if } 8 \leqq t\\
\end{cases}.
\end{align*} 
The quadrotor takes-off from $x_o=[0.2,\,-2.8,\,-1.2]\mathrm{m}$ at $t=0\;\mathrm{sec}$ and flies to the initial position of the Lissajous curve trajectory where is $x_i=[1,\,0,\,1.5]\mathrm{m}$ by tracking a linear desired trajectory. Then, the quadrotor starts to follow the Lissajous curve trajectory at $t=8\;\mathrm{sec}$. There is about $0.15\;\mathrm{sec}$ of time delay from the Vicon motion capture system to the Gumstix. However, due to the robustness and stability properties of the proposed controller, position tracking performance shows satisfactory results as shown at Figure~\ref{fig:LJ} and \ref{fig:Lissajous_xyzz}.
\begin{figure}[h]
\centerline{		
\includegraphics[width=0.65\columnwidth]{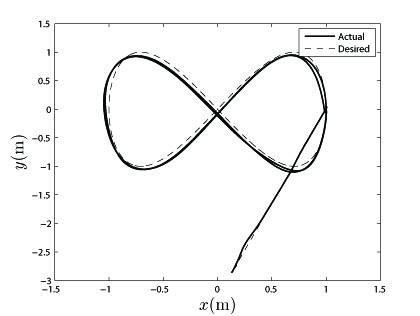}\label{fig:Lissajous_xyzzs}
}
\caption{Lissajous curve $x-y$ plane trajectory}\label{fig:Lissajous_xyzz}
\end{figure}
\begin{figure}
\centerline{
	\subfigure[Attitude error variables $\Psi,e_R,e_\Omega$]{
		\includegraphics[width=0.4\columnwidth]{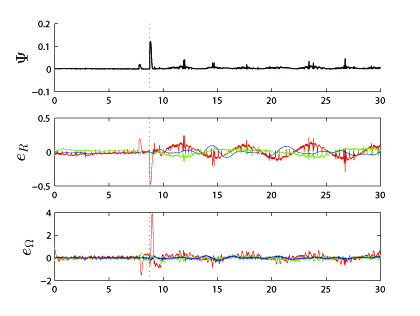}\label{fig:Lissajous_error}}
			\subfigure[Thrust of each rotor ($\mathrm{N}$)]{
				\includegraphics[width=0.4\columnwidth]{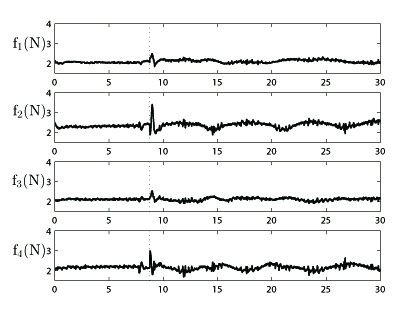}}
}
\centerline{
	\subfigure[Position (solid line) and desired (dotted line) $x,x_d$ ($\mathrm{m}$)]{
		\includegraphics[width=0.4\columnwidth]{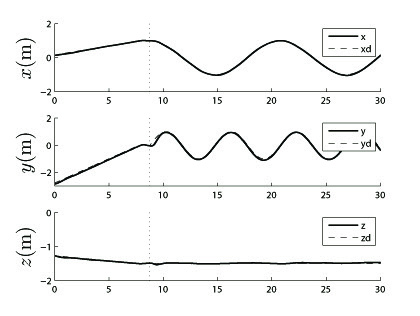}\label{fig:Lissajous_position}}
			\subfigure[Linear velocity ($\mathrm{m/sec}$)]{
				\includegraphics[width=0.4\columnwidth]{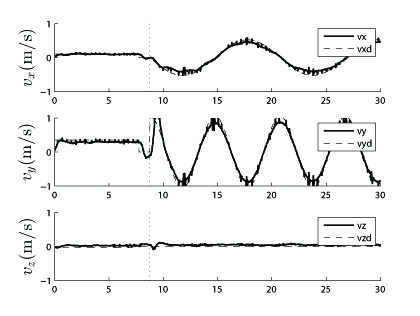}\label{fig:Lissajous_vel}}
}
\centerline{
	\subfigure[Eular angles ($\mathrm{rad}$)]{
		\includegraphics[width=0.4\columnwidth]{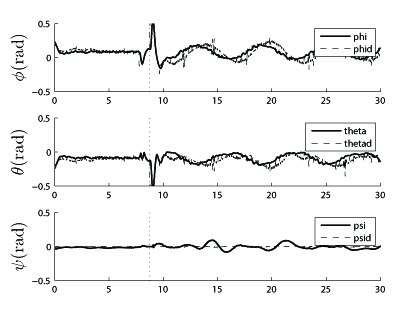}\label{fig:Lissajous_Eular}}
			\subfigure[Angular velocity $\Omega,\Omega_d$ ($\mathrm{rad/sec}$)]{
				\includegraphics[width=0.4\columnwidth]{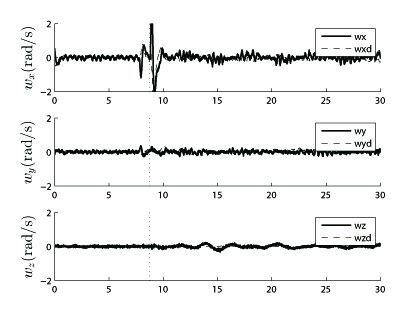}\label{fig:Lissajous_W}}
}
\caption{Lissajous curve tracking(dotted:desired, solid:actual)}\label{fig:LJ}
\end{figure}
\newpage
\subsubsection {\normalsize Flipping 360 Degree Maneuver}
Next, the proposed controller is validated with a flipping maneuver. The quadrotor takes off from a landing platform, increases altitude with constant speed to a constant point, flips $360$ degree about it $x$-axis. As presented in the numerical simulation section, this is a complex maneuver combining a nontrivial pitching maneuver with a yawing motion. It is achieved by concatenating the following two control modes of an attitude tracking same as presented in the numerical simulation to rotate the quadrotor
\begin{align*}
&R_d(t)= I+\sin(4 \pi t)\hat{e}_r+(1-\cos(4 \pi t))(e_r e_r^T-I),\; \Omega_d= 5\pi\cdot e_r.
\end{align*}
where $e_{r}=[1,\; 0,\;0]$, and a trajectory tracking mode to make it hover after completing the preceding rotation. As it is clear from the figures, the attitude control part which handles the rotation happens in almost $0.3$ seconds and then it switched to the position control mode to make the quadrotor stabilized and hovers to the desired position. Figure \ref{fig:ffgghhjjhhgg} and \ref{fig:snapflip} show the experimental results and snapshots of the flipping maneuver respectively.

There are several disturbances and modeling errors in this experimental setup, such as errors in mass properties of the quadrotor, processing and communication delay of the motion capture systems, and the dynamics of propellers. Therefore, these experimental illustrate robustness of the proposed control system with respect to various forms of uncertainties and disturbances\footnote{A short video of the experiments is available at \url{http://youtu.be/wtn9L6BsYiE}.}.

\begin{figure}
\centerline{
	\subfigure[Attitude error variables $\Psi,e_R,e_\Omega$]{
		\includegraphics[width=0.4\columnwidth]{errors.pdf}\label{fig:hover_error}}
		\subfigure[Thrust of each rotor ($\mathrm{N}$)]{
				\includegraphics[width=0.4\columnwidth]{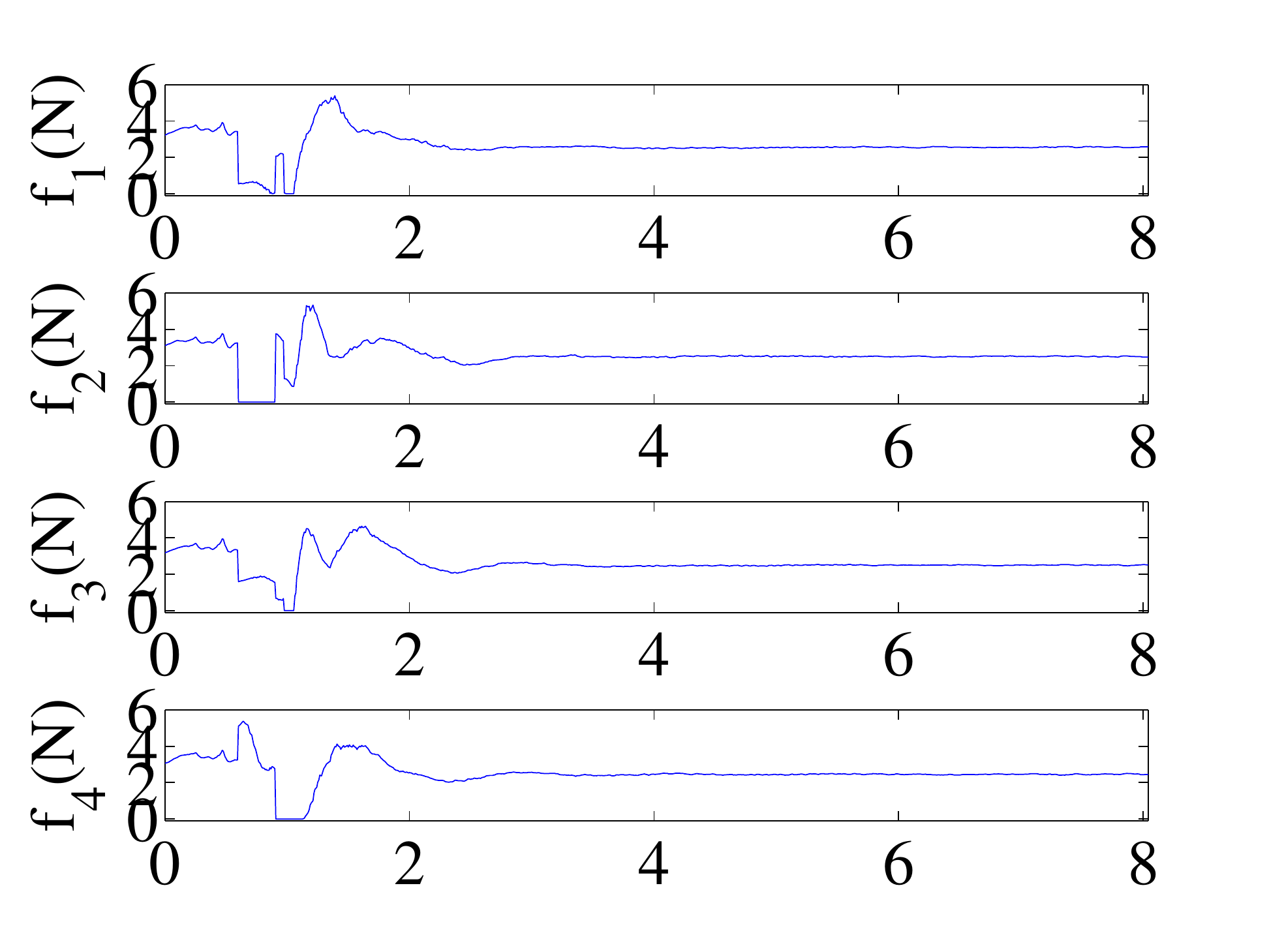}}
}
\centerline{
\subfigure[Position $x,x_d$ ($\mathrm{m}$)]{
		\includegraphics[width=0.4\columnwidth]{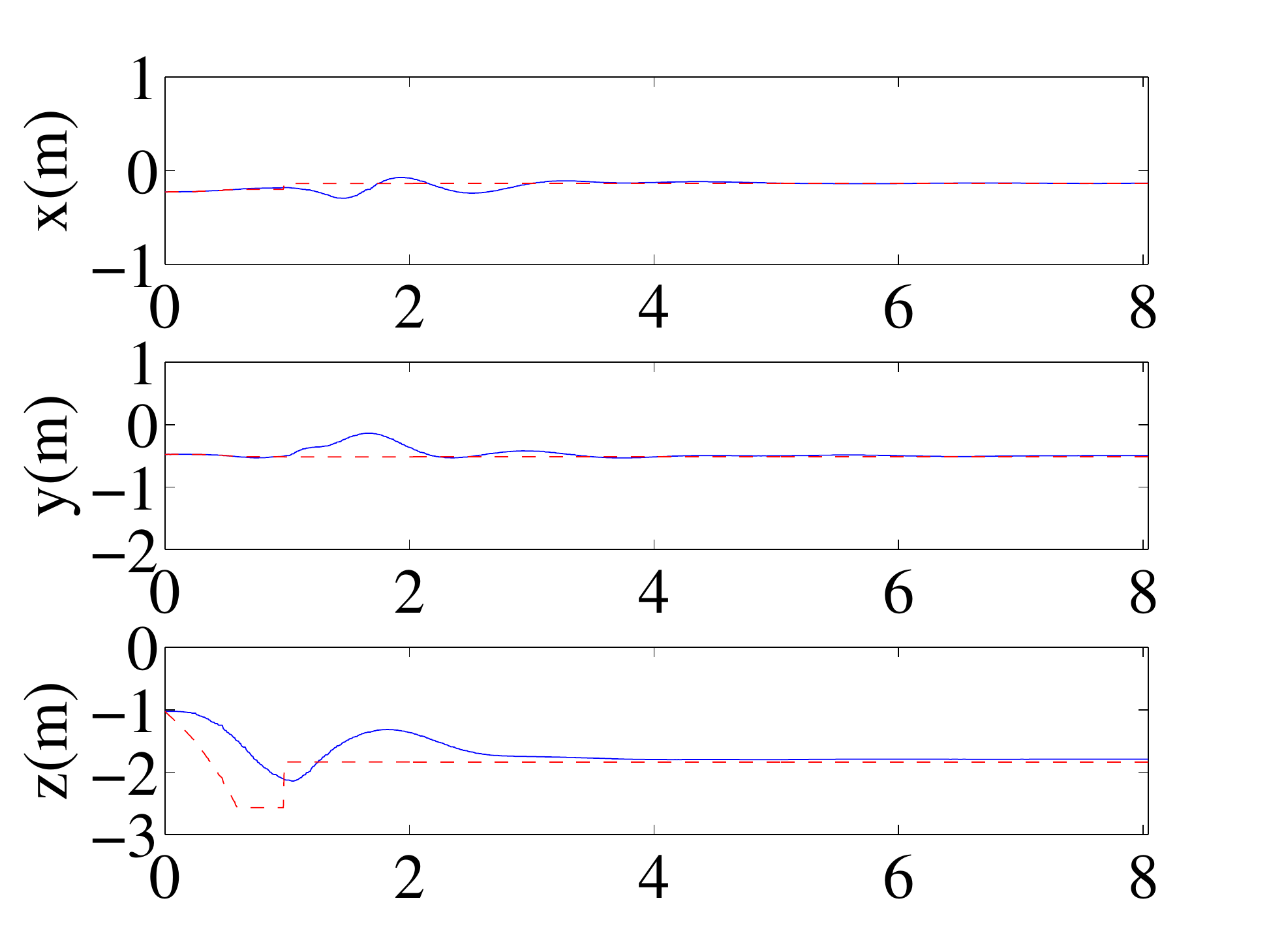}\label{fig:hover_position}}
			\subfigure[Linear velocity ($\mathrm{m/sec}$)]{
				\includegraphics[width=0.4\columnwidth]{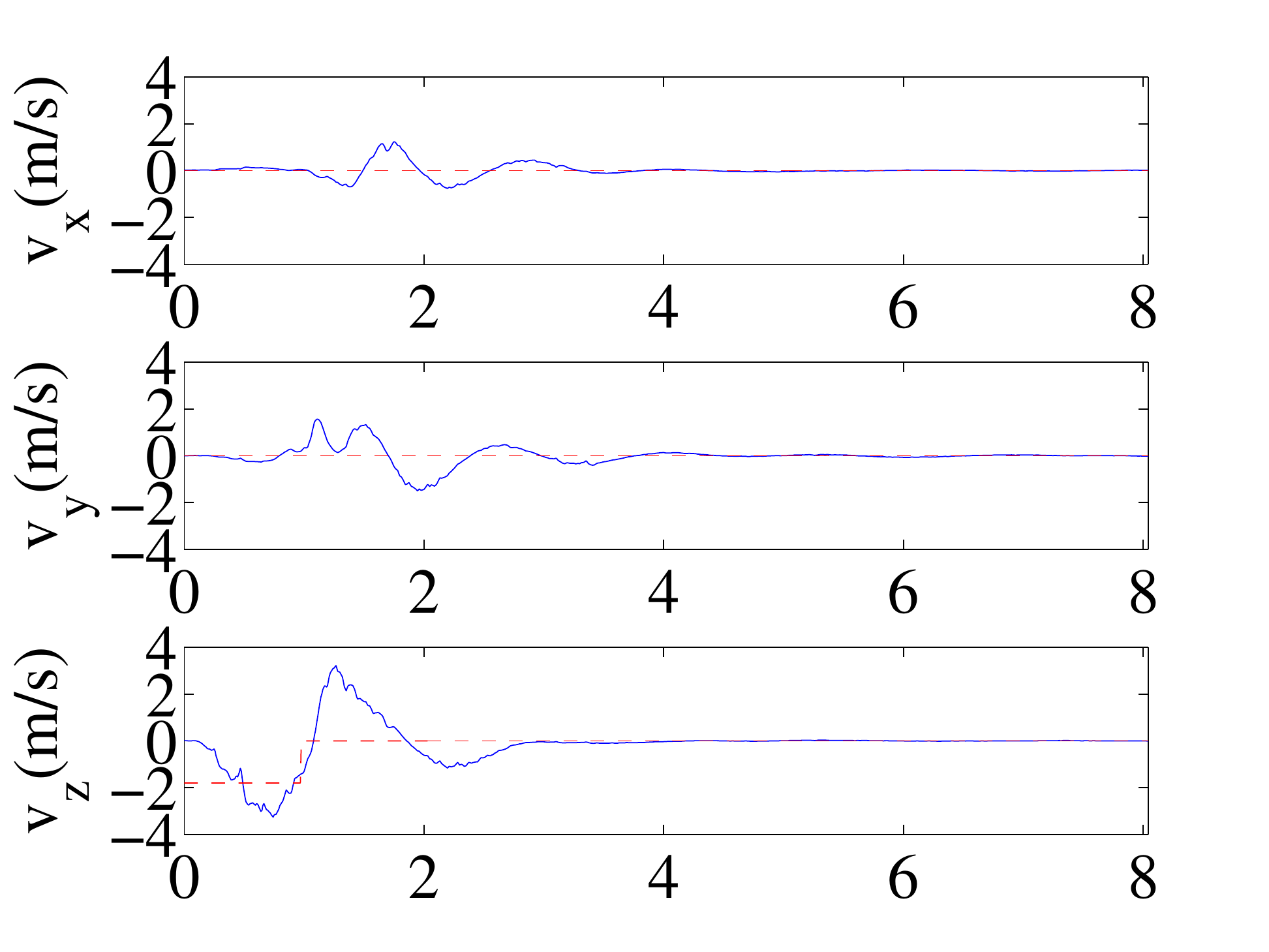}\label{fig:hover_vel}}
}
\centerline{
	\subfigure[Rotation Matrix ]{
		\includegraphics[width=0.4\columnwidth]{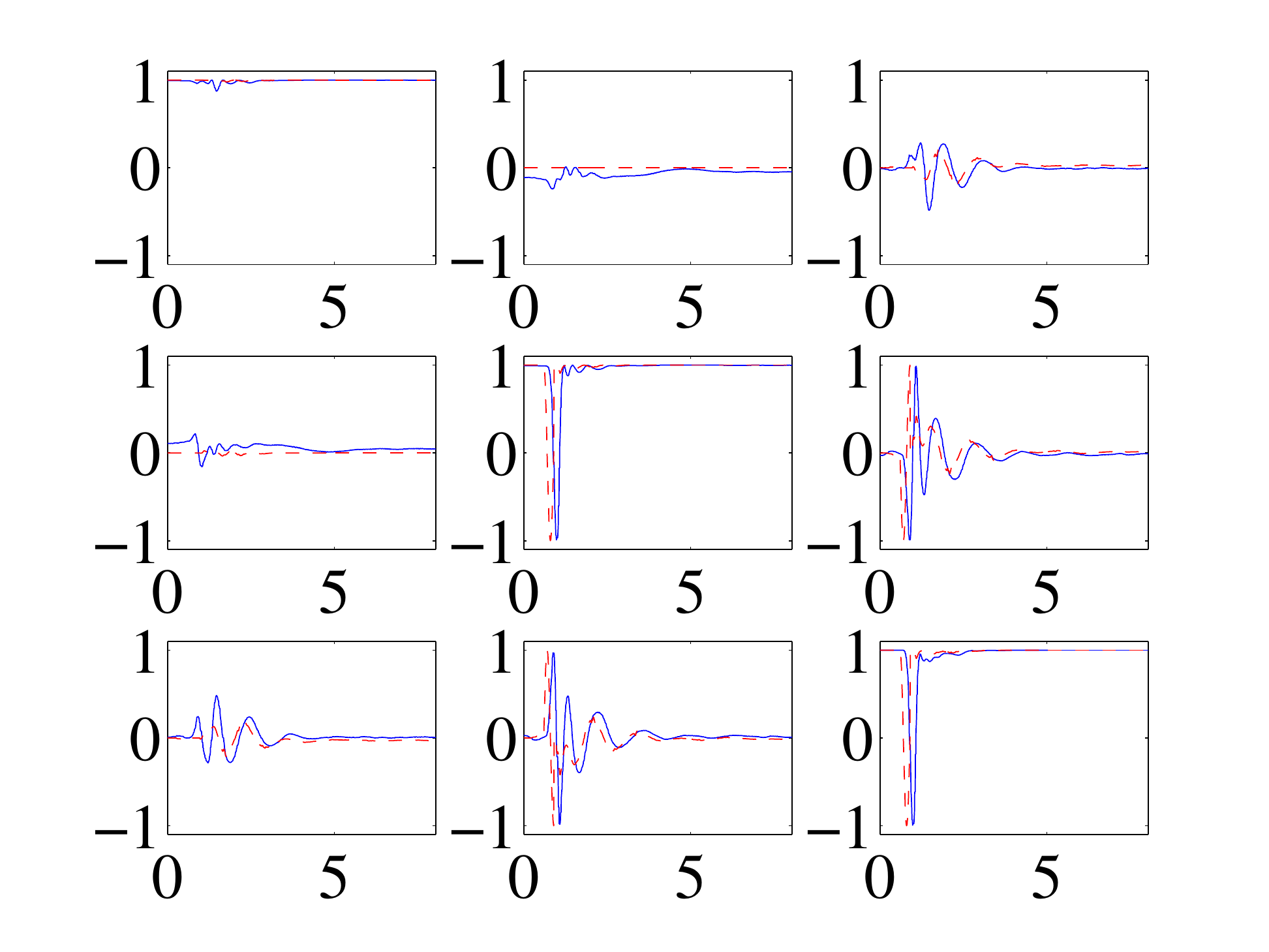}\label{fig:hover_Eular}}
			\subfigure[Angular velocity $\Omega,\Omega_d$ ($\mathrm{rad/sec}$)]{
				\includegraphics[width=0.4\columnwidth]{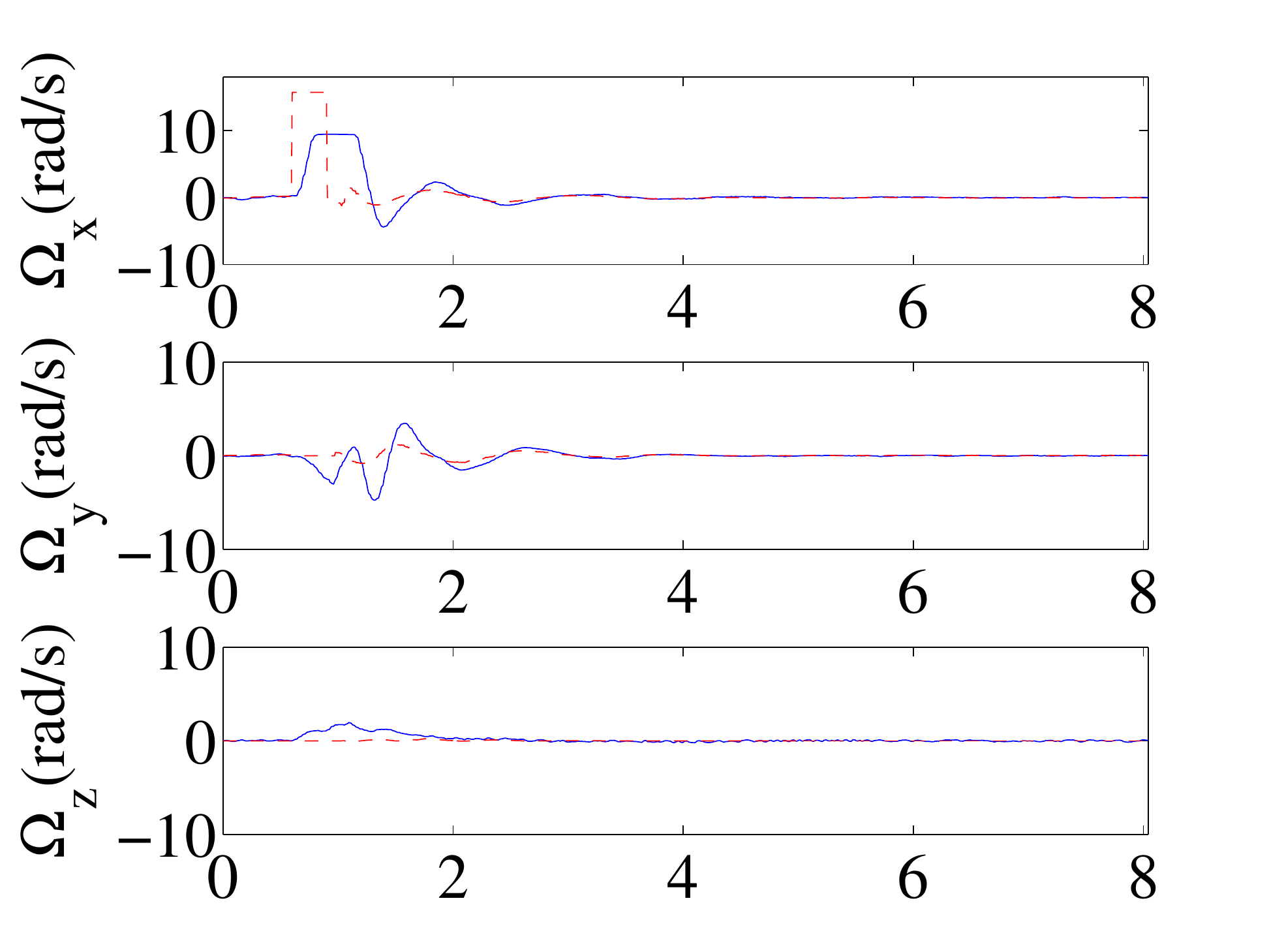}\label{fig:hover_W}}
}
\caption{Flipping flight test results (dotted:desired, solid:actual)}\label{fig:ffgghhjjhhgg}
\end{figure}
\begin{figure}[h]
\centerline{
	\subfigure[$t=0.0$ sec]{
		\includegraphics[width=0.250\columnwidth]{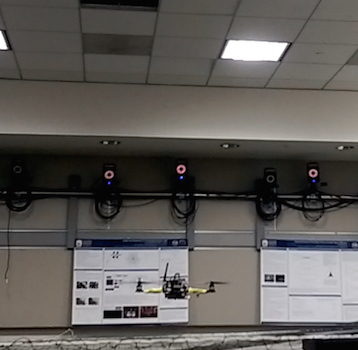}}
		\subfigure[$t=0.8756$ sec]{
				\includegraphics[width=0.250\columnwidth]{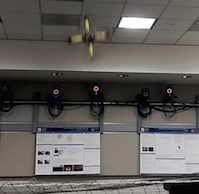}}
		\subfigure[$t=1.008$ sec]{
				\includegraphics[width=0.250\columnwidth]{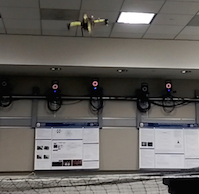}}
}
\centerline{
\subfigure[$t=1.079$ sec]{
		\includegraphics[width=0.250\columnwidth]{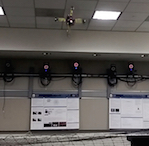}}
			\subfigure[$t=1.125$ sec]{
				\includegraphics[width=0.250\columnwidth]{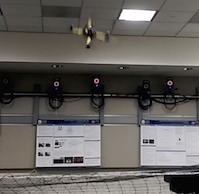}}
		\subfigure[$t=1.175$ sec]{
				\includegraphics[width=0.250\columnwidth]{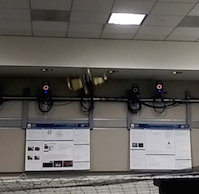}}
}
\centerline{
	\subfigure[$t=1.844$ sec]{
		\includegraphics[width=0.250\columnwidth]{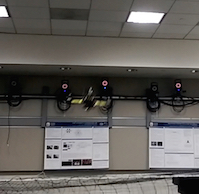}}
			\subfigure[$t=2.312$ sec]{
				\includegraphics[width=0.250\columnwidth]{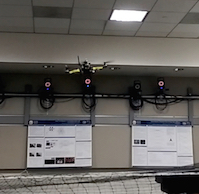}}
		\subfigure[$t=2.89$ sec]{
				\includegraphics[width=0.250\columnwidth]{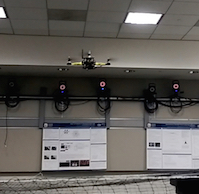}}
}
\caption{Snapshots for flipping maneuver.}
\label{fig:snapflip}
\end{figure}

}
\newpage
\subsection {\normalsize Payload Stabilization with one Quadrotor}
{\addtolength{\leftskip}{0.5in}
The weight of the entire UAV system is $0.791\mathrm{kg}$ including one battery.  A payload with mass of $m_1=0.036\ \mathrm{kg}$ is attached to the quadrotor via a cable of length $l_1=0.7\ \mathrm{m}$. The length from the center of the quadrotor to each motor rotational axis is $d=0.169\mathrm{m}$, the thrust to torque coefficient is $c_{{\tau}_f}=0.1056\mathrm{m}$ and the moment of inertia is $J=[0.56,0.56,1.05]\times 10^{-2}\,\mathrm{kgm^2}$. 

Two cases are considered and compared. For the first case, a position control system developed in~\cite{Farhad2013}, for quadrotor UAV that does not include the dynamics of the payload and the link, is applied to hover the quadrotor at the desired location, and the second case, the proposed control system is used.

\begin{figure}
\centerline{
\subfigure[Case I: quadrotor position control system~\cite{Farhad2013}]
{\includegraphics[width=0.7\columnwidth]{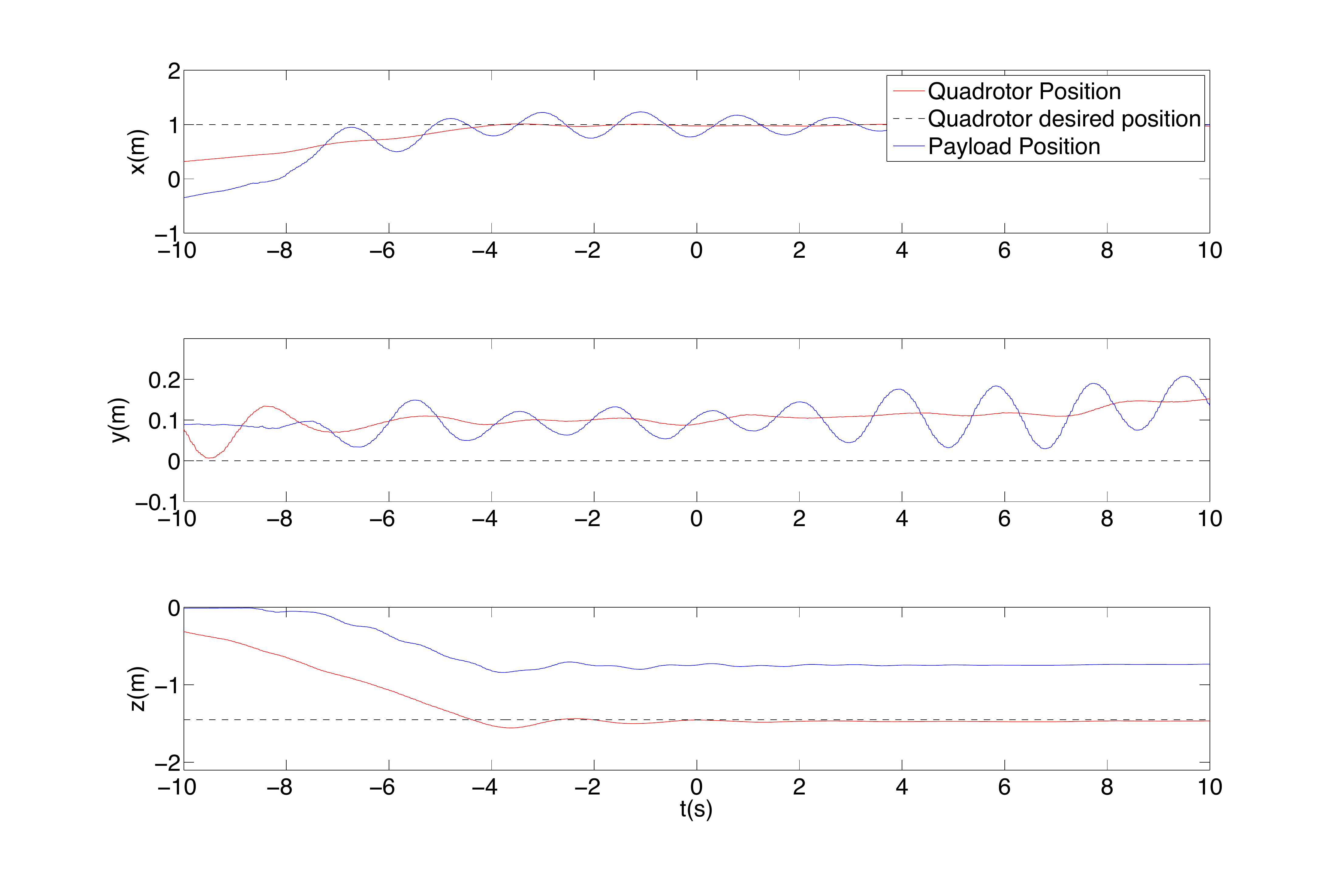}}}
\centerline{
\subfigure[Case II: proposed control system for quadrotor with suspended payload]
{\includegraphics[width=0.7\columnwidth]{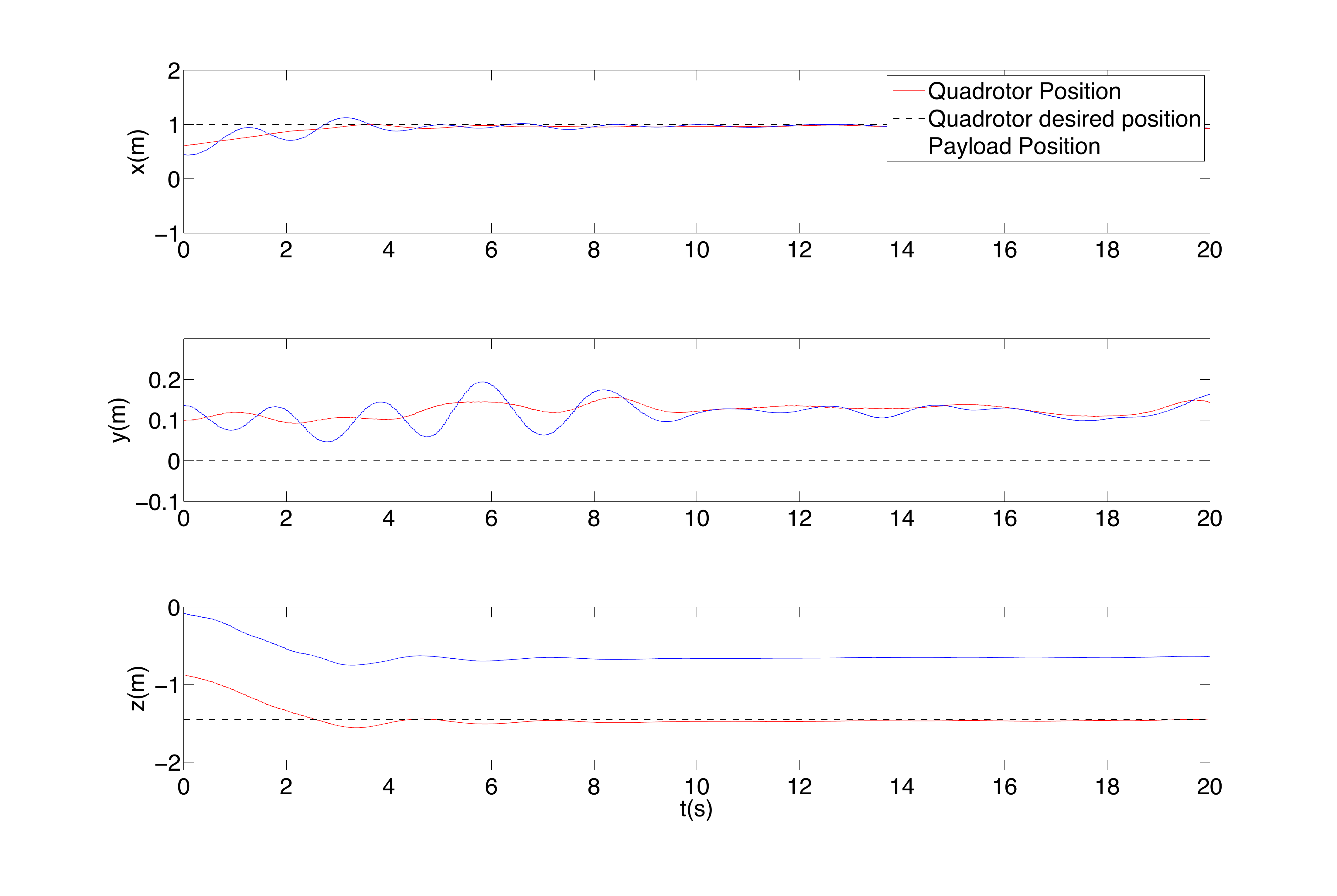}}}
\caption{Experimental results ($x_d$:black, $x$:red, $x+l_1q_1$:blue)}
\label{expresultsp}
\end{figure}

Experimental results are shown at Figures \ref{expresultsp} and \ref{expresultsq}. The position of the quadrotor and the payload is compared with the desired position of the quadrotor at Figure~\ref{expresultsp}, and the deflection angle of the link from the vertical direction are illustrated at Figure~\ref{expresultsq}. It is shown that the proposed control system reduces the undesired oscillation of the link effectively, compared with the quadrotor position control system. \footnote{A short video of the experiments is available at \url{http://youtu.be/RyTmWVbgt34}.}

\begin{figure}
\centerline{
	\includegraphics[width=0.4\columnwidth]{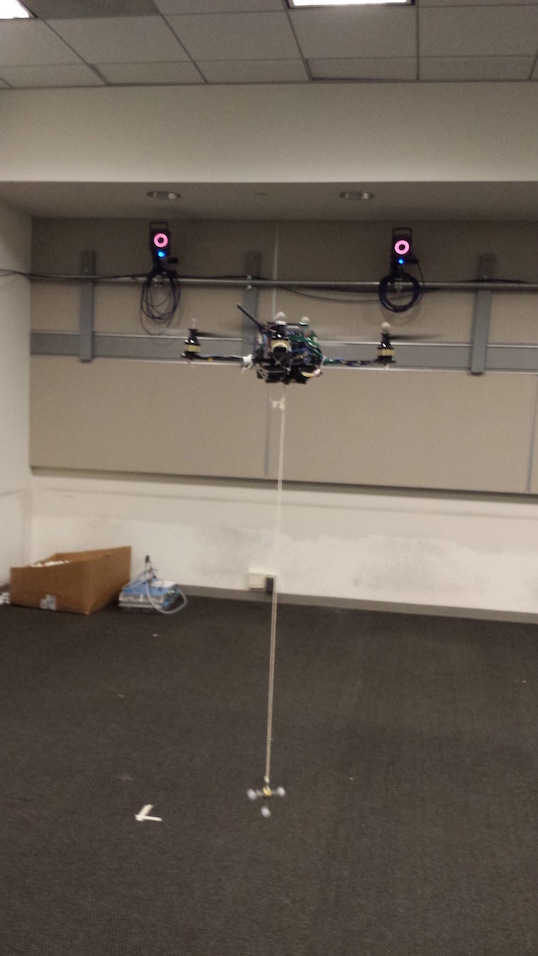}
}
\caption{Snapshot of a quadrotor UAV stabilizing a payload}\label{fig:QuadHW}
\end{figure}

\begin{figure}
\centerline{
\subfigure[Case I: quadrotor position control system~\cite{Farhad2013}]
{\includegraphics[width=0.4\columnwidth]{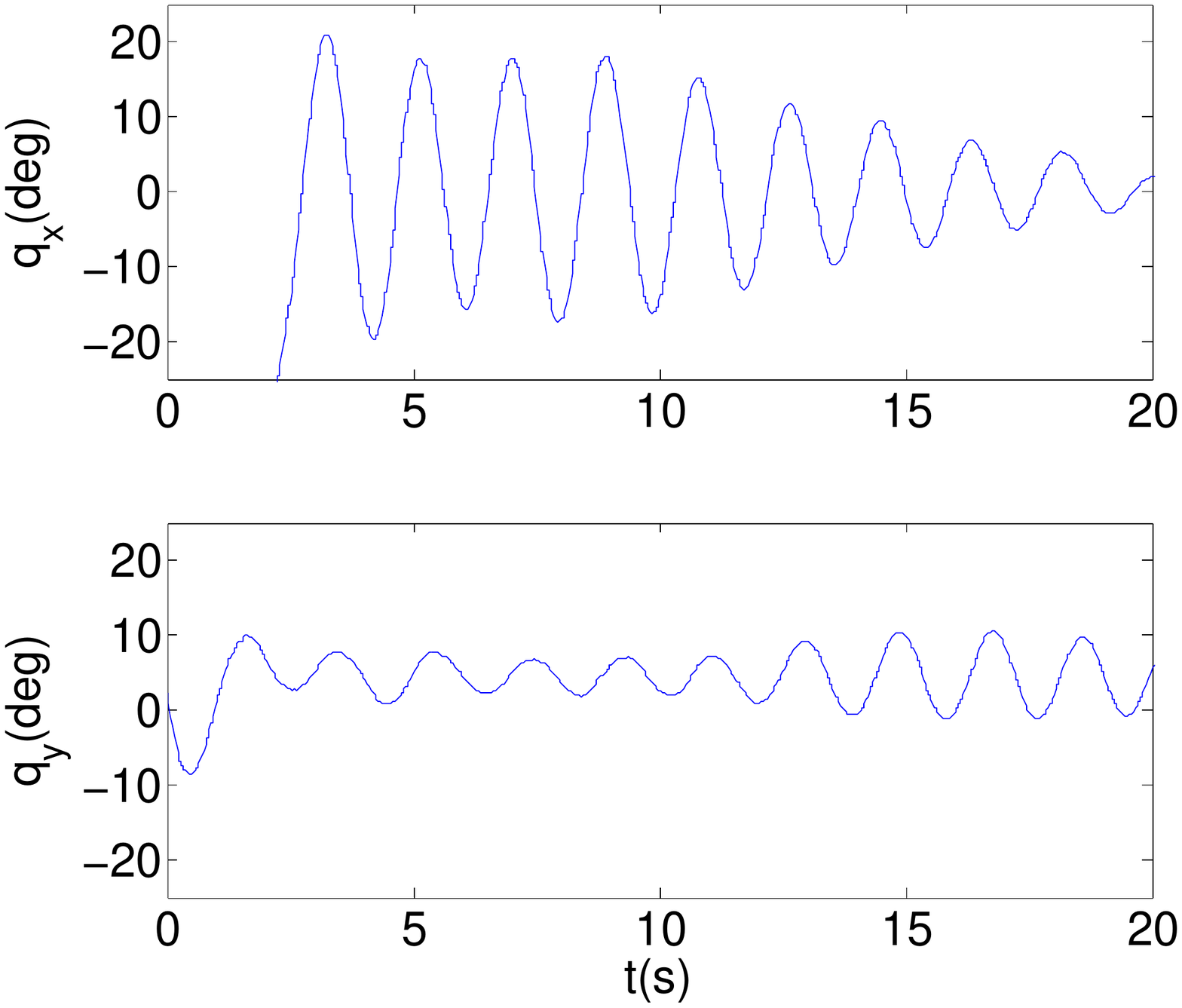}}
\subfigure[Case II: proposed control system for quadrotor with suspended payload]
{\includegraphics[width=0.4\columnwidth]{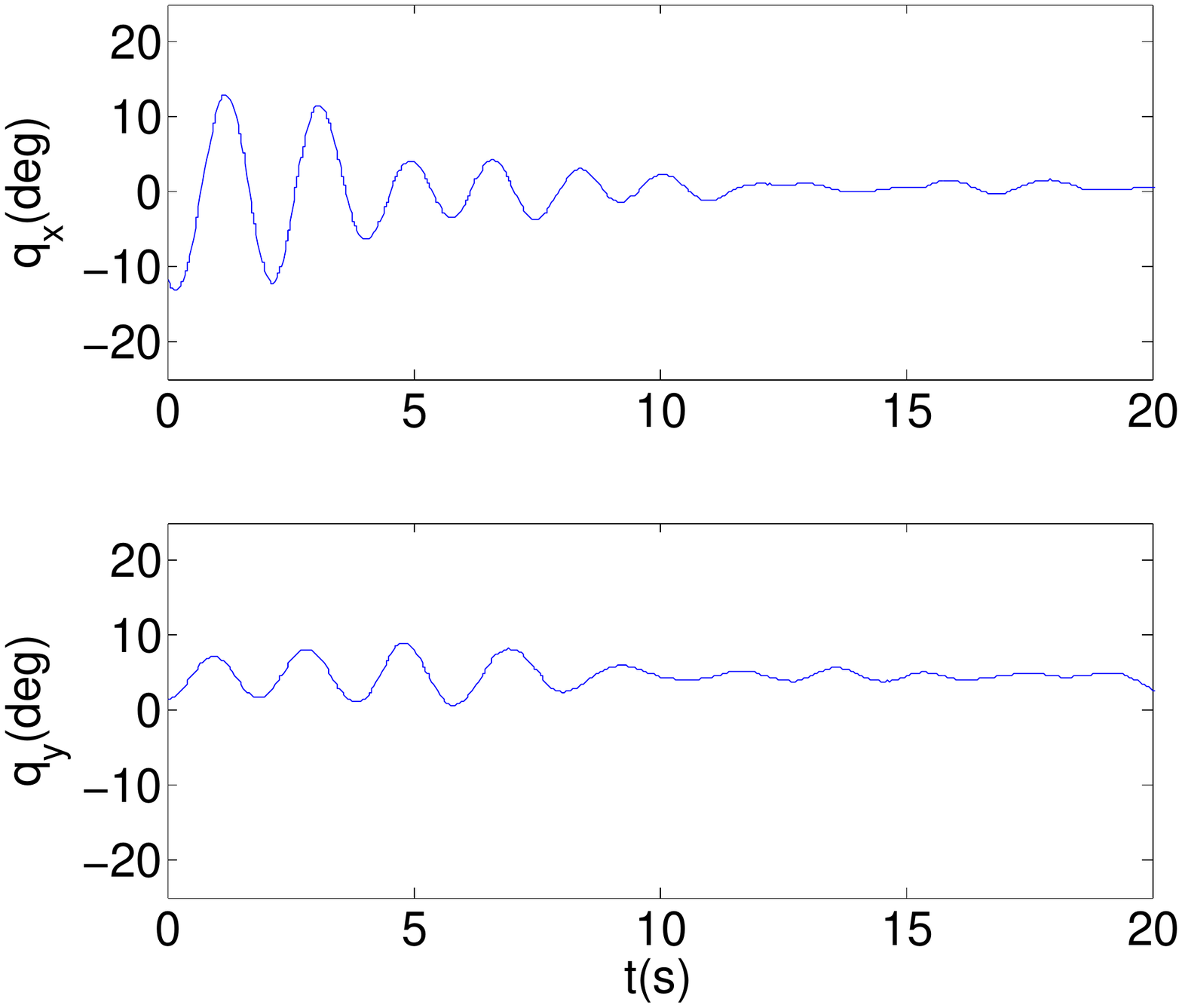}}
}
\caption{Experimental results: link deflection angles}
\label{expresultsq}
\end{figure}
There are several disturbances and modeling errors in this experimental setup, such as errors in mass properties of the quadrotor, processing and communication delay of the motion capture systems, and ground effects of air flow. Therefore, these experimental illustrate robustness of the proposed control system with respect to various forms of uncertainties and disturbances. Generalizing the presented control system for advanced nonlinear adaptive or robust controls are relegated to future investigation.

}

\newpage
\begin{singlespace}
\section{\protect \centering Chapter 6: Conclusions}
\end{singlespace}
\setcounter{section}{6}
We conclude our research and work of this dissertation here. In this section, first a summary of conclusions is given and it is followed by the future work.
\doublespacing
\subsection {\normalsize Conclusions}
{\addtolength{\leftskip}{0.5in}
A new nonlinear adaptive control system is proposed for tracking control of quadrotor unmanned aerial vehicles. It is developed directly on the special orthogonal group to avoid complexities and ambiguities that are associated with Euler-angles or quaternions, and the proposed adaptive control term guarantees almost global attractively for the tracking error variables in the existence of uncertainties.\\

Euler-Lagrange equations have been derived for multiple cooperative quadrotor UAVs and the chain pendulum to model each flexible cable transporting a rigid body in 3D space. These derivations developed in a remarkably compact form which allows us to choose arbitrary number and any configuration of the links. We developed a geometric nonlinear controller to stabilize the links below the quadrotor in the equilibrium position from any chosen initial condition. We expanded these derivations in such way that there is no need of using local angle coordinate. Rigorous mathematical stability proofs are given without any time-scale separation assumptions and these are verified by mathematical analysis, numerical simulations, and experiments for aggressive maneuvers concurrently.

}
\newpage
\subsection {\normalsize Challenges $\&$ Future Work}
{\addtolength{\leftskip}{0.5in}
Several limitations and challenges are considered for the proposed dynamic model, controller and experimental testbed. Some of the limitations, important hardware challenges, and future directions are listed bellow 
\begin{itemize}
{\addtolength{\leftskip}{0.1in}
\item Communication is very important in real-time experiments. We need high speed update rate in order to maintain an aggressive maneuver, and any failure in communication causes instability of the system and would lead to incidents.
\item Batteries used for the experiment can play an important role in a successful maneuver. They can be drained very soon and it affects the performance of the hardware and the gain tuning process. 
\item Different sensors are used in real-time experiments. For example, an IMU used to obtain the angular velocities, and Vicon Motion Capture system to obtain the position of the object. In both, sensors provide the noisy data which needed to be filtered or estimated to obtain smooth data. Furthermore, we experienced delay in receiving the data using the Vicon system. 
\item The flexible cables are considered into the dynamic of the system by modeling them as serially connected links. As more as links get used in the cable, the dynamic model becomes more accurate. However, additional sensors are needed to measure the deformation of the cable accurately. 
\item The proposed cable model via interconnected lumped masses is readily generalized to uniform mass distribution at each link. Based on the assumption that the diameter of the cable is much smaller than its length, the torsion of the bending moment are not considered in this dissertation.
\item Payload stabilization with single and multiple quadrotor UAVs to a fixed position are presented in this work. Dynamics model can be used and new controller can be extended and designed for tracking time-varying trajectories.
\item Experimental results for payload trajectory tracking is another future direction of this dissertation. There are several challenges in running experiments with multiple quadrotors simultaneously and tracking a desired trajectory for a payload with multiple quadrotor UAV's to validate the derivation for time-continuous controller.
\item Future directions include extending this research in an outdoor environment to identify the wind effects and verifying the performance of the controller in various weather situations. A GPS system can be installed on the quadrotors instead of the Motion Capture system in a laboratory to identify the position and velocity of the quadrotor.

}
\end{itemize}

}


\newpage
\section{\protect\centering Bibliography}
\doublespacing
\renewcommand{\refname}{}
\bibliographystyle{asmems4}
\bibliography{refrences}


\newpage
\renewcommand{\thesubsection}{A.\arabic{subsection} }
\setcounter{subsection}{0}
\section{\protect\centering Appendix- Proofs}

\subsection{\protect Proof for Proposition \ref{prop:propchap1_first}}\label{sec:pfchap1_first}{
The proofs of (i)-(iii) are available at~\cite[(Chap. 11)]{BulLew05}. To show (iv) and (v), let $Q=R_{d}^{T}R=\exp\hat{x}\in\SO$ for $x\in\Re^{3}$ from Rodrigues' formula. Using the MATLAB Symbolic Computation Tool, we find 
\begin{gather}
\Psi=\frac{1-\cos \|x\|}{2\|x\|^2}\sum_{(i,j,k)\in \CCC}{(g_{i}+g_{j})x_{k}^{2}},\nonumber\\
\|e_{R}\|^{2}=\frac{(1-\cos \|x\|)^{2}}{4\|x\|^{4}}\sum_{(i,j,k)\in \CCC}{(g_{i}-g_{j})^{2}x_{i}^{2}x_{j}^{2}}+\frac{\sin^{2}\|x\|}{4\|x\|^{2}}\sum_{(i,j,k)\in \CCC}{(g_{i}+g_{j})^{2}x_{k}^{2}}\nonumber,
\end{gather}
where $\CCC =\{(1,2,3),(2,3,1),(3,1,2)\}$. When $\Psi=0$, Eq.~\ref{eqn:PsiLB} is trivial. Assuming that $\Psi\neq0$, therefore $\|x\|\neq0$, an upper bound of $\frac{\|e_{R}\|^2}{\Psi}$ is given by
\begin{align}
\frac{\|e_{R}\|^{2}}{\Psi}\leq\frac{1}{2h_{1}}(1-\cos \|x\|)h_{2}+\frac{1}{2h_{1}}(1+\cos\|x\|)h_{3}\leq\frac{h_{2}+h_{3}}{h_{1}}\nonumber,
\end{align}
which shows Eq.~\ref{eqn:PsiLB}.

Next, we consider (v). When $\Psi=0$, Eq.~\ref{eqn:PsiUB} is trivial. Hereafter, we assume $\Psi\neq0$, therefore $R\neq R_{d}$. At the three remaining critical points of $\Psi$, the values of $\Psi$ are given by $g_1 + g_2$, $g_2 + g_3$, or $g_3 + g_1$. So, from the given bound $\Psi<\psi$, these three critical points are avoided, and we can guarantee that $e_{R}\neq0$ and $\|x\|<\pi$. An upper bound of $\frac{\Psi}{\|e_{R}\|^2}$ is given by
\begin{align}\label{eqn:salamsalamazizam}
\frac{\Psi}{\|e_{R}\|^{2}}\leq\frac{2(1-\cos\|x\|)}{\sin\|x\|^{2}}\frac{\sum_{\CCC}(g_{i}+h_{j})x_{k}^{2}/\|x\|^2}{\sum_{\CCC}(g_{i}+h_{j})^{2}x_{k}^{2}/\|x\|^2}\leq\frac{2}{1+\cos\|x\|}\frac{h_{4}}{h_{5}},
\end{align}
Also, an upper bound of $2-\psi$ is given by
\begin{align}
h_{1}-\psi<h_{1}-\Psi\leq h_{1}-\frac{1-\cos \|x\|}{2}h_{1}=\frac{h_{1}}{2}(1+\cos \|x\|).
\end{align}
Substituting this into Eq.~\ref{eqn:salamsalamazizam}, we have
\begin{align}
\frac{\Psi}{\|e_{R}\|^{2}}\leq\frac{h_{4}h_{1}}{h_{5}(h_{1}-\psi)}=h_{2}\nonumber,
\end{align}
which shows Eq.~\ref{eqn:PsiUB}.

For a given tracking command $(R_{d},\Omega_{d})$, and current attitude and angular velocity $(R,\Omega)$, we define an attitude error function $\Psi\  :\ SO(3)\times SO(3)\in R$ as
\begin{equation}
\Psi(R,R_{d})=\frac{1}{2}tr[I-R_{d}^{T}R].
\end{equation}
The desired angular velocity can be written as follows
\begin{equation}
\hat{\Omega}_{d}=R_{d}^{T}\dot{R}_{d}.
\end{equation}
We take the derivative to find the $e_{R}$ and $e_{\Omega}$
\begin{equation}
D\Psi(R,R_{d})=-\frac{1}{2}tr[d(R_{d}^{T}R)]=-\frac{1}{2}tr[R_{d}^{T}R\hat{\eta}].
\end{equation}
Using
\begin{equation}\label{eq50}
tr(\hat{x}A)=tr[A\hat{x}]=\frac{1}{2}tr[\hat{x}(A-A^{T})]=-x^{T}(A-A^{T})^{\vee},
\end{equation}
we can conclude
\begin{equation}
tr[R_{d}^{T}R\hat{\eta}]=-\eta(R_{d}^{T}R-R_{d}R^{T})^{\vee},
\end{equation}
\begin{equation}
D\Psi(R,R_{d})\cdot R\hat{\eta}=\frac{1}{2}(R_{d}^{T}R-R^{T}R_{d})^{\vee}\cdot\eta.
\end{equation}
We define
\begin{equation}
e_{R}=\frac{1}{2}(R_{d}^{T}R-R^{T}R_{d})^{\vee},
\end{equation}
and 
\begin{equation}
\hat{\Omega}_{d}=R_{d}^{T}\dot{R}_{d}\ \Rightarrow\ \dot{R}_{d}=R_{d}\hat{\Omega}_{d}.
\end{equation}
So we can write and simplify the following equation to find the $e_{\Omega}$
\begin{equation}
\dot{R}-\dot{R}_{d}(R^{T}R)=R\hat{\Omega}-R_{d}\hat{\Omega}_{d}R_{d}^{T}R=R(\hat{\Omega}-R^{T}R_{d}\hat{\Omega}_{d}R_{d}^{T}R).
\end{equation}
Comparing the above equation with $\dot{R}=R\hat{\Omega}$, we can result in the following equation
\begin{equation}
\hat{e}_{\Omega}=\hat{\Omega}-R^{T}R_{d}\hat{\Omega}_{d}R_{d}^{T}R,
\end{equation}
or
\begin{equation}
e_{\Omega}=\Omega-R^{T}R_{d}\Omega_{d}.
\end{equation}
Now, we try to find an expression for $\dot{\Psi}$ by taking time-derivative of $\Psi$
\begin{equation}
\dot{\Psi}=\frac{1}{2}tr[-(\frac{d}{dt}R_{d}^{T})R-R_{d}^{T}\dot{R}].
\end{equation}
Substituting $\dot{R}=R\hat{\Omega}$ and $\dot{R}_{d}^{T}=-\hat{\Omega}_{d}R_{d}^{T}$ into the above expression, we would have
\begin{equation}
\dot{\Psi}=-\frac{1}{2}tr[-\hat{\Omega}_{d}R_{d}^{T}R+R_{d}^{T}R\hat{\Omega}],
\end{equation}
and we can rewrite the above equation as follows
\begin{equation}
\dot{\Psi}=\frac{1}{2}e_{\Omega}^{T}(R_{d}^{T}R-R^{T}R_{d})^{\vee},
\end{equation}
so
\begin{equation}
\dot{\Psi}=e_{R}\cdot e_{\Omega}.
\end{equation}
}

\subsection{\protect Proof for Proposition \ref{prop:propchap2_2}}\label{sec:pfchap2_2}{

We first find the error dynamics for $e_R,e_\Omega$, and define a Lyapunov function. Then, we find conditions on control parameters to guarantee the boundedness of tracking errors. Using \refeqn{EL3}, \refeqn{EL4}, \refeqn{M}, the time-derivative of $Je_\Omega$ can be written as
\begin{align}
J\dot e_\Omega & = \{Je_\Omega + d\}^\wedge e_\Omega - k_R e_R-k_\Omega e_\Omega+ \mathds{W}_{R}\tilde{\theta}_{R},\label{eqn:JeWdot}
\end{align}
where $d=(2J-\trs{J}I)R^TR_d\Omega_d\in\Re^3$ and  $\tilde{\theta}_{R}=\theta_{R}-\bar{\theta}_{R}$. The important property is that the first term of the right hand side is normal to $e_\Omega$, and it simplifies the subsequent Lyapunov analysis. Define a Lyapunov function $\mathcal{V}_2$ be 
\begin{align}
\mathcal{V}_2 & = \frac{1}{2} e_\Omega \cdot J e_\Omega + k_R\, \Psi(R,R_d)+c_2 e_R\cdot Je_\Omega+\frac{1}{2\gamma_{R}}\|\tilde{\theta}_{R}\|^2.\label{eqn:V2}
\end{align}
From \refeqn{PsiLB}, \refeqn{PsiUB}, the Lyapunov function $\mathcal{V}_2$ is bounded as
\begin{align}
z_2^T M_{21} z_2+\frac{1}{2\gamma_{R}}\|\tilde{\theta}_{R}\|^2
\leq \mathcal{V}_2 \leq z_2^T M_{22} z_2+\frac{1}{2\gamma_{R}}\|\tilde{\theta}_{R}\|^2,
\label{eqn:V2b}
\end{align}
where $z_2 =[\|e_R\|,\;\|e_\Omega\|]^T\in\Re^2$, and the matrices $M_{21},M_{22}$ are given by
\begin{align}
M_{21} = \frac{1}{2}\begin{bmatrix} k_R & -c_2\lambda_M \\ -c_2\lambda_M & \lambda_m  \end{bmatrix},\,
M_{22} = \frac{1}{2}\begin{bmatrix} \frac{2k_R}{2-\psi_2} & c_2\lambda_M \\ c_2\lambda_M & \lambda_{M}\end{bmatrix}.
\end{align}
From \refeqn{PsiUB}, the upper-bound of \refeqn{V2b} is satisfied in the following domain:
\begin{align}
D_2 = \{ (R,\Omega)\in \SO\times\Re^3\,|\, \Psi(R,R_d)<\psi_2<2\}.\label{eqn:D2}
\end{align}
From \refeqn{Psidot00}, \refeqn{JeWdot}, the time derivative of $\mathcal{V}_2$ along the solution of the controlled system is given by
\begin{align*}
\dot{\mathcal{V}}_2  =&
-k_\Omega\|e_\Omega\|^2  +e_{\Omega}^{T}\mathds{W}_{R}\tilde{\theta}_{R}+ c_2 \dot e_R \cdot Je_\Omega+ c_2 e_R \cdot J\dot e_\Omega +\frac{1}{\gamma_{R}}(\tilde{\theta})^{T}(\dot{\tilde{\theta}}).
\end{align*}
We have $\dot{\tilde{\theta}}_{R}=-\dot{\bar{\theta}}_{R}$. Substituting \refeqn{JeWdot}, the above equation becomes
\begin{align*}
\dot{\mathcal{V}}_2  =&
-k_\Omega\|e_\Omega\|^2  + c_2 \dot e_R \cdot Je_\Omega-c _2 k_R \|e_R\|^2 \\
&+ c_2 e_R \cdot ((Je_\Omega+d)^\wedge e_\Omega -k_\Omega e_\Omega)\\
&+\tilde{\theta}_{R}^{T}\mathds{W}_{R}^{T}(e_{\Omega}+c_{2}e_{R})-\frac{1}{\gamma_{R}}\tilde{\theta}_{R}^{T}\dot{\bar{\theta}}_{R}.
\end{align*}
By substituting the adaptive law given by \refeqn{eI}
\begin{align*}
\dot{\mathcal{V}}_2  =&
-k_\Omega\|e_\Omega\|^2  + c_2 \dot e_R \cdot Je_\Omega-c _2 k_R \|e_R\|^2 + c_2 e_R \cdot ((Je_\Omega+d)^\wedge e_\Omega -k_\Omega e_\Omega).
\end{align*}
Since $\|e_R\|\leq 1$, $\|\dot e_R\|\leq \|e_\Omega\|$, and $\|d\|\leq B_2$, we have
\begin{align}
\dot{\mathcal{V}}_2 \leq - z_2^T W_2 z_2,\label{eqn:dotV2}
\end{align}
where the matrix $W_2\in\Re^{2\times 2}$ is given by
\begin{align*}
W_2 = \begin{bmatrix} c_2k_R & -\frac{c_2}{2}(k_\Omega+B_2) \\ 
-\frac{c_2}{2}(k_\Omega+B_2) & k_\Omega-2c_2\lambda_M \end{bmatrix}.
\end{align*}
The condition on $c_2$ given at \refeqn{c2} guarantees that all of matrices $M_{21}$, $W_2$ are positive definite. According to the LaSalle-Yoshizawa theorem~\cite[Theorem A.8]{Krstic95Kokotovic}, this implies that the zero equilibrium of tracking errors and the estimation error is stable in the sense of Lyapunov, and $e_R,e_\Omega\rightarrow 0$ as $t\rightarrow\infty$. Furthermore $\tilde{\theta}_{R}$ is uniformly bounded.}

\subsection{Proof for Proposition \ref{prop:propchap2_3}}\label{sec:pfchap2_3}{
We derive the tracking error dynamics and a Lyapunov function for the translational dynamics of a quadrotor UAV, and later it is combined with the stability analysis of the rotational dynamics. The subsequent analysis are developed in the domain $D_1$
\begin{align}
D_1=\{&(e_x,e_v,R,e_\Omega)\in\Re^3\times\Re^3\times \SO\times\Re^3\,|\,\nonumber\\
& \|e_x\|< e_{x_{\max}},\;\Psi< \psi_1 < 1\},\label{eqn:D}
\end{align}
Similar to \refeqn{PsiUB}, we can show that 
\begin{align}
\frac{1}{2} \norm{e_R}^2 \leq  \Psi(R,R_c) \leq \frac{1}{2-\psi_1} \norm{e_R}^2\label{eqn:eRPsi1}.
\end{align}

\subsubsection*{a) Translational Error Dynamics} The time derivative of the position error is $\dot e_x=e_v$. The time-derivative of the velocity error is given by
\begin{align}
m\dot e_v = m\ddot x -m\ddot x_d = mg e_3 - fRe_3 -m\ddot x_d+\mathds{W}_{x}\theta_{x}. \label{eqn:evdot0}
\end{align}
Consider the quantity $e_3^T R_c^T R e_3$, which represents the cosine of the angle between $b_3=Re_3$ and $b_{3_c}=R_ce_3$. Since $1-\Psi(R,R_c)$ represents the cosine of the eigen-axis rotation angle between $R_c$ and $R$, we have $e_3^T R_c^T R e_3\geq  1-\Psi(R,R_c)>0$ in $D_1$. Therefore, the quantity $\frac{1}{e_3^T R_c^T R e_3}$ is well-defined. To rewrite the error dynamics of $e_v$ in terms of the attitude error $e_R$, we add and subtract $\frac{f}{e_3^T R_c^T R e_3}R_c e_3$ to the right hand side of \refeqn{evdot0} to obtain
\begin{align}
m\dot e_v &  = mg e_3 -m\ddot x_d- \frac{f}{e_3^T R_c^T R e_3}R_c e_3 - X+\mathds{W}_{x}\theta_{x},\label{eqn:evdot1}
\end{align}
where $X\in\Re^3$ is defined by
\begin{align}
X=\frac{f}{e_3^T R_c^T R e_3}( (e_3^T R_c^T R e_3)R e_3 -R_ce_3).\label{eqn:X}
\end{align}
Let $A=-k_x e_x - k_v e_v -\mathds{W}_{x}\bar{\theta}_{x}-mg e_3 + m\ddot x_d$.
Then, from \refeqn{Rd3}, \refeqn{f}, we have ${b}_{3_c}=R_c e_3 = -A/\norm{A}$ and $f=-A\cdot Re_3$. By combining these, we obtain $f= (\norm{A}R_c e_3)\cdot R e_3$. Therefore, the third term of the right hand side of \refeqn{evdot1} can be written as
\begin{align*}
-  \frac{f}{e_3^T R_c^T R e_3} & R_c e_3 = -\frac{(\norm{A}R_c e_3)\cdot R e_3}{e_3^T R_c^T R e_3}\cdot - \frac{A}{\norm{A}}=A\\
& =-k_x e_x - k_v e_v -\mathds{W}_{x}\bar{\theta}_{x} -mg e_3 + m\ddot x_d.
\end{align*}
Substituting this into \refeqn{evdot1}, the error dynamics of $e_v$ can be written as
\begin{align}
m\dot e_v  = & -k_x e_x - k_v e_v -\mathds{W}_{x}\bar{\theta}_{x} - X+\mathds{W}_{x}{\theta}_{x}\nonumber \\
&=  -k_x e_x - k_v e_v +\mathds{W}_{x}\tilde{\theta}_{x} - X.\label{eqn:evdot}
\end{align}
where $\tilde{\theta}_{x}=\theta_{x}-\bar{\theta}_{x}$ is the estimation errors.

\subsubsection*{b) Lyapunov Candidate for Translation Dynamics}
Let a Lyapunov candidate $\mathcal{V}_1$ be
\begin{align}
\mathcal{V}_1 & = \frac{1}{2}k_x\|e_x\|^2  + \frac{1}{2} m \|e_v\|^2 + c_1 e_x\cdot me_v+ \frac{1}{2\gamma_{x}}\|\tilde{\theta}_{x}\|^2
\label{eqn:V1}.
\end{align}
The derivative of ${\mathcal{V}}_1$ along the solution of \refeqn{evdot} is given by
\begin{align}
\dot{\mathcal{V}}_1 
 =&  -(k_v-mc_1) \|e_v\|^2 
- c_1 k_x \|e_x\|^2 \nonumber\\
&-c_1 k_v e_x\cdot e_v+X\cdot \braces{ c_1 e_x + e_v}\nonumber\\
&+\mathds{W}_{x}\tilde{\theta}_{x}\cdot\{e_{v}+c_{1}e_{x}\}-\frac{1}{\gamma_{x}}\tilde{\theta}_{x}^{T}\dot{\bar{\theta}}_{x}.\label{eqn:V1dot03}
\end{align}
For the first case of the adaptive law given by \refeqn{adaptivelawx}, the last two terms of \refeqn{V1dot03} are cancelled out. Also for the second case~\cite{Ioannou96} that $\dot{\bar{\theta}}_{x}=\gamma_{x}(I-\frac{\bar{\theta}_{x}\bar{\theta}_{x}^{T}}{\bar{\theta}_{x}^{T}\bar{\theta}_{x}})\mathds{W}_{x}^{T}(e_{v}+c_{1}e_{x})$, we obtain
\begin{align}
\dot{\mathcal{V}}_1 
 =&  -(k_v-mc_1) \|e_v\|^2 
- c_1 k_x \|e_x\|^2 \nonumber\\
&-c_1 k_v e_x\cdot e_v+X\cdot \braces{ c_1 e_x + e_v}\nonumber\\
&+\tilde{\theta}_{x}^{T}\frac{\bar{\theta}_{x}\bar{\theta}_{x}^{T}}{\bar{\theta}_{x}^{T}\bar{\theta}_{x}}W_{x}^{T}(e_{v}+c_{1}e_{x}).
\end{align}
In the above equation, the last term on the right hand side is always negative since $\bar{\theta}_{x}^{T}\mathds{W}_{x}^T(e_{v}+c_{1}e_{x})>0$ and $\tilde{\theta}_{x}^T\bar{\theta}_{x}\leq 0$. Therefore for both cases of \refeqn{adaptivelawx}, we obtain
\begin{align}\label{eqn:V1dot0}
\dot{\mathcal{V}}_1 
 \leq&  -(k_v-mc_1) \|e_v\|^2 
- c_1 k_x \|e_x\|^2 -c_1 k_v e_x\cdot e_v+X\cdot \braces{ c_1 e_x + e_v}.
\end{align}
The last term of the above equation corresponds to the effects of the attitude tracking error on the translational dynamics. We find a bound of $X$, defined at \refeqn{X}, to show stability of the coupled translational dynamics and rotational dynamics in the subsequent Lyapunov analysis. Since $f=\|A\| (e_3^T R_c^T R e_3)$, we have
\begin{align*}
\norm{X}  \leq& \|A\|\,\| (e_3^T R_c^T R e_3)R e_3 -R_ce_3\|\\
 \leq&( k_x \|e_x\| + k_v \|e_v\| + B_{W_{x}} B_{\theta} +B_1) \times \| (e_3^T R_c^T R e_3)R e_3 -R_ce_3\|.
\end{align*}
The last term $\| (e_3^T R_c^T R e_3)R e_3 -R_ce_3\|$ represents the sine of the angle between $b_3=Re_3$ and $b_{c_3}=R_c e_3$, since $(b_{3_c}\cdot b_3)b_3 - b_{3_c} = b_{3}\times (b_3\times b_{3_c})$. The magnitude of the attitude error vector, $\|e_R\|$ represents the sine of the eigen-axis rotation angle between $R_c$ and $R$ (see \cite{LeeLeoPICDC10}). Therefore, $\| (e_3^T R_c^T R e_3)R e_3 -R_ce_3\| \leq \| e_R\|$ in $D_1$. It follows that 
\begin{align}
\| (e_3^T R_d^T R e_3)R e_3 & -R_de_3\| \leq \| e_R\| = \sqrt{\Psi(2-\Psi)}\nonumber\\
& \leq \braces{\sqrt{\psi_1 (2-\psi_1)}\triangleq\alpha}  <1.\label{eqn:eR_bound}
\end{align}
Therefore, $X$ is bounded by
\begin{align}
\norm{X} 
&\leq ( k_x \|e_x\| + k_v \|e_v\| + B_{W_{x}} B_{\theta} + B_1) \|e_R\| \nonumber\\
&\leq ( k_x \|e_x\| + k_v \|e_v\| + B_{W_{x}} B_{\theta} + B_1) \alpha.\label{eqn:XB}
\end{align}
Substituting \refeqn{XB} into \refeqn{V1dot0}, 
\begin{align}
\dot{\mathcal{V}}_1 
& \leq   -(k_v(1-\alpha)-mc_1) \|e_v\|^2 
- {c_1 k_x}(1-\alpha) \|e_x\|^2 \nonumber\\
& + {c_1k_v}(1+\alpha) \|e_x\|\|e_v\|\nonumber\\
& +  \|e_R\| \braces{(B_{W_{x}} B_{\theta} +B_1)({c_1} \|e_x\| + \|e_v\|)+k_x\|e_x\|\|e_v\|}.\label{eqn:V1dot1}
\end{align}
In the above expression for $\dot{\mathcal{V}}_1$, there is a third-order error term, namely $k_x\|e_R\|\|e_x\|\|e_v\|$. Using \refeqn{eR_bound}, it is possible to choose its upper bound as $k_x\alpha\|e_x\|\|e_v\|$ similar to other terms, but the corresponding stability analysis becomes complicated, and the initial attitude error should be reduced further. Instead, we restrict our analysis to the domain $D_1$ defined in \refeqn{D}, and its upper bound is chosen as $k_xe_{x_{\max}}\|e_R\|\|e_v\|$.
\subsubsection*{c) Lyapunov Candidate for the Complete System}
Let $\mathcal{V}=\mathcal{V}_1+\mathcal{V}_2$ be the Lyapunov candidate of the complete system. Define $z_1=[\|e_x\|,\;\|e_v\|]^T$, $z_2=[\|e_R\|,\;\|e_\Omega\|]^T\in\Re^2$, and 
\begin{align*}
\mathcal{V}_A = \frac{1}{2\gamma_{x}}\|\theta_{x}-\bar{\theta}_{x}\|^2+\frac{1}{2\gamma_{R}}\|\theta_{R}-\bar{\theta}_{R}\|^2.
\end{align*}
Using \refeqn{eRPsi1}, the bound of the Lyapunov candidate $\mathcal{V}$ can be written as
\begin{align}
z_1^T M_{11} z_1 + z_2^T M_{21} z_2& + \mathcal{V}_A
 \leq \mathcal{V} \leq z_1^T M_{12} z_1 + z_2^T M'_{22} z_2 +\mathcal{V}_A,\label{eqn:Vb}
\end{align}
where the matrices $M_{11},M_{12},M_{21},M_{22}$ are given by
\begin{gather*}
M_{11} = \frac{1}{2}\begin{bmatrix} k_x & -mc_1 \\ -mc_1 & m\end{bmatrix},\;
M_{12} = \frac{1}{2}\begin{bmatrix} k_x & mc_1 \\ mc_1 & m\end{bmatrix},\\
M_{21} = \frac{1}{2}\begin{bmatrix} k_R & -c_2\lambda_M \\ -c_2\lambda_M & \lambda_{m}  \end{bmatrix},\;
M_{22} = \frac{1}{2}\begin{bmatrix} \frac{2k_R}{2-\psi_1} & c_2\lambda_M \\ c_2\lambda_M & \lambda_{M}\end{bmatrix}.
\end{gather*}
Using \refeqn{dotV2} and \refeqn{V1dot1}, the time-derivative of $\mathcal{V}$ is given by
\begin{align}\label{eqn:eq53}
\dot{\mathcal{V}} & \leq -z_1^T W_1 z_1  + z_1^T W_{12} z_2 - z_2^T W_2 z_2\leq -z^T W z
\end{align}
where $z=[z_1,z_2]^T\in\Re^2$, and the matrices $W_1,W_{12},W_2\in\Re^{2\times 2}$ are defined at \refeqn{W1}-\refeqn{W2}. The matrix $W\in\Re^{2\times 2}$ is given by
\begin{align*}
W=\begin{bmatrix}
\lambda_{m}(W_1) & -\frac{1}{2}\|W_{12}\|_2\\
-\frac{1}{2}\|W_{12}\|_2 & \lambda_m(W_2)
\end{bmatrix}.
\end{align*}
The conditions given at \refeqn{c2}, \refeqn{c1b}, \refeqn{kRkWb} guarantee that all of matrices $M_{11},M_{12},M_{21},M_{22},W$ are positive definite. According to the LaSalle-Yoshizawa theorem~\cite[Theorem A.8]{Krstic95Kokotovic}, this implies that the zero equilibrium of tracking errors and the estimation errors is stable in the sense of Lyapunov, and tracking errors asymptotically converge to zero. Furthermore, estimation errors are uniformly bounded.}

\subsection{Proof for Proposition \ref{prop:propchap2_4}}\label{sec:pfchap2_4}
According to the proof of Proposition \ref{prop:propchap2_2}, the attitude tracking errors asymptotically decrease to zero, and therefore, they enter the region given by \refeqn{Psi0} in a finite time $t^*$, after which the results of Proposition \ref{prop:propchap2_3} can be applied to yield attractiveness. The remaining part of the proof is showing that the tracking error $z_1=[\|e_x\|,\|e_v\|]^T$ is bounded in $t\in[0, t^*]$. This is similar to the proof given at~\cite{LeeLeoAJC13}.

\subsection{Proof for Proposition \ref{prop:propchap3_1}}\label{sec:pfchap3_1}
From \refeqn{KE} and \refeqn{PE}, the Lagrangian is given by
\begin{align}
L&=\frac{1}{2}M_{00}\|\dot{x}\|^{2}+\dot{x}\cdot\sum^{n}_{i=1}{M_{0i}\dot{q}_i}+\frac{1}{2}\sum^{n}_{i,j=1}{M_{ij}\dot{q}_{i}\cdot\dot{q}_{j}}\nonumber \\
&\quad+\sum^{n}_{i=1}\sum^{n}_{a=i}m_{a}gl_{i}e_{3}\cdot q_{i}+M_{00}ge_{3}\cdot x+\frac{1}{2}\Omega^{T}J\Omega,
\end{align}
The derivatives of the Lagrangian are given by
\begin{align*}
\D_x L & = M_{00} g e_3,\\
\D_{\dot x} L & = M_{00} \dot x + \sum_{i=1}^n M_{0i}\dot q_i,
\end{align*}
where $\D_x L$ represents the derivative of $L$ with respect to $x$. From the variation of the angular velocity given after \refeqn{delR}, we have
\begin{align}
\D_{\Omega}L\cdot\delta\Omega=J\Omega\cdot(\dot\eta+\Omega\times\eta)
=J\Omega\cdot\dot\eta- \eta\cdot(\Omega\times J\Omega).
\end{align}
Similarly from \refeqn{delqi}, the derivative of the Lagrangian with respect to $q_i$ is given by
\begin{align*}
\D_{q_i} L \cdot \delta q_i = \sum_{a=i}^n m_a gl_ie_3\cdot (\xi_i\times q_i) 
= -\sum_{a=i}^n m_a gl_i\hat e_3 q_i\cdot \xi_i.
\end{align*}
The variation of $\dot q_i$ is given by
\begin{align*}
\delta\dot q_i = \dot \xi_i\times q_i + \xi_i\times q_i.
\end{align*}
Using this, the derivative of the Lagrangian with respect to $\dot q_i$ is given by
\begin{align*}
&\D_{\dot q_i}L\cdot \delta \dot q_i  = (M_{i0}\dot x + \sum_{j=1}^n M_{ij}\dot q_j) \cdot \delta \dot q_i \\
& = (M_{i0}\dot x + \sum_{j=1}^n M_{ij}\dot q_j) \cdot (\dot \xi_i \times q + \xi_i \times \dot q_i)\\
& = \hat q_i (M_{i0}\dot x + \sum_{j=1}^n M_{ij}\dot q_j)\cdot \dot\xi_i +
\hat{\dot q}_i (M_{i0}\dot x + \sum_{j=1}^n M_{ij}\dot q_j)\cdot \xi_i.
\end{align*}
Let $\mathfrak{G}$ be the action integral, i.e., $\mathfrak{G}=\int_{t_0}^{t_f} L\,dt$. From the above expressions for the derivatives of the Lagrangian, the variation of the action integral can be written as
\begin{align*}
\delta \mathfrak{G} =& \int_{t_0}^{t_f} \{M_{00} \dot x + \sum_{i=1}^n M_{0i}\dot q_i\}\cdot \delta \dot x
+M_{00}ge_3\cdot\delta x\\
& +\sum_{i=1}^n \{\hat q_i (M_{i0}\dot x + \sum_{j=1}^n M_{ij}\dot q_j)\}\cdot \dot \xi_i\\
 &+ \sum_{i=1}^n\{\hat{\dot q}_i (M_{i0}\dot x + \sum_{j=1}^n M_{ij}\dot q_j)-\sum_{a=i}^n m_a gl_i\hat e_3 q_i \}\cdot \xi_i\\
 &+ J\Omega\cdot\dot\eta- \eta\cdot(\Omega\times J\Omega)\,dt.
\end{align*}
Integrating by parts and using the fact that variations at the end points vanish, this reduces to
\begin{align*}
\delta \mathfrak{G} =& \int_{t_0}^{t_f} \{M_{00}ge_3 -M_{00} \ddot x - \sum_{i=1}^n M_{0i}\ddot q_i\}\cdot \delta x \\
&+ \sum_{i=1}^n\{-\hat q_i (M_{i0}\ddot x + \sum_{j=1}^n M_{ij}\ddot q_j)-\sum_{a=i}^n m_a gl_i\hat e_3 q_i \}\cdot \xi_i\\
& - \eta\cdot(J\dot\Omega+\Omega\times J\Omega) \,dt.
\end{align*}
According to the Lagrange-d'Alembert principle, the variation of the action integral is equal to the negative of the virtual work done by the external force and moment, namely
\begin{align}\label{eqn:estef}
-\int_{t_0}^{t_f} (-fRe_3+\Delta_{x})\cdot\delta x + (M+\Delta_R)\cdot \eta\; dt,
\end{align}
and we obtain \refeqn{xddot} and \refeqn{Wdot}. As $\xi_i$ is perpendicular to $q_i$, we also have
\begin{gather}
-\hat q_i^2 (M_{i0}\ddot x + \sum_{j=1}^n M_{ij}\ddot q_j)+\sum_{a=i}^n m_a gl_i\hat q_i^2 e_3=0.\label{eqn:qddot0}
\end{gather}
Equation \refeqn{qddot0} is rewritten to obtain an explicit expression for $\ddot q_i$. As $q_i\cdot \dot q_i =0$, we have $\dot q_i \cdot \dot q_i +q_i\cdot \ddot q_i=0$. Using this, we have
\begin{align*}
-\hat q_i^2 \ddot q_i = -(q_i\cdot \ddot q_i )q_i + (q_i\cdot q_i)\ddot q_i =(\dot q_i \cdot \dot q_i) q_i + \ddot q_i.
\end{align*}
Substituting this equation into \refeqn{qddot0}, we obtain \refeqn{qddot}. This can be slightly rewritten in terms of the angular velocities. Since $\dot q_i = \omega_i\times q_i$ for the angular velocity $\omega_i$ satisfying $q_i\cdot\omega_i=0$, we have
\begin{align*}
    \ddot q_i & = \dot \omega_i \times q_i + \omega_i\times (\omega_i\times q_i) \\
    &= \dot \omega_i \times q_i - \|\omega_i\|^2q_i=-\hat q_i \dot\omega_i - \|\omega_i\|^2q_i.
\end{align*}
Using this and the fact that $\dot\omega_i\cdot q_i=0$, we obtain \refeqn{ELwm}.

\subsection{Proof for Proposition \ref{prop:propchap3_2}}\label{sec:pfchap3_2}

The variations of $x,u$ and $q$ are given by \refeqn{delxLin} and \refeqn{delqLin}. From the kinematics equation $\dot q_i=\omega_i\times q_i$, $\delta\dot q_i$ is given by
\begin{align*}
\delta \dot q_i = \dot\xi_i \times e_3 =\delta\omega_i \times e_3 + 0\times (\xi_i\times e_3)=\delta\omega_i \times e_3.
\end{align*}
Since both sides of the above equation is perpendicular to $e_3$, this is equivalent to $e_3\times(\dot\xi_i\times e_3) = e_3\times(\delta\omega_i\times e_3)$, which yields
\begin{gather*}
\dot \xi - (e_3\cdot\dot\xi) e_3 = \delta\omega_i -(e_3\cdot\delta\omega_i)e_3.
\end{gather*}
Since $\xi_i\cdot e_3 =0$, we have $\dot\xi\cdot e_3=0$. As $e_3\cdot\delta\omega_i=0$ from the constraint, we obtain the linearized equation for the kinematics equation:
\begin{align}
\dot\xi_i = \delta\omega_i.\label{eqn:dotxii}
\end{align}
Substituting these into \refeqn{ELwm}, and ignoring the higher order terms, we obtain \refeqn{Lin}. See~\cite{FarhadDTLeeIJCAS} for details.
\subsection{Proof for Proposition \ref{prop:propchap3_3}}\label{sec:pfchap3_3}
We first show stability of the rotational dynamics, and later it is combined with the stability analysis of the translational dynamics of quad rotor and the rotational dynamics of links. 
\subsubsection*{a) Attitude Error Dynamics}
Here, attitude error dynamics for $e_{R}$, $e_{\Omega}$ are derived and we find conditions on control parameters to guarantee the boundedness of attitude tracking errors. The time-derivative of $Je_{\Omega}$ can be written as
\begin{align}
J\dot e_\Omega & = \{Je_\Omega + d\}^\wedge e_\Omega - k_R e_R-k_\Omega e_\Omega- k_I e_I + \Delta_R,\label{eqn:JeWdot}
\end{align}
where $d=(2J-\trs{J}I)R^TR_d\Omega_d\in\Re^3$~\cite{TFJCHTLeeHG}. The important property is that the first term of the right hand side is normal to $e_\Omega$, and it simplifies the subsequent Lyapunov analysis.

\subsubsection*{b) Stability for Attitude Dynamics}
Define a configuration error function on $\SO$ as follows
\begin{align}
\Psi= \frac{1}{2}\trs[I- R_c^T R].
\end{align}
We introduce the following Lyapunov function
\begin{align}
\begin{split}
\mathcal{V}_{2}=&\frac{1}{2}e_{\Omega}\cdot J\dot{e}_{\Omega}+k_{R}\Psi(R,R_{d})+c_{2}e_{R}\cdot e_{\Omega}+\frac{1}{2}k_{I}\|e_{I}-\frac{\Delta_R}{k_{I}}\|^{2}.
\end{split}
\end{align}
Consider a domain $D_{2}$ given by
\begin{align}
D_2 = \{ (R,\Omega)\in \SO\times\Re^3\,|\, \Psi(R,R_d)<\psi_2<2\}.\label{eqn:D2}
\end{align}
In this domain we can show that $\mathcal{V}_{2}$ is bounded as follows~\cite{TFJCHTLeeHG}
\begin{align}\label{eqn:ffff1}
\begin{split}
z_{2}^{T}M_{21}z_{2}&+\frac{k_{I}}{2}\|e_{I}-\frac{\Delta_R}{k_{I}}\|^{2}\leq\mathcal{V}_{2}\leq z_{2}^{T}M_{22}z_{2}+\frac{k_{I}}{2}\|e_{I}-\frac{\Delta_R}{k_{I}}\|^{2},
\end{split}
\end{align}
where $z_{2}=[\|e_{R}\|,\|e_{\Omega}\|]^{T}\in \Re^{2}$ and the matrices $M_{21}$, $M_{22}$ are given by
\begin{align}
M_{21}=\frac{1}{2}\begin{bmatrix}
k_{R}&-c_{2}\lambda_{M}\\
-c_{2}\lambda_{M}&\lambda_{m}
\end{bmatrix},M_{22}=\frac{1}{2}\begin{bmatrix}
\frac{2k_{R}}{2-\psi_{2}}&c_{2}\lambda_{M}\\
c_{2}\lambda_{M}&\lambda_{M}
\end{bmatrix},
\end{align}
The time derivative of $\mathcal{V}_2$ along the solution of the controlled system is given by
\begin{align*}
\dot{\mathcal{V}}_2  =&
-k_\Omega\|e_\Omega\|^2 - e_\Omega\cdot(k_Ie_I-\Delta_R) + c_2 \dot e_R \cdot Je_\Omega+ c_2 e_R \cdot J\dot e_\Omega + (k_Ie_I- \Delta_R) \dot e_I.
\end{align*}
We have $\dot e_I = c_2 e_R + e_\Omega$ from \refeqn{integralterm}. Substituting this and \refeqn{JeWdot}, the above equation becomes
\begin{align*}
\dot{\mathcal{V}}_2 =&
-k_\Omega\|e_\Omega\|^2  + c_2 \dot e_R \cdot Je_\Omega-c _2 k_R \|e_R\|^2 + c_2 e_R \cdot ((Je_\Omega+d)^\wedge e_\Omega -k_\Omega e_\Omega).
\end{align*}
We have $\|e_R\|\leq 1$, $\|\dot e_R\|\leq \|e_\Omega\|$~\cite{TFJCHTLeeHG}, and choose a constant $B_{2}$ such that $\|d\|\leq B_2$. Then we have
\begin{align}
\dot{\mathcal{V}}_2 \leq - z_2^T W_2 z_2,\label{eqn:dotV2}
\end{align}
where the matrix $W_2\in\Re^{2\times 2}$ is given by
\begin{align*}
W_2 = \begin{bmatrix} c_2k_R & -\frac{c_2}{2}(k_\Omega+B_2) \\ 
-\frac{c_2}{2}(k_\Omega+B_2) & k_\Omega-2c_2\lambda_M \end{bmatrix}.
\end{align*}
The matrix $W_{2}$ is a positive definite matrix if 
\begin{align}\label{eqn:c2}
c_{2}<\min\{\frac{\sqrt{k_{R}\lambda_{m}}}{\lambda_{M}},\frac{4k_{\Omega}}{8k_{R}\lambda_{M}+(k_{\Omega}+B_{2})^{2}} \}.
\end{align}
This implies that
\begin{align}\label{eqn:eq2}
\dot{\mathcal{V}}_{2}\leq - \lambda_{m}(W_{2})\|z_{2}\|^{2},
\end{align} 
which shows stability of attitude dynamics.
\subsubsection*{c) Translational Error Dynamics}
We derive the tracking error dynamics and a Lyapunov function for the translational dynamics of a quadrotor UAV and the dynamics of links. Later it is combined with the stability analyses of the rotational dynamics. This proof is based on the Lyapunov method presented in Theorem 3.6 and 3.7~\cite{Kha96}. From \refeqn{delxLin}, \refeqn{xddot}, \refeqn{Lin}, and \refeqn{fi}, the linearized equation of motion for the controlled full dynamic model is given by
\begin{align}\label{eqn:salam}
\Mb\ddot \xb  + \Gb\xb=\Bb(-fRe_{3}-M_{00}ge_{3})+\g(\xb,\dot{\xb})+\Bb\Delta_{x},
\end{align}
and $\g(\xb,\dot{\xb})$ is higher order term. The subsequent analyses are developed in the domain $D_{1}$
\begin{align}
D_1=\{&(\xb,\dot{\xb},R,e_\Omega)\in\Re^{2n+3}\times\Re^{2n+3}\times \SO\times\Re^3\,|\, \Psi< \psi_1 < 1\}.\label{eqn:D}
\end{align}
In the domain $D_{1}$, we can show that 
\begin{align}
\frac{1}{2} \norm{e_R}^2 \leq  \Psi(R,R_c) \leq \frac{1}{2-\psi_1} \norm{e_R}^2\label{eqn:eRPsi1}.
\end{align}
Consider the quantity $e_{3}^{T}R_{c}^{T}Re_{3}$, which represents the cosine of the angle between $b_{3}=Re_{3}$ and $b_{3_{c}}=R_{c}e_{3}$. Since $1-\Psi(R,R_{c})$ represents the cosine of the eigen-axis rotation angle between $R_{c}$ and $R$, we have $e_{3}^{T}R_{c}^{T}Re_{3}\geq 1-\Psi(R,R_{c})>0$ in $D_{1}$. Therefore, the quantity $\frac{1}{e_{3}^{T}R_{c}^{T}Re_{3}}$ is well-defined. We add and subtract $\frac{f}{e_{3}^{T}R_{c}^{T}Re_{3}}R_{c}e_{3}$ to the right hand side of \refeqn{salam} to obtain
\begin{align}\label{eqn:taghall}
\Mb\ddot \xb  + \Gb\xb=&\Bb(\frac{-f}{e_{3}^{T}R_{c}^{T}Re_{3}}R_{c}e_{3}-X-M_{00}ge_{3}+\Delta_{x})+\g(\xb,\dot{\xb}),
\end{align}
where $X\in \Re^{3}$ is defined by
\begin{align}\label{eqn:Xdef}
X=\frac{f}{e_{3}^{T}R_{c}^{T}Re_{3}}((e_{3}^{T}R_{c}^{T}Re_{3})Re_{3}-R_{c}e_{3}).
\end{align}
The first term on the right hand side of \refeqn{taghall} can be written as 
\begin{align}
-\frac{f}{e_{3}^{T}R_{c}^{T}Re_{3}}R_{c}e_{3}=-\frac{(\|A\|R_{c}e_{3})\cdot Re_{3}}{e_{3}^{T}R_{c}^{T}Re_{3}}\cdot -\frac{A}{\|A\|}=A.
\end{align}
Substituting this and \refeqn{A} into \refeqn{taghall}
\begin{align}
\Mb\ddot \xb  + \Gb\xb=\Bb(-K_{x}\xb-K_{\dot{x}}\dot{\xb}-K_{z}\sat_{\sigma}(e_{\xb})-X+\Delta_{x})+\g(\xb,\dot{\xb}),
\end{align}
This can be rearranged as
\begin{align}
\begin{split}
\ddot{\xb}=&-(\Mb^{-1}\Gb+\Mb^{-1}\Bb K_{x})\xb-(\Mb^{-1}\Bb K_{\dot{x}})\dot{\xb}\\
&-\Mb^{-1}\Bb X-\Mb^{-1}\Bb K_{z}\sat_{\sigma}(e_{\xb})+\Mb^{-1}\g(\xb,\dot{\xb})+\Mb^{-1}\Bb\Delta_{x}.
\end{split}
\end{align}
Using the definitions for $\mathds{A}$, $\mathds{B}$, and $z_{1}$ presented before, the above expression can be rearranged as
\begin{align}\label{eqn:zdot1}
\dot{z}_{1}=&\mathds{A} z_{1}+\mathds{B}(-\Bb X+\g(\xb,\dot{\xb})-\Bb K_{z}\sat_{\sigma}(e_{\xb})+\Bb\Delta_{x}).
\end{align}
\subsubsection*{d) Lyapunov Candidate for Translation Dynamics}
From the linearized control system developed at section 3, we use matrix $P$ to introduce the following Lyapunov candidate for translational dynamics
\begin{align}
\mathcal{V}_{1}=z_{1}^{T}Pz_{1}+2\int_{p_{eq}}^{e_{\xb}}{(\Bb K_{z}\satr_{\sigma}(\mu)-\Bb\Delta_{x})}\cdot d \mu.
\end{align}
The last integral term of the above equation is positive definite about the equilibrium point $e_{\xb}=p_{eq}$ where
\begin{align}
p_{eq}=[\frac{\Delta_{x}}{k_{z}},0,0,\cdots],
\end{align}
if 
\begin{align}
\delta< k_z\sigma,
\end{align}
considering the fact that $\sat_{\sigma}{y}=y$ if $y<\sigma$. The time derivative of the Lyapunov function using the Leibniz integral rule is given by
\begin{align}\label{eqn:devrr}
\dot{\mathcal{V}_{1}}=\dot{z}_{1}^{T}Pz_{1}+z_{1}^{T}P\dot{z}_{1}+2\dot{e}_{\xb}\cdot(\Bb K_{z}\satr_{\sigma}(e_{\xb})-\Bb\Delta_{x}).
\end{align}
Since $\dot{e}_{\xb}^{T}=((P\mathds{B})^{T}z_{1})^{T}=z_{1}^{T}P\mathds{B}$ from \refeqn{exterm}, the above expression can be written as
\begin{align}\label{eqn:devvcv}
\dot{\mathcal{V}_{1}}=\dot{z}_{1}^{T}Pz_{1}+z_{1}^{T}P\dot{z}_{1}+2z_{1}^{T}P\mathds{B}(\Bb K_{z}\satr_{\sigma}(e_{\xb})-\Bb\Delta_{x}).
\end{align}
Substituting \refeqn{zdot1} into \refeqn{devvcv}, it reduces to
\begin{align}\label{eqn:beforsimp}
\dot{\mathcal{V}}_{1}=z_{1}^{T}(\mathds{A}^{T}P+P\mathds{A})z_{1}+2z_{1}^{T}P\mathds{B}(-\Bb X+\g(\xb,\dot{\xb})).
\end{align}
Let $c_{3}=2\|P\mathds{B}\Bb\|_{2}\in\Re$ and using $\mathds{A}^{T}P+P\mathds{A}=-Q$, we have
\begin{align}\label{eqn:test}
\dot{\mathcal{V}}_{1}\leq-z_{1}^{T}Qz_{1}+c_{3}\|z_{1}\|\|X\|+2z_{1}^{T}P\mathds{B}g(\xb,\dot{\xb}).
\end{align}
The second term on the right hand side of the above equation corresponds to the effects of the attitude tracking error on the translational dynamics. We find a bound of $X$, defined at \refeqn{Xdef}, to show stability of the coupled translational dynamics and rotational dynamics in the subsequent Lyapunov analysis. Since 
\begin{align}
f=\|A\|(e_{3}^{T}R_{c}^{T}Re_{3}), 
\end{align}
we have
\begin{align}\label{eqn:ssstr}
\|X\|\leq\|A\|\|(e_{3}^{T}R_{c}^{T}Re_{3})Re_{3}-R_{c}e_{3}\|.
\end{align}
The last term $\|(e_{3}^{T}R_{c}^{T}Re_{3})Re_{3}-R_{c}e_{3}\|$ represents the sine of the angle between $b_{3}=Re_{3}$ and $b_{3_{c}}=R_{c}e_{3}$, since $(b_{3_{c}}\cdot b_{3})b_{3}-b_{3_{c}}=b_{3}\times(b_{3}\times b_{3_{c}})$. The magnitude of the attitude error vector, $\|e_{R}\|$ represents the sine of the eigen-axis rotation angle between $R_{c}$ and $R$. Therefore, $\|(e_{3}^{T}R_{c}^{T}Re_{3})Re_{3}-R_{c}e_{3}\|\leq\|e_{R}\|$ in $D_{1}$. It follows that
\begin{align}
\begin{split}
\|(e_{3}^{T}R_{c}^{T}Re_{3})Re_{3}-R_{c}e_{3}\|&\leq\|e_{R}\|=\sqrt{\Psi(2-\Psi)}\\
&\leq\{\sqrt{\psi_1(2-\psi_1)}\triangleq\alpha\}<1,
\end{split}
\end{align}
therefore
\begin{align}
\begin{split}
\|X\|&\leq \|A\|\|e_{R}\|\\
&\leq\|A\|\alpha.
\end{split}
\end{align}
We also use the following properties
\begin{align}
\lambda_{\min}(Q)\|z_{1}\|^{2}\leq z_{1}^{T}Qz_{1}.
\end{align}
Note that $\lambda_{\min}(Q)$ is real and positive since $Q$ is symmetric and positive definite. Then, we can simplify \refeqn{test} as given
\begin{align}\label{eqn:ssddss}
\dot{\mathcal{V}}_{1} \leq -\lambda_{\min}(Q) \|z_{1}\| ^{2}+c_{3} \|z_{1}\| \|A\|\|e_{R}\|+2z_{1}^{T}P\mathds{B}\g(\xb,\dot{\xb}).
\end{align}
We find an upper boundary for
\begin{align}\label{eqn:AA}
A=-K_{x} \xb-K_{\dot{x}}\dot\xb -K_{z}\satr_{\sigma}(e_{\xb})+ M_{00}ge_3,
\end{align}
by defining
\begin{align}
\|M_{00}ge_{3}\|\leq B_{1},
\end{align} 
for a given positive constant $B_{1}$. We use the following properties where for any matrix $A\in\Re^{m\times n}$ 
\begin{align}
\|A\|_{2}\leq \sqrt{mn}\|A\|_{\max},
\end{align}
where $\|A\|_{\max}=\max\{a_{mn}\}$. The third term on the right hand side of \refeqn{AA} can be bounded as
\begin{align}
\|-K_{z}\satr_{\sigma}(e_{\xb})\|\leq \|K_{z}\|\|\satr_{\sigma}(e_{\xb})\|\leq\|K_{z}\| \sqrt{2n+3}\sigma,
\end{align}
where
\begin{align*}
\|K_{z}\|\leq \sqrt{3(2n+3)}\|K_{z}\|_{\max},
\end{align*}
and similarly 
\begin{align*}
&\|K_{\xb}\|\leq \sqrt{3(2n+3)}\|K_{\xb}\|_{\max},\\
&\|K_{\dot{\xb}}\|\leq \sqrt{3(2n+3)}\|K_{\dot{\xb}}\|_{\max}.
\end{align*}
We define $K_{max}, K_{z_{m}}\in\Re$
\begin{align*}
&K_{\max}=\max\{\|K_{\xb}\|_{\max},\|K_{\dot{\xb}}\|_{\max}\}\sqrt{3(2n+3)}, \\
&K_{z_{m}}=\sqrt{3}(2n+3)\|K_{z}\|_{\max},
\end{align*}
and then the upper bound of $A$ is given by
\begin{align}
\|A\| & \leq K_{\max}(\|\xb\|+\|\dot{\xb}\|)+\sigma K_{z_{m}}+B_{1}\\
&\leq 2K_{\max}\|z_{1}\|+(B_{1}+\sigma K_{z_{m}}),\label{eqn:normA}
\end{align}
and substitute \refeqn{normA} into \refeqn{ssddss}
\begin{align}\label{eqn:eq1}
\begin{split}
\dot{\mathcal{V}}_{1} \leq& -(\lambda_{\min}(Q)-2c_{3}K_{\max}\alpha) \|z_{1}\| ^{2}\\
&+c_{3}(B_{1}+\sigma K_{z_{m}})\|z_{1}\|\|e_{R}\|+2z_{1}^{T}P\mathds{B}\g(\xb,\dot{\xb}).
\end{split}
\end{align}
\subsubsection*{e) Lyapunov Candidate for the Complete System}
Let $\mathcal{V}=\mathcal{V}_{1}+\mathcal{V}_{2}$ be the Lyapunov function for the complete system. The time derivative of $\mathcal{V}$ is given by
\begin{align}
\dot{\mathcal{V}}=\dot{\mathcal{V}}_{1}+\dot{\mathcal{V}}_{2}.
\end{align}
Substituting \refeqn{eq1} and \refeqn{eq2} into the above equation
\begin{align}
\begin{split}
\dot{\mathcal{V}}\leq& -(\lambda_{\min}(Q)-2c_{3}K_{\max}\alpha) \|z_{1}\| ^{2}+2z_{1}^{T}P\mathds{B}\g(\xb,\dot{\xb})\\
&+c_{3}(B_{1}+\sigma K_{z_{m}}) \|z_{1}\|\|e_{R}\|-\lambda_{m}(W_{2})\|z_{2}\|^{2},
\end{split}
\end{align}
and using $\|e_{R}\|\leq \|z_{2}\|$, it can be written as
\begin{align}\label{eqn:finalsimp}
\begin{split}
\dot{\mathcal{V}}\leq& -(\lambda_{\min}(Q)-2c_{3}K_{\max}\alpha) \|z_{1}\| ^{2}+2z_{1}^{T}P\mathds{B}\g(\xb,\dot{\xb})\\
&+c_{3}(B_{1}+\sigma K_{z_{m}})\|z_{1}\|\|z_{2}\|-\lambda_{m}(W_{2})\|z_{2}\|^{2}.
\end{split}
\end{align}
Also, the $2z_{1}^{T}P\mathds{B}\g(\xb,\dot{\xb})$ term in the above equation is indefinite. The function $\g(\xb,\dot{\xb})$ satisfies
\begin{align}
\frac{\|\g(\xb,\dot{\xb})\|}{\|z_{1}\|}\rightarrow 0\quad \mbox{as}\quad \|z_{1}\|\rightarrow 0.
\end{align}
Then, for any $\gamma>0$ there exists $r>0$ such that
\begin{align}
\|\g(\xb,\dot{\xb})\|<\gamma\|z_{1}\|,\quad \forall\|z_{1}\|<r,
\end{align}
so,
\begin{align}
2z_{1}^{T}P\mathds{B}\g(\xb,\dot{\xb})\leq 2\gamma\|P\|_{2}\|z_{1}\|^{2}.
\end{align}
Substituting the above equation into \refeqn{finalsimp}
\begin{align}
\begin{split}
\dot{\mathcal{V}}\leq &-(\lambda_{\min}(Q)-2c_{3}K_{\max}\alpha) \|z_{1}\| ^{2}+2\gamma\|P\|_{2}\|z_{1}\|^{2}\\
&+c_{3}(B_{1}+\sigma K_{z_{m}})\|z_{1}\|\|z_{2}\|-\lambda_{m}(W_{2})\|z_{2}\|^{2},
\end{split}
\end{align}
we obtain
\begin{align}
\dot{\mathcal{V}}\leq-z^{T}Wz+2\gamma\|P\|_{2}\|z_{1}\|^{2},
\end{align}
where $z=[z_{1},z_{2}]^{T}\in\Re^{2}$ and
\begin{align}
W=\begin{bmatrix}
\lambda_{\min}(Q)-2c_{3}K_{\max}\alpha&-\frac{c_{3}(B_{1}+\sigma K_{z_{m}})}{2}\\
-\frac{c_{3}(B_{1}+\sigma K_{z_{m}})}{2}&\lambda_{m}(W_{2})
\end{bmatrix}.
\end{align}
By using $\|z_{1}\|\leq\|z\|$, we obtain
\begin{align}
\dot{\mathcal{V}}\leq -(\lambda_{\min}(W)-2\gamma\|P\|_{2})\|z\|^{2}.
\end{align}
Choosing $\gamma<(\lambda_{\min}(W))/2\|P\|_{2}$, ensures that $\dot{\mathcal{V}}$ is negative semi-definite. This implies that the zero equilibrium of tracking errors is stable in the sense of Lyapunov and $\mathcal{V}$ is non-increasing. Therefore all of error variables $z_{1}$, $z_{2}$ and integral control terms $e_{I}$, $e_{\xb}$ are uniformly bounded. Also, from Lasalle-Yoshizawa theorem~\cite[Thm 3.4]{Kha96}, we have $z\rightarrow 0$ as $t\rightarrow \infty$.
\subsection{Proof for Proposition \ref{prop:propchap4_1}}\label{sec:pfchap4_1}
\subsubsection*{a) Kinetic Energy}
The kinetic energy of the whole system is composed of the kinetic energy of quadrotors, cables and the rigid body, as
\begin{align}
T=&\frac{1}{2}m_{0}\|\dot{x}_{0}\|^{2}+\sum_{i=1}^{n}\sum_{j=1}^{n_{i}}{\frac{1}{2}m_{ij}\|\dot{x}_{ij}\|^{2}}+\frac{1}{2}\sum_{i=1}^{n}{m_{i}\|\dot{x}_{i}\|^{2}}+\frac{1}{2}\sum_{i=1}^{n}{\Omega_{i}\cdot J_{i}\Omega_{i}}+\frac{1}{2}\Omega_{0}\cdot J_{0}\Omega_{0}.
\end{align}
Substituting the derivatives of \refeqn{xi} and \refeqn{xij} into the above expression we have
\begin{align}
T=&\frac{1}{2}m_{0}\|\dot{x}_{0}\|^{2}+\sum_{i=1}^{n}\sum_{j=1}^{n_{i}}{\frac{1}{2}m_{ij}\|\dot{x}_{0}+\dot{R}_{0}\rho_{i}-\sum_{a=j+1}^{n_{i}}{l_{ia}\dot{q}_{ia}}\|^{2}} \nonumber \\
&+\frac{1}{2}\sum_{i=1}^{n}{m_{i}\|\dot{x}_{0}+\dot{R}_{0}\rho_{i}-\sum_{a=1}^{n_{i}}{l_{ia}\dot{q}_{ia}}\|^{2}} +\frac{1}{2}\sum_{i=1}^{n}{\Omega_{i}\cdot J_{i}\Omega_{i}}+\frac{1}{2}\Omega_{0}\cdot J_{0}\Omega_{0}.
\end{align}
We expand the above expression as follow
\begin{align}\label{eqn:kineticbs}
T=&\frac{1}{2}(m_{0}\|\dot{x}_{0}\|^{2}+\sum_{i=1}^{n}\sum_{j=1}^{n_{i}}{m_{ij}\|\dot{x}_{0}\|^{2}}+\sum_{i=1}^{n}{m_{i}\|\dot{x}_{0}\|^2}) \nonumber\\
&+\frac{1}{2}\sum_{i=1}^{n}(\sum_{j=1}^{n_{i}}{m_{ij}\|\dot{R}_{0}\rho_{i}\|^2}+m_{i}\|\dot{R}_{0}\rho_{i}\|^2) \nonumber\\
&+\sum_{i=1}^{n}(\sum_{j=1}^{n_{i}}{m_{ij}\dot{x}_{0}\cdot\dot{R}_{0}\rho_{i}}+m_{i}\dot{x}_{0}\cdot\dot{R}_{0}\rho_{i}) \nonumber\\
&+\frac{1}{2}\sum_{i=1}^{n}(\sum_{j=1}^{n_{i}}{m_{ij}\|\sum_{a=j+1}^{n_{i}}{l_{ia}\dot{q}_{ia}}\|^2}+{m_{i}\|\sum_{a=1}^{n_{i}}{l_{ia}\dot{q}_{ia}}\|^2}) \nonumber\\
&-\sum_{i=1}^{n}(\sum_{j=1}^{n_{i}}{m_{ij}\dot{x}_{0}}\cdot\sum_{a=j+1}^{n_{i}}{{l_{ia}\dot{q}_{ia}}}+\dot{x}_{0}\cdot\sum_{a=1}^{n_{i}}{l_{ia}\dot{q}_{ia}}) \nonumber\\
&-\sum_{i=1}^{n}(\sum_{j=1}^{n_{i}}{m_{ij}\dot{R}_{0}\rho_{i}}\cdot\sum_{a=j+1}^{n_{i}}{l_{ia}\dot{q}_{ia}}+m_{i}\dot{R}_{0}\rho_{i}\cdot\sum_{a=1}^{n_{i}}{l_{ia}\dot{q}_{ia}}) \nonumber\\
&+\frac{1}{2}\sum_{i=1}^{n}{\Omega_{i}\cdot J_{i}\Omega_{i}}+\frac{1}{2}\Omega_{0}\cdot J_{0}\Omega_{0},
\end{align}
and substituting \refeqn{def1}, \refeqn{def3}, it is rewritten as
\begin{align}\label{eqn:kinetic}
T=&\frac{1}{2}M_{T}\|\dot{x}_{0}\|^2+\frac{1}{2}\sum_{i=1}^{n}{M_{iT}\|\dot{R}_{0}\rho_{i}\|^{2}}+\sum_{i=1}^{n}({M_{iT}\dot{x}_{0}\cdot\dot{R}_{0}\rho_{i}}) \nonumber\\
&+\sum_{i=1}^{n}\sum_{j,k=1}^{n_{i}}{M_{0ij}l_{ik}\dot{q}_{ij}\cdot\dot{q}_{ik}}-\sum_{i=1}^{n}({\dot{x}_{0}\cdot\sum_{j=1}^{n_{i}}{M_{0ij}l_{ij}\dot{q}_{ij}}}) \nonumber\\
&-\sum_{i=1}^{n}({\dot{R}_{0}\rho_{i}}\cdot\sum_{j=1}^{n_{i}}{M_{0ij}l_{ij}\dot{q}_{ij}}) \nonumber\\
&+\frac{1}{2}\sum_{i=1}^{n}{\Omega_{i}\cdot J_{i}\Omega_{i}}+\frac{1}{2}\Omega_{0}\cdot J_{0}\Omega_{0}.
\end{align}
\subsubsection*{b) Potential Energy}
We can derive the potential energy expression by considering the gravitational forces on each part of system as given
\begin{align}
V=-m_{0}ge_{3}\cdot x_{0}-\sum_{i=1}^{n}{m_{i}ge_{3}}\cdot x_{i}-\sum_{i=1}^{n}\sum_{j=1}^{n_{i}}{m_{ij}ge_{3}}\cdot x_{ij}.
\end{align}
Using \refeqn{xi} and \refeqn{xij}, we obtain
\begin{align}
V=&-m_{0}ge_{3}\cdot x_{0}-\sum_{i=1}^{n}{m_{i}ge_{3}}\cdot (x_{0}+R_{0}\rho_{i}-\sum_{a=1}^{n_{i}}{l_{ia}q_{ia}}) \nonumber\\
&-\sum_{i=1}^{n}\sum_{j=1}^{n_{i}}{m_{ij}ge_{3}}\cdot (x_{0}+R_{0}\rho_{i}-\sum_{a=j+1}^{n_{i}}{l_{ia}q_{ia}}),
\end{align}
and utilizing \refeqn{def3}, we can simplify the potential energy as
\begin{align}
V=-M_{T}ge_{3}\cdot x_{0}-\sum_{i=1}^{n}{M_{iT}ge_{3}\cdot R_{0}\rho_{i}}+\sum_{i=1}^{n}\sum_{j=1}^{n_{i}}{M_{0ij}l_{ij}q_{ij}\cdot e_{3}}.
\end{align}
\subsubsection*{c) Derivatives of Lagrangian}
We develop the equation of motion for the Lagrangian $L=T-V$. The derivatives of the Lagrangian are given by
\begin{align}
&D_{\dot{x}_{0}}L=M_{T}\dot{x}_{0}+\sum_{i=1}^{n}{M_{iT}\dot{R}_{0}\rho_{i}}-\sum_{i=1}^{n}\sum_{j=1}^{n_{i}}{M_{0ij}l_{ij}\dot{q}_{ij}},\\
&D_{x_{0}}L=M_{T}ge_{3},\\
&D_{\dot{q}_{ij}}L=\sum_{i=1}^{n}\sum_{j=1}^{n_{i}}{M_{0ij}l_{ik}\dot{q}_{ik}}-\sum_{i=1}^{n}{M_{0ij}l_{ij}(\dot{x}_{0}}+{\dot{R}_{0}\rho_{i}}),\\
&D_{q_{ij}}L=-\sum_{i=1}^{n}{M_{0ij}l_{ij}e_{3}},
\end{align}
where $D_{\dot x_0}$ denote the derivative with respect to $\dot x_0$, and other derivatives are defined similarly. We also have
\begin{align}
D_{\Omega_{0}}L=&J_{0}\Omega_{0}+\sum_{i=1}^{n}{M_{iT}\hat{\rho}_{i}R_{0}^{T}\dot{x}_{0}}-\sum_{i=1}^{n}\sum_{j=1}^{n_{i}}{M_{0ij}l_{ij}\hat{\rho}_{i}R_{0}^{T}\dot{q}_{ij}}-\sum_{i=1}^{n}{M_{iT}\hat{\rho}_{i}^{2}\Omega_{0}},
\end{align}
\begin{align}
&D_{\Omega_{0}}L=\bar{J}_{0}\Omega_{0}+\sum_{i=1}^{n}{\hat{\rho}_{i}R_{0}^{T}(M_{iT}\dot{x}_{0}-\sum_{j=1}^{n_{i}}{M_{0ij}l_{ij}\dot{q}_{ij}})},\\
&D_{\Omega_{i}}L=\sum_{i=1}^{n}{J_{i}\Omega_{i}},
\end{align}
where $\bar{J}_{0}$ is defined as
\begin{align}
\bar{J}_{0}=J_{0}-\sum_{i=1}^{n}{M_{iT}\hat{\rho}_{i}^{2}}.
\end{align}
The derivation of the Lagrangian with respect to $R_{0}$ is given by
\begin{align}
D_{R_{0}}L\cdot\delta R_{0}=&\sum_{i=1}^{n}{M_{iT}R_{0}\hat{\eta}_{0}\hat{\Omega}_{0}\rho_{i}\cdot\dot{x}_{0}}\nonumber\\
&-\sum_{i=1}^{n}{R_{0}\hat{\eta}_{0}\hat{\Omega}_{0}\rho_{i}\cdot\sum_{j=1}^{n_{i}}{M_{0ij}l_{ij}\dot{q}_{ij}}}\nonumber\\
&+\sum_{i=1}^{n}{M_{iT}ge_{3}\cdot R_{0}\hat{\eta}_{0}\rho_{i}},
\end{align}
which can be rewritten as
\begin{align}
&D_{R_{0}}L\cdot\delta R_{0}=d_{R_{0}}\cdot \eta_{0},
\end{align}
where
\begin{align}
d_{R_{0}}=\sum_{i=1}^{n}&(((\widehat{\hat{\Omega}_{0}\rho_{i}}R_{0}^{T}(M_{iT}\dot{x}_{0})-\sum_{j=1}^{n_{i}}{M_{0ij}l_{ij}\dot{q}_{ij}})+M_{iT}g\hat{\rho}_{i}R_{0}^{T}e_{3})).
\end{align}
\subsubsection*{d) Lagrange-d'Alembert Principle}
Consider $\mathfrak{G}=\int_{t_{0}}^{t_{f}}{L}$ be the action integral. Using the equations derived in previous section, the infinitesimal variation of the action integral can be written as
\begin{align}
\delta \mathfrak{G}=&\int_{t_{0}}^{t_{f}}D_{\dot{x}_{0}}L\cdot\delta\dot{x}_{0}+D_{x_{0}}\cdot\delta x_{0} \nonumber\\
&+D_{\Omega_{0}}L(\dot{\eta}_{0}+\Omega_{0}\times \eta_{0})+d_{R_{0}}L\cdot\eta_{0} \nonumber\\
&+\sum_{i=1}^{n}\sum_{j=1}^{n_{i}}{D_{\dot{q}_{ij}}L(\dot{\xi}_{ij}\times q_{ij}+\xi_{ij}\times\dot{q}_{ij})}\nonumber\\
&+\sum_{i=1}^{n}\sum_{j=1}^{n_{i}}{D_{q_{ij}}L\cdot(\xi_{ij}\times q_{ij})} \nonumber\\
&+\sum_{i=1}^{n}{D_{\Omega_{i}}L\cdot(\dot{\eta}_{i}+\Omega_{i}\times\eta_{i})}.
\end{align}
The total thrust at the $i$-th quadrotor with respect to the inertial frame is denoted by $u_{i}=-f_{i}R_{i}e_{3}\in\Re^{3}$ and the total moment at the $i$-th quadrotor is defined as $M_{i}\in\Re^{3}$. The corresponding virtual work is given by
\begin{align}
\delta W=\int_{t_{0}}^{t_{f}}&{\sum_{i=1}^{n}{u_{i}\cdot\{\delta x_{0}+R_{0}\hat{\eta}_{0}\rho_{i}-\sum_{j=1}^{n_{i}}{l_{ij}\dot{\xi}_{ij}\times q_{ij}}}}\} +M_{i}\cdot \eta_{i}\; dt.
\end{align}
According to Lagrange-d Alembert principle, we have $\delta \mathfrak{G}=-\delta W$ for any variation of trajectories with fixes end points. By using integration by parts and rearranging, we obtain the following Euler-Lagrange equations
\begin{gather}
\frac{d}{dt}D_{\dot{x}_{i}}L-D_{x_{0}}L=\sum_{i=1}^{n}{u_{i}},\\
\frac{d}{dt}D_{\Omega_{0}}+\Omega_{0}\times D_{\Omega_{0}}-d_{R_{0}}=\sum_{i=1}^{n}{\hat{\rho}_{i}R_{0}^{T}u_{i}},\\
\hat{q}_{ij}\frac{d}{dt}D_{\dot{q}_{ij}}L-\hat{q}_{ij}D_{q_{i}}L=-l_{ij}\hat{q}_{ij}u_{i},\\
\frac{d}{dt}D_{\Omega_{i}}L+\Omega_{i}\times D_{\Omega_{i}}L=M_{i}.
\end{gather}
Substituting the derivatives of Lagrangians into the above expression and rearranging, the equations of motion are given by \refeqn{EOMM1}, \refeqn{EOMM2}, \refeqn{EOMM3}, \refeqn{EOMM4}.

\subsection{Proof for Proposition \ref{prop:propchap4_2}}\label{sec:pfchap4_2}
The variations of $x$ and $q$ are given by \refeqn{xlin} and \refeqn{qlin}. From the kinematics equation $\dot q_{ij}=\omega_{ij}\times q_{ij}$ and
\begin{align*}
\delta \dot q_{ij} = \dot\xi_{ij} \times e_3 =\delta\omega_{ij} \times e_3 + 0\times (\xi_{ij}\times e_3)=\delta\omega_{ij} \times e_3.
\end{align*}
Since both sides of the above equation is perpendicular to $e_3$, this is equivalent to $e_3\times(\dot\xi_{ij}\times e_3) = e_3\times(\delta\omega_{ij}\times e_3)$, which yields
\begin{gather*}
\dot \xi_{ij} - (e_3\cdot\dot\xi_{ij}) e_3 = \delta\omega_{ij} -(e_3\cdot\delta\omega_{ij})e_3.
\end{gather*}
Since $\xi_{ij}\cdot e_3 =0$, we have $\dot\xi_{ij}\cdot e_3=0$. As $e_3\cdot\delta\omega_{ij}=0$ from the constraint, we obtain the linearized equation for the kinematics equation of the link
\begin{align}
\dot\xi_{ij} = \delta\omega_{ij}.\label{eqn:dotxii}
\end{align}
The infinitesimal variation of $R_{0}\in\SO$ in terms of the exponential map
\begin{align}
\delta R_{0} = \frac{d}{d\epsilon}\bigg|_{\epsilon = 0} R_{0}\exp (\epsilon \hat\eta_{0}) = R_{0}\hat\eta_{0},\label{eqn:delR0}
\end{align}
for $\eta_{0}\in\Re^3$. Substituting these into \refeqn{EOMM1}, \refeqn{EOMM2}, and \refeqn{EOMM3}, and ignoring the higher order terms, we obtain the following sets of linearized equations of motion 
\begin{gather}
M_{T}\delta \ddot{x}_{0}-\sum_{i=1}^{n}{M_{iT}\hat{\rho}_{i}}\delta\dot{\Omega}_{0}+\sum_{i=1}^{n}\sum_{j=1}^{n_{i}}M_{0ij}l_{ij}\hat{e}_{3}C(C^{T}\ddot{\xi}_{ij})=\sum_{i=1}^{n}{\delta u_{i}}\\
\sum_{i=1}^{n}{M_{iT}\hat{\rho}_{i}\delta\ddot{x}_{0}}+\bar{J}_{0}\delta\dot{\Omega}_{0}+\sum_{i=1}^{n}\sum_{j=1}^{n_{i}}M_{0ij}l_{ij}\hat{\rho}_{i}\hat{e}_{3}C(C^{T}\ddot{\xi}_{ij})+\sum_{i=1}^{n}\frac{m_{0}}{n}g\hat{\rho}_{i}\hat{e}_{3}\eta_{0}=\sum_{i=1}^{n}{\hat{\rho}_{i}\delta u_{i}}\\
-M_{0ij}C^{T}\hat{e}_{3}\delta\ddot{x}_{0}+M_{0ij}C^{T}\hat{e}_{3}\hat{\rho}_{i}\delta\dot{\Omega}_{0}+\sum_{k=1}^{n_{i}}{M_{0ij}l_{ik}I_{2}(C^{T}\ddot{\xi}_{ij})}\nonumber\\
=-C^{T}\hat{e}_{3}\delta u_{i}+(-M_{iT}-\frac{m_{0}}{n}+M_{0ij})ge_{3} I_{2}(C^{T}\xi_{ij})\\
\dot{\eta}_{i}=\delta\Omega_{i},\; \dot{\eta}_{0}=\delta\Omega_{0},\; J_{i}\delta\Omega_{i}=\delta M_{i},
\end{gather}
which can be written in a matrix form as presented in \refeqn{EOMLin}. See~\cite{FarhadDTLeeIJCAS} for detailed derivations for a similar dynamic system. We used $C^{T}\hat{e}_{3}^{2}C=-I_{2}$ to simplify these derivations. 

\subsection{Proof for Proposition \ref{prop:propchap4_3}}\label{sec:pfchap4_3}
We first show stability of the rotational dynamics of each quadrotor, and later it is combined with the stability analysis for the remaining parts.
\subsubsection*{a) Attitude Error Dynamics}
Here, attitude error dynamics for $e_{R_{i}}$, $e_{\Omega_{i}}$ are derived and we find conditions on control parameters to guarantee the boundedness of attitude tracking errors. The time-derivative of $J_{i}e_{\Omega_{i}}$ can be written as
\begin{align}
J_{i}\dot e_{\Omega_{i}} & = \{J_{i}e_{\Omega_{i}} + d_{i}\}^\wedge e_{\Omega_{i}} - k_R e_{R_{i}}-k_\Omega e_{\Omega_{i}},\label{eqn:JeWdot}
\end{align}
where $d_{i}=(2J_{i}-\trs{[J_{i}]I})R_{i}^TR_{i_{d}}\Omega_{i_d}\in\Re^3$~\cite{Farhad2013}. The important property is that the first term of the right hand side is normal to $e_{\Omega_{i}}$, and it simplifies the subsequent Lyapunov analysis.

\subsubsection*{b) Stability for Attitude Dynamics}
Define a configuration error function on $\SO$ as follows
\begin{align}
\Psi_{i}= \frac{1}{2}\trs[I- R_{{i}_c}^T R_{i}].
\end{align}
We introduce the following Lyapunov function
\begin{align}
\mathcal{V}_{2}=\sum_{i=1}^{n}{\mathcal{V}_{2_{i}}},
\end{align}
where 
\begin{align}
\mathcal{V}_{2_i}=\frac{1}{2}e_{\Omega_{i}}\cdot J_{i}\dot{e}_{\Omega_{i}}+k_{R}\Psi_{i}(R_{i},R_{d_{i}})+c_{2_i}e_{R_{i}}\cdot e_{\Omega_{i}}.
\end{align}
Consider a domain $D_{2}$ given by
\begin{align}
D_2 = \{ (R_{i},\Omega_{i})\in \SO\times\Re^3\,|\, \Psi_{i}(R_{i},R_{d_{i}})<\psi_{2_i}<2\}.\label{eqn:D2}
\end{align}
In this domain we can show that $\mathcal{V}_{2}$ is bounded as follows~\cite{Farhad2013}
\begin{align}\label{eqn:ffff1}
\begin{split}
z_{2_i}^{T}M_{i_{21}}z_{2_i}\leq\mathcal{V}_{2_i}\leq z_{2_i}^{T}M_{i_{22}}z_{2_i},
\end{split}
\end{align}
and $z_{2_i}=[\|e_{R_{i}}\|,\|e_{\Omega_{i}}\|]^{T}\in \Re^{2}$. Matrices $M_{i_{21}}$, $M_{i_{22}}$ are given by
\begin{align}
M_{i_{21}}=&\frac{1}{2}\begin{bmatrix}
k_{R}&-c_{2_i}\lambda_{M_{i}}\\
-c_{2_i}\lambda_{M_{i}}&\lambda_{m_{i}}
\end{bmatrix},\nonumber\\
M_{i_{22}}=&\frac{1}{2}\begin{bmatrix}
\frac{2k_{R}}{2-\psi_{2_i}}&c_{2_i}\lambda_{M_{i}}\\
c_{2_i}\lambda_{M_{i}}&\lambda_{M_{i}}\nonumber
\end{bmatrix},
\end{align}
The time derivative of $\mathcal{V}_2$ along the solution of the controlled system is given by
\begin{align*}
\dot{\mathcal{V}}_2  =&
\sum_{i=1}^{n}-k_\Omega\|e_{\Omega_{i}}\|^2 + c_{2_i} \dot e_{R_{i}} \cdot J_{i}e_{\Omega_{i}}+ c_{2_i} e_{R_{i}} \cdot J_{i}\dot e_{\Omega_{i}}.
\end{align*}
Substituting \refeqn{JeWdot}, the above equation becomes
\begin{align*}
\dot{\mathcal{V}}_2 =&
\sum_{i=1}^{n}-k_\Omega\|e_{\Omega_{i}}\|^2  + c_{2_i} \dot e_{R_{i}} \cdot J_{i}e_{\Omega_{i}}-c_{2_i} k_R \|e_{R_{i}}\|^2 \\
&+ c_{2_i} e_{R_{i}} \cdot ((J_{i}e_{\Omega_{i}}+d_{i})^\wedge e_{\Omega_{i}} -k_\Omega e_{\Omega_{i}}).
\end{align*}
We have $\|e_{R_{i}}\|\leq 1$, $\|\dot e_{R_{i}}\|\leq \|e_{\Omega_{i}}\|$~\cite{TFJCHTLeeHG}, and choose a constant $B_{2_i}$ such that $\|d_{i}\|\leq B_{i_2}$. Then we have
\begin{align}
\dot{\mathcal{V}}_2 \leq -\sum_{i=1}^{n} z_{2_i}^T W_{2_i} z_{2_i},\label{eqn:dotV2}
\end{align}
where the matrix $W_{2_i}\in\Re^{2\times 2}$ is given by
\begin{align*}
W_{2_i} = \begin{bmatrix} c_{2_i}k_R & -\frac{c_{2_i}}{2}(k_\Omega+B_{2_i}) \\ 
-\frac{c_{2_i}}{2}(k_\Omega+B_{2_i}) & k_\Omega-2c_{2_i}\lambda_{M_{i}} \end{bmatrix}.
\end{align*}
The matrix $W_{2_i}$ is a positive definite matrix if 
\begin{align}\label{eqn:c2}
c_{2_i}<\min\{\frac{\sqrt{k_{R}\lambda_{m_i}}}{\lambda_{M_i}},\frac{4k_{\Omega}}{8k_{R}\lambda_{M_i}+(k_{\Omega}+B_{i_2})^{2}} \}.
\end{align}
This implies that
\begin{align}\label{eqn:eq2}
\dot{\mathcal{V}}_{2}\leq -\sum_{i=1}^{n} {\lambda_{m}(W_{2_i})\|z_{2_i}\|^{2}},
\end{align} 
which shows stability of the attitude dynamics of quadrotors.
\subsubsection*{c) Error Dynamics of the Payload and Links}
We derive the tracking error dynamics and a Lyapunov function for the translational dynamics of a payload and the dynamics of links. Later it is combined with the stability analyses of the rotational dynamics. From \refeqn{EOMM1}, \refeqn{EOMLin}, \refeqn{Ai}, and \refeqn{fi}, the equation of motion for the controlled dynamic model is given by
\begin{align}\label{eqn:salam}
\Mb\ddot \xb  + \Gb\xb=\Bb(u-u^{*})+\g(\xb,\dot{\xb}),
\end{align}
where
\begin{align}
u=\begin{bmatrix}
u_{1}\\
u_{2}\\
\vdots\\
u_{n}
\end{bmatrix},\; u^{*}=\begin{bmatrix}
-(M_{1T}+\frac{m_{0}}{n})ge_{3}\\
-(M_{2T}+\frac{m_{0}}{n})ge_{3}\\
\vdots\\
-(M_{nT}+\frac{m_{0}}{n})ge_{3}
\end{bmatrix},
\end{align}
and $\g(\xb,\dot{\xb})$ corresponds to the higher order terms. As $u_i=-f_iR_ie_3$ for the full dynamic model, $\delta u=u-u^*$ is given by
\begin{align}\label{eqn:deltauu}
\delta u=
\begin{bmatrix}
-f_{1}R_{1}e_{3}+(M_{1T}+\frac{m_{0}}{n})ge_{3}\\
-f_{2}R_{2}e_{3}+(M_{2T}+\frac{m_{0}}{n})ge_{3}\\
\vdots\\
-f_{n}R_{n}e_{3}+(M_{nT}+\frac{m_{0}}{n})ge_{3}
\end{bmatrix}.
\end{align}
The subsequent analyses are developed in the domain $D_{1}$
\begin{align}
D_1=\{&(\xb,\dot{\xb},R_i,e_{\Omega_i})\in\Re^{D_{\xb}}\times\Re^{D_{\xb}}\times \SO\times\Re^3\,|\, \Psi_{i}< \psi_{1_i} < 1\}.\label{eqn:D}
\end{align}
In the domain $D_{1}$, we can show that 
\begin{align}
\frac{1}{2} \norm{e_{R_{i}}}^2 \leq  \Psi_i(R_i,R_{c_i}) \leq \frac{1}{2-\psi_{1_i}} \norm{e_{R_i}}^2\label{eqn:eRPsi1}.
\end{align}
Consider the quantity $e_{3}^{T}R_{c_i}^{T}R_{i}e_{3}$, which represents the cosine of the angle between $b_{3_i}=R_{i}e_{3}$ and $b_{3_{c_i}}=R_{c_i}e_{3}$. Since $1-\Psi_i(R_i,R_{c_i})$ represents the cosine of the eigen-axis rotation angle between $R_{c_i}$ and $R_i$, we have $e_{3}^{T}R_{c_i}^{T}Re_{3}\geq 1-\Psi_i(R_i,R_{c_i})>0$ in $D_{1}$. Therefore, the quantity $\frac{1}{e_{3}^{T}R_{c_i}^{T}R_i e_{3}}$ is well-defined. We add and subtract $\frac{f_i}{e_{3}^{T}R_{c_i}^{T}R_i e_{3}}R_{c_i}e_{3}$ to the right hand side of \refeqn{deltauu} to obtain
\begin{align}\label{eqn:hallaa}
\delta u=
\begin{bmatrix}
\frac{-f_1}{e_{3}^{T}R_{c_1}^{T}R_1 e_{3}}R_{c_1}e_{3}-X_1+(M_{1T}+\frac{m_{0}}{n})ge_{3}\\
\frac{-f_2}{e_{3}^{T}R_{c_2}^{T}R_2 e_{3}}R_{c_2}e_{3}-X_2+(M_{2T}+\frac{m_{0}}{n})ge_{3}\\
\vdots\\
\frac{-f_n}{e_{3}^{T}R_{c_n}^{T}R_n e_{3}}R_{c_n}e_{3}-X_n+(M_{nT}+\frac{m_{0}}{n})ge_{3}
\end{bmatrix}.
\end{align}
where $X_i \in \Re^{3}$ is defined by
\begin{align}\label{eqn:Xdef}
X_i=\frac{f_i}{e_{3}^{T}R_{c_i}^{T}R_i e_{3}}((e_{3}^{T}R_{c_i}^{T}R_i e_{3})R_i e_{3}-R_{c_i}e_{3}).
\end{align}
Using 
\begin{align}
-\frac{f_i}{e_{3}^{T}R_{c_i}^{T}R_i e_{3}}R_{c_i}e_{3}=-\frac{(\|A_i\|R_{c_i}e_{3})\cdot R_i e_{3}}{e_{3}^{T}R_{c_i}^{T}R_i e_{3}}\cdot -\frac{A_i}{\|A_i\|}=A_i,
\end{align}
the equation \refeqn{hallaa} becomes
\begin{align}
\delta u=
\begin{bmatrix}
A_{1}-X_1+(M_{1T}+\frac{m_{0}}{n})ge_{3}\\
A_{2}-X_2+(M_{2T}+\frac{m_{0}}{n})ge_{3}\\
\vdots\\
A_{n}-X_n+(M_{nT}+\frac{m_{0}}{n})ge_{3}
\end{bmatrix}.
\end{align}
Substituting \refeqn{Ai} into the above equation, \refeqn{salam} becomes
\begin{align}
\Mb\ddot \xb  + \Gb\xb=\Bb(-K_{\xb}\xb-K_{\dot{\xb}}\dot{\xb}-X)+\g(\xb,\dot{\xb}),
\end{align}
where $X=[X_{1}^T,\; X_{2}^T,\; \cdots,\; X_{n}^T]^{T}\in\Re^{3n}$.
It is rewritten in the following matrix form
\begin{align}\label{eqn:zdot1}
\dot{z}_{1}=\mathds{A} z_{1}+\mathds{B}(\Bb X+\g(\xb,\dot{\xb})),
\end{align}
where $z_{1}=[\xb,\dot{\xb}]^{T}\in\Re^{2D_{\xb}}$ and
\begin{align}
\mathds{A}=\begin{bmatrix}
0&I\\
-\Mb^{-1}(\Gb+\Bb K_{\xb})&-\Mb^{-1}\Bb K_{\dot{\xb}}
\end{bmatrix},
\mathds{B}=\begin{bmatrix}
0\\
\Mb^{-1}
\end{bmatrix}.
\end{align}
We can also choose $K_{\xb}$ and $K_{\dot{\xb}}$ such that $\mathds{A}\in\Re^{2D_{x}\times 2D_{x}}$ is Hurwitz. Then for any positive definite matrix $Q\in\Re^{2D_{\xb}\times 2D_{\xb}}$, there exist a positive definite and symmetric matrix $P\in\Re^{2D_{\xb}\times 2D_{\xb}}$ such that $\mathds{A}^{T}P+P\mathds{A}=-Q$ according to~\cite[Thm 3.6]{Kha96}.

\subsubsection*{d) Lyapunov Candidate for Simplified Dynamics}
From the linearized control system developed at section 3, we use matrix $P$ to introduce the following Lyapunov candidate for translational dynamics
\begin{align}
\mathcal{V}_{1}=z_{1}^{T}Pz_{1}.
\end{align}
The time derivative of the Lyapunov function using the Leibniz integral rule is given by
\begin{align}\label{eqn:devrr}
\dot{\mathcal{V}_{1}}=\dot{z}_{1}^{T}Pz_{1}+z_{1}^{T}P\dot{z}_{1}.
\end{align}
Substituting \refeqn{zdot1} into above expression
\begin{align}\label{eqn:beforsimp}
\dot{\mathcal{V}}_{1}=z_{1}^{T}(\mathds{A}^{T}P+P\mathds{A})z_{1}+2z_{1}^{T}P\mathds{B}(\Bb X+\g(\xb,\dot{\xb})).
\end{align}
Let $c_{3}=2\|P\mathds{B}\Bb\|_{2}\in\Re$ and using $\mathds{A}^{T}P+P\mathds{A}=-Q$, we have
\begin{align}\label{eqn:test}
\dot{\mathcal{V}}_{1}\leq-z_{1}^{T}Qz_{1}+c_{3}\|z_{1}\|\|X\|+2z_{1}^{T}P\mathds{B}\g(\xb,\dot{\xb}).
\end{align}
The second term on the right hand side of the above equation corresponds to the effects of the attitude tracking error on the translational dynamics. We find a bound of $X_{i}$, defined at \refeqn{Xdef}, to show stability of the coupled translational dynamics and rotational dynamics in the subsequent Lyapunov analysis. Since 
\begin{align}
f_{i}=\|A_{i}\|(e_{3}^{T}R_{c_i}^{T}R_i e_{3}), 
\end{align}
we have
\begin{align}\label{eqn:ssstr}
\|X_i\|\leq\|A_i\|\|(e_{3}^{T}R_{c_i}^{T}R_i e_{3})R_i e_{3}-R_{c_i}e_{3}\|.
\end{align}
The last term $\|(e_{3}^{T}R_{c_i}^{T}R_i e_{3})R_i e_{3}-R_{c_i}e_{3}\|$ represents the sine of the angle between $b_{3_i}=R_i e_{3}$ and $b_{3_{c_i}}=R_{c_i}e_{3}$, since $(b_{3_{c_i}}\cdot b_{3_i})b_{3_i}-b_{3_{c_i}}=b_{3_i}\times(b_{3_i}\times b_{3_{c_i}})$. The magnitude of the attitude error vector, $\|e_{R_i}\|$ represents the sine of the eigen-axis rotation angle between $R_{c_i}$ and $R_i$. Therefore, $\|(e_{3}^{T}R_{c_i}^{T}R_i e_{3})R_i e_{3}-R_{c_i}e_{3}\|\leq\|e_{R_i}\|$ in $D_{1}$. It follows that
\begin{align}
\|(e_{3}^{T}R_{c_i}^{T}R_i e_{3})R_i e_{3}-R_{c_i}e_{3}\|&\leq\|e_{R_i}\|=\sqrt{\Psi_i(2-\Psi_i)}\nonumber\\
&\leq\{\sqrt{\psi_{1_i}(2-\psi_{1_i})}\triangleq\alpha_i\}<1,
\end{align}
therefore
\begin{align}
\|X_i\|&\leq \|A_i\|\|e_{R_i}\|\nonumber\\
&\leq\|A_i\|\alpha_i.
\end{align}
We find an upper boundary for
\begin{align}\label{eqn:AA}
A_i=-K_{\xb} \xb-K_{\dot{\xb}}\dot\xb +u_{i}^{*},
\end{align}
by defining
\begin{align}
\|u_{i}^{*}\|\leq B_{1_i},
\end{align} 
for a given positive constant $B_{1}$. defining $K_{max}\in\Re$
\begin{align*}
&K_{\max}=\max\{\|K_{\xb}\|,\|K_{\dot{\xb}}\|\},
\end{align*}
and then the upper bound of $A$ is given by
\begin{align}
\|A_i\| & \leq K_{\max}(\|\xb\|+\|\dot{\xb}\|)+B_{1}\nonumber\\
&\leq 2K_{\max}\|z_{1}\|+B_{1}.\label{eqn:normA}
\end{align}
Using the above steps we can show that
\begin{align}
\|X\|&\leq \sum_{i=1}^{n}((2K_{\max}\|z_{1}\|+B_{1})\|e_{R_{i}}\|)\nonumber\\
&\leq (2K_{\max}\|z_{1}\|+B_{1})\alpha,
\end{align}
where $\alpha=\sum_{i=1}^{n}\alpha_{i}$. Then, we can simplify \refeqn{test} as 
\begin{align}\label{eqn:eq1}
\dot{\mathcal{V}}_{1} \leq& -(\lambda_{\min}(Q)-2c_{3}K_{\max}\alpha) \|z_{1}\| ^{2}+\sum_{i=1}^{n}c_{3}B_{1}\|z_{1}\|\|e_{R_{i}}\|+2z_{1}^{T}P\mathds{B}\g(\xb,\dot{\xb}).
\end{align}
\subsubsection*{e) Lyapunov Candidate for the Complete System}
Let $\mathcal{V}=\mathcal{V}_{1}+\mathcal{V}_{2}$ be the Lyapunov function for the complete system. The time derivative of $\mathcal{V}$ is given by
\begin{align}
\dot{\mathcal{V}}=\dot{\mathcal{V}}_{1}+\dot{\mathcal{V}}_{2}.
\end{align}
Substituting \refeqn{eq1} and \refeqn{eq2} into the above equation
\begin{align}
\dot{\mathcal{V}}\leq& -(\lambda_{\min}(Q)-2c_{3}K_{\max}\alpha) \|z_{1}\| ^{2}+2z_{1}^{T}P\mathds{B}\g(\xb,\dot{\xb})\nonumber\\
&+\sum_{i=1}^{n}c_{3}B_{1} \|z_{1}\|\|e_{R_{i}}\|-\sum_{i=1}^{n}\lambda_{m}(W_{2_i})\|z_{2_i}\|^{2},
\end{align}
and using $\|e_{R_{i}}\|\leq \|z_{2_i}\|$, it can be written as
\begin{align}\label{eqn:finalsimp}
\dot{\mathcal{V}}\leq& -(\lambda_{\min}(Q)-2c_{3}K_{\max}\alpha) \|z_{1}\| ^{2}+2z_{1}^{T}P\mathds{B}\g(\xb,\dot{\xb})\nonumber\\
&+\sum_{i=1}^{n}c_{3}B_{1} \|z_{1}\|\|z_{2_i}\|-\sum_{i=1}^{n}\lambda_{m}(W_{2_i})\|z_{2_i}\|^{2}.
\end{align}
The $2z_{1}^{T}P\mathds{B}\g(\xb,\dot{\xb})$ term in the above equation is indefinite. The function $\g(\xb,\dot{\xb})$ satisfies
\begin{align*}
\frac{\|\g(\xb,\dot{\xb})\|}{\|z_{1}\|}\rightarrow 0\quad \mbox{as}\quad \|z_{1}\|\rightarrow 0.
\end{align*}
Then, for any $\gamma>0$ there exists $r>0$ such that
\begin{align*}
\|\g(\xb,\dot{\xb})\|<\gamma\|z_{1}\|\quad \forall\|z_{1}\|<r.
\end{align*}
Therefore
\begin{align}
2z_{1}^{T}P\mathds{B}\g(\xb,\dot{\xb})\leq 2\gamma\|P\|_{2}\|z_{1}\|^{2}.
\end{align}
Substituting the above inequality into \refeqn{finalsimp}
\begin{align}
\dot{\mathcal{V}}\leq& -(\lambda_{\min}(Q)-2c_{3}K_{\max}\alpha) \|z_{1}\| ^{2}+2\gamma\|P\|_{2}\|z_{1}\|^{2}\nonumber\\
&+\sum_{i=1}^{n}c_{3}B_{1} \|z_{1}\|\|z_{2_i}\|-\sum_{i=1}^{n}\lambda_{m}(W_{2_i})\|z_{2_i}\|^{2},
\end{align}
and rearranging
\begin{align}
\dot{\mathcal{V}}\leq -\sum_{i=1}^{n}(&\frac{\lambda_{\min}(Q)-2c_{3}K_{\max}\alpha}{n} \|z_{1}\| ^{2}\nonumber\\
&-c_{3}B_{1} \|z_{1}\|\|z_{2_i}\|+\lambda_{m}(W_{2_i})\|z_{2_i}\|^{2})\nonumber\\
&+2\gamma\|P\|_{2}\|z_{1}\|^{2},
\end{align}
we obtain
\begin{align}
\dot{\mathcal{V}}\leq-\sum_{i=1}^{n}(\zb_{i}^{T}W_{i}\zb_{i})+2\gamma\|P\|_{2}\|z_{1}\|^{2},
\end{align}
where $\zb_{i}=[\|z_{1}\|,\|z_{2_{i}}\|]^{T}\in\Re^{2}$ and
\begin{align}
W_i=\begin{bmatrix}
\frac{\lambda_{\min}(Q)-2c_{3}K_{\max}\alpha}{n}&-\frac{c_{3}B_{1_i}}{2}\\
-\frac{c_{3}B_{1_i}}{2}&\lambda_{m}(W_{2_i})
\end{bmatrix}.
\end{align}
By using $\|z_{1}\|\leq\|\zb_i\|$, we obtain
\begin{align}
\dot{\mathcal{V}}\leq -\sum_{i=1}^{n}(\lambda_{\min}(W_i)-\frac{2\gamma\|P\|_{2}}{n})\|\zb_{i}\|^{2}.
\end{align}
Choosing $\gamma<n(\lambda_{\min}(W_i))/2\|P\|_{2}$, and
\begin{align}
\lambda_{m}(W_{2_i})>\frac{n\|\frac{c_{3}B_{1_i}}{2}\|^2}{\lambda_{\min}(Q)-2c_{3}K_{\max}\alpha},
\end{align}
ensures that $\dot{\mathcal{V}}$ is negative definite. Then, the zero equilibrium is exponentially stable.

\end{document}